\newcommand{\vast}{\bBigg@{4}}
\newcommand{\Vast}{\bBigg@{5}}
\def\given{\,|\,}
\def\tr{\mathop{\text{tr}}\kern.2ex}
\long\def\comment#1{}
\def\tr{\mathop{\text{tr}}}
\def\svec{\text{svec}}
\def\smat{\text{smat}}
\def\F{\text{F}}
\def\gap{\text{gap}}
\def\TV{\text{TV}}
\newcommand{\bel}{\begin{eqnarray}\label}
\newcommand{\eel}{\end{eqnarray}}
\newcommand{\bes}{\begin{eqnarray*}}
\newcommand{\ees}{\end{eqnarray*}}
\newcommand{\red}{\color{red}}
\newcommand{\la}{\langle}
\newcommand{\ra}{\rangle}
\newcommand{\normmm}[1]{{\left\vert\kern-0.25ex\left\vert\kern-0.25ex\left\vert #1 
    \right\vert\kern-0.25ex\right\vert\kern-0.25ex\right\vert}}
\def \poly {\text{poly}}
\newtheorem{problem}[theorem]{Problem}
\def\##1\#{\begin{align}#1\end{align}}
\def\$#1\${\begin{align*}#1\end{align*}}
\newcommand*\samethanks[1][\value{footnote}]{\footnotemark[#1]}
\date{\vspace{-5ex}}
\begin{document}

\title{\huge Actor-Critic Provably Finds Nash Equilibria of Linear-Quadratic Mean-Field Games}

\author{Zuyue Fu\thanks{Department of Industrial Engineering and Management Sciences, Northwestern University}~~~~ Zhuoran Yang\thanks{Department of Operations Research and Financial Engineering, Princeton University}~~~~ Yongxin Chen\thanks{School of Aerospace Engineering, Georgia Institute of Technology}~~~~ Zhaoran Wang\samethanks[1]}

\maketitle


\begin{abstract}
We study discrete-time mean-field Markov games with infinite numbers of agents where each agent aims to minimize its ergodic cost.   We consider the setting where the agents have identical linear state transitions and quadratic cost functions, while the aggregated effect of the agents is captured by the population mean of their states, namely, the mean-field state. For such a game, based on the Nash certainty equivalence principle, we provide sufficient conditions for the existence and uniqueness of its Nash equilibrium. Moreover, to find the Nash equilibrium, we propose a mean-field actor-critic algorithm with linear function approximation, which does not require knowing the model of dynamics. Specifically, at each iteration of our algorithm, we use the single-agent actor-critic algorithm to approximately obtain the optimal policy of the each agent given the current mean-field state, and then update the mean-field state. In particular,  we prove that our algorithm converges to the Nash equilibrium at a linear rate. To the best of our knowledge, this is the first success of applying model-free reinforcement learning with function approximation to discrete-time mean-field Markov games with provable non-asymptotic global convergence guarantees. 
\end{abstract}

\section{Introduction}
In reinforcement learning (RL) \citep{sutton2018reinforcement}, an agent learns to make decisions that minimize its expected total cost through sequential interactions with the environment.  
Multi-agent reinforcement learning (MARL) \citep{shoham2003multi,shoham2007if,busoniu2008comprehensive} aims to extend 
RL to sequential decision-making problems involving multiple agents.     
In a non-cooperative game, we are interested in the Nash equilibrium \citep{nash1951non}, which is a joint policy of all the agents such that each agent cannot decrease its expected total cost by unilaterally deviating from its Nash policy. 
The Nash equilibrium plays a critical role in understanding the social dynamics of self-interested agents \citep{ash2000social, axtell2002non} and constructing the optimal policy of a particular agent via fictitious self-play \citep{bowling2000analysis, ganzfried2009computing}. 
In the presence of the recent  development in deep learning \citep{lecun2015deep},  MARL with function approximation achieves tremendous empirical successes
in applications, including Go \citep{silver2016mastering,silver2017mastering}, Dota \citep{OpenAI_dota}, Star Craft \citep{vinyals2019alphastar}, Poker  \citep{heinrich2016deep,moravvcik2017deepstack},  multi-robotic systems \citep{yang2004multiagent}, autonomous driving \citep{shalev2016safe}, and solving social dilemmas \citep{de2006learning, leibo2017multi, hughes2018inequity}. 
However, since the capacity of the joint state and action spaces grows exponentially in  the number of agents,  such MARL approaches become computationally intractable when the number of agents is large, which is common in real-world applications \citep{sandholm2010population,calderone2017models,wang2017display}. 

Mean-field game is proposed by \cite{huang2003individual, huang2006large, lasry2006jeux, lasry2006jeux2, lasry2007mean} with the idea of utilizing mean-field approximation to model the strategic interactions within a large population. In a mean-field game, each agent has the same cost function and state transition, which depend on the other agents only through their aggregated effect.
As a result, the optimal policy of each agent depends solely on its own state and the aggregated effect of the population, and such an optimal policy is symmetric across all the agents. 
Moreover, if the aggregated effect of the population corresponds to the Nash equilibrium, then the optimal policy of each agent jointly constitutes a Nash equilibrium. 
Although such a Nash equilibrium corresponds to an infinite number of agents, it well approximates the Nash equilibrium for a sufficiently large number of agents \citep{bensoussan2016linear}.  
Also, as the aggregated effect of the population abstracts away the strategic interactions between individual agents, it circumvents the computational intractability of the MARL approaches that do not exploit symmetry. 

However, most existing work on mean-field games focuses on characterizing the existence and uniqueness of the Nash equilibrium rather than designing provably efficient algorithms. In particular, most existing work considers the continuous-time setting, which requires solving a pair of Hamilton-Jacobi-Bellman (HJB) and Fokker-Planck (FP) equations, whereas the discrete-time setting  is more common in practice, e.g., in the aforementioned applications. Moreover, most existing approaches, including the ones based on solving the HJB and FP equations, require knowing the model of dynamics \citep{bardi2014linear}, or having the access to a simulator, which generates the next state given any state-action pair and aggregated effect of the population \citep{guo2019learning}, which is often unavailable in practice.  

 

To address these challenges, we develop an efficient model-free RL approach to mean-field game, which provably attains the Nash equilibrium.  In particular, we focus on discrete-time mean-field games with linear state transitions and quadratic cost functions, where the aggregated effect of the population is quantified by the mean-field state. Such games capture the fundamental difficulties of general mean-field games and well approximates a variety of real-world systems such as power grids \citep{minciardi2011optimal}, swarm robots \citep{fang2014lqr, araki2017multi, doerr2018random}, and financial systems \citep{zhou2000continuous,huang2018linear}.  
In detail, based on the Nash certainty equivalence (NCE) principle \citep{huang2006large, huang2007large}, we propose a mean-field actor-critic algorithm which, at each iteration, given the mean-field state $\mu$, approximately attains the optimal policy $\pi_\mu^*$ of each agent, and then updates the mean-field state $\mu$ assuming that all the agents follow $\pi_\mu^*$.        We parametrize the actor and critic by linear and quadratic functions, respectively, and prove that such a parameterization encompasses the optimal policy of each agent. Specifically, we update the actor parameter using policy gradient \citep{sutton2000policy} and natural policy gradient \citep{kakade2002natural, peters2008natural, bhatnagar2009natural} and update the critic parameter using primal-dual gradient temporal difference \citep{sutton2009fast, sutton2009convergent}. 
In particular, we prove that given the mean-field state $\mu$, the sequence of policies generated by the actor converges linearly to the optimal policy $\pi_{\mu}^*$. 
Moreover, when alternatingly update the policy and mean-field state, we prove that the sequence of policies and its corresponding sequence of mean-field states converge to the unique Nash equilibrium at a linear rate. 
Our approach can be interpreted from both ``passive'' and ``active'' perspectives: (i) Assuming that each self-interested agent employs the single-agent actor-critic algorithm, the policy of each agent converges to the unique Nash policy, which characterizes the social dynamics of a large population of model-free RL agents. (ii) For a particular agent, our approach serves as a fictitious self-play method for it to find its Nash policy, assuming the other agents give their best responses. To the best of our knowledge, our work establishes the first efficient model-free RL approach with function approximation that provably attains the Nash equilibrium of a discrete-time mean-field game. 
As a byproduct, we also show that the sequence of policies generated by the single-agent actor-critic algorithm converges at a linear rate to the optimal policy of a linear-quadratic regulator (LQR) problem in the presence of drift, which may be of independent interest.

\vspace{5pt} 

{\noindent \bf Related Work.}
Mean-field game is first introduced in \cite{huang2003individual, huang2006large, lasry2006jeux, lasry2006jeux2, lasry2007mean}.  In the last decade, there is growing interest in understanding continuous-time mean-field games. See, e.g., \cite{gueant2011mean, bensoussan2013mean, gomes2014mean,  carmona2013probabilistic, carmona2018probabilistic} and the references therein. 
Due to their simple structures,  continuous-time  linear-quadratic mean-field games are extensively studied under various model assumptions. See  
\cite{li2008asymptotically, bardi2011explicit,wang2012mean, bardi2014linear, huang2016mean, huang2016backward, bensoussan2016linear, bensoussan2017linear, caines2017epsilon,  huang2017robust, moon2018linear, huang2019linear} for examples of this line of work. Meanwhile, the literature on discrete-time linear-quadratic mean-field games remains relatively scarce. Most of this line of work focuses on characterizing the existence of a Nash equilibrium and the behavior of such a Nash equilibrium  when the number of agents goes to infinity 
 \citep{gomes2010discrete,tembine2011mean, moon2014discrete, biswas2015mean, saldi2018discrete, saldi2018markov, saldi2019approximate}. See also \cite{yang2018deep}, which applies  maximum entropy  inverse RL \citep{ziebart2008maximum} to infer the cost function and social  dynamics of discrete-time mean-field games with finite state and action spaces. 
 Our work is most related to \cite{guo2019learning}, where they propose a mean-field Q-learning algorithm \citep{watkins1992q} for discrete-time mean-field games with finite state and action spaces. Such an algorithm requires the access to a simulator, which, given any state-action pair and mean-field state, outputs the next state. In contrast,  both our state and action spaces are infinite, and we do not require such a simulator but only observations of trajectories. 
 Correspondingly, we study the mean-field actor-critic algorithm with linear function approximation, whereas their algorithm is tailored to the tabular setting.  Also, our work is closely related to \cite{mguni2018decentralised}, which focuses on a more restrictive setting where the state transition does not involve the mean-field state. In such a setting, mean-field games are potential games, which is, however, not true in more general settings \citep{li2017connections, briani2018stable}. In comparison, we allow the state transition to depend on the mean-field state. Meanwhile, they propose a fictitious self-play method based on the single-agent actor-critic algorithm and establishes its asymptotic convergence. However, their proof of convergence relies on the assumption that the single-agent actor-critic algorithm converges to the optimal policy, which is unverified therein.  Meanwhile, a model-based algorithm is proposed in \cite{kaiqing2019approximate} for the discounted linear-quadratic mean-field games, where they only show that the algorithm converges asymptotically to the Nash equilibrium.   In addition, our work is related to \cite{Subramanian2019Reinforce}, where the proposed algorithm is only shown to converge asymptotically to a stationary point of the mean-field game.  
 
 Our work also extends the line of work on finding the Nash equilibria of Markov games using MARL. Due to the computational intractability introduced by the large number of agents, such a line of work focuses on finite-agent Markov games  \citep{littman1994markov, littman2001friend,hu1998multiagent, bowling2001rational, lagoudakis2002value,hu2003nash, conitzer2007awesome,  perolat2015approximate, perolat2016use, perolat2016softened, perolat2018actor,  wei2017online, zhang2018finite, zou2019finite, casgrain2019deep}. See also  \cite{shoham2003multi,shoham2007if, busoniu2008comprehensive,li2018deep} for detailed surveys.  Our work is related to \cite{yang2018mean}, where they combine the mean-field approximation of actions (rather than states) and Nash Q-learning \citep{hu2003nash} to study general-sum Markov games with a large number of agents. However, the Nash Q-learning algorithm is only applicable to finite state and action spaces, and its convergence is established under rather strong assumptions. 
 Also, when the number of agents goes to infinity, their approach yields a variant of tabular Q-learning, which is different from our mean-field actor-critic algorithm.

For policy optimization, based on the policy gradient theorem,  
\cite{sutton2000policy,konda2000actor} propose the actor-critic algorithm, which is later generalized to the natural actor-critic algorithm \citep{peters2008natural, bhatnagar2009natural}. Most existing results on the convergence of actor-critic algorithms are based on stochastic approximation using ordinary differential equations \citep{bhatnagar2009natural, castro2010convergent,konda2000actor, maei2018convergent}, which are asymptotic in nature. For policy evaluation, the convergence of primal-dual gradient temporal difference is studied in \cite{liu2015finite, du2017stochastic, wang2017finite,yu2017convergence,wai2018multi}. However, this line of work assumes that the feature mapping is bounded, which is not the case in our setting. Thus, the existing convergence results are not applicable to analyzing the critic update in our setting. To handle the unbounded feature mapping, 
we utilize a truncation argument, which requires more delicate analysis. 

Finally, our work extends the line of work that studies model-free RL for LQR. For example,  \cite{bradtke1993reinforcement, bradtke1994adaptive} show that  policy iteration converges to the optimal policy,  \cite{tu2017least, dean2017sample} study the sample complexity of least-squares temporal-difference for policy evaluation. More recently,  \cite{fazel2018global,malik2018derivative,  tu2018gap} show that the policy gradient algorithm converges at a linear rate to the optimal policy. See as also \cite{hardt2016gradient, dean2018regret} for more in this line of work.    Our work is also closely related to \cite{yyy2019aclqr}, where they show that the sequence of policies generated by the natural actor-critic algorithm enjoys a linear rate of convergence to the optimal policy. Compared with this work, when fixing the mean-field state, we use the actor-critic algorithm to study LQR in the presence of drift, which introduces significant difficulties in the analysis. As we show in \S \ref{sec:algo}, the drift causes the optimal policy to have an additional intercept, which makes the state- and action-value functions more complicated. 




\vskip5pt

{\noindent \bf  Notations.}
We denote by $\|M\|_2$ the spectral norm,  $\rho(M)$ the spectral radius,  $\sigma_{\min}(M)$ the minimum singular value, and $\sigma_{\max}(M)$ the maximum singular value of a matrix $M$.   We use $\|\alpha\|_2$ to represent the $\ell_2$-norm of a vector $\alpha$, and $(\alpha)_i^j$ to denote the sub-vector $(\alpha_i, \alpha_{i+1}, \ldots, \alpha_j)^\top$, where $\alpha_k$ is the $k$-th entry of the vector $\alpha$.       For scalars $a_1, \ldots, a_n$, we denote by $\poly(a_1, \ldots, a_n)$ the polynomial of $a_1, \ldots, a_n$, and this polynomial may vary from line to line.   We use $[n]$ to denote the set $\{1, 2, \ldots, n\}$ for any $n\in\NN$.


\setitemize{label=\textbullet, leftmargin=20pt, nolistsep}

\section{Linear-Quadratic Mean-Field Game}\label{sec:algo_lqmfg}
A linear-quadratic mean-field game involves $N_{\text{a}}\in\NN$ agents. Their state transitions are given by
\$
x_{t+1}^i = A x_t^i + B u_t^i + \overline A\cdot \frac{1}{N_{\text{a}}}\sum_{j = 1}^{N_{\text{a}}} x_t^j + d^i +  \omega_t^i,\qquad \forall t\geq 0, ~i\in[N_{\text{a}}],
\$
where $x_t^i\in\RR^m$ and $u_t^i\in\RR^k$ are the state and action vectors of agent $i$, respectively, the vector $d^i\in\RR^m$ is a drift term, and $\omega_t^i\in\RR^m$ is an independent random noise term following the Gaussian distribution $\mathcal N(0,\Psi_\omega)$. The agents are coupled through the mean-field state $1/N_{\text{a}}\cdot \sum_{j = 1}^{N_{\text{a}}} x_t^j$. In the linear-quadratic mean-field game, the cost of agent $i\in[N_{\text{a}}]$ at time $t\geq 0$ is given by
\$
c^i_t = (x_t^i)^\top Q x_t^i + (u_t^i)^\top R u_t^i  + \Biggl(\frac{1}{N_{\text{a}}}\sum_{j = 1}^{N_{\text{a}}} x_t^j\Biggr)^\top \overline Q\Biggl(\frac{1}{N_{\text{a}}}\sum_{j = 1}^{N_{\text{a}}} x_t^j\Biggr),
\$
where $u^i_t$ is generated by $\pi^i$, i.e., the policy of agent $i$.
To measure the performance of agent $i$ following its policy $\pi^i$ under the influence of the other agents, we define the expected total cost of agent $i$ as
\$
J^i(\pi^1, \pi^2, \ldots, \pi^{N_{\text{a}}}) = \lim_{T\to\infty} \EE\Biggl(\frac{1}{T}\sum_{t = 0}^T c_t^i\Biggr). 
\$ 
We are interested in finding a Nash equilibrium $(\pi^1, \pi^2, \ldots, \pi^{N_{\text{a}}})$, which is defined by
\$
J^i(\pi^1, \ldots, \pi^{i-1}, \pi^i, \pi^{i+1},\ldots, \pi^{N_{\text{a}}})  \leq J^i(\pi^1, \ldots, \pi^{i-1}, \tilde \pi^i, \pi^{i+1},\ldots, \pi^{N_{\text{a}}}), \qquad \forall \tilde \pi^i, ~ i\in[N_{\text{a}}].
\$
That is, agent $i$ cannot further decrease its expected total cost by unilaterally deviating from its Nash policy.  

For the simplicity of discussion, we assume that the drift term $d^i$ is identical for each agent. By the symmetry of the agents in terms of their state transitions and cost functions, we focus on a fixed agent and drop the superscript $i$ hereafter.    Further taking the infinite-population limit $N_{\text{a}}\to\infty$ leads to the following formulation of linear-quadratic mean-field game (LQ-MFG). 

\begin{problem}[LQ-MFG]\label{prob:mflqr}
We consider the following formulation,
\$
& x_{t+1} = A x_t + B u_t + \overline A  \EE x_t^* + d + \omega_t, \notag\\
& c(x_t, u_t) = x_t^\top Q x_t + u_t^\top R u_t + (\EE x_t^*)^\top \overline Q(\EE x_t^*), \notag\\
& J(\pi) = \lim_{T\to\infty} \EE\Biggl[\frac{1}{T}\sum_{t = 0}^T c(x_t, u_t)\Biggr], 
\$
where $x_t\in\RR^m$ is the state vector, $u_t\in\RR^k$ is the action vector generated by the policy $\pi$, $\{x_t^*\}_{t\geq0}$ is the trajectory generated by a Nash policy $\pi^*$ (assuming it exists), $\omega_t\in\RR^m$ is an independent random noise term following the Gaussian distribution $\mathcal N(0,\Psi_\omega)$, and $d\in\RR^m$ is a drift term. Here the expectation $\EE x_t^*$ is taken across all the agents. We aim to find $\pi^*$ such that $J(\pi^*) = \inf_{\pi\in\Pi}J(\pi)$. 
\end{problem}

The formulation in Problem \ref{prob:mflqr} is studied by \cite{lasry2007mean, bensoussan2016linear, saldi2018discrete, saldi2018markov}. 
We propose a more general formulation in Problem \ref{prob:mflqr2} (see \S\ref{sec:gen_form} of the appendix for details), where an additional interaction term between the state vector $x_t$ and the mean-field state $\EE x_t^*$ is incorporated into the cost function. According to our analysis in  \S\ref{sec:gen_form}, up to minor modification, the results in the following sections also carry over to Problem \ref{prob:mflqr2}. Therefore, for the sake of simplicity, we focus on Problem \ref{prob:mflqr} in the sequel.   

Note that the mean-field state $\EE x_t^*$ converges to a constant vector $\mu^*$ as $t\to\infty$, which serves as a fixed mean-field state, since the Markov chain of states generated by the Nash policy $\pi^*$ admits a stationary distribution.  
As we consider the ergodic setting, it suffices to study Problem \ref{prob:mflqr} with $t$ sufficiently large, which motivates the following drifted LQR (D-LQR) problem, where the mean-field state acts as another drift term.

\begin{problem}[D-LQR]\label{prob:slqr}
Given a  mean-field state $\mu\in\RR^m$, we consider the following formulation,
\$
& x_{t+1} = A x_t + B u_t + \overline A \mu + d + \omega_t,\notag\\
& c_\mu(x_t, u_t) = x_t^\top Q x_t + u_t^\top R u_t + \mu^\top \overline Q\mu, \notag\\
& J_\mu(\pi) = \lim_{T\to\infty} \EE\Biggl[\frac{1}{T}\sum_{t = 0}^T c_\mu(x_t, u_t)\Biggr],  
\$
where $x_t\in\RR^m$ is the state vector, $u_t\in\RR^k$ is the action vector generated by the policy $\pi$,  $\omega_t\in\RR^m$ is an independent random noise term following the Gaussian distribution $\mathcal N(0,\Psi_\omega)$, and $d\in\RR^m$ is a drift term. We aim to find an optimal policy $\pi^*_\mu$ such that $J_\mu(\pi^*_\mu) = \inf_{\pi\in\Pi}J_\mu(\pi)$. 
\end{problem}

For the mean-field state $\mu = \mu^*$, which corresponds to the Nash equilibrium, solving Problem \ref{prob:slqr} gives $\pi^*_{\mu^*}$, which coincides with the Nash policy $\pi^*$ defined in Problem \ref{prob:mflqr}. Compared with the most studied LQR problem \citep{lewis2012optimal}, both the state transition and the cost function in Problem \ref{prob:slqr} have drift terms, which act as the mean-field ``force'' that drives the states away from zero.  Such a mean-field ``force'' introduces additional challenges when solving Problem \ref{prob:slqr} in the model-free setting (see  \S\ref{sec:slqr} for  details). On the other hand, the unique optimal policy $\pi^*_\mu$ of Problem \ref{prob:slqr} admits a linear form $\pi^*_\mu(x_t) = -K_{\pi^*_\mu}x_t + b_{\pi^*_\mu}$ \citep{anderson2007optimal}, where the matrix $K_{\pi^*_\mu} \in\RR^{k\times m}$ and the vector $b_{\pi^*_\mu} \in\RR^{k}$ are the parameters of $\pi_\mu^*$.  Motivated by such a linear form of the optimal policy, we define the class of linear-Gaussian policies as 
\#\label{eq:def-Pi}
\Pi = \{ \pi(x) = -Kx + b + \sigma\cdot \eta\colon K\in\RR^{k\times m}, b\in\RR^k \},
\# 
where the standard Gaussian noise term $\eta\in\RR^k$ is included to encourage exploration.  To solve Problem \ref{prob:slqr}, it suffices to find the optimal policy $\pi^*_\mu$ within $\Pi$.


Now, we introduce the definition of the Nash equilibrium pair \citep{saldi2018discrete, saldi2018markov}. 
The Nash equilibrium pair is characterized by the NCE principle, which states that it suffices to find a pair of $\pi^*$ and $\mu^*$, such that the policy $\pi^*$  is optimal for each agent when the mean-field state is $\mu^*$, while all the agents following the policy $\pi^*$ generate the mean-field state $\mu^*$ as $t\to\infty$. 
To present its formal definition, we define $\Lambda_1(\mu)$ as the optimal policy in $\Pi$ given the mean-field state $\mu$, and define $\Lambda_2(\mu, \pi)$ as the mean-field state generated by the policy $\pi$ given the current mean-field state $\mu$ as $t\to\infty$.

\begin{definition}[Nash Equilibrium Pair]\label{def:eq_pair}
The pair $(\mu^*, \pi^*)\in\RR^m\times \Pi$ constitutes a Nash equilibrium pair of Problem \ref{prob:mflqr} if it satisfies $\pi^*=\Lambda_1(\mu^*)$ and $\mu^* = \Lambda_2(\mu^*, \pi^*)$. Here $\mu^*$ is called the Nash mean-field state and $\pi^*$ is called the Nash policy.
\end{definition}

\section{Mean-Field Actor-Critic} \label{sec:algo}

We first characterize the existence and uniqueness of the Nash equilibrium pair of Problem \ref{prob:mflqr} under mild regularity conditions, and then propose a mean-field actor-critic algorithm to obtain such a Nash equilibrium. As a building block of the mean-field actor-critic, we propose the natural actor-critic to solve Problem \ref{prob:slqr}.

\subsection{Existence and Uniqueness of Nash Equilibrium Pair}

We now establish the existence and uniqueness of the Nash equilibrium pair defined in Definition \ref{def:eq_pair}.  We impose the following regularity conditions. 

\begin{assumption}\label{assum:contraction}
We assume that the following statements hold:
\begin{itemize}
\item[(i)] The algebraic Riccati equation $X = A^\top X A + Q - A^\top  X   B(B^\top  X  B + R)^{-1}B^\top X  A$ admits a unique symmetric positive definite solution $X^*$; 
\item[(ii)] It holds for $L_0 = L_1L_3 + L_2$ that $L_0 < 1$, where 
\$
& L_1 = \bigl\| \bigl[ (I-A)Q^{-1}(I-A)^\top + BR^{-1}B^\top \bigr]^{-1} \overline A \bigr\|_2\cdot  \bigl\|\bigl[ K^*Q^{-1} (I-A)^\top - R^{-1}B^\top \bigr] \bigr\|_2,\notag\\
& L_2 = \bigl[1-\rho(A-BK^*)\bigr]^{-1}\cdot \|\overline A\|_2, \qquad L_3 = \bigl[1-\rho(A-BK^*)\bigr]^{-1}\cdot \|B\|_2.
\$
Here $K^* = -(B^\top X^* B + R)^{-1}B^\top X^* A$. 
\end{itemize}
\end{assumption}

The first assumption is implied by mild regularity conditions on the matrices $A$, $B$, $Q$, and $R$. See Theorem 3.2 in \cite{de1986riccati} for details. 
The second assumption is standard in the literature \citep{bensoussan2016linear, saldi2018markov}, which ensures the stability of the LQ-MFG. 
In the following proposition, we show that Problem \ref{prob:mflqr} admits a unique Nash equilibrium pair. 

\begin{proposition}[Existence and Uniqueness of Nash Equilibrium Pair]\label{prop:uniq_eq}
Under Assumption \ref{assum:contraction},  
the operator $\Lambda(\cdot) = \Lambda_2(\cdot, \Lambda_1(\cdot))$ is $L_0$-Lipschitz, where $L_0$ is given in Assumption \ref{assum:contraction}. Moreover, there exists a unique Nash equilibrium pair $(\mu^*, \pi^*)$ of Problem \ref{prob:mflqr}. 
\end{proposition}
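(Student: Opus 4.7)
My plan is to deduce the proposition from the Banach fixed-point theorem. By Definition~\ref{def:eq_pair}, a pair $(\mu^*,\pi^*)$ is a Nash equilibrium pair iff $\mu^*$ is a fixed point of $\Lambda=\Lambda_2(\cdot,\Lambda_1(\cdot))$, in which case $\pi^*=\Lambda_1(\mu^*)$ is automatically determined. Thus it suffices to establish that $\Lambda\colon\RR^m\to\RR^m$ is $L_0$-Lipschitz, since Assumption~\ref{assum:contraction}(ii) gives $L_0<1$; completeness of $\RR^m$ then yields a unique $\mu^*$ and hence a unique Nash equilibrium pair.

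\textbf{Characterizing $\Lambda_1$.} Given a mean-field state $\mu$, Problem~\ref{prob:slqr} is an ergodic LQR with drift $\bar A\mu+d$ (the additive $\mu^\top\overline Q\mu$ in the cost does not affect the optimal policy). Standard average-cost LQR theory produces a differential value function of the form $V(x)=x^\top X^*x+2v_\mu^\top x+c_\mu$, with $X^*$ the Riccati solution in Assumption~\ref{assum:contraction}(i), together with a greedy policy $\Lambda_1(\mu)(x)=-K^*x+b^*_\mu$ where
\$
K^*=(B^\top X^*B+R)^{-1}B^\top X^*A,\qquad b^*_\mu=-(B^\top X^*B+R)^{-1}B^\top\bigl(X^*(\bar A\mu+d)+v_\mu\bigr),
\$
and $v_\mu$ is determined by the linear-in-$x$ Bellman matching equation. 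Solving the resulting small linear system shows that $b^*_\mu$ depends linearly on $\bar A\mu+d$, so the sensitivity $\partial b^*_\mu/\partial\mu$ is a well-defined constant matrix.

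\textbf{Characterizing $\Lambda_2$ and decomposing $\Lambda$.} For any linear-Gaussian policy $\pi(x)=-Kx+b+\sigma\eta$ with $\rho(A-BK)<1$, the closed-loop chain $x_{t+1}=(A-BK)x_t+Bb+\bar A\mu+d+\omega_t$ admits a stationary mean $(I-A+BK)^{-1}(Bb+\bar A\mu+d)$. Plugging in $\pi=\Lambda_1(\mu)$ gives
\$
\Lambda(\mu)=(I-A+BK^*)^{-1}\bigl[\,B\,b^*_\mu+\bar A\mu+d\,\bigr].
\$
Subtracting the analogue at $\mu'$, using the Neumann/Gelfand series for the stable matrix $A-BK^*$ to bound $\|(I-A+BK^*)^{-1}\|_2$ by $[1-\rho(A-BK^*)]^{-1}$, and splitting the bracketed difference into a direct term $\bar A(\mu-\mu')$ and an indirect term $B(b^*_\mu-b^*_{\mu'})$ yields the Lipschitz bound $\|\Lambda(\mu)-\Lambda(\mu')\|_2\leq\bigl(L_3\|\partial b^*_\mu/\partial\mu\|_2+L_2\bigr)\|\mu-\mu'\|_2$.

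\textbf{Expected difficulty.} The main obstacle will be the final algebraic identification of $\|\partial b^*_\mu/\partial\mu\|_2$ with the quantity $L_1$ appearing in Assumption~\ref{assum:contraction}(ii). Using the closed-form for $b^*_\mu$ and $v_\mu$ together with the Riccati identity
\$
X^*=A^\top X^*A+Q-A^\top X^*B(B^\top X^*B+R)^{-1}B^\top X^*A
\$
and the definition of $K^*$, the sensitivity matrix must be reduced, via Sherman--Morrison--Woodbury-type manipulations that expose $Q^{-1}$ and $R^{-1}$ explicitly, to the factored form $\bigl[K^*Q^{-1}(I-A)^\top-R^{-1}B^\top\bigr]\bigl[(I-A)Q^{-1}(I-A)^\top+BR^{-1}B^\top\bigr]^{-1}\bar A$. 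Its operator norm is then dominated by the product in $L_1$ by submultiplicativity. Everything after this is routine: $\Lambda$ is $L_0$-Lipschitz with $L_0=L_1L_3+L_2<1$, and the Banach fixed-point theorem concludes.
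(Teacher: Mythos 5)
Your top-level strategy coincides with the paper's: reduce the Nash pair to a fixed point of $\Lambda$, write $\Lambda_2(\mu,\pi_{K,b})=(I-A+BK)^{-1}(Bb+\overline A\mu+d)$ from the stationary mean, split the difference $\Lambda(\mu)-\Lambda(\mu')$ into the direct term through $\overline A$ and the indirect term through the optimal intercept, bound $\|(I-A+BK^*)^{-1}\|_2$ by $[1-\rho(A-BK^*)]^{-1}$ exactly as the paper does, and invoke Banach on $\RR^m$. Where you genuinely diverge is in how $\Lambda_1$ and its $\mu$-sensitivity are obtained. The paper never touches the average-cost Bellman/Riccati formulas for the intercept: it uses the decomposition $J=J_1+J_2+\mathrm{const}$ together with Proposition \ref{prop:J2}, whose proof minimizes $J_2(K,b)$ over $b$ subject to the stationary-mean constraint and reads off, via the KKT system, the closed form $b^{K}=\bigl[KQ^{-1}(I-A)^\top-R^{-1}B^\top\bigr]\bigl[(I-A)Q^{-1}(I-A)^\top+BR^{-1}B^\top\bigr]^{-1}(\overline A\mu+d)$. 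With that formula the Lipschitz constant $L_1$ is immediate by submultiplicativity, and the $\mu$-independence of $K^*$ also follows (since $\min_bJ_2$ does not depend on $K$, the optimal gain minimizes $J_1$ alone). Your route instead gets $K^*$ and $b^*_\mu=-(B^\top X^*B+R)^{-1}B^\top(X^*(\overline A\mu+d)+v_\mu)$ from dynamic programming — which does give $\mu$-independence of the gain for free — but then must prove that the sensitivity of this coupled $(v_\mu,b^*_\mu)$ linear system equals the factored matrix appearing in $L_1$.

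That identification is precisely the step you label an ``expected difficulty'' and leave to unspecified Sherman--Morrison--Woodbury manipulations, and it is the crux of the whole proposition: without it you have Lipschitzness of $\Lambda$ with \emph{some} constant, not with $L_0=L_1L_3+L_2$, and Assumption \ref{assum:contraction}(ii) gives you nothing. The target identity is true — both expressions characterize the unique optimal intercept of the drifted average-cost LQR, so they must agree — hence your approach would not fail, but as written the proof is incomplete at exactly the point where the paper's argument does its real work. The cleanest fixes are either to carry out the Riccati/Woodbury reduction explicitly, or to shortcut it the way the paper does: establish (or cite) the constrained-minimization characterization of $b^{K^*}$ as in Proposition \ref{prop:J2}, from which the $L_1$ bound and the $K$-independence of the optimum drop out without any matrix-inversion gymnastics.
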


\begin{proof}
See \S\ref{proof:prop:uniq_eq} for a detailed proof. 
\end{proof}

\subsection{Mean-Field Actor-Critic for LQ-MFG}

The NCE principle motivates a fixed-point approach to solve Problem \ref{prob:mflqr}, which generates a sequence of policies  $\{ \pi_s \}_{s\geq 0}$ and mean-field states $\{ \mu_s \}_{s\geq 0}$ satisfying the following two properties: (i) Given the mean-field state $\mu_s$, the policy $\pi_s$ is optimal. (ii) The mean-field state becomes $\mu_{s+1}$ as $t\to\infty$, if all the agents follow $\pi_s$ under the current mean-field state $\mu_s$.  Here (i) requires solving Problem \ref{prob:slqr} given the mean-field state $\mu_s$, while (ii) requires simulating the agents following the policy $\pi_s$ given the current mean-field $\mu_s$.   Based on such properties, we propose the mean-field actor-critic in Algorithm \ref{algo:mflqr}.

\begin{algorithm}[h]
    \caption{Mean-Field Actor-Critic for solving LQ-MFG.}\label{algo:mflqr}
    \begin{algorithmic}[1]
    \STATE{{\textbf{Input:}} 
    \begin{itemize}
    \setlength\itemsep{0em}
    \item Initial mean-field state $\mu_0$ and Initial policy $\pi_0$ with parameters $K_0$ and $b_0$.
    \item Numbers of iterations $S$,   $\{N_s\}_{s\in[S]}$,  $\{H_{s}\}_{s\in[S]}$,  $\{\tilde T_{s,n}, T_{s,n}\}_{s\in[S], n\in[N_s]}$,  $\{\tilde T^b_{s,h}, T^b_{s,h}\}_{s\in[S], h\in[H_{s}]}$.
    \item  Stepsizes $\{\gamma_s\}_{s\in[S]}$,  $\{\gamma^b_{s}\}_{s\in[S]}$,  $\{\gamma_{s, n, t}\}_{s\in[S], n\in[N_s], t\in[T_{s,n}]}$,  $\{\gamma^b_{s, h, t}\}_{s\in[S], h\in [H_{s}], t\in[T^b_{s,h}]}$.
    \end{itemize}}
    \FOR{$s = 0, 1, 2, \ldots, S-1$}
        \STATE \textbf{Policy Update:} Solve for the optimal policy $\pi_{s+1}$ with parameters $K_{s+1}$ and $b_{s+1}$ of Problem \ref{prob:slqr} via Algorithm \ref{algo:ac_lqr} with $\mu_s$,  $\pi_{s}$,  $N_s$,   $H_{s}$, $\{\tilde T_{s, n}, T_{s, n}\}_{n\in[N_s]}$, $\{\tilde T^b_{s, h}, T^b_{s, h}\}_{h\in[H_{s}]}$,  $\gamma_s$,  $\gamma^b_{s}$,  $\{\gamma_{s, n, t}\}_{n\in[N_s], t\in[T_{s,n}]}$, and $\{\gamma^b_{s, h, t}\}_{h\in [H_{s}], t\in[T^b_{s,h}]}$, which gives the estimated mean-field state $\hat \mu_{K_{s+1}, b_{s+1}}$.  \label{line:policy-update}
        \STATE \textbf{Mean-Field State Update:} Update the mean-field state via $\mu_{s+1} \gets \hat \mu_{K_{s+1}, b_{s+1}}$. 
    \ENDFOR
    \STATE{\textbf{Output:} Pair $(\pi_S, \mu_S)$.}
    \end{algorithmic}
\end{algorithm}

Algorithm \ref{algo:mflqr} requires solving Problem \ref{prob:slqr} at each iteration to obtain $\pi_s = \Lambda_1(\mu_s)$ and $\mu_{s+1} = \Lambda_2(\mu_s, \pi_s)$. To this end, we introduce the natural actor-critic in \S\ref{sec:slqr} that solves Problem \ref{prob:slqr}.

\subsection{Natural Actor-Critic for D-LQR}\label{sec:slqr}
Now we focus on solving Problem \ref{prob:slqr} for a fixed mean-field state $\mu$, we thus drop the subscript $\mu$ hereafter. 
We write $\pi_{K,b}(x)=-Kx + b + \sigma\eta$ to emphasize the dependence on $K$ and $b$, and $J(K,b) = J(\pi_{K,b})$ consequently. Now, we propose the natural actor-critic to solve Problem \ref{prob:slqr}. 

For any policy $\pi_{K,b}\in\Pi$, by the state transition in Problem \ref{prob:slqr}, we have
\#\label{eq:f1}
x_{t+1} = (A-BK)x_t + (Bb + \overline A\mu + d) + \epsilon_t, \qquad \epsilon_t\sim\mathcal N(0,\Psi_\epsilon),
\#
where $\Psi_\epsilon = \sigma BB^\top + \Psi_\omega$.  It is known that if $\rho(A-BK) < 1$, then the Markov chain $\{x_t\}_{t\geq 0}$ induced by \eqref{eq:f1} has a unique stationary distribution $\mathcal N(\mu_{K,b}, \Phi_K)$ \citep{anderson2007optimal}, where the mean-field state $\mu_{K,b}$ and the covariance $\Phi_K$ satisfy that
\#
& \mu_{K,b} = (I - A+BK)^{-1} (Bb + \overline A\mu + d),\label{eq:f2q}\\
& \Phi_K = (A-BK) \Phi_K (A-BK)^\top + \Psi_\epsilon.  \label{eq:f2p}
\#
Meanwhile, the Bellman equation for Problem \ref{prob:slqr} takes the following form
\#\label{eq:bellman}
P_K = (Q+K^\top RK) + (A-BK)^\top P_K(A-BK). 
\#
Then by calculation (see Proposition \ref{prop:cost_form} in  \S\ref{sec:res_lqr} of the appendix for details), it holds that the expected total cost $J(K, b)$  is decomposed as
\#\label{eq:a1}
J(K,b) = J_1(K) + J_2(K,b) + \sigma^2 \cdot \tr(R) + \mu^\top \overline Q \mu,
\#
where $J_1(K)$ and $J_2(K,b)$ are defined as
\#\label{eq:a2}
& J_1(K) =\tr\bigl[(Q + K^\top R K)\Phi_K\bigr]  = \tr(P_K \Psi_\epsilon),\notag\\ 
& J_2(K,b) = \begin{pmatrix}
\mu_{K,b}\\
b
\end{pmatrix}^\top
\begin{pmatrix}
Q + K^\top RK & -K^\top R\\
-R K & R
\end{pmatrix}
 \begin{pmatrix}
\mu_{K,b}\\
b
\end{pmatrix}.
\#
Here $J_1(K)$ is the expected total cost in the most studied LQR problems \citep{yyy2019aclqr, fazel2018global}, where the state transition does not have drift terms. Meanwhile, $J_2(K,b)$ corresponds to the expected cost induced by the drift terms. The following two propositions characterize the properties of $J_2(K, b)$. 

First, we show that $J_2(K,b)$ is strongly convex in $b$. 
\begin{proposition}\label{prop:convex_J2}
Given any $K$, the function $J_2(K,b)$ is $\nu_K$-strongly convex in $b$. Here $\nu_K = \sigma_{\min}(Y_{1, K}^\top Y_{1, K} + Y_{2, K}^\top Y_{2, K})$, where $Y_{1, K} = {R^{1/2}}K(I-A+BK)^{-1}B - {R^{1/2}}$ and $Y_{2, K} = {Q^{1/2}} (I-A+BK)^{-1} B$. Also, $J_2(K, b)$ has $\iota_K$-Lipschitz continuous gradient in $b$, where $\iota_K$ is upper bounded as  $\iota_K \leq [1-\rho(A-BK)]^{-2} \cdot (  \|B\|_2^2\cdot \|K\|_2^2\cdot \|R\|_2 + \|B\|_2^2\cdot \|Q\|_2 )$. 
\end{proposition}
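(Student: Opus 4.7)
The plan is to exploit the fact that the mean-field state $\mu_{K,b}$ from \eqref{eq:f2q} is affine in $b$, so that $J_2(K, \cdot)$ is a convex quadratic whose Hessian admits a closed-form expression. Writing $\mu_{K,b} = M_K b + c_K$ with $M_K = (I-A+BK)^{-1} B$ and $c_K = (I-A+BK)^{-1}(\overline A \mu + d)$, I would first expand the block quadratic form defining $J_2$ and rearrange the cross terms to obtain the sum-of-squares identity
\[
J_2(K, b) = \bigl\| Q^{1/2} \mu_{K,b} \bigr\|_2^2 + \bigl\| R^{1/2}(K \mu_{K,b} - b) \bigr\|_2^2,
\]
which follows from noticing that the block matrix acts as $(K\mu - b)^\top R (K\mu - b) + \mu^\top Q \mu$ after collecting the $Q$, $K^\top R K$, $-K^\top R$, and $R$ pieces. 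Substituting the affine expression for $\mu_{K,b}$ rewrites the two squares as $\| Y_{2,K} b + Q^{1/2} c_K \|_2^2$ and $\| Y_{1,K} b + R^{1/2} K c_K \|_2^2$, with $Y_{1,K}$ and $Y_{2,K}$ exactly as in the statement. Differentiating twice in $b$ then yields the Hessian $\nabla_b^2 J_2(K, b) = 2\bigl( Y_{1,K}^\top Y_{1,K} + Y_{2,K}^\top Y_{2,K} \bigr)$; its smallest eigenvalue supplies the strong convexity modulus $\nu_K$, up to the paper's normalization of the strong-convexity constant.

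For the Lipschitz gradient bound, I would upper bound the largest eigenvalue of the same Hessian by $2(\|Y_{1,K}\|_2^2 + \|Y_{2,K}\|_2^2)$ via subadditivity of $\sigma_{\max}$, then bound each $\|Y_{i,K}\|_2$ using sub-multiplicativity of the spectral norm. The crucial ingredient is the resolvent estimate $\|(I-A+BK)^{-1}\|_2 \leq [1-\rho(A-BK)]^{-1}$, which converts $M_K$ into $\|B\|_2 \cdot [1-\rho(A-BK)]^{-1}$. The extra $\|K\|_2$ factor in the first summand comes from the $K M_K$ block inside $Y_{1,K}$, while the $\|Q\|_2$ and $\|R\|_2$ factors come from the $Q^{1/2}$ and $R^{1/2}$ prefactors; squaring and combining yields the stated form.

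The main obstacle is precisely that resolvent estimate. The naive Neumann-series inequality $\|(I-M)^{-1}\|_2 \leq (1-\|M\|_2)^{-1}$ only applies when the \emph{operator} norm of $M = A-BK$ is below one, whereas stability of the closed-loop system only supplies $\rho(A-BK) < 1$. Bridging this gap needs a Gelfand-type argument exploiting the geometric decay $\|(A-BK)^n\|_2 \leq C \rho(A-BK)^n$ for some constant $C$ depending on the matrix, possibly via a similarity transformation bringing $A-BK$ close to its Jordan form and absorbing $C$ into the prefactor. A smaller subtlety is the stray $-R^{1/2}$ inside $Y_{1,K}$: I would absorb it either via the algebraic identity $KM_K - I = -(I + K(I-A)^{-1} B)^{-1}$, which cleans up the resolvent, or by the cruder split $\|KM_K - I\|_2 \leq \|K\|_2 \|M_K\|_2 + 1$ followed by folding the residual constant into the $[1-\rho(A-BK)]^{-2}$ prefactor.
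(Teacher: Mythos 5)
Your route is essentially the paper's: the paper likewise computes $\nabla^2_{bb}J_2(K,b)$ in closed form, factors it as $Y_{1,K}^\top Y_{1,K}+Y_{2,K}^\top Y_{2,K}$ (your sum-of-squares identity is just this Gram factorization read off before differentiating), and takes its smallest and largest eigenvalues to get $\nu_K$ and the bound on $\iota_K$. The resolvent estimate $\|(I-A+BK)^{-1}\|_2\le[1-\rho(A-BK)]^{-1}$ that worries you is simply asserted in the paper's proof (and used throughout the paper without comment), so your Gelfand-type caveat and your handling of the stray $-R^{1/2}$ term are points where you are, if anything, more careful than the paper's own argument rather than missing anything.
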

\begin{proof}
See \S\ref{proof:prop:convex_J2} for a detailed proof. 
\end{proof}

Second, we show that $\min_b J_2(K, b)$ is independent of $K$. 
 
\begin{proposition}\label{prop:J2}
We define $b^K = \argmin_{b} J_2(K,b)$, where $J_2(K,b)$ is defined in \eqref{eq:a2}.  It holds that
\$
b^K =  \bigl[ KQ^{-1} (I-A)^\top - R^{-1}B^\top \bigr] \cdot \bigl[ (I-A)Q^{-1}(I-A)^\top + BR^{-1}B^\top \bigr]^{-1} \cdot  (\overline A \mu + d). 
\$
Moreover, $J_2(K, b^K)$ takes the form of
\$
J_2(K, b^K) = (\overline A \mu + d)^\top \bigl[ (I-A)Q^{-1}(I-A)^\top + BR^{-1}B^\top \bigr]^{-1} \cdot (\overline A \mu + d),
\$
which is independent of $K$.
\end{proposition}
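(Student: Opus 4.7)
The plan is to identify a change of variable in $b$ that makes the $K$-dependence of $J_2(K,b)$ vanish at the minimum. Expanding the block quadratic form directly yields
$$J_2(K,b) = \mu_{K,b}^\top Q \mu_{K,b} + (K\mu_{K,b} - b)^\top R (K\mu_{K,b} - b).$$
Multiplying the stationary relation \eqref{eq:f2q} through by $(I - A + BK)$ and rearranging gives the ``deterministic stationarity'' identity $(I - A)\mu_{K,b} = B(b - K\mu_{K,b}) + \overline A \mu + d$. This motivates the substitution $\overline u := b - K\mu_{K,b}$, under which the pair $(\mu_{K,b}, \overline u)$ satisfies the $K$-free affine constraint $(I - A)\mu_{K,b} - B\overline u = \overline A\mu + d$ and
$$J_2(K,b) = \mu_{K,b}^\top Q \mu_{K,b} + \overline u^\top R \overline u.$$

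Next I would observe that, with $K$ fixed, as $b$ ranges over $\RR^k$ the pair $(\mu_{K,b}, \overline u)$ traces out exactly the affine set
$$\mathcal{C} := \bigl\{(z, w)\in\RR^m\times\RR^k : (I - A)z - Bw = \overline A \mu + d\bigr\}.$$
One direction is immediate from the definition above; conversely, for any $(z, w) \in \mathcal{C}$ the choice $b = w + Kz$ makes $\mu_{K,b} = z$ (by verifying the defining relation $(I-A+BK)z = Bb + \overline A\mu + d$) and hence $\overline u = w$. Therefore
$$\min_b J_2(K,b) \;=\; \min_{(z,w)\in\mathcal{C}}\, \bigl\{z^\top Q z + w^\top R w\bigr\},$$
whose right-hand side is patently independent of $K$, establishing the second claim of the proposition.

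It remains to solve this constrained quadratic program to pin down $b^K$ and the optimal value. I would introduce a Lagrange multiplier $\lambda \in \RR^m$ for the equality constraint; the KKT stationarity conditions give $z = -Q^{-1}(I-A)^\top \lambda$ and $w = R^{-1}B^\top \lambda$, and substituting into the constraint yields $\lambda = -W^{-1}(\overline A\mu + d)$ with $W := (I-A)Q^{-1}(I-A)^\top + BR^{-1}B^\top$. Writing $z^\star, w^\star$ for the resulting minimizers, $b^K = w^\star + Kz^\star = \bigl[KQ^{-1}(I-A)^\top - R^{-1}B^\top\bigr] W^{-1}(\overline A\mu + d)$, matching the claimed formula. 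For the optimal value, $z^{\star\top} Q z^\star + w^{\star\top} R w^\star = \lambda^\top W \lambda = (\overline A\mu + d)^\top W^{-1}(\overline A\mu + d)$, again matching the claim. There is no serious obstacle; the only conceptual step is recognizing that the reparameterization $\overline u = b - K\mu_{K,b}$ decouples the feasible set from $K$, after which the rest is a routine KKT calculation. I would also note in passing that the invertibility of $W$ follows from $Q\succ 0$ and $R\succ 0$, so the program is well posed.
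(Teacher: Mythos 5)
Your proof is correct, and it reaches the same two formulas as the paper, but the route is organized differently in a way worth noting. The paper also recasts $\min_b J_2(K,b)$ as an equality-constrained quadratic program in $(\mu, b)$, but it keeps the $K$-dependent objective matrix $M_K$ and constraint matrix $N_K$, solves the KKT system in that coupled form, computes $M_K^{-1}$ explicitly, and only at the end observes that $N_K^\top M_K^{-1} N_K = (I-A)Q^{-1}(I-A)^\top + BR^{-1}B^\top$, so the independence of $\min_b J_2(K,b)$ from $K$ emerges as an algebraic coincidence of that computation. You instead perform the change of variables $(z,w) = (\mu_{K,b},\, b - K\mu_{K,b})$ first, which simultaneously diagonalizes the objective to $z^\top Q z + w^\top R w$ and turns the stationarity constraint into the $K$-free affine set $(I-A)z - Bw = \overline A\mu + d$; together with your verification that $b \mapsto (z,w)$ is onto this set (which uses the invertibility of $I - A + BK$, guaranteed by the standing stability assumption $\rho(A-BK)<1$), the $K$-independence of the optimal value is structural before any optimization is done, and the Lagrange-multiplier step is then a clean computation with a diagonal quadratic. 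Your multiplier relations $z = -Q^{-1}(I-A)^\top\lambda$, $w = R^{-1}B^\top\lambda$ correspond to scaling the standard multiplier by $1/2$, which is harmless since all downstream formulas for $b^K$ and $J_2(K,b^K)$ come out exactly as stated. In short: same constrained-QP/KKT skeleton as the paper, but your decoupling reparameterization buys a transparent explanation of why $\min_b J_2(K,b)$ cannot depend on $K$, at no extra cost.
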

\begin{proof}
See \S\ref{proof:prop:J2} for a detailed proof. 
\end{proof}

Since $\min_b J_2(K, b)$ is independent of $K$ by Proposition \ref{prop:J2}, it holds that the optimal $K^*$ is the same as $\argmin_K J_1(K)$. This motivates us to minimize $J(K,b)$ by first updating $K$ following the gradient direction $\nabla_K J_1(K)$ to the optimal $K^*$, then updating $b$ following the gradient direction $\nabla_b J_2(K^*, b)$.  We now design our algorithm based on this idea.  

We define  $\Upsilon_K$, $p_{K,b}$, and $q_{K,b}$ as
\#\label{eq:def_upsilon}
& \Upsilon_K = \begin{pmatrix}
Q + A^\top P_K A & A^\top P_K B\\
B^\top P_K A & R + B^\top P_K B
\end{pmatrix} = \begin{pmatrix}
\Upsilon_K^{11} & \Upsilon_K^{12}\\
\Upsilon_K^{21} & \Upsilon_K^{22}
\end{pmatrix},\notag\\
&   p_{K,b} = A^\top\bigl[P_K\cdot (\overline A \mu+d) + f_{K,b} \bigr],\qquad q_{K,b} = B^\top\bigl[P_K\cdot (\overline A \mu+d) + f_{K,b} \bigr],
\#
where $f_{K,b} = (I-A+BK)^{-\top} [  (A-BK)^\top P_K (Bb+\overline A\mu + d) - K^\top Rb ]$.  
By calculation (see Proposition \ref{prop:pg} in \S\ref{sec:res_lqr} of the appendix for details), the gradients of $J_1(K)$ and $J_2(K,b)$ take the forms of
\$ 
&\nabla_K J_1(K) = 2(\Upsilon_K^{22} K  - \Upsilon_K^{21})\cdot \Phi_K,\qquad \nabla_b J_2(K,b) = \Upsilon_K^{22}(-K\mu_{K, b} + b) + \Upsilon_{K}^{21}\mu_{K,b} + q_{K,b}.
\$

Our algorithm follows the natural actor-critic method \citep{bhatnagar2009natural} and actor-critic method \citep{konda2000actor}.  
Specifically, (i) To obtain the optimal $K^*$, in the critic update step, we estimate the matrix $\Upsilon_K$ by $\hat \Upsilon_K$ via a policy evaluation algorithm, e.g., Algorithm \ref{algo:pg_eval} or Algorithm \ref{algo:td_pe} (see \S\ref{sec:pe_algo} and \S\ref{sec:td_algo} of the appendix for details);  in the actor update step, we update $K$ via $K\gets K - \gamma\cdot (\hat \Upsilon_K^{22} K - \hat \Upsilon_K^{21})$, where the term $\hat \Upsilon_K^{22} K - \hat \Upsilon_K^{21}$ is the estimated natural gradient.  
(ii) To obtain the optimal $b^*$ given $K^*$, in the critic update step, we estimate $\Upsilon_{K^*}$, $q_{K^*,b}$, and $\mu_{K^*,b}$ by $\hat \Upsilon_{K^*}$, $\hat q_{K^*,b}$, and $\hat\mu_{K^*,b}$ via a policy evaluation algorithm;  In the actor update step, we update $b$ via $b\gets b - \gamma\cdot \hat \nabla_b J_2(K^*, b)$, where $\hat \nabla_b J_2(K^*, b) = \hat\Upsilon_{K^*}^{22}(-K^*\hat\mu_{K^*, b} + b) + \hat \Upsilon_{K^*}^{21}\hat\mu_{K^*,b} + \hat q_{K^*,b}$ is the estimated gradient.
Combining the above procedure, we obtain the natural actor-critic for Problem \ref{prob:slqr}, which is stated in Algorithm \ref{algo:ac_lqr}.



\begin{algorithm}[h]
    \caption{Natural Actor-Critic Algorithm for D-LQR.}\label{algo:ac_lqr}
    \begin{algorithmic}[1]
    \STATE{{\textbf{Input:}} 
    \begin{itemize}
    \item Mean-field state $\mu$ and initial policy $\pi_{K_0, b_0}$.
    \item Numbers of iterations $N$,   $H$,  $\{\tilde T_{n}, T_{n}\}_{n\in[N]}$,  $\{\tilde T^b_{h}, T^b_{h}\}_{h\in[H]}$.
    \item Stepsizes $\gamma$,  $\gamma^b$,  $\{\gamma_{n, t}\}_{n\in[N], t\in[T_n]}$,  $\{\gamma^b_{h, t}\}_{h\in [H], t\in[T^b_{h}]}$.
    \end{itemize}}
    \FOR{ $n = 0, 1, 2, \ldots, N-1$}
        \STATE\textbf{Critic Update:} Compute $\hat\Upsilon_{K_n}$ via  Algorithm \ref{algo:pg_eval} with $\pi_{K_n, b_0}$,  $\mu$,  $\tilde T_n$,$T_n$,  $\{\gamma_{n, t}\}_{t\in[T_n]}$, $K_0$, and $b_0$ as inputs.\label{line:critic-update-ac_lqr}
        \STATE\textbf{Actor Update:} Update the parameter via 
        \$
        K_{n+1} \gets K_n -  \gamma\cdot (\hat \Upsilon_{K_n}^{22} K_n  - \hat \Upsilon_{K_n}^{21}).
        \$\vskip-5pt
    \ENDFOR
    \FOR{ $h = 0, 1, 2, \ldots, H-1$}
        \STATE\textbf{Critic Update:} Compute $\hat \mu_{K_N, b_h}$, $\hat\Upsilon_{K_N}$, $\hat q_{K_N,b_h}$ via Algorithm \ref{algo:pg_eval} with  $\pi_{K_N, b_h}$,  $\mu$,  $\tilde T^b_h$, $T^b_h$,  $\{\gamma^b_{h, t}\}_{t\in[T^b_h]}$,  $K_0$, and $b_0$.  \label{line:critic-update}
        \STATE\textbf{Actor Update:} Update the parameter via
        \$
        b_{h+1} \gets b_h -  \gamma^b\cdot \bigl[\hat \Upsilon_{K_N}^{22}(-K_N\hat \mu_{K, b_h} + b_h) + \hat \Upsilon_{K_N}^{21}\hat \mu_{K_N,b_h} +\hat  q_{K_N,b_h}\bigr].
        \$\vskip-5pt
    \ENDFOR
    \STATE{\textbf{Output:} Policy $\pi_{K,b} = \pi_{K_N, b_H}$, estimated mean-field state $\hat \mu_{K, b} = \hat \mu_{K_N, b_H}$ .} 
    \end{algorithmic}
\end{algorithm}


\section{Global Convergence Results}
The following theorem establishes the rate of convergence of Algorithm \ref{algo:mflqr} to the Nash equilibrium pair $(\mu^*, \pi^*)$ of Problem \ref{prob:mflqr}. 

\begin{theorem}[Convergence of Algorithm \ref{algo:mflqr}]\label{thm:conv_mfg}
For a sufficiently small tolerance $\varepsilon > 0$, we set the number of iterations $S$ in Algorithm \ref{algo:mflqr} such that 
\#\label{eq:choice_S}
S > \frac{\log \bigl(\|\mu_0 - \mu^*\|_2  \cdot \varepsilon^{-1}\bigr)}{\log (1/L_0)}. 
\#
For any $s\in[S]$, we define 
\#\label{eq:def_eps_s}
& \varepsilon_s =  \min\Bigl\{     \bigl[ 1 - \rho(A-BK^*) \bigr]^4 \bigl( \|B\|_2 + \|\overline A\|_2 \bigr)^{-4} \bigl( \|\mu_s\|_2^{-2} + \|d\|_2^{-2}\bigr)  \cdot\sigma_{\min}(\Psi_\epsilon)\cdot \sigma_{\min}(R)\cdot \varepsilon^2,  \notag\\
&\qquad\qquad\qquad \nu_{K^*} \cdot \bigl[ 1 - \rho(A-BK^*) \bigr]^4\cdot \|B\|_2^{-2}  \cdot M_b(\mu_s)\cdot \varepsilon^2, ~ \varepsilon \Bigr\}\cdot 2^{-s-10},
\#
where $\nu_{K^*}$ is defined in Proposition \ref{prop:convex_J2} and
\#\label{eq:def-mb}
M_b(\mu_s) & = 4 \Bigl\| Q^{-1} (I-A)^\top   \cdot \bigl[ (I-A)Q^{-1}(I-A)^\top + BR^{-1}B^\top \bigr]^{-1} \cdot  (\overline A \mu_s + d) \Bigr\|_2 \notag\\
&\qquad \cdot  \bigl[{{\nu_{K^*}^{-1} + \sigma_{\min}^{-1}(\Psi_\epsilon)\cdot \sigma_{\min}^{-1}(R) }}\bigr]^{1/2}.
\# 
In the $s$-th policy update step in Line \ref{line:policy-update} of Algorithm \ref{algo:mflqr}, we set the inputs via Theorem \ref{thm:ac} such that $J_{\mu_s}(\pi_{s+1}) - J_{\mu_s}(\pi_{\mu_s}^*) < \varepsilon_s$, where the expected total cost $J_{\mu_s}(\cdot)$ is defined in Problem \ref{prob:slqr}, and $\pi_{\mu_s}^* = \Lambda_1(\mu_s)$ is the optimal policy under the mean-field state $\mu_s$. 
Then it holds with probability at least $1-\varepsilon^5$ that 
\$
\|\mu_S - \mu^*\|_2 \leq \varepsilon,\qquad \|K_S - K^*\|_\F \leq \varepsilon, \qquad \|b_S - b^* \|_2 \leq (1 + L_1)\cdot \varepsilon.
\$
Here $\mu^*$ is the Nash mean-field state,  $K_S$ and $b_S$ are parameters of the policy $\pi_S$, and $K^*$ and $b^*$ are parameters of the Nash policy $\pi^*$. 
\end{theorem}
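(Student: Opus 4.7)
The plan is to unfold the iteration into a perturbed fixed-point recursion for the operator $\Lambda=\Lambda_2(\cdot,\Lambda_1(\cdot))$, use the contraction property from Proposition \ref{prop:uniq_eq}, and then control the per-step perturbation by converting the D-LQR cost suboptimality $\varepsilon_s$ into errors on $K_{s+1}$, $b_{s+1}$, and hence on the updated mean-field state.

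First, introduce the \emph{idealized} update $\tilde\mu_{s+1}=\Lambda(\mu_s)=\Lambda_2(\mu_s,\pi_{\mu_s}^\ast)$, i.e.\ the mean-field state that would be produced if Line \ref{line:policy-update} returned the exact optimizer $\pi_{\mu_s}^\ast$ with parameters $(K^\ast_{\mu_s},b^\ast_{\mu_s})$. By Proposition \ref{prop:uniq_eq},
\[
\|\tilde\mu_{s+1}-\mu^\ast\|_2 \le L_0\,\|\mu_s-\mu^\ast\|_2 .
\]
The actual update in Algorithm \ref{algo:mflqr} produces $\mu_{s+1}=(I-A+BK_{s+1})^{-1}(Bb_{s+1}+\overline A\mu_s+d)$ by \eqref{eq:f2q}, while $\tilde\mu_{s+1}=(I-A+BK^\ast_{\mu_s})^{-1}(Bb^\ast_{\mu_s}+\overline A\mu_s+d)$. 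Differentiating this rational expression in $(K,b)$, and using that the spectra of $A-BK_{s+1}$ and $A-BK^\ast_{\mu_s}$ are bounded away from the unit circle (which will follow from $J_{\mu_s}(\pi_{s+1})-J_{\mu_s}(\pi_{\mu_s}^\ast)<\varepsilon_s$ being small), I would obtain a Lipschitz bound of the form
\[
\|\mu_{s+1}-\tilde\mu_{s+1}\|_2 \;\le\; C(\mu_s)\cdot\bigl(\|K_{s+1}-K^\ast_{\mu_s}\|_\F+\|b_{s+1}-b^\ast_{\mu_s}\|_2\bigr),
\]
where $C(\mu_s)$ depends polynomially on $[1-\rho(A-BK^\ast)]^{-1}$, $\|B\|_2$, $\|\overline A\|_2$, $\|\mu_s\|_2$, and $\|d\|_2$.

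The next step converts the cost gap $\varepsilon_s$ into parameter error. Since Proposition \ref{prop:J2} shows $\min_b J_2(K,b)$ is independent of $K$, the decomposition \eqref{eq:a1} gives $J_{\mu_s}(K,b)-J_{\mu_s}(K^\ast_{\mu_s},b^\ast_{\mu_s})=[J_1(K)-J_1(K^\ast_{\mu_s})]+[J_2(K,b)-J_2(K,b^K)]$. Standard gradient-domination for $J_1$ (as in the LQR analysis of Fazel et al.\ and the cited Theorem \ref{thm:ac}) converts the first summand into a bound on $\|K_{s+1}-K^\ast_{\mu_s}\|_\F$ involving $[\sigma_{\min}(\Psi_\epsilon)\sigma_{\min}(R)]^{-1}$, while Proposition \ref{prop:convex_J2} gives $\|b_{s+1}-b^{K_{s+1}}\|_2^2\le 2\nu_{K^\ast}^{-1}[J_2(K_{s+1},b_{s+1})-J_2(K_{s+1},b^{K_{s+1}})]$; a short argument shows $\|b^{K_{s+1}}-b^\ast_{\mu_s}\|_2$ is itself Lipschitz in $\|K_{s+1}-K^\ast_{\mu_s}\|_\F$ via the closed form of Proposition \ref{prop:J2}, absorbing into $M_b(\mu_s)$ as defined in \eqref{eq:def-mb}. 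Reading the constants in \eqref{eq:def_eps_s} backwards, one sees precisely that $\varepsilon_s$ has been chosen so that these two parameter errors are each bounded by $2^{-s-5}\cdot\varepsilon/C(\mu_s)$, yielding $\|\mu_{s+1}-\tilde\mu_{s+1}\|_2\le 2^{-s-4}\varepsilon\cdot(1-L_0)$.

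Combining the two pieces gives the perturbed contraction
\[
\|\mu_{s+1}-\mu^\ast\|_2 \;\le\; L_0\,\|\mu_s-\mu^\ast\|_2 \;+\; 2^{-s-4}(1-L_0)\varepsilon .
\]
Unrolling for $S$ steps yields $\|\mu_S-\mu^\ast\|_2 \le L_0^{S}\|\mu_0-\mu^\ast\|_2+\varepsilon/8$, and by the choice of $S$ in \eqref{eq:choice_S} the first term is at most $\varepsilon/2$, giving the stated bound on $\|\mu_S-\mu^\ast\|_2$. The bounds on $\|K_S-K^\ast\|_\F$ and $\|b_S-b^\ast\|_2$ follow by triangle inequality: write $K_S-K^\ast=(K_S-K^\ast_{\mu_{S-1}})+(K^\ast_{\mu_{S-1}}-K^\ast)$, bound the first difference by $\varepsilon_{S-1}$-induced parameter error, and the second by the $L_1$-Lipschitz dependence of $\Lambda_1$ on $\mu$ (extracted implicitly from Proposition \ref{prop:uniq_eq} and giving the $1+L_1$ factor on $b$). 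Finally, the high-probability statement $1-\varepsilon^5$ follows from a union bound over the $S$ calls to Algorithm \ref{algo:ac_lqr}, each invoked with failure probability $\varepsilon^5/S$ as permitted by Theorem \ref{thm:ac}.

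The main obstacle I anticipate is the bookkeeping in step two: converting the scalar cost gap $\varepsilon_s$ into simultaneous bounds on $K_{s+1}$ and $b_{s+1}$ with explicit constants that match the intricate expression in \eqref{eq:def_eps_s}, while ensuring the intermediate stability condition $\rho(A-BK_{s+1})<1$ remains under control uniformly in $s$. All other pieces (contraction of $\Lambda$, geometric summation, union bound) are routine once the per-step perturbation bound is in hand.
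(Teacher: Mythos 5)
Your overall strategy is the same as the paper's: compare the algorithm's update to the idealized update $\Lambda(\mu_s)$, use the $L_0$-contraction from Proposition \ref{prop:uniq_eq}, convert the cost gap $\varepsilon_s$ into bounds on $\|K_{s+1}-K^*\|_\F$ and $\|b_{s+1}-b^*(\mu_s)\|_2$ (the paper simply quotes these from Theorem \ref{thm:ac} rather than re-deriving them), control the sensitivity of $(I-A+BK)^{-1}(Bb+\overline A\mu_s+d)$ in $(K,b)$ by a Taylor/Neumann expansion, unroll the perturbed contraction, and finish with a union bound and triangle inequalities for $K_S$ and $b_S$. Two small remarks: by Proposition \ref{prop:J2} the optimal $K^*(\mu)$ does not depend on $\mu$, so your term $K^*_{\mu_{S-1}}-K^*$ vanishes identically and only $b^*(\mu)$ carries the $L_1$-Lipschitz dependence; this is exactly how the paper gets the clean $\varepsilon$ bound for $K_S$ and the $(1+L_1)\varepsilon$ bound for $b_S$.

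There is, however, one genuine gap. You assert that the algorithm's update satisfies $\mu_{s+1}=(I-A+BK_{s+1})^{-1}(Bb_{s+1}+\overline A\mu_s+d)$, i.e.\ that $\mu_{s+1}$ is the exact stationary mean under the returned policy. It is not: Line 4 of Algorithm \ref{algo:mflqr} sets $\mu_{s+1}\gets\hat\mu_{K_{s+1},b_{s+1}}$, the Monte Carlo estimate produced by Algorithm \ref{algo:pg_eval} from a length-$\tilde T$ trajectory. Consequently the paper's error decomposition has three terms, $\|\mu_{s+1}-\tilde\mu_{s+1}\|_2+\|\tilde\mu_{s+1}-\Lambda(\mu_s)\|_2+\|\Lambda(\mu_s)-\mu^*\|_2$, where the first (statistical estimation) term is controlled by the $\|\hat\mu_{K_N,b_H}-\mu_{K^*,b^*}\|_2\le\varepsilon$ conclusion of Theorem \ref{thm:ac} (ultimately Lemma \ref{lemma:dist_hat_muz}/Theorem \ref{thm:pe}), and is budgeted at $\varepsilon_s\le \varepsilon\cdot 2^{-s}/8$ per iteration. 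Your proposal omits this term entirely, so the perturbed-contraction inequality does not follow as written; the fix is available from the theorem you already invoke, but it must appear explicitly in the decomposition and in the per-step error budget.
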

\begin{proof}
See \S\ref{proof:thm:conv_mfg} for a detailed proof. 
\end{proof}

We highlight that if the inputs of Algorithm \ref{algo:mflqr} satisfy the conditions stated in Theorem \ref{thm:ac}, it holds that $J_{\mu_s}(\pi_{s+1}) - J_{\mu_s}(\pi_{\mu_s}^*) < \varepsilon_s$ for any $s\in[S]$.  See Theorem \ref{thm:ac} in \S\ref{sec:res_lqr} of the appendix for details. 
By Theorem \ref{thm:conv_mfg},  Algorithm \ref{algo:mflqr} converges linearly to the unique Nash equilibrium pair $(\mu^*, \pi^*)$ of Problem \ref{prob:mflqr}. 
To the best of our knowledge,  this theorem is the first successful attempt to establish that reinforcement learning with function approximation finds the Nash equilibrium pairs in mean-field games with theoretical guarantee, which lays the theoretical foundations for applying modern reinforcement learning techniques to general mean-field games.

\section{Conclusion}
For the discrete-time linear-quadratic mean-field games, we provide sufficient conditions for the existence and uniqueness of the Nash equilibrium pair. Moreover, we propose the mean-field actor-critic algorithm with linear function approximation that is shown converges to the Nash equilibrium pair with linear rate of convergence.  
Our algorithm can be modified to use other parametrized function classes, including deep neural networks, for solving mean-field games. 
For future research, we aim to extend our algorithm to 
other variations of mean-field games  including  risk-sensitive mean-field games \citep{saldi2018discrete,tembine2014risk}, 
robust mean-field games \citep{bauso2016robust}, and 
partially observed mean-field games \citep{saldi2019approximate}.

\bibliographystyle{ims}
\bibliography{rl_ref}

\newpage
\appendix


\section{Notations in the Appendix}
In the proof, for convenience, for any invertible matrix $M$, we denote by $M^{-\top} = (M^{-1})^\top = (M^\top)^{-1}$ and $\|M\|_\F$ the Frobenius norm. We also denote by $\svec(M)$ the symmetric vectorization of the symmetric matrix $M$, which is the vectorization of the upper triangular matrix of the symmetric matrix $M$, with off-diagonal entries scaled by $\sqrt{2}$.  We denote by $\smat(\cdot)$ the inverse operation.  For any matrices $G$ and $H$, we denote by $G\otimes H$ the Kronecker product, and $G\otimes_s H$ the symmetric Kronecker product,  which is defined as a mapping on a vector $\svec(M)$ such that $(G\otimes_s H)\svec(M) = 1/2\cdot \svec(HMG^\top + GMH^\top)$. 

For notational simplicity, we write $\EE_{\pi}(\cdot)$ to emphasize that the expectation is taken following the policy $\pi$. 

\section{Auxiliary Algorithms and Analysis}

\subsection{Results in D-LQR} \label{sec:res_lqr}

In this section, we provide auxiliary results in analyzing Problem \ref{prob:slqr}.  First, we introduce the value functions of the Markov decision process (MDP) induced by Problem \ref{prob:slqr}. 
We define the state- and action-value functions $V_{K,b}(x)$ and $Q_{K,b}(x,u)$ as follows
\#
& V_{K,b}(x) = \sum_{t = 0}^\infty \Bigl\{ \EE\bigl[ c(x_t, u_t) \given x_0 = x \bigr] - J(K,b) \Bigr\}, \label{eq:val_funcv} \\
& Q_{K,b}(x,u) = c(x,u) - J(K,b) + \EE\bigl[ V_{K,b}(x_1)\given x_0 = x, u_0 = u \bigr], \label{eq:val_funcq}
\#
where $x_t$ follows the state transition, and $u_t$ follows the policy $\pi_{K,b}$ given $x_t$. In other words, we have $u_{t} = -Kx_t + b + \sigma\eta_t$, where $\eta_t\sim \mathcal N(0, I )$.  The following proposition establishes the close forms of these value functions.

\begin{proposition}\label{prop:val_func_form}
The state-value function $V_{K,b}(x)$ takes the form of
\#\label{eq:4a}
V_{K,b}(x) = x^\top P_K x - \tr(P_K \Phi_K)  +  2 f_{K,b}^\top (x -\mu_{K,b}) - \mu_{K,b}^\top P_K\mu_{K,b},
\#
and the action-value function $Q_{K,b}(x,u)$ takes the form of
\#\label{eq:4b}
Q_{K,b}(x,u) &= \begin{pmatrix}
x\\
u
\end{pmatrix}^\top
\Upsilon_K
\begin{pmatrix}
x\\
u
\end{pmatrix} + 2\begin{pmatrix}
p_{K,b}\\
q_{K,b}
\end{pmatrix}^\top \begin{pmatrix}
x\\
u
\end{pmatrix}   - \tr(P_K\Phi_K)  - \sigma^2\cdot \tr(R + P_K B B^\top) \notag\\
& \qquad  - b^\top Rb +2b^\top RK\mu_{K,b}  - \mu_{K,b}^\top (Q+K^\top RK + P_K)\mu_{K,b}    \notag\\
& \qquad + 2f_{K,b}^\top\bigl[ (\overline A\mu+d)-\mu_{K,b}\bigr]  + (\overline A\mu+d)^\top P_K (\overline A\mu+d), 
\# 
where $f_{K,b} = (I-A+BK)^{-\top} [  (A-BK)^\top P_K (Bb+\overline A\mu + d) - K^\top Rb ]$, and $\Upsilon_K$, $p_{K,b}$, and $q_{K,b}$ are defined in \eqref{eq:def_upsilon}. 
\end{proposition}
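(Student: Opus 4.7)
My plan is to verify both formulas by substituting an appropriate ansatz into the average-cost Bellman (Poisson) equation for $V_{K,b}$, and then to derive $Q_{K,b}$ directly from its definition in \eqref{eq:val_funcq}.

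From \eqref{eq:val_funcv}, the differential value function satisfies the Poisson equation $V_{K,b}(x) + J(K,b) = \EE[c(x, u_0) + V_{K,b}(x_1) \given x_0 = x]$, where $u_0 = -Kx + b + \sigma \eta_0$ and $x_1 = (A - BK) x + (Bb + \overline A \mu + d) + B\sigma\eta_0 + \omega_0$, together with the normalization $\EE_{x \sim \mathcal N(\mu_{K,b}, \Phi_K)}[V_{K,b}(x)] = 0$ inherited from the series definition and the stationarity of the induced Markov chain $\{x_t\}$ under $\pi_{K,b}$. I would postulate the ansatz $V_{K,b}(x) = x^\top P x + 2 f^\top x + c_0$ with $P$ symmetric, plug it into the Poisson equation, take the expectation over $(\eta_0, \omega_0)$, and equate coefficients of quadratic, linear, and constant terms. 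The quadratic match in $x$ recovers the Bellman equation \eqref{eq:bellman}, forcing $P = P_K$; the linear match yields the affine system $(I - (A - BK)^\top) f = (A - BK)^\top P_K (Bb + \overline A \mu + d) - K^\top R b$, whose unique solution under $\rho(A - BK) < 1$ is exactly $f = f_{K,b}$ as stated. The normalization then pins down $c_0 = -\tr(P_K \Phi_K) - \mu_{K,b}^\top P_K \mu_{K,b} - 2 f_{K,b}^\top \mu_{K,b}$, which after rearrangement is \eqref{eq:4a}.

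With $V_{K,b}$ in hand, I would compute $Q_{K,b}(x,u)$ directly from \eqref{eq:val_funcq} by substituting $x_1 = Ax + Bu + \overline A \mu + d + \omega_0$, taking the expectation over $\omega_0 \sim \mathcal N(0, \Psi_\omega)$, and expanding $(Ax + Bu + \overline A \mu + d)^\top P_K (Ax + Bu + \overline A \mu + d)$ together with $2 f_{K,b}^\top (Ax + Bu + \overline A \mu + d - \mu_{K,b})$. The quadratic piece in $(x,u)$ collapses to $(x^\top, u^\top) \Upsilon_K (x^\top, u^\top)^\top$ by the definition of $\Upsilon_K$ in \eqref{eq:def_upsilon}, while the linear piece consolidates to $2 (p_{K,b}^\top x + q_{K,b}^\top u)$ upon recognizing $A^\top [P_K (\overline A \mu + d) + f_{K,b}] = p_{K,b}$ and the analogous identity for $q_{K,b}$.

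The main obstacle is bookkeeping the constant term in $Q_{K,b}$. To organize it, I would use the decomposition $J(K,b) = J_1(K) + J_2(K,b) + \sigma^2 \tr(R) + \mu^\top \overline Q \mu$ from \eqref{eq:a1} along with $J_1(K) = \tr(P_K \Psi_\epsilon)$ and the definition of $\Psi_\epsilon$, so that the $\tr(P_K \Psi_\omega)$ generated by $\EE[x_1^\top P_K x_1]$ combines with $-J_1(K)$ into $-\sigma^2 \tr(P_K B B^\top)$, which pairs with $-\sigma^2 \tr(R)$ to produce $-\sigma^2 \tr(R + P_K B B^\top)$. The $\mu^\top \overline Q \mu$ contributions from $c(x,u)$ and $-J(K,b)$ cancel, and the remaining cross terms in $\mu_{K,b}$, $b$, and $f_{K,b}$ assemble into \eqref{eq:4b} after invoking the explicit expression for $J_2(K,b)$ in \eqref{eq:a2} and the identity $\overline A \mu + d - \mu_{K,b} = -(A - BK) \mu_{K,b}$, which follows from the defining equation \eqref{eq:f2q} for $\mu_{K,b}$. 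This matching is mechanical but delicate, as it requires using each of the defining equations for $P_K$, $f_{K,b}$, $\mu_{K,b}$, and $\Phi_K$ in just the right way.
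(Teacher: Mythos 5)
Your proposal is correct and follows essentially the same route as the paper: postulate a quadratic ansatz for $V_{K,b}$, plug it into the average-cost Bellman (Poisson) equation, match quadratic and linear coefficients to recover $P_K$ and $f_{K,b}$, pin the constant via $\EE[V_{K,b}(x)]=0$ under the stationary distribution, and then obtain $Q_{K,b}$ by substituting this $V_{K,b}$ into \eqref{eq:val_funcq} and expanding. One minor slip: the identity you invoke should read $\overline A\mu + d - \mu_{K,b} = -(A-BK)\mu_{K,b} - Bb$ (by \eqref{eq:f2q}), but it is not actually needed, since the term $2f_{K,b}^\top[(\overline A\mu+d)-\mu_{K,b}]$ appears unsimplified in \eqref{eq:4b} and the constant terms match directly once $J(K,b)$ is decomposed as in \eqref{eq:a1}.
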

\begin{proof}
See \S\ref{proof:prop:val_func_form} for a detailed proof. 
\end{proof}

By Proposition \ref{prop:val_func_form}, we know that $V_{K, b}(x)$ is quadratic in $x$, while $Q_{K,b}(x,u)$ is quadratic in $(x^\top, u^\top)^\top$.   Now, we show that \eqref{eq:a1} holds. 

\begin{proposition}\label{prop:cost_form}
The expected total cost $J(K,b)$ defined in Problem \ref{prob:slqr} takes the form of
\$
J(K,b) = J_1(K) + J_2(K,b) + \sigma^2 \cdot \tr(R) + \mu^\top \overline Q \mu,
\$
where
\$
& J_1(K) =\tr\bigl[(Q + K^\top R K)\Phi_K\bigr]  = \tr(P_K \Psi_\epsilon),\notag\\ 
& J_2(K,b) = \begin{pmatrix}
\mu_{K,b}\\
b
\end{pmatrix}^\top
\begin{pmatrix}
Q + K^\top RK & -K^\top R\\
-R K & R
\end{pmatrix}
 \begin{pmatrix}
\mu_{K,b}\\
b
\end{pmatrix}.
\$
Here $\mu_{K,b}$ is defined in \eqref{eq:f2q}, $\Phi_K$ is defined in \eqref{eq:f2p}, and $P_K$ is defined in \eqref{eq:bellman}.
\end{proposition}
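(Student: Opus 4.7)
The plan is to compute the ergodic cost $J(K,b)$ directly as the expectation of the instantaneous cost under the stationary distribution, then rearrange the result into the quadratic form defining $J_2(K,b)$ plus the trace term defining $J_1(K)$, and finally use the Bellman and Lyapunov equations to identify the two equivalent forms of $J_1(K)$.

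First, because (implicitly) $\rho(A-BK)<1$, the closed-loop chain in \eqref{eq:f1} admits the unique stationary distribution $\mathcal N(\mu_{K,b},\Phi_K)$ as stated in \eqref{eq:f2q}--\eqref{eq:f2p}. By ergodicity of the Gaussian linear system, $J(K,b)$ equals $\EE_{x\sim\mathcal N(\mu_{K,b},\Phi_K)}\bigl[x^\top Q x+u^\top R u\bigr]+\mu^\top\overline Q\mu$ where $u=-Kx+b+\sigma\eta$ with $\eta\sim\mathcal N(0,I)$ independent of $x$. I would evaluate each piece with the standard identity $\EE[z^\top M z]=\tr(M\,\mathrm{Cov}(z))+(\EE z)^\top M(\EE z)$: this produces $\tr(Q\Phi_K)+\mu_{K,b}^\top Q\mu_{K,b}$ for the state term, and $\tr(K^\top R K\,\Phi_K)+\sigma^2\tr(R)+(-K\mu_{K,b}+b)^\top R(-K\mu_{K,b}+b)$ for the action term (the $\sigma^2\tr(R)$ coming from the exploration noise covariance).

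Next, I would group the two trace terms into $\tr\bigl[(Q+K^\top RK)\Phi_K\bigr]$, which is the definition of $J_1(K)$, and isolate the remaining non-trace quadratic
\[
\mu_{K,b}^\top Q\mu_{K,b}+(-K\mu_{K,b}+b)^\top R(-K\mu_{K,b}+b).
\]
Expanding the second summand and collecting coefficients of $\mu_{K,b}\mu_{K,b}^\top$, $\mu_{K,b}b^\top$, and $bb^\top$ recovers exactly the block quadratic form
\[
\begin{pmatrix}\mu_{K,b}\\ b\end{pmatrix}^{\!\top}\!\!\begin{pmatrix}Q+K^\top RK & -K^\top R\\ -RK & R\end{pmatrix}\!\!\begin{pmatrix}\mu_{K,b}\\ b\end{pmatrix}=J_2(K,b).
\]
Adding the constant $\mu^\top\overline Q\mu$ carried from the cost, together with the $\sigma^2\tr(R)$ contribution, then gives the claimed decomposition.

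Finally, to verify the alternative identity $J_1(K)=\tr(P_K\Psi_\epsilon)$, I would substitute the Bellman equation \eqref{eq:bellman}, $Q+K^\top RK=P_K-(A-BK)^\top P_K(A-BK)$, into $\tr\bigl[(Q+K^\top RK)\Phi_K\bigr]$ and invoke the Lyapunov equation \eqref{eq:f2p} for $\Phi_K$ together with the cyclic property of the trace; the identity $\tr\bigl[P_K\Phi_K-(A-BK)^\top P_K(A-BK)\Phi_K\bigr]=\tr(P_K\Psi_\epsilon)$ follows immediately. The computation is largely mechanical; the only mildly delicate step is the rearrangement in the preceding paragraph that collapses the non-trace drift contributions into the matrix block form of $J_2(K,b)$, but even that reduces to completing a quadratic once one writes $-K\mu_{K,b}+b$ as an affine function of $(\mu_{K,b},b)$.
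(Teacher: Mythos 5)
Your proposal is correct and follows essentially the same route as the paper's proof: express $J(K,b)$ as the stationary-distribution expectation of the instantaneous cost, apply the Gaussian second-moment identity, regroup the mean terms into the block quadratic form $J_2(K,b)$, and use the Bellman equation \eqref{eq:bellman} together with the Lyapunov equation \eqref{eq:f2p} to identify $\tr[(Q+K^\top RK)\Phi_K]=\tr(P_K\Psi_\epsilon)$. The only cosmetic difference is that you establish the last identity by a single substitution plus cyclicity of the trace, whereas the paper phrases it as iteratively applying the two equations; both are valid under $\rho(A-BK)<1$.
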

\begin{proof}
See \S\ref{proof:prop:cost_form} for a detailed proof. 
\end{proof}

The following proposition establishes the gradients of $J_1(K)$ and $J_2(K,b)$, respectively.

\begin{proposition}\label{prop:pg}
The gradient of $J_1(K)$ and the gradient of $J_2(K,b)$ with respect to $b$ take the forms of
\$
&\nabla_K J_1(K) = 2(\Upsilon_K^{22} K  - \Upsilon_K^{21})\cdot \Phi_K,\qquad \nabla_b J_2(K,b) = 2\bigl[\Upsilon_K^{22}(-K\mu_{K, b} + b) + \Upsilon_{K}^{21}\mu_{K,b} + q_{K,b}\bigr], 
\$
where $\Upsilon_K$ and $q_{K,b}$ are defined in \eqref{eq:def_upsilon}. 
\end{proposition}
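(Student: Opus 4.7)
The plan is to prove the two gradient formulas separately, as they have quite different character: $\nabla_K J_1(K)$ is essentially the classical LQR policy gradient, while $\nabla_b J_2(K,b)$ is a direct (but notation-heavy) finite-dimensional calculation.

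For $\nabla_K J_1(K)$, I would exploit the fact that $J_1(K) = \tr(P_K\Psi_\epsilon)$ is the cost of the driftless LQR problem, so the derivation mirrors the classical argument (e.g.\ Fazel et al.\ 2018; see also \cite{yyy2019aclqr}). Concretely, I would differentiate the Bellman equation $P_K = (Q+K^\top R K) + (A-BK)^\top P_K (A-BK)$ in $K$, producing a Lyapunov-type recursion for $\nabla_K P_K$. Substituting into $\nabla_K J_1(K) = \tr(\nabla_K P_K \cdot \Psi_\epsilon)$ and unfolding the recursion over powers of $(A-BK)$, one recognizes the geometric series for the stationary covariance $\Phi_K$ defined by \eqref{eq:f2p}. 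This collapses the expression to $2[(R+B^\top P_K B)K - B^\top P_K A]\Phi_K$, which is exactly $2(\Upsilon_K^{22}K - \Upsilon_K^{21})\Phi_K$ in the notation of \eqref{eq:def_upsilon}.

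For $\nabla_b J_2(K,b)$, I would proceed by direct differentiation. Expand the quadratic form from \eqref{eq:a2} as
\[
J_2(K,b) = \mu_{K,b}^\top (Q+K^\top R K)\mu_{K,b} - 2\mu_{K,b}^\top K^\top R b + b^\top R b,
\]
and use the fact from \eqref{eq:f2q} that $\mu_{K,b}$ is affine in $b$ with Jacobian $M := (I-A+BK)^{-1}B$. A direct chain-rule computation gives
\[
\tfrac{1}{2}\nabla_b J_2(K,b) = M^\top(Q+K^\top R K)\mu_{K,b} - M^\top K^\top R b - RK\mu_{K,b} + R b.
\]
The last two terms already assemble into $R(b - K\mu_{K,b})$, the contribution of $\Upsilon_K^{22}$ that is \emph{not} through $P_K$. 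The remaining obstacle is to rewrite $M^\top[(Q+K^\top R K)\mu_{K,b} - K^\top R b]$ in the form $B^\top P_K B(b-K\mu_{K,b}) + B^\top P_K A\mu_{K,b} + q_{K,b}$. To do so, I would invoke the Bellman identity \eqref{eq:bellman} to substitute $Q + K^\top R K = P_K - (A-BK)^\top P_K(A-BK)$, then apply the fixed-point relation $(I-A+BK)\mu_{K,b} = Bb + \overline A \mu + d$ (equivalently $(A-BK)\mu_{K,b} = \mu_{K,b} - Bb - \overline A \mu - d$) to replace factors of $(A-BK)\mu_{K,b}$. After this substitution, the definition $f_{K,b} = (I-A+BK)^{-\top}[(A-BK)^\top P_K(Bb+\overline A \mu + d) - K^\top R b]$ emerges naturally, and regrouping the $P_K(\overline A \mu + d)$ and $f_{K,b}$ pieces gives exactly $q_{K,b} = B^\top[P_K(\overline A \mu + d) + f_{K,b}]$.

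The main obstacle is precisely this algebraic bookkeeping in the $\nabla_b$ computation: several $M^\top$-prefixed expressions must be matched with the $B^\top$-prefixed form of $q_{K,b}$, which requires carefully tracking how $(I-A+BK)^{-\top}$ acts after invoking the Bellman and fixed-point identities. Once the cancellations are organized cleanly, the claimed formula for $\nabla_b J_2(K,b)$ drops out. Both computations are self-contained given Propositions \ref{prop:val_func_form} and \ref{prop:cost_form}, and I would cross-check the resulting gradient against the advantage-function representation implied by \eqref{eq:4b} to catch sign or transpose errors.
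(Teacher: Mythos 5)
Your proposal is correct and follows essentially the same route as the paper: the $\nabla_K J_1$ derivation is the standard recursion-unfolding argument (the paper phrases it via $\nabla_K f_K(y)$ with $f_K(y)=y^\top P_K y$ rather than via $\nabla_K P_K$, but the two are the same computation), and your chain-rule plan for $\nabla_b J_2$ — using the Jacobian $(I-A+BK)^{-1}B$, the Bellman identity for $Q+K^\top RK$, and the fixed-point relation $(I-A+BK)\mu_{K,b}=Bb+\overline A\mu+d$ to recover $q_{K,b}$ — is exactly the calculation the paper compresses into "by calculation," and the cancellations you describe do go through.
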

\begin{proof}
See \S\ref{proof:prop:pg} for a detailed proof. 
\end{proof}

The following theorem establishes the convergence of Algorithm \ref{algo:ac_lqr}.   

\begin{theorem}[Convergence of Algorithm \ref{algo:ac_lqr}]\label{thm:ac}
Assume that $\rho(A-BK_0) < 1$. Let $\varepsilon > 0$ be a sufficiently small tolerance.  We set
\$
& \gamma\leq \bigl[\|R\|_2 + \|B\|_2^2\cdot J(K_0, b_0)\cdot \sigma_{\min}^{-1}(\Psi_\epsilon)\bigr]^{-1}, \notag\\
& N \geq C\cdot \|\Phi_{K^*}\|_2\cdot\gamma^{-1} \cdot \log\Bigl\{ 4\bigl[J(K_0,b_0) - J(K^*, b^*)\bigr]\cdot\varepsilon^{-1} \Bigr\}, \notag\\
& T_n\geq \poly \bigl( \|K_n\|_\F, \|b_0\|_2, \|\mu\|_2, J(K_0, b_0) \bigr) \cdot \lambda_{K_n}^{-4} \cdot \bigl[1-\rho(A-BK_n)\bigr]^{-9}\cdot \varepsilon^{-5},\notag\\
& \tilde T_n \geq \poly \bigl( \|K_n\|_\F, \|b_0\|_2, \|\mu\|_2, J(K_0, b_0) \bigr) \cdot \lambda_{K_n}^{-2}\cdot \bigl[1-\rho(A-BK_n)\bigr]^{-12}\cdot \varepsilon^{-12},\notag\\
& \gamma_{n,t} = \gamma_0\cdot t^{-1/2}, \notag\\
& \gamma^b\leq\min\Bigl\{1 - \rho(A-BK_N), \bigl[1-\rho(A-BK_N)\bigr]^{-2} \cdot \bigl(  \|B\|_2^2\cdot \|K_N\|_2^2\cdot \|R\|_2 + \|B\|_2^2\cdot \|Q\|_2 \bigr)\Bigr\}, \notag\\
& H \geq C_0\cdot \nu_{K_N}^{-1}\cdot (\gamma^b)^{-1} \cdot \log\Bigl\{ 4\bigl[J(K_N, b_0) - J(K_N, b^{K_N})\bigr]\cdot\varepsilon^{-1} \Bigr\}, \notag\\
& T^b_{h} \geq \poly\bigl( \|K_N\|_\F, \|b_{h}\|_2, \|\mu\|_2, J(K_N, b_{0}) \bigr)\cdot \lambda_{K_N}^{-4}\cdot \nu_{K_N}^{-4}\cdot \bigl[1-\rho(A-BK_N)\bigr]^{-11}\cdot \varepsilon^{-5},\\
& \tilde T^b_{h} \geq \poly\bigl( \|K_N\|_\F, \|b_{h}\|_2, \|\mu\|_2, J(K_N, b_{0}) \bigr)\cdot \lambda_{K_N}^{-4}\cdot \nu_{K_N}^{-2}\cdot \bigl[1-\rho(A-BK_N)\bigr]^{-17}\cdot \varepsilon^{-8},\\
& \gamma^b_{h,t} = \gamma_0\cdot t^{-1/2},
\$
where $C$, $C_0$, and $\gamma_0$ are positive absolute constants,  $\{K_n\}_{n\in[N]}$ and $\{b_h\}_{h\in[H]}$ are the sequences generated by Algorithm \ref{algo:ac_lqr},   $\lambda_{K_n}$ is specified in Proposition \ref{prop:invert_theta},  and $\nu_{K_N}$ is specified in Proposition \ref{prop:convex_J2}. Then it holds with probability at least $1 - \varepsilon^{10}$ that
\$
& J(K_N, b_H) - J(K^*, b^*) < \varepsilon,  \qquad \|b_H - b^*\|_2\leq M_b(\mu) \cdot \varepsilon^{1/2}, \notag\\
& \|K_N - K^*\|_\F\leq \bigl[{\sigma_{\min}^{-1}(\Psi_\epsilon)\cdot \sigma_{\min}^{-1}(R) \cdot \varepsilon}\bigr]^{1/2}, \qquad \| \hat \mu_{K_N,b_H} - \mu_{K^*, b^*} \|_2 \leq \varepsilon,
\$
where $M_b(\mu)$ is defined in \eqref{eq:def-mb}. 
\end{theorem}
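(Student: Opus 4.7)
The proof decouples into two nearly independent parts mirroring the two loops of Algorithm~\ref{algo:ac_lqr}. The key structural fact is the decomposition $J(K,b) = J_1(K) + J_2(K,b) + \sigma^2\tr(R) + \mu^\top\overline Q\mu$ from Proposition~\ref{prop:cost_form}, combined with Proposition~\ref{prop:J2}, which tells us that $\min_b J_2(K,b)$ does not depend on $K$. Hence $K^* = \arg\min_K J_1(K)$ and $b^* = b^{K^*}$, so it is legitimate to first drive $K_n \to K^*$ using the natural-gradient loop and then, with $K_N$ held fixed, drive $b_h \to b^{K_N}$ using the gradient loop. I would allocate a budget of $\varepsilon/4$ to each of the four sources of error (NPG on $J_1$, critic error in the $K$-loop, GD on $J_2(K_N,\cdot)$, critic error in the $b$-loop) and take a union bound at the end.

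\textbf{Analysis of the $K$-loop.} Using Proposition~\ref{prop:pg}, the NPG direction $\Upsilon_K^{22}K - \Upsilon_K^{21}$ satisfies $\nabla_K J_1(K) = 2(\Upsilon_K^{22}K - \Upsilon_K^{21}) \Phi_K$, so an exact NPG step is policy iteration preconditioned by $\Phi_K$. I would first prove an exact one-step contraction of the form
\[
J_1(K_{n+1}^{\rm exact}) - J_1(K^*) \leq \bigl(1 - c\cdot \gamma/\|\Phi_{K^*}\|_2\bigr)\cdot \bigl(J_1(K_n) - J_1(K^*)\bigr),
\]
using the LQR gradient-domination and smoothness properties adapted from the drift-free analysis of Fazel~et~al.\ and \cite{yyy2019aclqr}, with $c$ depending on $\sigma_{\min}(\Psi_\epsilon)\sigma_{\min}(R)$. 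Next, I would invoke the policy-evaluation guarantee for Algorithm~\ref{algo:pg_eval} to bound $\|\hat\Upsilon_{K_n} - \Upsilon_{K_n}\|_\F$ with high probability by a quantity polynomial in the stated parameters and shrinking with $T_n,\tilde T_n$; substituting this into the exact contraction through a standard inexact-descent argument yields linear convergence of $J_1(K_n) - J_1(K^*)$ after $N = \widetilde\Omega(\|\Phi_{K^*}\|_2/\gamma)$ iterations. A separate lemma is needed to ensure that the NPG iterates preserve stability $\rho(A-BK_n) < 1$ (a standard monotonicity argument for small enough $\gamma$); this is essential because all the critic bounds scale with $[1-\rho(A-BK_n)]^{-1}$.

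\textbf{Analysis of the $b$-loop.} Once $K_N$ is close to $K^*$, the function $b \mapsto J_2(K_N,b)$ is $\nu_{K_N}$-strongly convex with Lipschitz gradient $\iota_{K_N}$ by Proposition~\ref{prop:convex_J2}, so exact gradient descent with stepsize $\gamma^b \leq 1/\iota_{K_N}$ contracts at rate $1 - \gamma^b\nu_{K_N}$. The estimated gradient aggregates three noisy objects $\hat\Upsilon_{K_N}$, $\hat\mu_{K_N,b_h}$, $\hat q_{K_N,b_h}$; I would bound each via Algorithm~\ref{algo:pg_eval}'s high-probability error bound, sum the perturbations, and feed them into the standard inexact gradient-descent analysis for strongly convex objectives, taking $H$ logarithmic in $1/\varepsilon$ and $T^b_h, \tilde T^b_h$ polynomial as stated to push $J_2(K_N,b_H) - J_2(K_N, b^{K_N}) < \varepsilon/2$. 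The cost error bound $J(K_N,b_H) - J(K^*,b^*) < \varepsilon$ then follows by combining the two loop bounds. Converting cost errors into parameter errors uses the quadratic lower bounds $J_1(K) - J_1(K^*) \geq \sigma_{\min}(\Psi_\epsilon)\sigma_{\min}(R)\|K - K^*\|_\F^2$ and $J_2(K_N,b) - J_2(K_N,b^{K_N}) \geq \nu_{K_N}/2 \cdot \|b - b^{K_N}\|_2^2$, together with the Lipschitz dependence of $b^{K}$ on $K$ extracted from the closed form in Proposition~\ref{prop:J2}; this yields $\|b_H - b^*\|_2 \leq M_b(\mu)\cdot \varepsilon^{1/2}$. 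Finally, $\|\hat\mu_{K_N,b_H} - \mu_{K^*,b^*}\|_2 \leq \varepsilon$ follows by one more application of the policy-evaluation guarantee combined with the Lipschitz dependence of $\mu_{K,b}$ on $(K,b)$ encoded in \eqref{eq:f2q}.

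\textbf{Main obstacle.} The principal difficulty lies in the critic step rather than the actor step: the stationary state distribution is Gaussian, so the quadratic feature map $(x,u) \mapsto \svec\bigl((x;u)(x;u)^\top\bigr)$ is unbounded, and the standard finite-sample analyses of primal-dual gradient TD cited in the related-work section do not directly apply. Handling this requires the truncation argument mentioned in the introduction, coupled with uniform control of the mixing time $[1-\rho(A-BK_n)]^{-1}$ of the induced Markov chain and a lower bound $\lambda_{K_n}$ on the relevant Gram matrix (Proposition~\ref{prop:invert_theta}); these two quantities are precisely what drive the large negative exponents in the prescribed $T_n, \tilde T_n, T^b_h, \tilde T^b_h$. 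A subtle secondary issue is that the critic error in the $K$-loop perturbs not only the NPG direction but also the implicit invariant $\rho(A-BK_n) < 1$, so the stability lemma and the critic bound must be proved by a joint induction over $n$ rather than sequentially.
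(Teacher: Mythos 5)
Your plan follows essentially the same route as the paper's proof: the decomposition $J=J_1+J_2+\mathrm{const}$ with Proposition \ref{prop:J2} decoupling the two loops, an exact one-step contraction for the natural-gradient update on $J_1$ via gradient dominance combined with a perturbation bound driven by the critic error from Theorem \ref{thm:pe} (handled by induction to preserve $J_1(K_n)\le J_1(K_0)$ and hence stability), an inexact strongly-convex gradient-descent analysis for $J_2(K_N,\cdot)$, conversion of cost gaps to parameter gaps via the quadratic lower bounds and the Lipschitz dependence of $b^K$ on $K$, and a final union bound. This matches the paper's argument in both structure and the key lemmas, so no substantive differences to report.
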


\begin{proof}
See \S\ref{proof:thm:ac} for a detailed proof. 
\end{proof}

By Theorem \ref{thm:ac}, given any mean-field state $\mu$, Algorithm \ref{algo:ac_lqr} converges linearly to the optimal policy $\pi^*_\mu$ of Problem \ref{prob:slqr}.

\subsection{Primal-Dual Policy Evaluation Algorithm}\label{sec:pe_algo}

Note that the critic update steps in Algorithm \ref{algo:ac_lqr}  are built upon the estimators of the matrix $\Upsilon_K$ and the vector $q_{K,b}$.  We now derive a policy evaluation algorithm to establish the estimators of $\Upsilon_K$ and $q_{K,b}$, which is based on gradient temporal difference algorithm \citep{sutton2009fast}. 

We define the feature vector as 
\#\label{eq:def_feature}
\psi (x,u) = \begin{pmatrix}
\varphi(x,u)\\
x - \mu_{K,b}\\
u -  (-K\mu_{K,b} + b)\\
\end{pmatrix},
\#
where
\$
\varphi(x,u) = \svec\Biggl[
\begin{pmatrix}
x - \mu_{K,b}\\
u -  (-K\mu_{K,b} + b)
\end{pmatrix}
\begin{pmatrix}
x - \mu_{K,b}\\
u -  (-K\mu_{K,b} + b)
\end{pmatrix}^\top\Biggr]. 
\$
Recall $\svec(M)$ gives the symmetric vectorization of the symmetric matrix $M$. 
We also define
\#\label{eq:q1}
\alpha_{K,b} = \begin{pmatrix}
\svec(\Upsilon_K )\\
\Upsilon_K 
\begin{pmatrix}
\mu_{K,b}\\
-K\mu_{K,b} + b
\end{pmatrix} + \begin{pmatrix}
p_{K,b}\\
q_{K,b}
\end{pmatrix}
\end{pmatrix},
\#
where $\Upsilon_K$, $p_{K,b}$, and $q_{K,b}$ are defined in \eqref{eq:def_upsilon}.  To estimate $\Upsilon_K$ and $q_{K,b}$, it suffices to estimate  $\alpha_{K,b}$. 
Meanwhile, we define 
\#\label{eq:q2}
\Theta_{K,b} = \EE_{\pi_{K,b}}\Bigl\{\psi(x,u)\bigl[ \psi(x,u) - \psi(x',u') \bigr] ^\top   \Bigr\},
\#
where $(x',u')$ is the state-action pair after $(x,u)$ following the policy $\pi_{K,b}$ and the state transition. 
The following proposition characterizes the connection between $\Theta_{K,b}$ and $\alpha_{K,b}$.

\begin{proposition}\label{prop:bellman_compact}
It holds that
\$ 
\begin{pmatrix}
1 & 0\\
\EE_{\pi_{K,b}}\bigl[ \psi(x,u) \bigr] & \Theta_{K,b}
\end{pmatrix}
\begin{pmatrix}
J(K,b)\\
\alpha_{K,b}
\end{pmatrix} = \begin{pmatrix}
J(K,b)\\
\EE_{\pi_{K,b}}\bigl[ c(x,u) \psi(x,u) \bigr]
\end{pmatrix},
\$
where  $\psi(x,u)$ is defined in \eqref{eq:def_feature},  $\alpha_{K,b}$ is defined in \eqref{eq:q1}, and  $\Theta_{K,b}$ is defined in \eqref{eq:q2}. 
\end{proposition}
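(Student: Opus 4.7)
The plan is to derive the matrix identity from the Bellman equation for the action-value function $Q_{K,b}$, by recognizing that $\psi(x,u)^\top \alpha_{K,b}$ reproduces $Q_{K,b}(x,u)$ up to an additive constant that drops out in any temporal-difference difference, and then taking expectations against $\psi(x,u)$ under the stationary distribution of $\pi_{K,b}$.

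First, I would start from the explicit quadratic form of $Q_{K,b}(x,u)$ given in Proposition \ref{prop:val_func_form}. Expanding around the stationary conditional mean $m_{K,b} = (\mu_{K,b}^\top,\, (-K\mu_{K,b}+b)^\top)^\top$ and writing $\tilde y = (x-\mu_{K,b};\, u-(-K\mu_{K,b}+b))$, the identity $\tilde y^\top \Upsilon_K \tilde y = \svec(\Upsilon_K)^\top \varphi(x,u)$ (which follows from the svec convention $\svec(A)^\top\svec(B)=\tr(AB)$ for symmetric matrices) lets me collect the quadratic term into $\svec(\Upsilon_K)^\top\varphi(x,u)$, while combining the cross terms from $(x;u)^\top \Upsilon_K (x;u)$ with the affine term $2(p_{K,b};q_{K,b})^\top(x;u)$ groups all linear dependence on $\tilde y$ under the coefficient $\Upsilon_K m_{K,b} + (p_{K,b}; q_{K,b})$, which is precisely the lower block of $\alpha_{K,b}$ in \eqref{eq:q1}. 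The residual is a function of $K,b,\mu$ alone, so one obtains $Q_{K,b}(x,u) = \psi(x,u)^\top \alpha_{K,b} + C_{K,b}$ for some constant $C_{K,b}$.

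Second, I would invoke the Bellman equation, which follows by combining \eqref{eq:val_funcq} with $V_{K,b}(x') = \EE_{u'\sim\pi_{K,b}(\cdot\sep x')}[Q_{K,b}(x',u')]$, namely $c(x,u) - J(K,b) = Q_{K,b}(x,u) - \EE[Q_{K,b}(x',u')\sep x,u]$. Multiplying by $\psi(x,u)$ and taking expectation under the stationary distribution, the tower property yields $\EE[(c-J)\psi] = \EE[\psi Q_{K,b}(x,u)] - \EE[\psi Q_{K,b}(x',u')]$. Substituting $Q_{K,b} = \psi^\top \alpha_{K,b} + C_{K,b}$ and noting that the constants cancel in the difference, this reduces to $\EE[(c-J)\psi] = \EE[\psi(\psi-\psi')^\top]\alpha_{K,b} = \Theta_{K,b}\alpha_{K,b}$, which is exactly the bottom block of the asserted matrix equation. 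The top block $1\cdot J(K,b) + 0\cdot \alpha_{K,b} = J(K,b)$ is tautological, so assembling the two blocks yields the claim.

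The main obstacle is step one: the algebraic bookkeeping that verifies $Q_{K,b}(x,u) - \psi(x,u)^\top\alpha_{K,b}$ is indeed constant in $(x,u)$. One has to carefully track the contributions of the drift-induced quantities $p_{K,b}, q_{K,b}$ and the cross terms produced by recentering $(x;u)$ around $m_{K,b}$, and confirm that the svec identity correctly repackages the quadratic term into the inner product with $\svec(\Upsilon_K)$. Once this matching is established, the rest is a clean application of the Bellman equation together with stationarity.
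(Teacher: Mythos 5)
Your argument is correct and is essentially the paper's own proof: the paper likewise reads off from Proposition \ref{prop:val_func_form} that $Q_{K,b}(x,u) = \psi(x,u)^\top \alpha_{K,b} + \beta_{K,b}$ with $\beta_{K,b}$ a constant, substitutes this into the Bellman identity \eqref{eq:val_funcq} so that the constant cancels, and then left-multiplies by $\psi(x,u)$ and takes the stationary expectation to obtain the displayed linear system via the definition \eqref{eq:q2} of $\Theta_{K,b}$. The only difference is that you spell out the recentering/$\svec$ bookkeeping behind the linear form in the features, which the paper simply asserts as a consequence of Proposition \ref{prop:val_func_form}.
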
 
\begin{proof}
See \S\ref{proof:prop:bellman_compact} for a detailed proof. 
\end{proof}

By Proposition \ref{prop:bellman_compact}, to obtain $\alpha_{K,b}$, it suffices to solve the following linear system in $\zeta = (\zeta_1, \zeta_2^\top)^\top$,
\#\label{eq:linear_sys}
\tilde \Theta_{K,b} \cdot  \zeta
= \begin{pmatrix}
J(K,b)\\
\EE_{\pi_{K,b}}\bigl[ c(x,u) \psi(x,u) \bigr]
\end{pmatrix},
\#
where for notational convenience, we define
\#\label{eq:def-tilde-theta}
\tilde \Theta_{K,b} = \begin{pmatrix}
1 & 0\\
\EE_{\pi_{K,b}}\bigl[ \psi(x,u) \bigr] & \Theta_{K,b}
\end{pmatrix}. 
\# 
    The following proposition shows that  $\Theta_{K,b}$ is invertible. 

\begin{proposition}\label{prop:invert_theta}
If $\rho(A-BK) < 1$, then the matrix $\Theta_{K,b}$ is invertible, and $\|\Theta_{K,b}\|_2\leq 4 ( 1 + \|K\|_\F^2 )^2\cdot \|\Phi_K\|_2^2$.  Also,  $\sigma_{\min}(\tilde \Theta_{K,b})\geq \lambda_K$, where $\lambda_K$ only depends on $\|K\|_2$ and $\rho(A-BK)$. 
\end{proposition}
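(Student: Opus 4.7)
The argument splits into two parts: (i) the operator-norm upper bound on $\Theta_{K,b}$, which is a direct moment computation; and (ii) the invertibility of $\tilde\Theta_{K,b}$ together with the singular-value lower bound, which is the substantive piece.

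For (i), the plan is to use Cauchy--Schwarz and stationarity to write
\[\|\Theta_{K,b}\|_2 \leq \EE_{\pi_{K,b}}\bigl[\|\psi(x,u)\|_2\cdot\|\psi(x,u)-\psi(x',u')\|_2\bigr] \leq 2\,\EE_{\pi_{K,b}}\|\psi(x,u)\|_2^2.\]
Setting $\tilde z = (\tilde x^\top,\tilde u^\top)^\top$ with $\tilde x = x - \mu_{K,b}$ and $\tilde u = u + K\mu_{K,b} - b$, which is a zero-mean Gaussian at stationarity, one has $\|\varphi(x,u)\|_2^2 = \|\tilde z \tilde z^\top\|_\F^2 = \|\tilde z\|_2^4$ together with $\EE\|\tilde x\|_2^2 + \EE\|\tilde u\|_2^2 \leq C(1+\|K\|_\F^2)\|\Phi_K\|_2$. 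A standard fourth-moment bound for Gaussian vectors yields $\EE\|\tilde z\|_2^4 \leq C(1+\|K\|_\F^2)^2\|\Phi_K\|_2^2$, and collecting terms recovers the stated constant.

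For (ii), the key algebraic identity to establish first is
\[\Theta_{K,b} \;=\; \Sigma_\psi\,(I - \mathcal P^\top),\]
where $\Sigma_\psi = \mathrm{Cov}_{\pi_{K,b}}(\psi(x,u))$ and $\mathcal P$ is the linear operator defined by $\EE[\psi(x',u')\mid x,u] = \mathcal P\,\psi(x,u) + c$ for some constant vector $c$. Existence of such a $\mathcal P$ follows from combining the state transition with the centering identity $(I - A + BK)\mu_{K,b} = Bb + \overline A\mu + d$ read off from \eqref{eq:f2q}, which gives
\[\EE[\tilde x'\mid x,u] = A\tilde x + B\tilde u,\qquad \EE[\tilde u'\mid x,u] = -K(A\tilde x + B\tilde u),\]
and $\EE[\varphi(x',u')\mid x,u] = (F\otimes_s F)\,\varphi(x,u) + \svec(\EE[\delta\delta^\top])$, with $F = \begin{pmatrix} A & B\\ -KA & -KB\end{pmatrix}$ and $\delta$ a zero-mean noise increment. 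Moreover, on the centered feature $\bar\psi = \psi - \EE\psi$, the operator $\mathcal P$ is block diagonal with respect to the decomposition into quadratic features $\bar\varphi$ and linear features $(\tilde x,\tilde u)$, because the odd Gaussian moments of $\tilde z$ kill the cross terms. This reduces the proof to: (a) the non-zero eigenvalues of $F$ coincide with those of $A-BK$ (by the rank-$m$ factorization of $F$), so $\rho(A-BK)<1$ gives explicit lower bounds on $\sigma_{\min}(I-F)$ and $\sigma_{\min}(I - F\otimes_s F)$ depending only on $\|K\|_2$ and $\rho(A-BK)$, via a discrete Lyapunov-equation argument analogous to that defining $\Phi_K$ in \eqref{eq:f2p}; and (b) the same odd-moment vanishing makes $\Sigma_\psi$ itself block diagonal, with quadratic block proportional to $\Sigma_z \otimes_s \Sigma_z$ and linear block $\Sigma_z = \mathrm{Cov}_{\pi_{K,b}}(\tilde z)$, both strictly positive definite thanks to $\Phi_K \succeq \Psi_\epsilon \succ 0$ and the exploration noise $\sigma\eta$. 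Combining (a) and (b) via $\sigma_{\min}(AB) \geq \sigma_{\min}(A)\,\sigma_{\min}(B)$ yields the required strictly positive lower bound on $\sigma_{\min}(\Theta_{K,b})$ depending only on $\|K\|_2$ and $\rho(A-BK)$.

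To pass from $\Theta_{K,b}$ to $\tilde\Theta_{K,b}$, I would use the factorization
\[\tilde\Theta_{K,b} = \begin{pmatrix}1 & 0\\ \EE[\psi] & I\end{pmatrix}\begin{pmatrix}1 & 0\\ 0 & \Theta_{K,b}\end{pmatrix},\]
which is a product of two invertible block-triangular matrices; a direct norm computation gives $\sigma_{\min}(\tilde\Theta_{K,b}) \geq \min(1,\sigma_{\min}(\Theta_{K,b}))/(1+\|\EE[\psi]\|_2)$ with $\|\EE[\psi]\|_2 = \|\svec(\Sigma_z)\|_2 = \|\Sigma_z\|_\F$, completing the lower bound. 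The main technical obstacle I anticipate is step (a): extracting a clean quantitative dependence of $\sigma_{\min}(I - F\otimes_s F)$ on $\rho(A-BK)$ and $\|K\|_2$ requires working in the symmetric Kronecker geometry and a Lyapunov-type estimate; once the affine-on-Gaussian-features structure of $\mathcal P$ has been recognized, the rest reduces to algebraic manipulation.
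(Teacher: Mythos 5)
Your core argument is correct and, after unwinding, lands on exactly the paper's structural formula, but it gets there by a different (and arguably cleaner) route. The paper proves Lemma \ref{lemma:theta_form} by evaluating the quadratic form $\svec(M)^\top\Theta_{K,b}\svec(N)$ directly with the Gaussian fourth-moment identity (Lemma \ref{lemma:magnus}), obtaining $\Theta_{K,b}=\mathrm{diag}\bigl(2(\Sigma_z\otimes_s\Sigma_z)(I-L\otimes_s L)^\top,\ \Sigma_z(I-L)^\top\bigr)$; your factorization $\Theta_{K,b}=\Sigma_\psi(I-\mathcal P^\top)$ with $\mathcal P=\mathrm{diag}(L\otimes_s L,\,L)$ and $\Sigma_\psi=\mathrm{diag}(2\,\Sigma_z\otimes_s\Sigma_z,\,\Sigma_z)$ gives the same matrix, provided you note that at stationarity the intercept satisfies $c=(I-\mathcal P)\EE[\psi]$, which is what lets you replace $\EE[\psi\psi^\top]$ by the covariance; make that step explicit. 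Two small corrections: block-diagonality of $\mathcal P$ is purely structural (the conditional mean of the quadratic features depends only on the quadratic features), so no odd-moment argument is needed there --- vanishing third moments are only needed for $\Sigma_\psi$; and for $\tilde\Theta_{K,b}$ your block-triangular factorization is fine and yields $\sigma_{\min}(\tilde\Theta_{K,b})\geq \min\{1,\sigma_{\min}(\Theta_{K,b})\}/(1+\|\Sigma_z\|_\F)$, whereas the paper instead inverts $\tilde\Theta_{K,b}$ explicitly and exploits that $\Theta_{K,b}^{-1}\tilde\sigma_z=\tfrac12(I-L\otimes_s L)^{-\top}\svec(\Sigma_z^{-1})$, so its bound runs through $\sigma_{\min}(\Sigma_z)$ rather than through an upper bound on $\|\Phi_K\|_2$; both work. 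Your quantitative worry in step (a) (that $\sigma_{\min}(I-L)$ and $\sigma_{\min}(I-L\otimes_s L)$ are not simply $1-\rho$ and $1-\rho^2$ because $L$ is not normal, so a Gelfand/Lyapunov-type constant is needed) is legitimate, but the paper's own Lemma \ref{lemma:upper_bound_tilde_theta} glosses over the same point, so you are not losing anything relative to the published argument.

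The one step that would actually fail as written is part (i). Bounding $\|\Theta_{K,b}\|_2$ by $2\,\EE_{\pi_{K,b}}\|\psi(x,u)\|_2^2$ forces you through the fourth moment $\EE\|\tilde z\|_2^4=(\tr\Sigma_z)^2+2\tr(\Sigma_z^2)$, which scales with $(m+k)^2\|\Sigma_z\|_2^2$; this cannot reproduce the dimension-free constant $4(1+\|K\|_\F^2)^2\|\Phi_K\|_2^2$ claimed in the proposition, so ``collecting terms recovers the stated constant'' is not accurate. The fix is immediate and is what the paper does: once you have the block formula for $\Theta_{K,b}$, read off $\|\Theta_{K,b}\|_2\leq 2\max\{\|\Sigma_z\|_2^2(1+\|L\|_2^2),\ \|\Sigma_z\|_2(1+\|L\|_2)\}$ and then bound $\|\Sigma_z\|_2$ and $\|L\|_2$ in terms of $\|K\|_\F$ and $\|\Phi_K\|_2$ using \eqref{eq:sigma_z_factor}; the Cauchy--Schwarz detour is both unnecessary and too lossy.
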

\begin{proof}
See \S\ref{proof:prop:invert_theta} for a detailed proof. 
\end{proof}

By Proposition \ref{prop:invert_theta}, $\Theta_{K,b}$ is invertible.  Therefore,  \eqref{eq:linear_sys} admits the unique solution $\zeta_{K,b} = (J(K,b), \alpha_{K,b}^\top)^\top$.

Now, we present the primal-dual gradient temporal difference algorithm.

\vskip5pt
\noindent\textbf{Primal-Dual Gradient Method.}  Instead of solving \eqref{eq:linear_sys} directly, we minimize the following loss function with respect to $\zeta = ((\zeta^1)^\top, (\zeta^2)^\top)$,
\#\label{eq:esam1}
 \bigl[\zeta^1 - J(K,b)\bigr]^2 + \Bigl\| \EE_{\pi_{K,b}}\bigl[\psi(x,u)\bigr]  \zeta^1+ \Theta_{K,b} \zeta^2 -\EE_{\pi_{K,b}}\bigl[ c(x,u) \psi(x,u) \bigr] \Bigr\|_2^2. 
\#
By Fenchel's duality, the minimization of \eqref{eq:esam1} is equivalent to the following primal-dual min-max problem,
\#\label{eq:minmax_pe}
\min_{\zeta\in\cV_\zeta}\max_{\xi\in\cV_\xi} F(\zeta, \xi)& = \Bigl\{  \EE_{\pi_{K,b}}\bigl[\psi(x,u)\bigr] \zeta^1+ \Theta_{K,b} \zeta^2 - \EE_{\pi_{K,b}}\bigl[ c(x,u) \psi(x,u) \bigr]\Bigr\}^\top \xi^2\\
&\qquad  + \bigl[\zeta^1 - J(K,b)\bigr] \cdot \xi^1 -  \|\xi\|_2^2/2,\notag
\#
where we restrict the primal variable $\zeta$ in a compact set $\cV_\zeta$ and the dual variable $\xi$ in a compact set $\cV_\xi$, which are specified in Definition  \ref{assum:proj}.  It holds that
\#\label{eq:grad-forms-pe}
& \nabla_{\zeta^1} F = \xi^1 + \EE_{\pi_{K,b}}\bigl[ \psi(x,u) \bigr]^\top \xi^2, \qquad  \nabla_{\zeta^2} F = \Theta_{K,b}^\top \xi^2, \qquad  \nabla_{\xi^1} F = \zeta^1 - J(K,b)-\xi^1, \quad \notag\\
& \nabla_{\xi^2} F  = \EE_{\pi_{K,b}}\bigl[\psi(x,u)\bigr] \zeta^1+ \Theta_{K,b} \zeta^2  - \EE_{\pi_{K,b}}\bigl[ c(x,u) \psi(x,u) \bigr] - \xi^2.
\#
The primal-dual gradient method updates $\zeta$ and $\xi$ via
\#\label{eq:pe-updates}
& \zeta^1 \gets \zeta^1 - \gamma \cdot \nabla_{\zeta^1} F(\zeta, \xi), \qquad \zeta^2 \gets \zeta^2 - \gamma \cdot \nabla_{\zeta^2} F(\zeta, \xi)\notag \\
& \xi^1 \gets \xi^1 - \gamma \cdot \nabla_{\xi^1} F(\zeta, \xi), \qquad \xi^2 \gets \xi^2 - \gamma \cdot \nabla_{\xi^2} F(\zeta, \xi). 
\#

\vskip5pt
\noindent\textbf{Estimation of Mean-Field State $\mu_{K,b}$.}  To utilize the primal-dual gradient method in \eqref{eq:pe-updates}, it remains to evaluate the feature vector $\psi(x,u)$.  Note that by \eqref{eq:def_feature}, the evaluation of the feature vector $\psi(x,u)$ requires the mean-field state $\mu_{K,b}$. In what follows, we establish the estimator $\hat \mu_{K,b}$ of the mean-field state $\mu_{K,b}$ by simulating the MDP following the policy $\pi_{K,b}$ for $\tilde T$ steps, and calculate the estimated feature vector $\hat \psi(x,u)$ by 
\#\label{eq:est_def_feature}
\hat \psi (x,u) = \begin{pmatrix}
\hat \varphi(x,u)\\
x - \hat \mu_{K,b}\\
u -  (-K\hat \mu_{K,b} + b)\\
\end{pmatrix},
\# 
where $\hat \varphi(x,u)$ takes the form of
\$
\hat \varphi(x,u) = \svec\Biggl[
\begin{pmatrix}
x - \hat \mu_{K,b}\\
u -  (-K\hat \mu_{K,b} + b)
\end{pmatrix}
\begin{pmatrix}
x - \hat \mu_{K,b}\\
u -  (-K\hat \mu_{K,b} + b)
\end{pmatrix}^\top\Biggr]. 
\$

\vskip5pt
We now define the sets $\cV_\zeta$ and $\cV_\xi$ in \eqref{eq:minmax_pe}. 

\begin{definition}\label{assum:proj}
Given $K_0$ and $b_0$ such that $\rho(A-BK_0) < 1$ and $J(K_0, b_0) < \infty$, we define the sets $\cV_\zeta$ and $\cV_\xi$ as
\$
& \cV_\zeta = \Bigl\{ \zeta\colon 0\leq \zeta^1\leq J(K_0, b_0), \|\zeta^2\|_2 \leq M_{\zeta,1} + M_{\zeta,2}\cdot(1+ \|K\|_\F)\cdot \bigl[ 1-\rho(A-BK) \bigr]^{-1}  \Bigr\}, \\
& \cV_\xi = \Bigl\{ \xi\colon |\xi^1|\leq J(K_0, b_0), \|\xi^2\|_2\leq M_\xi\cdot \bigl(1+ \|K\|_\F^2 \bigr)^3 \cdot \bigl[ 1-\rho(A-BK) \bigr]^{-1} \Bigr\}. 
\$
Here $M_{\zeta,1}$, $M_{\zeta,2}$, and $M_\xi$ are constants independent of $K$ and $b$, which take the forms of
\$
&M_{\zeta,1} = \Bigl[ \bigl(\|Q\|_\F + \|R\|_\F\bigr) +  \bigl( \|A\|_\F^2 + \|B\|_\F^2 \bigr) \cdot \sqrt{d}\cdot  J(K_0, b_0)\cdot\sigma_{\min}^{-1}(\Psi_\omega)  \Bigr]\\
&\qquad\qquad + \bigl( \|A\|_2 + \|B\|_2 \bigr) \cdot  J(K_0, b_0)^2\cdot \sigma_{\min}^{-1}(\Psi_\omega)\cdot \sigma_{\min}^{-1}(Q), \notag\\
& \qquad\qquad + \Bigl[ \bigl(\|Q\|_2 + \|R\|_2\bigr) +  \bigl( \|A\|_2 + \|B\|_2 \bigr)^2 \cdot  J(K_0, b_0)\cdot\sigma_{\min}^{-1}(\Psi_\omega)  \Bigr] \notag\\
&\qquad \qquad \cdot J(K_0, b_0)\cdot \bigl[ \sigma_{\min}^{-1}(Q) + \sigma_{\min}^{-1}(R) \bigr] \notag\\
& M_{\zeta,2}  = \bigl( \|A\|_2 + \|B\|_2 \bigr) \cdot  (\kappa_Q + \kappa_R), \qquad M_\xi  = C\cdot (M_{\zeta,1} + M_{\zeta,2}) \cdot J(K_0, b_0)^2\cdot\sigma_{\min}^{-2}(Q),
\$
where $C$ is a positive absolute constant, and $\kappa_Q$ and $\kappa_R$ are condition numbers of  $Q$ and $R$, respectively. 
\end{definition}

We summarize the primal-dual gradient temporal difference algorithm in Algorithm \ref{algo:pg_eval}. Hereafter, for notational convenience, we denote by $\hat\psi_t$ the estimated feature vector $\hat\psi(x_t, u_t)$.

\begin{algorithm}[htpb]
    \caption{Primal-Dual Gradient Temporal Difference Algorithm.}\label{algo:pg_eval}
    \begin{algorithmic}[1]
    \STATE{{\textbf{Input:}} Policy $\pi_{K,b}$, mean-field state $\mu$, numbers of iteration $\tilde T$ and $T$, stepsizes $\{\gamma_t\}_{t\in[T]}$, parameters $K_0$ and $b_0$. }
    \STATE{Define the sets $\cV_\zeta$ and $\cV_\xi$ via Definition \ref{assum:proj} with $K_0$ and $b_0$.  }
    \STATE{Initialize the parameters by $\zeta_0\in \cV_\zeta$ and $\xi_0\in \cV_\xi$. }
    \STATE{Sample $\tilde x_0$ from the the stationary distribution $\mathcal N(\mu_{K,b}, \Phi_K)$.}
    \FOR{ $t =  0, \ldots, \tilde T-1$}
        \STATE{Given the mean-field state $\mu$, take action $\tilde u_{t}$ following $\pi_{K,b}$ and generate the next state $\tilde x_{t+1}$. \label{line:tilde-xu}}
    \ENDFOR
    \STATE{Set $\hat\mu_{K,b} \gets 1/\tilde T\cdot \sum_{t = 1}^{\tilde T} \tilde x_t$ and compute the estimated feature vector $\hat\psi$ via \eqref{eq:est_def_feature}. }
    \STATE{Sample $x_0$ from the the stationary distribution $\mathcal N(\mu_{K,b}, \Phi_K)$.}
    \FOR{ $t =  0, \ldots, T-1$}
        \STATE{Given the mean-field state $\mu$, take action $u_{t}$ following $\pi_{K,b}$, observe the cost $c_{t}$, and generate the next state $x_{t+1}$.}
        \STATE{Set $\delta_{t+1} \gets \zeta_{t}^1  + (\hat \psi_{t} - \hat\psi_{t+1})^\top \zeta_{t}^2   - c_{t}.$}
        \STATE{Update parameters via
        \$
        & \zeta_{t+1}^1 \gets \zeta_{t}^1 - \gamma_{t+1}\cdot (\xi_{t}^1 + \hat \psi_{t}^\top \xi_{t}^2), \qquad &&\zeta_{t+1}^2 \gets \zeta_{t}^2 - \gamma_{t+1}\cdot \hat \psi_{t}(\hat\psi_{t} - \hat\psi_{t+1})^\top \xi_{t}^2, \notag  \\
        & \xi_{t+1}^1 \gets (1-\gamma_{t+1})\cdot \xi_{t}^1 + \gamma_{t+1} \cdot (\zeta_{t}^1 -  c_{t}), \qquad &&\xi_{t+1}^2 \gets (1-\gamma_{t+1})\cdot \xi_{t}^2 + \gamma_{t+1} \cdot\delta_{t+1}\cdot \hat\psi_{t}. 
        \$\vskip-10pt}
        \STATE{Project $\zeta_{t+1}$ and $\xi_{t+1}$ to  $\cV_\zeta$ and $\cV_\xi$, respectively. }
    \ENDFOR
    \STATE{Set $\hat\alpha_{K, b} \gets (\sum_{t=1}^T \gamma_t)^{-1}\cdot(\sum_{t=1}^T \gamma_t\cdot \zeta_t^2)$, and
    \$
    \hat\Upsilon_K \gets \smat(\hat\alpha_{K, b, 1}), \qquad\begin{pmatrix}\hat p_{K, b}\\ \hat q_{K, b}\end{pmatrix} \gets \hat\alpha_{K, b, 2} - \hat\Upsilon_{K}\begin{pmatrix}
    \hat\mu_{K,b}\\
    -K \hat\mu_{K,b} + b
    \end{pmatrix},
    \$\vskip-10pt
    where $\hat\alpha_{K, b, 1} =  (\hat\alpha_{K, b})_{1}^{(k+d+1)(k+d)/2}$ and $\hat\alpha_{K, b, 2} =  (\hat\alpha_{K, b})_{(k+d+1)(k+d)/2+1}^{(k+d+3)(k+d)/2}$. }
    \STATE{\textbf{Output: }  Estimators  $\hat\mu_{K,b}$,  $\hat\Upsilon_K$, and $\hat q_{K, b}$. }
    \end{algorithmic}
\end{algorithm}

We now characterize the rate of convergence of Algorithm \ref{algo:pg_eval}.

\begin{theorem}[Convergence of Algorithm \ref{algo:pg_eval}]\label{thm:pe}
Given $K_0$, $b_0$, $K$, and $b$ such that $\rho(A-BK_0) < 1$ and $J(K,b) \leq J(K_0, b_0)$, we define the sets $\cV_\zeta$ and $\cV_\xi$ through Definition \ref{assum:proj}. Let $\gamma_t = \gamma_0 t^{-1/2}$, where $\gamma_0$ is a positive absolute constant. Let $\rho\in (\rho(A-BK), 1) $. For  $\tilde T \geq \poly_0( \|K\|_\F, \|b\|_2, \|\mu\|_2, J(K_0, b_0) )\cdot (1 - \rho)^{-6}$ and a sufficiently large $T$, it holds with probability at least $1-T^{-4} - \tilde T^{-6}$ that
\$
\|\hat\alpha_{K, b} - \alpha_{K, b}\|_2^2 \leq  \lambda_K^{-2}\cdot \poly_1\bigl( \|K\|_\F, \|b\|_2, \|\mu\|_2, J(K_0, b_0) \bigr)  \cdot  \biggl[  \frac{\log^6 T}{T^{1/2} \cdot (1-\rho)^{4}} +  \frac{\log \tilde T}{\tilde T^{1/4}\cdot (1-\rho)^{2}}\biggr], 
\$
where $\lambda_K$ is defined in Proposition \ref{prop:invert_theta}.  Same bounds for $\|\hat\Upsilon_K - \Upsilon_K\|_\F^2$, $\|\hat p_{K,b} - p_{K,b}\|_2^2$, and $\|\hat q_{K,b} - q_{K,b}\|_2^2$ hold.  Meanwhile,   it  holds with probability at least $1 -  \tilde T^{-6}$ that
\$
\| \hat \mu_{K,b} - \mu_{K,b} \|_2 \leq   \frac{\log \tilde T}{\tilde T^{1/4}}\cdot (1-\rho)^{-2} \cdot \poly_2\bigl( \|\Phi_K\|_2, \|K\|_\F, \|b\|_2, \|\mu\|_2, J(K_0, b_0)  \bigr). 
\$
\end{theorem}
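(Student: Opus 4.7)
The proof naturally splits into two stages matching the two loops in Algorithm \ref{algo:pg_eval}: first, bound $\|\hat\mu_{K,b}-\mu_{K,b}\|_2$ produced by the $\tilde T$-step averaging phase; then, bound $\|\hat\alpha_{K,b}-\alpha_{K,b}\|_2$ produced by the $T$-step primal-dual phase, tracking the additional bias that the estimated feature $\hat\psi$ inherits from $\hat\mu_{K,b}$. For the mean-field estimate, I would use the geometric ergodicity of the linear-Gaussian chain $\{\tilde x_t\}$ whose stationary distribution is $\mathcal N(\mu_{K,b},\Phi_K)$ with $\|\Phi_K\|_2\lesssim [1-\rho(A-BK)]^{-1}$. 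Because the deviations $\tilde x_t-\mu_{K,b}$ are sub-Gaussian rather than bounded, I would combine a truncation at a polylogarithmic radius with a Bernstein-type concentration inequality for geometrically mixing sequences; balancing the truncation bias against the variance gives the $\log\tilde T/\tilde T^{1/4}$ rate with $(1-\rho)^{-2}$ prefactor, and any truncation failure is absorbed into the $\tilde T^{-6}$ probability slack.

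For the primal-dual stage, Proposition \ref{prop:bellman_compact} and Proposition \ref{prop:invert_theta} give that the unique saddle point $(\zeta^*,\xi^*)$ of \eqref{eq:minmax_pe} satisfies $\zeta^{*1}=J(K,b)$, $\zeta^{*2}=\alpha_{K,b}$, and $\xi^*=0$, and lies in $\cV_\zeta\times\cV_\xi$ by Definition \ref{assum:proj}. The saddle function $F$ is $1$-strongly concave in $\xi$, and its primal gradient mapping has Jacobian $\tilde\Theta_{K,b}$ with $\sigma_{\min}(\tilde\Theta_{K,b})\ge\lambda_K$. Therefore a bound on the duality gap $\sup_{\xi\in\cV_\xi}F(\bar\zeta,\xi)-\inf_{\zeta\in\cV_\zeta}F(\zeta,\bar\xi)$ translates, through the $\lambda_K$ lower bound, into the squared-error bound $\|\hat\zeta^2-\alpha_{K,b}\|_2^2\lesssim\lambda_K^{-2}\cdot(\text{gap})$.

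It remains to bound the gap along the iterates of \eqref{eq:pe-updates}. Using the gradient forms in \eqref{eq:grad-forms-pe}, I would decompose the stochastic gradient noise into (i) a martingale-difference part arising from replacing exact expectations by single Markov-chain transitions under the \emph{true} features $\psi_t$, and (ii) a bias part from substituting $\hat\psi_t$ for $\psi_t$. For (i), the key obstacle is that the feature vector contains degree-two monomials in $x_t,u_t$ and hence degree-four monomials appear in the update; standard bounded-feature primal-dual TD analyses \citep{du2017stochastic,wang2017finite} do not apply directly. I would instead condition on a high-probability event that $\|\hat\psi_t\|_2$ stays below a polylogarithmic threshold, controlled via sub-Gaussian tail bounds on the stationary distribution and a burn-in to discount non-stationarity of the initial transient; the stepsize $\gamma_t=\gamma_0 t^{-1/2}$ with Polyak averaging then gives an $\tilde O(\log^6 T/T^{1/2})$ gap, with $(1-\rho)^{-4}$ absorbing $\|\Phi_K\|_2^2$ and related mixing constants. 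Part (ii) contributes a perturbation that is Lipschitz in $\|\hat\psi_t-\psi_t\|$, which by \eqref{eq:def_feature} and \eqref{eq:est_def_feature} is at most a polynomial in the state magnitudes times $\|\hat\mu_{K,b}-\mu_{K,b}\|_2$; plugging in the first-stage bound on $\hat\mu_{K,b}$ yields the $\log\tilde T/\tilde T^{1/4}\cdot(1-\rho)^{-2}$ summand. A union bound combines the two failure events.

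The main obstacle is the unboundedness of $\hat\psi_t$ together with the Markovian dependence: every step of the bookkeeping must simultaneously carry a truncation radius, a mixing-time factor, and the propagated error $\|\hat\mu-\mu_{K,b}\|$, all while keeping the dependence on $(\|K\|_\F,\|b\|_2,\|\mu\|_2,J(K_0,b_0))$ polynomial so that it can be absorbed into the $\poly_1,\poly_2$ factors. Once the truncation-based concentration and the strong concavity of $F$ in $\xi$ are in place, the stated bounds on $\|\hat\Upsilon_K-\Upsilon_K\|_\F^2$, $\|\hat p_{K,b}-p_{K,b}\|_2^2$, and $\|\hat q_{K,b}-q_{K,b}\|_2^2$ follow because each of these quantities is a Lipschitz function (in the relevant norm) of $\hat\alpha_{K,b}$ and $\hat\mu_{K,b}$ through the $\smat$ and linear maps at the end of the algorithm.
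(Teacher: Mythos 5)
Your proposal follows essentially the same route as the paper's proof: reduce $\|\hat\alpha_{K,b}-\alpha_{K,b}\|_2^2$ to the primal--dual gap via the saddle point $(\zeta_{K,b},0)\in\cV_\zeta\times\cV_\xi$ and $\sigma_{\min}(\tilde\Theta_{K,b})\ge\lambda_K$, handle the unbounded quadratic features by truncating on a high-probability polylogarithmic-radius event (the paper uses Hanson--Wright), bound the truncated gap using the $\beta$-mixing of the linear-Gaussian chain with $\gamma_t=\gamma_0 t^{-1/2}$ and averaging, and propagate the first-stage error $\|\hat\mu_{K,b}-\mu_{K,b}\|_2$ as a bias term of order $\log\tilde T/\tilde T^{1/4}\cdot(1-\rho)^{-2}$. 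The only differences are tool-level: the paper exploits the exact Gaussianity of the time-average $\hat\mu_z$ (Lemma \ref{lemma:dist_hat_muz}) rather than a Bernstein-type mixing inequality, and it invokes a ready-made $\beta$-mixing primal--dual gap theorem (Lemma \ref{lemma:primal_dual_gap}) instead of your martingale-plus-bias decomposition (which, for Markovian data, would in any case have to be executed through a mixing/Poisson-equation argument as you implicitly acknowledge).
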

\begin{proof}
See \S\ref{proof:thm:pe} for a detailed proof. 
\end{proof}

\subsection{Temporal Difference Policy Evaluation Algorithm}\label{sec:td_algo}
Besides the primal-dual gradient temporal difference algorithm, we can also evaluate $\alpha_{K,b}$ by TD(0) method \citep{sutton2018reinforcement} in practice, which is presented in Algorithm \ref{algo:td_pe}.

\begin{algorithm}[ht]
    \caption{Temporal Difference Policy Evaluation Algorithm.}\label{algo:td_pe}
    \begin{algorithmic}[1]
    \STATE{{\textbf{Input:}} Policy $\pi_{K,b}$, number of iteration $\tilde T$ and $T$, stepsizes $\{\gamma_t\}_{t\in[T]}$. }
    \STATE{Sample $\tilde x_0$ from the stationary distribution $\mathcal N(\mu_{K,b}, \Phi_K)$.}
    \FOR{ $t =  0, \ldots, \tilde T-1$}
        \STATE{Take action $\tilde u_{t}$ under the policy $\pi_{K,b}$ and generate the next state $\tilde x_{t+1}$.}
    \ENDFOR
    \STATE{Set $\hat\mu_{K,b} \gets 1/\tilde T\cdot \sum_{t = 1}^{\tilde T} \tilde x_t$. }
    \STATE{Sample $x_0$ from the the stationary distribution $\mathcal N(\mu_{K,b}, \Phi_K)$.}
    \FOR{ $t =  0, \ldots, T$}
        \STATE{Given the mean-field state $\mu$, take action $ u_t$ following $\pi_{K,b}$, observe the cost $c_t$, and generate the next state $ x_{t+1}$.}
        \STATE{Set $\delta_{t+1} \gets \zeta_{t}^1  + (\hat \psi_{t} - \hat\psi_{t+1})^\top \zeta_{t}^2   - c_{t}$. }
        \STATE{Update parameters via $\zeta_{t+1}^1 \gets (1-\gamma_{t+1})\cdot \zeta_{t}^1 + \gamma_{t+1}\cdot c_{t}$ and $\zeta_{t+1}^2 \gets \zeta_{t}^2 - \gamma_{t+1}\cdot \delta_{t+1}\cdot \hat\psi_{t}$.}
        \STATE{Project $\zeta_t$ to $\cV'_\zeta$, where $\cV'_\zeta$ is a compact set. }
    \ENDFOR
    \STATE{Set  $\hat\alpha_{K, b} \gets (\sum_{t=1}^T \gamma_t)^{-1}\cdot(\sum_{t=1}^T \gamma_t\cdot \zeta_t^2)$, and 
    \$
    \hat\Upsilon_K \gets \smat(\hat\alpha_{K, b, 1}), \qquad\begin{pmatrix}\hat p_{K, b}\\ \hat q_{K, b}\end{pmatrix} \gets \hat\alpha_{K, b, 2} - \hat\Upsilon_{K}\begin{pmatrix}
    \hat\mu_{K,b}\\
    -K \hat\mu_{K,b} + b
    \end{pmatrix},
    \$\vskip-10pt
    where $\hat\alpha_{K, b, 1} =  (\hat\alpha_{K, b})_{1}^{(k+d+1)(k+d)/2}$ and $\hat\alpha_{K, b, 2} =  (\hat\alpha_{K, b})_{(k+d+1)(k+d)/2+1}^{(k+d+3)(k+d)/2}$. 
  }
    \STATE{\textbf{Output: }  Estimators  $\hat\mu_{K,b}$,  $\hat\Upsilon_K$, and $\hat q_{K, b}$. }

    \end{algorithmic}
\end{algorithm}

Note that in related literature \citep{bhandari2018finite, korda2015td}, non-asymptotic convergence analysis of TD(0) method with linear function approximation is only applied to discounted MDP. As for our ergodic setting, the convergence of TD(0) method is only shown asymptotically \citep{borkar2000ode, kushner2003stochastic} using ordinary differential equation method.  Therefore, in the convergence theorem proposed in \S\ref{sec:algo}, we only focus on the primal-dual gradient temporal difference method (Algorithm \ref{algo:pg_eval}) to establish non-asymptotic convergence result.


\section{General Formulation}\label{sec:gen_form}

In this section, we study a general formulation of LQ-MFG. Compared with Problem \ref{prob:mflqr}, such a general formulation includes an additional term $x_t^\top P \EE x_t^*$ in the cost function.  We define the general formulation as follows. 

\begin{problem}[General LQ-MFG]\label{prob:mflqr2}
We consider the following formulation,
\$
& x_{t+1} = A x_t + B u_t + \overline A  \EE x_t^* + d + \omega_t, \notag\\
& \tilde c(x_t, u_t) = x_t^\top Q x_t + u_t^\top R u_t + (\EE x_t^*)^\top \overline Q(\EE x_t^*) + 2 x_t^\top P (\EE x_t^*), \notag\\
& \tilde J(\pi) = \lim_{T\to\infty} \EE\Biggl[\frac{1}{T}\sum_{t = 0}^T \tilde c(x_t, u_t)\Biggr], 
\$
where $x_t\in\RR^m$ is the state vector, $u_t\in\RR^k$ is the action vector generated by the policy $\pi$, $\{x_t^*\}_{t\geq0}$ is the trajectory generated by a Nash policy $\pi^*$ (assuming it exists), $\omega_t\in\RR^m$ is an independent random noise term following the Gaussian distribution $\mathcal N(0, \Psi_\omega)$, and $d\in\RR^m$ is a drift term. Here the expectation in $\EE x^*_t$ is taken across all the agents.  We aim to find $\pi^*$ such that $\tilde J(\pi^*) = \inf_{\pi\in\Pi}\tilde J(\pi)$. 
\end{problem}

Following similar analysis in \S\ref{sec:algo_lqmfg}, it suffices to study Problem \ref{prob:mflqr2} with $t$ sufficiently large, which motivates us to formulate the following general drifted LQR (general D-LQR) problem. 

\begin{problem}[General D-LQR]\label{prob:slqr2}
Given a mean-field state $\mu\in\RR^m$, we consider the following formulation,
\$
& x_{t+1} = A x_t + B u_t + \overline A \mu + d + \omega_t,\notag\\
& \tilde c_\mu(x_t, u_t) = x_t^\top Q x_t + u_t^\top R u_t + \mu^\top \overline Q\mu + 2 x_t^\top P \mu, \notag\\
& \tilde J_\mu(\pi) = \lim_{T\to\infty} \EE\Biggl[\frac{1}{T}\sum_{t = 0}^T \tilde c_\mu(x_t, u_t)\Biggr],  
\$
where $x_t\in\RR^m$ is the state vector, $u_t\in\RR^k$ is the action vector generated by the policy $\pi$, $\omega_t\in\RR^m$ is an independent random noise term following the Gaussian distribution $\mathcal N(0, \Psi_\omega)$, and $d\in\RR^m$ is a drift term. We aim to find an optimal policy $\pi^*_\mu$ such that $\tilde J_\mu(\pi^*_\mu) = \inf_{\pi\in\Pi}\tilde J_\mu(\pi)$. 
\end{problem}

In Problem \ref{prob:slqr2}, the unique optimal policy $\pi^*_\mu$ admits a linear form $\pi^*_\mu(x_t) = -K_{\pi^*_\mu}x_t + b_{\pi^*_\mu}$ \citep{anderson2007optimal}, where the matrix $K_{\pi^*_\mu}\in\RR^{k\times m}$ and the vector $b_{\pi^*_\mu}\in\RR^{k}$ are the parameters of the policy $\pi$.  
 It then suffices to find the optimal policy in the class $\Pi$ introduced in \eqref{eq:def-Pi}.  
Similar to \S\ref{sec:slqr}, we drop the subscript $\mu$ when we focus on Problem \ref{prob:slqr2} for a fixed $\mu$. 
We write $\pi_{K,b}(x) = -Kx + b + \sigma\eta$ to emphasize the dependence on $K$ and $b$, and $\tilde J(K,b) = \tilde J(\pi_{K,b})$ consequently.
We derive a close form of the expected total cost $\tilde J(K,b)$ in the following proposition. 

\begin{proposition}
The expected total cost $\tilde J(K,b)$ in Problem \ref{prob:slqr2} is decomposed as
\$
\tilde J(K,b) = \tilde J_1(K) + \tilde J_2(K,b) + \sigma^2 \cdot \tr(R) + \mu^\top \overline Q \mu,
\$
where $\tilde J_1(K)$ and $\tilde J_2(K,b)$ take the forms of
\$
&\tilde  J_1(K) =\tr\bigl[(Q + K^\top R K)\Phi_K\bigr]  = \tr(P_K \Psi_\epsilon),\notag\\ 
& \tilde J_2(K,b) = \begin{pmatrix}
\mu_{K,b}\\
b
\end{pmatrix}^\top
\begin{pmatrix}
Q + K^\top RK & -K^\top R\\
-R K & R
\end{pmatrix}
 \begin{pmatrix}
\mu_{K,b}\\
b
\end{pmatrix} + 2\mu^\top P\mu_{K,b}.
\$
Here $\mu_{K,b}$ is defined in \eqref{eq:f2q}, $\Phi_K$ is defined in \eqref{eq:f2p}, and $P_K$ is defined in \eqref{eq:bellman}.
\end{proposition}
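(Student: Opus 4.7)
The plan is to reduce the claim to Proposition \ref{prop:cost_form} by isolating the contribution of the new interaction term $2x_t^\top P\mu$ in the cost. Observe that the cost in Problem \ref{prob:slqr2} decomposes as $\tilde c_\mu(x_t,u_t) = c_\mu(x_t,u_t) + 2x_t^\top P\mu$, where $c_\mu$ is the cost in Problem \ref{prob:slqr}. By linearity of expectation and the definition of $\tilde J_\mu$ as an ergodic average, this gives
\$
\tilde J(K,b) \;=\; J(K,b) \;+\; 2\cdot\lim_{T\to\infty}\frac{1}{T}\sum_{t=0}^{T} \EE[x_t]^\top P\mu.
\$

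Next, since $\pi_{K,b}$ is stable (i.e., $\rho(A-BK)<1$), the Markov chain $\{x_t\}_{t\geq 0}$ induced by \eqref{eq:f1} admits the unique stationary distribution $\mathcal N(\mu_{K,b},\Phi_K)$ established in \eqref{eq:f2q}--\eqref{eq:f2p}, so $\EE[x_t]\to\mu_{K,b}$ as $t\to\infty$ at a geometric rate governed by $\rho(A-BK)$. The Ces\`aro limit of this convergent sequence therefore equals its pointwise limit, yielding
\$
\lim_{T\to\infty}\frac{1}{T}\sum_{t=0}^{T} \EE[x_t]^\top P\mu \;=\; \mu_{K,b}^\top P\mu \;=\; \mu^\top P \mu_{K,b},
\$
where the second equality follows from taking the scalar transpose (using the implicit symmetry of the quadratic cost coefficient $P$ standard in LQ formulations).

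To finish, I would substitute the closed-form expression for $J(K,b)$ supplied by Proposition \ref{prop:cost_form},
\$
J(K,b) \;=\; J_1(K) + J_2(K,b) + \sigma^2\tr(R) + \mu^\top \overline Q\mu,
\$
and then set $\tilde J_1(K):=J_1(K)$ while absorbing the new cross term into $\tilde J_2(K,b):=J_2(K,b) + 2\mu^\top P\mu_{K,b}$. Matching this with the definitions of $\tilde J_1$ and $\tilde J_2$ stated in the proposition recovers exactly the claimed decomposition.

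I do not anticipate a real obstacle: the argument is essentially the additivity of the ergodic-average functional over the two parts of the cost, combined with the ergodic mean of $x_t$ already identified as $\mu_{K,b}$ in Section \ref{sec:slqr}. The only minor care needed is (i) justifying the interchange of the Ces\`aro limit with the linear functional $x\mapsto x^\top P\mu$, which is immediate from the geometric convergence of $\EE[x_t]$ to $\mu_{K,b}$, and (ii) noting that the new cross term depends on $b$ only through $\mu_{K,b}$, so no rearrangement of the $J_1(K)$ part (which captures the stationary second-moment contribution $\tr[(Q+K^\top RK)\Phi_K]$) is necessary.
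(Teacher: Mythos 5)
Your proposal is correct and is essentially the paper's own (omitted) argument: the paper simply asserts the proof is "similar to Proposition \ref{prop:cost_form}," and your reduction—splitting off the cross term $2x_t^\top P\mu$, using linearity of the ergodic average, and identifying the Ces\`aro limit of $\EE[x_t]$ with the stationary mean $\mu_{K,b}$—is precisely that computation, just reusing Proposition \ref{prop:cost_form} instead of redoing it. The only point worth flagging is that rewriting $2\mu_{K,b}^\top P\mu$ as $2\mu^\top P\mu_{K,b}$ needs $P$ symmetric (otherwise it is $2\mu^\top P^\top\mu_{K,b}$), an implicit assumption the paper's statement also makes, so your parenthetical remark is the right level of care.
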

\begin{proof}
The proof is similar to the one of Proposition \ref{prop:cost_form}. Thus we omit it here. 
\end{proof}

Compared with the form of $J(K,b)$ in \eqref{eq:a1}, the form of $\tilde J(K,b)$ contains an additional term $2\mu^\top P\mu_{K,b}$ in $\tilde J_2(K,b)$.  Recall that $\mu_{K,b}$ is linear in $b$ by \eqref{eq:f2q}. Therefore, $2\mu^\top P\mu_{K,b}$ is linear in $b$, which shows that $\tilde J_2(K,b)$ is still strongly convex in $b$.  The following proposition formally characterize the strong convexity of $\tilde J_2(K,b)$ in $b$. 

\begin{proposition}
Given any $K$, the function $\tilde J_2(K,b)$ is $\nu_K$-strongly convex in $b$, here $\nu_K = \sigma_{\min}(Y_{1, K}^\top Y_{1, K} + Y_{2, K}^\top Y_{2, K})$, where $Y_{1, K} = {R^{1/2}}K(I-A+BK)^{-1}B - {R^{1/2}}$ and $Y_{2, K} = {Q^{1/2}} (I-A+BK)^{-1} B$. Also, $\tilde J_2(K, b)$ has $\iota_K$-Lipschitz continuous gradient in $b$, where $\iota_K \leq [1-\rho(A-BK)]^{-2} \cdot (  \|B\|_2^2\cdot \|K\|_2^2\cdot \|R\|_2 + \|B\|_2^2\cdot \|Q\|_2 )$. 
\end{proposition}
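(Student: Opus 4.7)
The plan is to reduce this proposition to Proposition \ref{prop:convex_J2} by observing that the only modification relative to $J_2(K,b)$ is the extra term $2\mu^{\top} P \mu_{K,b}$, which is \emph{affine} in $b$.

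First, I would decompose
$
\tilde J_2(K,b) \;=\; J_2(K,b) \;+\; 2\mu^{\top} P\, \mu_{K,b},
$
where $J_2(K,b)$ is the function analyzed in Proposition \ref{prop:convex_J2}. Recall from \eqref{eq:f2q} that
$
\mu_{K,b} \;=\; (I - A + BK)^{-1}\bigl(Bb + \overline A\mu + d\bigr),
$
so $\mu_{K,b}$ is affine in $b$ with coefficient matrix $(I-A+BK)^{-1}B$, and hence the added term $2\mu^{\top} P \mu_{K,b} = 2\mu^{\top} P (I-A+BK)^{-1}B\, b + \text{const}$ is affine in $b$ as well.

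Since adding an affine function of $b$ does not change the Hessian in $b$, we have $\nabla_b^2 \tilde J_2(K,b) = \nabla_b^2 J_2(K,b)$, and therefore $\tilde J_2(K,b)$ is strongly convex in $b$ with the same modulus $\nu_K = \sigma_{\min}(Y_{1,K}^{\top} Y_{1,K} + Y_{2,K}^{\top} Y_{2,K})$ and has Lipschitz continuous gradient in $b$ with the same Lipschitz constant $\iota_K$ as $J_2(K,b)$ established in Proposition \ref{prop:convex_J2}. Invoking that proposition then yields both claims immediately, including the stated bound
$
\iota_K \;\leq\; \bigl[1-\rho(A-BK)\bigr]^{-2}\cdot\bigl( \|B\|_2^2\,\|K\|_2^2\,\|R\|_2 + \|B\|_2^2\,\|Q\|_2 \bigr).
$

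There is no real obstacle here: the entire argument is the observation that strong convexity and gradient-Lipschitzness are invariant under the addition of an affine term in the argument of interest, so no new computation beyond what is already done in the proof of Proposition \ref{prop:convex_J2} is required. The only thing to double-check is that the explicit expressions for $Y_{1,K}$ and $Y_{2,K}$ arising from the quadratic part of $\tilde J_2(K,b)$ are indeed identical to those in Proposition \ref{prop:convex_J2}, which is clear because the quadratic part (the block-matrix term involving $\mu_{K,b}$ and $b$) is unchanged.
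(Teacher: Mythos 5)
Your proposal is correct and matches the paper's intended argument: the paper itself notes that the extra term $2\mu^\top P\mu_{K,b}$ is linear in $b$ (since $\mu_{K,b}$ is affine in $b$ by \eqref{eq:f2q}) and then simply reuses the Hessian computation from Proposition \ref{prop:convex_J2}, which is exactly your reduction. No gap to report.
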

\begin{proof}
The proof is similar to the one of Proposition \ref{prop:convex_J2}. Thus we omit it here. 
\end{proof}

We derive a similar proposition to Proposition \ref{prop:J2} in the sequel.  

\begin{proposition}
We define $\tilde b^K = \argmin_{b} \tilde J_2(K,b)$. It holds that
\$
\tilde b^K & =  \bigl[ KQ^{-1} (I-A)^\top - R^{-1}B^\top \bigr]  \cdot S \cdot \bigl[ (\overline A\mu + d) + (I-A)Q^{-1}P^\top \mu\bigr] - K Q^{-1} P^\top \mu. 
\$
Moreover, $\tilde J_2(K, b^K)$ takes the form of
\$
\tilde J_2(K, \tilde b^K) = \begin{pmatrix}
\overline A\mu + d\\
P^\top \mu
\end{pmatrix}^\top  \begin{pmatrix}
S  & S (I-A)Q^{-1}\\
Q^{-1}(I-A)^\top S & 3 Q^{-1}(I-A)^\top S(I-A)Q^{-1} - Q^{-1}
\end{pmatrix} \begin{pmatrix}
\overline A\mu + d\\
P^\top \mu
\end{pmatrix},
\$
which is independent of $K$. Here $S = [ (I-A)Q^{-1}(I-A)^\top + BR^{-1}B^\top ]^{-1}$.  
\end{proposition}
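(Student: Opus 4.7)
The plan is to adapt the argument from Proposition \ref{prop:J2} to accommodate the additional cross term $2 x_t^\top P \mu$ in the cost. The key structural observation is that
$$\tilde J_2(K, b) = J_2(K, b) + 2\mu^\top P \mu_{K, b},$$
and since $\mu_{K, b} = (I - A + BK)^{-1}(Bb + \overline A \mu + d)$ is affine in $b$, the extra term is affine in $b$. Thus $\tilde J_2$ remains a strongly convex quadratic in $b$ whose quadratic part coincides with that of $J_2$, and admits a unique minimizer $\tilde b^K$ characterized by the first-order condition.

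First, I would compute $\nabla_b \tilde J_2(K, b)$ by combining the formula for $\nabla_b J_2(K, b)$ given in Proposition \ref{prop:pg} with the additional contribution $2 B^\top (I - A + BK)^{-\top} P^\top \mu$ coming from the cross term. Setting this to zero gives a linear equation in $b$. Since the quadratic part of $\tilde J_2$ in $b$ matches that of $J_2$, we may write $\tilde b^K = b^K + \Delta^K$, where $b^K$ is given by Proposition \ref{prop:J2} and $\Delta^K$ solves the same linear system with right-hand side $-2 B^\top (I - A + BK)^{-\top} P^\top \mu$. The identities that simplify $b^K$ in the proof of Proposition \ref{prop:J2} (notably the Woodbury-type manipulation that relates $(I - A + BK)^{-1}$ to $S = [(I-A)Q^{-1}(I-A)^\top + BR^{-1}B^\top]^{-1}$) apply verbatim to $\Delta^K$, yielding the claimed closed form for $\tilde b^K$.

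Second, I would substitute $\tilde b^K$ back into $\tilde J_2(K, \tilde b^K)$ and verify that all $K$-dependent terms cancel. Writing $\tilde J_2(K, \tilde b^K) = J_2(K, \tilde b^K) + 2\mu^\top P \mu_{K, \tilde b^K}$ and completing the square in $b$ around the unperturbed minimizer $b^K$ gives
$$J_2(K, \tilde b^K) = J_2(K, b^K) + (\Delta^K)^\top H_K \Delta^K,$$
where $H_K$ is the $b$-quadratic coefficient of $J_2$. By Proposition \ref{prop:J2}, the first term equals the $(1,1)$-block contribution $(\overline A\mu+d)^\top S (\overline A\mu+d)$. The remaining terms, after expanding $\Delta^K$, $\mu_{K, \tilde b^K}$, and using the same Woodbury identity once more, must be regrouped according to their bilinear dependence in $(\overline A\mu+d, P^\top \mu)$ to produce the off-diagonal $S(I-A)Q^{-1}$ block and the $(2,2)$ block $3 Q^{-1}(I-A)^\top S(I-A)Q^{-1} - Q^{-1}$.

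The main obstacle is verifying this $K$-cancellation in the final step: it is the same algebraic miracle that drives Proposition \ref{prop:J2}, but now with two source vectors $\overline A\mu+d$ and $P^\top\mu$ being processed through $K$-dependent operators, so the bookkeeping is heavier. In particular, the asymmetric-looking coefficient $3 Q^{-1}(I-A)^\top S(I-A)Q^{-1} - Q^{-1}$ on the $P^\top\mu$ block suggests that several contributions (from $J_2(K, b^K)$'s quadratic recombination, from $(\Delta^K)^\top H_K \Delta^K$, and from $2\mu^\top P \mu_{K, \tilde b^K}$) must be added with the correct signs and multiplicities before the $K$-dependence disappears. I would carry this out by grouping terms block-by-block in $(\overline A\mu+d, P^\top\mu)$ and applying the identity $S^{-1} = (I-A)Q^{-1}(I-A)^\top + BR^{-1}B^\top$ to eliminate any residual occurrences of $B$, $R$, and $K$ in the final expression.
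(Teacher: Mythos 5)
Your route diverges from the paper's, and the divergence is exactly where your gap sits. The paper's (omitted) argument simply reruns the Lagrangian computation of \S\ref{proof:prop:J2}: one minimizes jointly over the pair $(\mu_{K,b},b)$ subject to the linear constraint, so the stationarity condition \eqref{eq:a5} only acquires the additional constant $2P^\top\mu$ in its $\mu$-block. The only objects ever inverted there are $M_K$ (explicit) and $N_K^\top M_K^{-1}N_K=S^{-1}$, and the one new quantity, $N_K^\top M_K^{-1}$ applied to the $\mu$-block vector $P^\top\mu$, equals $-(I-A)Q^{-1}P^\top\mu$, which is $K$-free; the closed forms for $\tilde b^K$ and for the optimal value then drop out without ever forming $(I-A+BK)^{-1}$ or the $b$-Hessian. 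Your plan instead works unconstrained in $b$ and needs the closed form of $\Delta^K$, i.e.\ of $\bigl[\nabla^2_{bb}J_2(K,\cdot)\bigr]^{-1}$ applied to $B^\top(I-A+BK)^{-\top}P^\top\mu$, and you justify this by claiming the identities from the proof of Proposition \ref{prop:J2} ``apply verbatim.'' They do not: that proof contains no Woodbury-type manipulation relating $(I-A+BK)^{-1}$ to $S$ (it never forms that inverse at all), so the collapse of the Hessian-inverse expression into the claimed $S$-form is precisely the nontrivial identity your argument would have to establish, and it is left unproved. This is the crux, not bookkeeping.

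A second, concrete problem is your target for the optimal value. Executing either route honestly yields $\tilde J_2(K,\tilde b^K)=g^\top S g-\mu^\top PQ^{-1}P^\top\mu$ with $g=(\overline A\mu+d)+(I-A)Q^{-1}P^\top\mu$, i.e.\ the $(2,2)$ block of the quadratic form is $Q^{-1}(I-A)^\top S(I-A)Q^{-1}-Q^{-1}$, with coefficient $1$ rather than the stated $3$. A scalar sanity check confirms this: with $A=\overline A=0$, $B=Q=R=1$, $K=0$ and $w=P\mu$, the minimum of $(b+d)^2+b^2+2w(b+d)$ is $d^2/2+dw-w^2/2$, which matches the coefficient-$1$ expression and not the coefficient-$3$ one. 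So your plan of regrouping contributions ``with the correct multiplicities'' until the stated constant appears cannot succeed as described; you would either uncover this discrepancy or force an algebraic error to match it. The completing-the-square step around $b^K$ is sound in principle, but it only becomes a proof once $\Delta^K$ is derived explicitly (the missing identity above) and the cross term $2\mu^\top P\mu_{K,\tilde b^K}$ is evaluated with it.
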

\begin{proof}
The proof is similar to the one of Proposition \ref{prop:J2}. Thus we omit it here. 
\end{proof}

Similar to Problem \ref{prob:slqr}, we define the state- and action-value functions as
\$
& \tilde V_{K,b}(x) = \sum_{t = 0}^\infty \Bigl\{ \EE\bigl[ \tilde c(x_t, u_t) \given x_0 = x \bigr] - \tilde J(K,b) \Bigr\},\\
& \tilde Q_{K,b}(x,u) = \tilde c(x,u) - \tilde J(K,b) + \EE \bigl[ \tilde V_{K,b}(x_1)\given x_0 = x, u_0 = u \bigr],
\$
where $x_t$ follows the state transition, and $u_t$ follows the policy $\pi_{K,b}$ given $x_t$. In other words, we have $u_{t} = -Kx_t + b + \sigma\eta_t$, where $\eta_t\sim \mathcal N(0, I )$.   
Similar to Proposition \ref{prop:val_func_form}, the following proposition establishes the close forms of these value functions.

\begin{proposition}
The state-value function $\tilde V_{K,b}(x)$ takes the form of
\$
\tilde V_{K,b}(x) = x^\top P_K x - \tr(P_K \Phi_K)  +  2 \tilde f_{K,b}^\top (x -\mu_{K,b}) - \mu_{K,b}^\top P_K\mu_{K,b},
\$
and the action-value function $\tilde Q_{K,b}(x,u)$ takes the form of
\$
\tilde Q_{K,b}(x,u) = &\begin{pmatrix}
x\\
u
\end{pmatrix}^\top
\Upsilon_K
\begin{pmatrix}
x\\
u
\end{pmatrix} + 2\begin{pmatrix}
\tilde p_{K,b}\\
\tilde q_{K,b}
\end{pmatrix}^\top \begin{pmatrix}
x\\
u
\end{pmatrix}   - \tr(P_K\Phi_K)  - \sigma^2\cdot \tr(R + P_K B B^\top) - b^\top Rb \notag\\
& +2b^\top RK\mu_{K,b}  - \mu_{K,b}^\top (Q+K^\top RK + P_K)\mu_{K,b}    + 2\tilde f_{K,b}^\top\bigl[ (\overline A\mu+d)-\mu_{K,b}\bigr]  \notag\\
& + (\overline A\mu+d)^\top P_K (\overline A\mu+d) - 2\mu^\top P\mu_{K,b}.
\$
Here $\Upsilon_K$ is defined in \eqref{eq:def_upsilon}, and $\tilde p_{K,b}$, $\tilde q_{K,b}$ are defined as
\#\label{eq:def_upsilon2}
\tilde p_{K,b} = A^\top\bigl[P_K\cdot (\overline A \mu+d) +\tilde f_{K,b} \bigr] + P\mu, \qquad \tilde q_{K,b} = B^\top\bigl[P_K\cdot (\overline A \mu+d) +\tilde f_{K,b} \bigr],
\#
where $\tilde f_{K,b} = (I-A+BK)^{-\top} [  (A-BK)^\top P_K (Bb+\overline A\mu + d) - K^\top Rb + P\mu]$. 
\end{proposition}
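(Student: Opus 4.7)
The plan is to mimic the proof of Proposition \ref{prop:val_func_form}, adjusting for the extra cross term $2x^\top P\mu$ in the cost. The strategy is to guess a quadratic-plus-affine ansatz for the state-value function and then verify it by matching coefficients in the Bellman equation. Concretely, I will postulate
\[
\tilde V_{K,b}(x) = x^\top P_K x + 2 g^\top x + h
\]
for an unknown vector $g\in\RR^m$ and scalar $h\in\RR$, and then substitute into the Bellman equation
\[
\tilde V_{K,b}(x) = \EE_{u \sim \pi_{K,b}(\cdot\given x),\,\omega}\bigl[\tilde c(x,u) - \tilde J(K,b) + \tilde V_{K,b}(x')\bigr],
\]
where $u = -Kx + b + \sigma\eta$ and $x' = (A-BK)x + Bb + \overline A\mu + d + \epsilon$ with $\epsilon\sim\mathcal N(0,\Psi_\epsilon)$ as in \eqref{eq:f1}.

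First I would match the quadratic terms in $x$. Because the new cost piece $2x^\top P\mu$ is linear rather than quadratic in $x$, the quadratic balance produces exactly the same Riccati-like equation \eqref{eq:bellman} satisfied by $P_K$ in the drift-free case; so the quadratic coefficient remains $P_K$. Next I would match the linear terms in $x$. Here the $P\mu$ term enters: expanding $\EE[(x')^\top P_K x']$ and $\EE[2g^\top x']$ produces linear terms proportional to $(A-BK)^\top P_K(Bb+\overline A\mu+d)$ and $(A-BK)^\top g$, while the cost contributes a $P\mu$ linear term and a $-K^\top R b$ linear term (from the quadratic cross terms after substituting $u = -Kx+b+\sigma\eta$). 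Setting the sum to zero gives
\[
g = (I-A+BK)^{-\top}\bigl[(A-BK)^\top P_K(Bb+\overline A\mu+d) - K^\top R b + P\mu\bigr] = \tilde f_{K,b},
\]
which is exactly the formula claimed, differing from $f_{K,b}$ only by the additive $P\mu$ inside the bracket. Finally, matching the constant term fixes $h$, and evaluating $\tilde V_{K,b}(\mu_{K,b})$ via the stationary relation \eqref{eq:f2q} lets me write the constant so that the $-\tr(P_K\Phi_K)$ and $-\mu_{K,b}^\top P_K \mu_{K,b}$ pieces in the stated formula appear correctly (the value function is centered so that its expectation under the stationary distribution equals zero, which determines $\tilde J(K,b)$ as well).

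Once $\tilde V_{K,b}$ is in hand, the action-value function is obtained directly from its definition:
\[
\tilde Q_{K,b}(x,u) = \tilde c(x,u) - \tilde J(K,b) + \EE_\omega\bigl[\tilde V_{K,b}(Ax+Bu+\overline A\mu+d+\omega)\bigr].
\]
Substituting the quadratic ansatz and the drift, the $x^\top P_K x$ coefficient in $\tilde V_{K,b}$ produces the block matrix $\Upsilon_K$ from \eqref{eq:def_upsilon}, and the $2\tilde f_{K,b}^\top x'$ linear term produces, upon taking the expectation over $\omega$ and collecting linear-in-$(x,u)$ terms, the vector
\[
\begin{pmatrix}\tilde p_{K,b}\\ \tilde q_{K,b}\end{pmatrix} = \begin{pmatrix}A^\top[P_K(\overline A\mu+d)+\tilde f_{K,b}]+P\mu\\ B^\top[P_K(\overline A\mu+d)+\tilde f_{K,b}]\end{pmatrix}
\]
as defined in \eqref{eq:def_upsilon2}; note the $P\mu$ in $\tilde p_{K,b}$ comes from the $2x^\top P\mu$ cost term, not from $\tilde V_{K,b}$. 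The remaining constant terms in $\tilde Q_{K,b}$ come from (i) the noise variance $\sigma^2\tr(R+P_K BB^\top)$, (ii) the constant pieces in $\tilde c(x,u)$ once we substitute $u = -Kx+b+\sigma\eta$, and (iii) the constant $h$ from $\tilde V_{K,b}$ together with the $\tilde f_{K,b}$-based cross-term $2\tilde f_{K,b}^\top(\overline A\mu+d-\mu_{K,b})$ that arises from replacing $\EE[x']$ with $(A-BK)\mu_{K,b}+Bb+\overline A\mu+d$ and using \eqref{eq:f2q}.

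The main obstacle is purely book-keeping: carefully tracking the $-2\mu^\top P\mu_{K,b}$ and $P\mu$ contributions generated by the new cross-cost, and verifying that the quadratic coefficient in $\tilde V_{K,b}$ is still $P_K$ without modification. No new analytic ideas are needed beyond those in Proposition \ref{prop:val_func_form}; the argument will simply inherit its structure with $f_{K,b}$ replaced by $\tilde f_{K,b}$ and $(p_{K,b},q_{K,b})$ replaced by $(\tilde p_{K,b},\tilde q_{K,b})$, plus one extra constant term $-2\mu^\top P\mu_{K,b}$ at the end.
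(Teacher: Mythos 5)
Your proposal is correct and follows essentially the same route as the paper, which proves this proposition by noting it is the analogue of Proposition \ref{prop:val_func_form}: posit the quadratic-plus-affine ansatz, match quadratic terms to recover $P_K$ via \eqref{eq:bellman}, match linear terms to get $\tilde f_{K,b}$ (with the extra $P\mu$ inside the bracket), pin down the constant by centering under the stationary distribution, and then read off $\tilde Q_{K,b}$ from its definition. Your tracking of where the $P\mu$ and $-2\mu^\top P\mu_{K,b}$ contributions enter matches the intended bookkeeping, so no gap remains.
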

\begin{proof}
The proof is similar to the one of Proposition \ref{prop:val_func_form}. Thus we omit it here. 
\end{proof}

The following proposition establishes the gradients of $\tilde J_1(K)$ and $\tilde J_2(K, b)$, respectively.

\begin{proposition}
The gradient of $\tilde J_1(K)$ and the gradient of $\tilde J_2(K,b)$ with respect to $b$ take the forms of
\$
&\nabla_K\tilde  J_1(K) = 2(\Upsilon_K^{22} K  - \Upsilon_K^{21})\cdot \Phi_K,\qquad \nabla_b \tilde J_2(K,b) = 2\bigl[\Upsilon_K^{22}(-K\mu_{K,b} + b) + \Upsilon_{K}^{21}\mu_{K,b} +\tilde  q_{K,b}\bigr], 
\$
where $\Upsilon_K$ and $\tilde q_{K,b}$ are defined in \eqref{eq:def_upsilon} and \eqref{eq:def_upsilon2}, respectively. 
\end{proposition}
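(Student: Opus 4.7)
The plan is to reuse the proof of Proposition \ref{prop:pg} as much as possible, since the general formulation differs from Problem \ref{prob:slqr} only by the extra linear-in-$x_t$ interaction term $2 x_t^\top P\mu$ in the cost. Because this cross term does not contribute to the purely quadratic-in-$(x_t, u_t)$ part of the cost, it leaves $\tilde J_1(K) = \tr[(Q + K^\top RK)\Phi_K]$ identical to $J_1(K)$ as given in Proposition \ref{prop:cost_form}. Hence the formula $\nabla_K \tilde J_1(K) = 2(\Upsilon_K^{22} K - \Upsilon_K^{21}) \Phi_K$ is inherited verbatim from Proposition \ref{prop:pg} with no modification needed.

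For $\nabla_b \tilde J_2(K, b)$, the key step is the decomposition $\tilde J_2(K, b) = J_2(K, b) + 2 \mu^\top P \mu_{K, b}$, which immediately yields $\nabla_b \tilde J_2(K,b) = \nabla_b J_2(K,b) + 2 \nabla_b(\mu^\top P \mu_{K,b})$. By Proposition \ref{prop:pg}, the first summand equals $2[\Upsilon_K^{22}(-K\mu_{K,b} + b) + \Upsilon_K^{21}\mu_{K,b} + q_{K,b}]$. For the second summand, I would apply the chain rule through $\mu_{K,b} = (I-A+BK)^{-1}(Bb + \overline A \mu + d)$ from \eqref{eq:f2q}, which gives $\partial \mu_{K,b}/\partial b = (I-A+BK)^{-1}B$ and hence a contribution of the form $2 B^\top (I - A + BK)^{-\top} P \mu$ (up to transposition on $P$).

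To match this with the desired compact form, I would compare the definitions of $\tilde f_{K, b}$ and $f_{K, b}$ and observe that they differ precisely by the additive piece $(I - A + BK)^{-\top} P \mu$, so by the definitions of $q_{K,b}$ and $\tilde q_{K,b}$ we have $\tilde q_{K,b} - q_{K,b} = B^\top (I - A + BK)^{-\top} P \mu$. Substituting this identity and grouping terms produces the claimed expression $2[\Upsilon_K^{22}(-K\mu_{K,b} + b) + \Upsilon_K^{21}\mu_{K,b} + \tilde q_{K,b}]$.

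The main obstacle is the careful algebraic bookkeeping to confirm that the chain-rule contribution from $2\mu^\top P\mu_{K,b}$ aligns exactly with the $P\mu$ piece buried inside $\tilde f_{K,b}$, and hence inside $\tilde q_{K,b}$, so that the two additive corrections collapse into a single tidy $\tilde q_{K,b}$ term without residual terms. Once this alignment is verified, the remainder of the proof is a line-by-line transcription of the derivation in Proposition \ref{prop:pg}, with $f_{K,b}$, $p_{K,b}$, $q_{K,b}$ replaced throughout by their tilded counterparts $\tilde f_{K,b}$, $\tilde p_{K,b}$, $\tilde q_{K,b}$; no new estimates or structural identities are required beyond those already established for the original D-LQR formulation.
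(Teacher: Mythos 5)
Your proposal is correct and matches what the paper intends: the paper omits this proof as ``similar to the one of Proposition \ref{prop:pg}'', and your plan---inheriting $\nabla_K\tilde J_1$ verbatim since $\tilde J_1=J_1$, and obtaining $\nabla_b\tilde J_2$ as $\nabla_b J_2$ plus the correction coming from the extra cross term---is exactly that derivation, using only \eqref{eq:f2q} and the definitions in \eqref{eq:def_upsilon} and \eqref{eq:def_upsilon2}. The transposition worry you flag resolves itself once the extra expected cost is written as $2\mu_{K,b}^\top P\mu$ (which is what $\EE[2x_t^\top P\mu]$ gives at stationarity): its gradient in $b$ is $2B^\top(I-A+BK)^{-\top}P\mu = 2(\tilde q_{K,b}-q_{K,b})$, so the correction collapses into $\tilde q_{K,b}$ with no residual term.
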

\begin{proof}
The proof is similar to the one of Proposition \ref{prop:pg}. Thus we omit it here. 
\end{proof}

Equipped with above results,  parallel to the analysis in \S\ref{sec:algo},  it is clear that by slight modification of Algorithms \ref{algo:mflqr}, \ref{algo:ac_lqr}, and \ref{algo:pg_eval}, we derive similar actor-critic algorithms to solve both Problem \ref{prob:mflqr2} and Problem \ref{prob:slqr2}, where all the non-asymptotic convergence results hold.  We omit the algorithms and the convergence results here.


\section{Proofs of Theorems}

\subsection{Proof of Theorem \ref{thm:conv_mfg}}\label{proof:thm:conv_mfg}
We define $\mu_{s+1}^* = \Lambda(\mu_s)$, which is the mean-field state generated by the optimal policy $\pi_{K^*(\mu_s), b^*(\mu_s)} = \Lambda_1(\mu_s)$ under the current mean-field state $\mu_s$. By Proposition \ref{prop:J2}, the optimal $K^*(\mu)$ is independent of the mean-field state $\mu$. Therefore, we write $K^* = K^*(\mu)$ hereafter for notational convenience. By \eqref{eq:f2q}, we know that
\$
\mu^*_{s + 1} = (I-A+BK^*)^{-1}\cdot \bigl[B b^*(\mu_s) + \overline A \mu_s + d\bigr].
\$
We define
\$
\tilde \mu_{s+1} = (I-A + BK_s)^{-1} (Bb_s + \overline A\mu_s + d),
\$
which is the mean-field state generated by the policy $\pi_s$ under the current mean-field state $\mu_s$, where $K_s$ and $b_s$ are the parameters of the policy $\pi_s$.
By triangle inequality, we have
\#\label{eq:3terms}
\|\mu_{s+1} - \mu^*\|_2& \leq \underbrace{\| \mu_{s + 1} - \tilde \mu_{s+1}\|_2}_{E_1}+ \underbrace{\| \tilde \mu_{s + 1} -  \mu^*_{s+1}\|_2}_{E_2} + \underbrace{\| \mu_{s+1}^* - \mu^*  \|_2}_{E_3},
\#
where $\mu_{s+1}$ is generated by Algorithm \ref{algo:mflqr}. 
We upper bound  $E_1$, $E_2$, and $E_3$ in the sequel. 

\vskip5pt
\noindent\textbf{Upper Bound of $E_1$.}  By  Theorem \ref{thm:ac}, it holds with probability at least $1 - \varepsilon^{10}$ that
\#\label{eq:term1bound}
E_1 = \|\mu_{s+1} - \tilde \mu_{s+1}\|_2  < \varepsilon_s  \leq \varepsilon / 8 \cdot 2^{-s},
\#
where $\varepsilon_s$ is given in \eqref{eq:def_eps_s}.  

\vskip5pt
\noindent\textbf{Upper Bound of $E_2$.}  By the triangle inequality, we have
\#\label{eq:term2bound11}
E_2 & =  \Bigl\|  (I-A + BK_s)^{-1} (Bb_s + \overline A\mu_s + d) -  (I-A+BK^*)^{-1}\cdot \bigl[B b^*(\mu_s) + \overline A \mu_s + d\bigr]  \Bigr\|_2\notag\\
& \leq \bigl\|B b^*(\mu_s)  + \overline A\mu_s  + d \bigr \|_2\cdot \Bigl\|  \bigl[ I-A+BK^* + B(K_s - K^*) \bigr]^{-1} - (I-A+BK^*)^{-1} \Bigr\|_2\notag\\
&\qquad + \bigl\|  (I-A+BK_s)^{-1}  \bigr\|_2\cdot \|B\|_2 \cdot \bigl\|b_s - b^*(\mu_s) \bigr\|_2. 
\#
By Taylor's expansion,  we have
\#\label{eq:term2bound12}
& \Bigl\|  \bigl[ I-A+BK^* + B(K_s - K^*) \bigr]^{-1} - (I-A+BK^*)^{-1} \Bigr\|_2 \notag\\
& \qquad =  \Bigl\| (I-A+BK^*)^{-1} \bigl[ I + (I-A+BK^*)^{-1}B(K_s - K^*) \bigr]^{-1} - (I-A+BK^*)^{-1} \Bigr\|_2 \notag\\
& \qquad \leq 2\bigl \|  (I-A+BK^*)^{-1} B (K_s - K^*) (I-A+BK^*)^{-1} \bigr\|_2. 
\#
Meanwhile, by Taylor's expansion, it holds with probability at least $1 - \varepsilon^{10}$ that
\#\label{eq:term2bound13}
 \bigl\|  (I-A+BK_s)^{-1}  \bigr\|_2 & =  \Bigl\|  \bigl(I-A+BK^* + B(K_s - K^*) \bigr)^{-1}  \Bigr\|_2 \notag \\
& =  \Bigl\| (I-A+BK^*)^{-1} \bigl(I + (I-A+BK^*)^{-1} B(K_s - K^*) \bigr)^{-1}  \Bigr\|_2 \notag \\
& \leq \bigl[1 - \rho(A-BK^*) \bigr]^{-1} \cdot \Bigl(1 + \bigl \|(I-A+BK^*)^{-1} B \bigr\|_2 \cdot \|K^* - K_s\|_2 \Bigr)\notag\\
& \leq 2 \bigl[1 - \rho(A-BK^*) \bigr]^{-2},
\#
where the last inequality comes from Theorem \ref{thm:ac}. By plugging \eqref{eq:term2bound12} and \eqref{eq:term2bound13} in \eqref{eq:term2bound11}, it holds with probability at least $1 - \varepsilon^{10}$ that
\#\label{eq:term2bound1}
E_2& \leq  2 \bigl \|B b^*(\mu_s)  + \overline A\mu_s + d \bigr \|_2\cdot \bigl \|  (I-A+BK^*)^{-1} B (K_s - K^*) (I-A+BK^*)^{-1} \bigr \|_2\notag\\
&\qquad + \bigl \|  (I-A+BK_s)^{-1} \bigr \|_2\cdot \|B\|_2 \cdot \bigl \|b_s - b^*(\mu_s) \bigr \|_2\notag\\
& \leq 2 \bigl \|B b^*(\mu_s)  + \overline A\mu_s + d \bigr \|_2\cdot \bigl[ 1 - \rho(A-B K^*) \bigr]^{-2}\cdot \|B\|_2 \cdot \|K_s - K^*\|_2 \\
&\qquad + 2\bigl[ 1 - \rho(A-BK^*) \bigr]^{-2} \cdot \|B\|_2 \cdot \bigl \|b_s - b^*(\mu_s) \bigr \|_2. \notag
\#
By Proposition \ref{prop:J2}, it holds that 
\#\label{eq:term2bound2}
\bigl \|B b^*(\mu_s)  + \overline A\mu_s + d \bigr \|_2& \leq L_1\cdot\|B\|_2\cdot \|\mu_s\|_2 + \|\overline A\|_2\cdot \|\mu_s\|_2 + \|d\|_2\notag\\
& \leq  \bigl(L_1\cdot \|B\|_2 + \|\overline A\|_2\bigr) \cdot \|\mu_s\|_2+ \|d\|_2,
\#
where the scalar $L_1$ is defined in Assumption \ref{assum:contraction}.  Meanwhile, by Theorem \ref{thm:ac}, 
it holds with probability at least $1-\varepsilon^{10}$ that
\#\label{eq:term2bound3}
\|K_s - K^*\|_\F\leq \bigl[{\sigma_{\min}^{-1}(\Psi_\epsilon)\cdot \sigma_{\min}^{-1}(R) \cdot \varepsilon_s}\bigr]^{1/2}, \qquad \bigl\|b_s - b^*(\mu_s)\bigr\|_2\leq M_b(\mu_s) \cdot \varepsilon_s^{1/2}, 
\#
where $M_b(\mu_s)$ is defined in \eqref{eq:def-mb}.  Combining \eqref{eq:term2bound1}, \eqref{eq:term2bound2}, \eqref{eq:term2bound3}, and the choice of $\varepsilon_s$ in \eqref{eq:def_eps_s}, it holds with probability at least $1-\varepsilon^{10}$ that
\#\label{eq:term2bound}
E_2 \leq \varepsilon/8 \cdot 2^{-s}. 
\# 

\vskip5pt
\noindent\textbf{Upper Bound of $E_3$.}  By Proposition \ref{prop:uniq_eq}, we have
\#\label{eq:term3bound}
E_3 = \| \mu_{s+1}^* - \mu^*  \|_2 = \bigl \| \Lambda(\mu_{s}) - \Lambda(\mu^*) \bigr \|_2\leq L_0\cdot  \| \mu_{s} - \mu^*  \|_2, 
\# 
where $L_0 = L_1 L_3 + L_2$ by Assumption \ref{assum:contraction}. 

\vskip5pt
By plugging \eqref{eq:term1bound}, \eqref{eq:term2bound}, and \eqref{eq:term3bound} in \eqref{eq:3terms}, we know that 
\#\label{eq:iter1}
\|\mu_{s+1}-\mu^*\|_2 \leq   L_0\cdot \|\mu_{s}-\mu^*\|_2 + \varepsilon\cdot 2^{-s-2},
\#
which holds with probability at least $1-\varepsilon^{10}$.  Following from \eqref{eq:iter1} and a union bound argument with $S = \cO(\log (1/\varepsilon))$, it holds with probability at least $1-\varepsilon^{5}$ that
\$
\|\mu_S - \mu^*\|_2 \leq L_0^S\cdot \|\mu_0 - \mu^*\|_2 + \varepsilon/2,
\$
where we use the fact that $L_0 < 1$ by Assumption \ref{assum:contraction}.  By the choice of $S$ in \eqref{eq:choice_S}, it further holds with probability at least $1-\varepsilon^{6}$ that
\#\label{eq:xiewanle}
\|\mu_S - \mu^*\| \leq \varepsilon. 
\#

By Theorem \ref{thm:ac} and the choice of $\varepsilon_s$ in \eqref{eq:def_eps_s}, it holds with probability at least $1 - \varepsilon^5$ that
\#\label{eq:xiewanle2}
\| K_S - K^*  \|_\F = \bigl\| K_S - K^*(\mu_S)  \bigr\|_\F \leq  \bigl[{\sigma_{\min}^{-1}(\Psi_\epsilon)\cdot \sigma_{\min}^{-1}(R) \cdot \varepsilon_S} \bigr]^{1/2} \leq \varepsilon. 
\#
Meanwhile, by the triangle inequality and the choice of $\varepsilon_s$ in \eqref{eq:def_eps_s}, it holds with probability at least $1 - \varepsilon^5$ that
\#\label{eq:xiewanle3}
\|b_S - b^*\|_2 & \leq \bigl\|b_S - b^*(\mu_S) \bigr\|_2 + \bigl\|b^*(\mu_S) - b^* \bigr\|_2 \notag\\
& \leq M_b(\mu_S) \cdot \varepsilon_S^{1/2} + L_1 \cdot \| \mu_S - \mu^*  \|_2\notag\\
& \leq (1 + L_1)\cdot \varepsilon,
\#
where the second inequality comes from Theorem \ref{thm:ac} and Proposition \ref{prop:J2}, and the last inequality comes from \eqref{eq:xiewanle}.  By \eqref{eq:xiewanle}, \eqref{eq:xiewanle2}, and \eqref{eq:xiewanle3}, we conclude the proof of the theorem.

\subsection{Proof of Theorem \ref{thm:ac}}\label{proof:thm:ac}
\begin{proof} 
We first show that $J_1(K_N) - J_1(K^*) < \varepsilon /2$ with a high probability, then show that $J_2(K_N, b_H) - J_2(K^*, b^*) < \varepsilon / 2$ with a high probability.   Then we have
\$
J(K_N, b_N) - J(K^*, b^*)& = J_1(K_N) + J_2(K_N, b_H)  -  J_1(K^*) - J_2(K^*, b^*) < \varepsilon
\$
with a high probability, which proves Theorem \ref{thm:ac}. 

\vskip5pt
\noindent\textbf{Part 1.} We show that $J_1(K_N) - J_1(K^*) < \varepsilon /2$ with a high probability. 

We first bound $J_1(K_1) - J_1(K_2)$ for any $K_1$ and $K_2$.   By Proposition \ref{prop:cost_form}, $J_1(K)$ takes the form of
\#\label{eq:J1_form}
J_1(K) = \tr(P_K \Psi_\epsilon) = \EE_{y\sim \mathcal N(0, \Psi_\epsilon)}(y^\top P_K y).
\#
The following lemma calculates $y^\top P_{K_1} y - y^\top P_{K_2} y$ for any $K_1$ and $K_2$. 

\begin{lemma}\label{lemma:cost_diff}
Assume that $\rho(A-BK_1) < 1$ and $\rho(A-BK_2) < 1$. For any state vector $y$, we denote by $\{y_t\}_{t\geq 0}$ the sequence generated by the state transition $y_{t+1} = (A-BK_2)y_t$ with initial state $y_0 = y$. It holds that
\$
y^\top P_{K_2} y - y^\top P_{K_1} y = \sum_{t\geq 0} D_{K_1, K_2}(y_t),
\$
where 
\$
D_{K_1, K_2}(y) = 2y^\top (K_2 - K_1)(\Upsilon_{K_1}^{22} K_1  - \Upsilon_{K_1}^{21}) y + y^\top (K_2 - K_1)^\top\Upsilon_{K_1}^{22} (K_2 - K_1)y.
\$
 Here  $\Upsilon_K$ is defined in \eqref{eq:def_upsilon}. 
\end{lemma}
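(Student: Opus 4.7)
The statement is the LQR analogue of the performance difference lemma, and the natural route is a telescoping identity along the trajectory driven by $K_2$, combined with the Bellman equation \eqref{eq:bellman} for $P_{K_1}$.

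First, since $\rho(A - BK_2) < 1$, the sequence $y_t = (A-BK_2)^t y$ decays geometrically, so $y_t^\top M y_t \to 0$ for any matrix $M$. This lets me telescope freely. Applying the Bellman equation for $P_{K_2}$ along its own closed loop gives
\$
y^\top P_{K_2} y = \sum_{t \geq 0} y_t^\top (Q + K_2^\top R K_2) y_t,
\$
while writing $y^\top P_{K_1} y = \sum_{t\geq 0}\bigl[y_t^\top P_{K_1} y_t - y_{t+1}^\top P_{K_1} y_{t+1}\bigr]$ with $y_{t+1} = (A-BK_2) y_t$ yields
\$
y^\top P_{K_1} y = \sum_{t\geq 0} y_t^\top \bigl[P_{K_1} - (A-BK_2)^\top P_{K_1} (A-BK_2)\bigr] y_t.
\$

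Subtracting and invoking the Bellman equation $P_{K_1} = Q + K_1^\top R K_1 + (A-BK_1)^\top P_{K_1}(A-BK_1)$, the per-summand matrix is
\$
(Q + K_2^\top R K_2) - (Q + K_1^\top R K_1) - (A-BK_1)^\top P_{K_1}(A-BK_1) + (A-BK_2)^\top P_{K_1}(A-BK_2).
\$
Setting $\Delta K = K_2 - K_1$ and expanding via $A - BK_2 = (A-BK_1) - B\Delta K$, the cubic cancellations leave exactly
\$
\Delta K^\top(R + B^\top P_{K_1} B)\Delta K + \Delta K^\top\bigl[R K_1 - B^\top P_{K_1}(A-BK_1)\bigr] + \bigl[R K_1 - B^\top P_{K_1}(A-BK_1)\bigr]^\top \Delta K.
\$

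Finally, I identify the two bracketed expressions with the blocks of $\Upsilon_{K_1}$ defined in \eqref{eq:def_upsilon}: $R + B^\top P_{K_1} B = \Upsilon_{K_1}^{22}$ and $R K_1 - B^\top P_{K_1}(A-BK_1) = \Upsilon_{K_1}^{22} K_1 - \Upsilon_{K_1}^{21}$. Using $y_t^\top M y_t = y_t^\top M^\top y_t$ to symmetrize the cross terms then yields the claimed expression for $D_{K_1,K_2}(y_t)$, and summing over $t$ gives the lemma.

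There is no real obstacle — the argument is bookkeeping — beyond making sure the telescoping is legitimate. The key point to check cleanly is that both $y^\top P_{K_i} y$ admit convergent series representations along the $(A-BK_2)$-trajectory: $P_{K_1}$ is well-defined by $\rho(A-BK_1) < 1$, and summability of the telescoping sums is guaranteed by $\rho(A-BK_2) < 1$.
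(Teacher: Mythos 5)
Your proposal is correct and follows essentially the same route as the paper's proof: expand $y^\top P_{K_2}y$ as a series along the $(A-BK_2)$-trajectory, telescope $y^\top P_{K_1}y$ along the same trajectory, apply the Bellman equation \eqref{eq:bellman} for $P_{K_1}$ to each summand, and identify $R+B^\top P_{K_1}B$ and $RK_1 - B^\top P_{K_1}(A-BK_1)$ with $\Upsilon_{K_1}^{22}$ and $\Upsilon_{K_1}^{22}K_1-\Upsilon_{K_1}^{21}$. The per-summand expansion and symmetrization you describe match the paper's computation exactly.
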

\begin{proof}
See \S\ref{proof:lemma:cost_diff} for a detailed proof. 
\end{proof}

The following lemma shows that $J_1(K)$ is gradient dominant. 

\begin{lemma}\label{lemma:grad_dom}
Let $K^*$ be the optimal parameter and $K$ be a parameter such that $J_1(K) < \infty$, then it holds that
\#
& J_1(K) - J_1(K^*) \leq \sigma_{\min}^{-1}(R)\cdot \| \Phi_{K^*} \|_2\cdot \tr\bigl[ (\Upsilon_K^{22} K  - \Upsilon_K^{21})^\top (\Upsilon_K^{22} K  - \Upsilon_K^{21}) \bigr], \label{eq:upper-bdJ}\\
& J_1(K) - J_1(K^*) \geq \sigma_{\min}(\Psi_\omega)\cdot \| \Upsilon_{K}^{22} \|_2^{-1}\cdot \tr\bigl[ (\Upsilon_K^{22} K  - \Upsilon_K^{21})^\top (\Upsilon_K^{22} K  - \Upsilon_K^{21}) \bigr]. \label{eq:lower-bdJ}
\#
\end{lemma}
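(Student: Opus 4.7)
The plan is to derive both inequalities from the cost-difference identity in Lemma \ref{lemma:cost_diff} by completing the square in the matrix variable $K_2-K_1$, exactly in the spirit of the standard gradient dominance arguments for LQR (cf.\ \cite{fazel2018global}). Throughout, write $E_K = \Upsilon_K^{22}K-\Upsilon_K^{21}$ for the ``natural gradient residual'' so that the quadratic $D_{K_1,K_2}(y)$ has the form $\tr\bigl[(K_2-K_1)^\top\Upsilon_{K_1}^{22}(K_2-K_1)\,yy^\top\bigr]+2\tr\bigl[(K_2-K_1)^\top E_{K_1}\,yy^\top\bigr]$. Completing the square in $M:=K_2-K_1$ with $\Upsilon_{K_1}^{22}\succ 0$ (since $\Upsilon_{K_1}^{22}=R+B^\top P_{K_1}B\succeq R$), one obtains for any $y$ and any $M$,
\$
D_{K_1,K_2}(y)=\tr\Bigl[(M+(\Upsilon_{K_1}^{22})^{-1}E_{K_1})^\top\Upsilon_{K_1}^{22}(M+(\Upsilon_{K_1}^{22})^{-1}E_{K_1})\,yy^\top\Bigr]-\tr\bigl[E_{K_1}^\top(\Upsilon_{K_1}^{22})^{-1}E_{K_1}\,yy^\top\bigr].
\$

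For the upper bound \eqref{eq:upper-bdJ}, I would apply Lemma \ref{lemma:cost_diff} with $K_1=K$ and $K_2=K^*$. This yields $J_1(K^*)-J_1(K)=\EE_{y_0\sim\mathcal N(0,\Psi_\epsilon)}\sum_{t\ge0}D_{K,K^*}(y_t)$, where $y_t=(A-BK^*)^t y_0$. Dropping the nonnegative squared term in the identity above gives the pointwise lower bound $D_{K,K^*}(y)\ge -\tr[E_K^\top(\Upsilon_K^{22})^{-1}E_K\,yy^\top]$, hence
\$
J_1(K)-J_1(K^*)\le\tr\bigl[E_K^\top(\Upsilon_K^{22})^{-1}E_K\,\Phi_{K^*}\bigr],
\$
since $\EE\sum_{t\ge0}y_ty_t^\top=\Phi_{K^*}$ by the Lyapunov identity \eqref{eq:f2p}. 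Bounding $(\Upsilon_K^{22})^{-1}\preceq R^{-1}$ and pulling out $\|\Phi_{K^*}\|_2$ delivers \eqref{eq:upper-bdJ}.

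For the lower bound \eqref{eq:lower-bdJ}, the idea is to use optimality of $K^*$ together with an explicit one-step improvement. Let $K'=K-(\Upsilon_K^{22})^{-1}E_K$, i.e.\ take the exact natural-gradient step so that $M=K'-K=-(\Upsilon_K^{22})^{-1}E_K$ realizes the minimum in the completion of the square. Applying Lemma \ref{lemma:cost_diff} with $K_1=K$ and $K_2=K'$ then produces the clean identity $D_{K,K'}(y)=-\tr[E_K^\top(\Upsilon_K^{22})^{-1}E_K\,yy^\top]$, so that
\$
J_1(K)-J_1(K')=\tr\bigl[E_K^\top(\Upsilon_K^{22})^{-1}E_K\,\Phi_{K'}\bigr]\ge\sigma_{\min}(\Psi_\omega)\cdot\|\Upsilon_K^{22}\|_2^{-1}\cdot\tr(E_K^\top E_K),
\$
using $\Phi_{K'}\succeq\Psi_\epsilon\succeq\Psi_\omega$ from $\Psi_\epsilon=\sigma BB^\top+\Psi_\omega$ and \eqref{eq:f2p}. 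By optimality of $K^*$ one has $J_1(K^*)\le J_1(K')$, which yields \eqref{eq:lower-bdJ}.

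The main obstacle is the side condition that $K'$ be stabilizing, so that Lemma \ref{lemma:cost_diff} legitimately applies and $\Phi_{K'}$ is well-defined. Since $\rho(A-BK)<1$ is standing (required for $J_1(K)<\infty$) and $K'-K=-(\Upsilon_K^{22})^{-1}E_K$, one can verify stability by a perturbation/Lyapunov argument: the Lyapunov equation \eqref{eq:bellman} gives $P_{K'}\preceq P_K$ because the one-step greedy improvement is monotone, which in turn forces $\rho(A-BK')<1$. I would isolate this monotonicity as a small preliminary lemma so the main proof reads cleanly; the rest is then just the algebra above.
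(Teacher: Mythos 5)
Your proposal is correct and follows essentially the same route as the paper: both bounds are obtained from the cost-difference identity of Lemma \ref{lemma:cost_diff} by completing the square in $K_2-K_1$, dropping the nonnegative quadratic (with $y_t$ driven by $A-BK^*$ and $\EE\sum_t y_ty_t^\top=\Phi_{K^*}$) for \eqref{eq:upper-bdJ}, and taking the exact natural-gradient point $\tilde K=K-(\Upsilon_K^{22})^{-1}(\Upsilon_K^{22}K-\Upsilon_K^{21})$ together with optimality of $K^*$ and $\Phi_{\tilde K}\succeq\Psi_\epsilon\succeq\Psi_\omega$ for \eqref{eq:lower-bdJ}. The only difference is that you explicitly flag and sketch the verification that $\tilde K$ is stabilizing (via the standard policy-improvement Lyapunov argument), a point the paper's proof leaves implicit.
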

\begin{proof}
See \S\ref{proof:lemma:grad_dom} for a detailed proof. 
\end{proof}    

Recall that from Algorithm \ref{algo:ac_lqr}, the parameter $K$ is updated via
\#\label{eq:ac_update_Kn}
K_{n + 1} = K_n - \gamma \cdot (\hat \Upsilon_{K_n}^{22}K_n - \hat \Upsilon_{K_n}^{21}),
\#
where $\hat \Upsilon_{K_n}$ is the output of Algorithm \ref{algo:pg_eval}.
We upper bound $|J_1(K_{n+1}) - J_1(K^*)|$ in the sequel.  First, we show that if $J_1(K_n) - J_1(K^*)\geq \varepsilon / 2$ holds for any $n\leq N$, we obtain that 
\#\label{eq:lxnb1}
J_1(K_N)\leq J_1(K_{N-1})\leq  \cdots \leq J_1(K_0),
\#
which holds with probability at least $1 - \varepsilon^{13}$.  We prove \eqref{eq:lxnb1} by mathematical induction.  Suppose that 
\#\label{eq:lxnb2}
J_1(K_n)\leq J_1(K_{n-1})\leq  \cdots \leq J_1(K_0),
\#
which holds for $n=0$.    In what follows, we define $\tilde K_{n+1}$ as
\#\label{eq:tilde_update_Kn}
\tilde K_{n+1} = K_n - \gamma \cdot ( \Upsilon_{K_n}^{22}K_n - \Upsilon_{K_n}^{21}),
\#
where $\Upsilon_{K_n}$ is given in \eqref{eq:def_upsilon}. By \eqref{eq:tilde_update_Kn},  we have
\#\label{eq:diff_tildeK_K1}
& J_1(\tilde K_{n+1}) - J_1(K_n) = \EE_{y\sim\mathcal N(0, \Psi_\epsilon)}\bigl[ y^\top ( P_{\tilde K_{n+1}} - P_{K_n} )y \bigr]\notag\\
&\qquad = -2\gamma\cdot \tr\bigl[ \Phi_{\tilde K_{n+1}} \cdot (\Upsilon_{K_n}^{22}K_n - \Upsilon_{K_n}^{21})^\top (\Upsilon_{K_n}^{22}K_n - \Upsilon_{K_n}^{21}) \bigr] \notag\\
&\qquad\qquad+ \gamma^2\cdot \tr\bigl[ \Phi_{\tilde K_{n+1}} \cdot (\Upsilon_{K_n}^{22}K_n - \Upsilon_{K_n}^{21})^\top \Upsilon_{K_n}^{22}(\Upsilon_{K_n}^{22}K_n - \Upsilon_{K_n}^{21}) \bigr]\notag\\
&\qquad \leq -2\gamma\cdot \tr\bigl[ \Phi_{\tilde K_{n+1}} \cdot (\Upsilon_{K_n}^{22}K_n - \Upsilon_{K_n}^{21})^\top (\Upsilon_{K_n}^{22}K_n - \Upsilon_{K_n}^{21}) \bigr]\\
& \qquad\qquad + \gamma^2\cdot \|\Upsilon_{K_n}^{22}\|_2 \cdot \tr\bigl[ \Phi_{\tilde K_{n+1}} \cdot (\Upsilon_{K_n}^{22}K_n - \Upsilon_{K_n}^{21})^\top (\Upsilon_{K_n}^{22}K_n - \Upsilon_{K_n}^{21}) \bigr],\notag
\#
where the first equality comes from \eqref{eq:J1_form}, the second equality comes from Lemma \ref{lemma:cost_diff}, and the last inequality comes from the trace inequality. By the definition of $\Upsilon_K$ in \eqref{eq:def_upsilon}, we obtain that
\#\label{eq:nmd1}
 \|\Upsilon_{K_n}^{22}\|_2 & \leq \|R\|_2 + \|B\|_2^2\cdot \|P_{K_n}\|_2 \leq \|R\|_2 + \|B\|_2^2\cdot J_1(K_n)\cdot \sigma_{\min}^{-1}(\Psi_\epsilon) \notag \\
& \leq \|R\|_2 + \|B\|_2^2\cdot J_1(K_0)\cdot \sigma_{\min}^{-1}(\Psi_\epsilon),
\#
where the second inequality comes from Proposition \ref{prop:cost_form}.  By plugging \eqref{eq:nmd1} and the choice of stepsize $\gamma\leq [\|R\|_2 + \|B\|_2^2\cdot J_1(K_0)\cdot \sigma_{\min}^{-1}(\Psi_\epsilon)]^{-1}$ into \eqref{eq:diff_tildeK_K1},  we obtain that
\#\label{eq:main_diff_J1}
J_1(\tilde K_{n+1}) - J_1(K_n)& \leq -\gamma\cdot \tr\bigl[ \Phi_{\tilde K_{n+1}} \cdot (\Upsilon_{K_n}^{22}K_n - \Upsilon_{K_n}^{21})^\top (\Upsilon_{K_n}^{22}K_n - \Upsilon_{K_n}^{21}) \bigr]\notag\\
&\leq -\gamma\cdot \sigma_{\min}(\Psi_\epsilon)\cdot  \tr\bigl[ (\Upsilon_{K_n}^{22}K_n - \Upsilon_{K_n}^{21})^\top (\Upsilon_{K_n}^{22}K_n - \Upsilon_{K_n}^{21}) \bigr]\notag\\
& \leq -\gamma\cdot \sigma_{\min}(\Psi_\epsilon)\cdot \sigma_{\min}(R)\cdot \|\Phi_{K^*}\|_2^{-1}\cdot  \bigl[ J_1(K_n) - J_1(K^*) \bigr] < 0,
\#
where the last inequality comes from Lemma \ref{lemma:grad_dom}.

The following lemma upper bounds $|J_1(\tilde K_{n+1}) - J_1( K_{n+1})|$. 

\begin{lemma}\label{lemma:error_bound_tilde_J}
Assume that $J_1(K_n)\leq J_1(K_0)$.  It holds with probability at least $1-\varepsilon^{15}$ that
\$
\bigl|J_1(\tilde  K_{n+1}) - J_1( K_{n+1})\bigr| \leq \gamma\cdot \sigma_{\min}(\Psi_\epsilon)\cdot \sigma_{\min}(R)\cdot \|\Phi_{K^*}\|_2^{-1}\cdot  \varepsilon / 4, 
\$
where $K_{n+1}$ and $\tilde K_{n+1}$ are defined in \eqref{eq:ac_update_Kn}  and  \eqref{eq:tilde_update_Kn}, respectively. 
\end{lemma}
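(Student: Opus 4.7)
The plan is to control the discrepancy between $K_{n+1}$ and $\tilde K_{n+1}$ via the accuracy of the critic output $\hat\Upsilon_{K_n}$, and then translate this parameter-level error into a cost-level error through a local perturbation analysis of $J_1$.

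First, by the update rules \eqref{eq:ac_update_Kn} and \eqref{eq:tilde_update_Kn},
\$
K_{n+1} - \tilde K_{n+1} = -\gamma \cdot \bigl[(\hat\Upsilon_{K_n}^{22} - \Upsilon_{K_n}^{22})K_n - (\hat\Upsilon_{K_n}^{21} - \Upsilon_{K_n}^{21})\bigr],
\$
so by Theorem \ref{thm:pe}, applied with the prescribed choices of $T_n$ and $\tilde T_n$, we obtain with probability at least $1-\varepsilon^{15}$ a bound of the form $\|\hat\Upsilon_{K_n} - \Upsilon_{K_n}\|_\F \leq \delta_n$, where $\delta_n$ can be made as small as needed by enlarging $T_n, \tilde T_n$. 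This yields $\|K_{n+1} - \tilde K_{n+1}\|_\F \leq \gamma\cdot(1+\|K_n\|_\F)\cdot \delta_n$. Because $J_1(K_n)\leq J_1(K_0)$ by hypothesis, the bound in \eqref{eq:nmd1} shows that $\|K_n\|_\F$ is controlled in terms of the inputs, so $\delta_n$ can indeed be chosen as a polynomial function of problem quantities times the target accuracy.

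Next, I would translate the parameter perturbation to a cost perturbation. Using the trick already used to derive \eqref{eq:diff_tildeK_K1}, namely the cost-difference formula of Lemma \ref{lemma:cost_diff} with $K_1 = \tilde K_{n+1}$ and $K_2 = K_{n+1}$, together with $J_1(K) = \tr(P_K \Psi_\epsilon)$, one obtains
\$
J_1(K_{n+1}) - J_1(\tilde K_{n+1}) = \tr\bigl[\Phi_{K_{n+1}} \cdot G(\tilde K_{n+1}, K_{n+1})\bigr]
\$
for a matrix $G$ that is quadratic in $K_{n+1} - \tilde K_{n+1}$ and linear in $\Upsilon_{\tilde K_{n+1}}^{22} \tilde K_{n+1} - \Upsilon_{\tilde K_{n+1}}^{21}$. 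Provided both $\tilde K_{n+1}$ and $K_{n+1}$ are stabilizing (which holds by the descent shown in \eqref{eq:main_diff_J1} for $\tilde K_{n+1}$, and by a continuity/perturbation argument for $K_{n+1}$ once $\delta_n$ is small enough), we can bound $\|\Phi_{K_{n+1}}\|_2$ and $\|\Upsilon_{\tilde K_{n+1}}\|_2$ by polynomial quantities in $J_1(K_0)$ and other inputs. This yields
\$
\bigl|J_1(K_{n+1}) - J_1(\tilde K_{n+1})\bigr| \leq \mathrm{poly}(\cdots) \cdot \bigl(\|K_{n+1} - \tilde K_{n+1}\|_\F + \|K_{n+1} - \tilde K_{n+1}\|_\F^2\bigr).
\$

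Finally, choosing $T_n, \tilde T_n$ in Theorem \ref{thm:pe} so that $\delta_n$ is small enough to make the right-hand side above less than $\gamma\cdot\sigma_{\min}(\Psi_\epsilon)\cdot\sigma_{\min}(R)\cdot\|\Phi_{K^*}\|_2^{-1}\cdot \varepsilon/4$ finishes the proof. The main obstacle is the preservation of stability for $K_{n+1}$: the perturbation bound $\|\Phi_{K_{n+1}}\|_2 \leq C$ we need is not automatic, and requires showing that once $\|K_{n+1} - \tilde K_{n+1}\|_\F$ is sufficiently small, $\rho(A - BK_{n+1}) < 1$ and the associated Lyapunov quantities are controlled. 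This step relies on a continuity argument for solutions of the discrete Lyapunov equation \eqref{eq:f2p}, together with the a priori bounds on $\tilde K_{n+1}$ inherited from the descent inequality \eqref{eq:main_diff_J1}.
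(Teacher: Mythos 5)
Your overall architecture matches the paper's: bound the critic error $\|\hat\Upsilon_{K_n}-\Upsilon_{K_n}\|_\F$ via Theorem \ref{thm:pe} with the prescribed $T_n,\tilde T_n$, convert it into $\|K_{n+1}-\tilde K_{n+1}\|_\F \le \gamma(1+\|K_n\|_2)\|\hat\Upsilon_{K_n}-\Upsilon_{K_n}\|_\F$, and then turn this parameter perturbation into a cost perturbation, using the hypothesis $J_1(K_n)\le J_1(K_0)$ to control all the ambient quantities ($\|P_{\tilde K_{n+1}}\|_2$, $\|\Phi_{\tilde K_{n+1}}\|_2$, $\|\Upsilon_{\tilde K_{n+1}}\|_2$, etc.). Where you diverge is in the last translation step: the paper writes $|J_1(\tilde K_{n+1})-J_1(K_{n+1})| \le \|P_{\tilde K_{n+1}}-P_{K_{n+1}}\|_2\|\Psi_\epsilon\|_\F$ and invokes the perturbation bound of Lemma \ref{lemma:perturb} (imported from Lemma 5.7 of the single-agent LQR analysis), whose hypothesis \eqref{eq:perturbK} simultaneously certifies that the perturbed gain remains stabilizing and that $P_{K_{n+1}}$ stays close to $P_{\tilde K_{n+1}}$; you instead apply the cost-difference formula of Lemma \ref{lemma:cost_diff} around $\tilde K_{n+1}$, which expresses the gap through $\Phi_{K_{n+1}}$ and the natural gradient at $\tilde K_{n+1}$. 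Both routes work, and yours has the merit of reusing machinery already proved in the paper rather than an external lemma.

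The caveat is that the step you defer --- showing $\rho(A-BK_{n+1})<1$ and $\|\Phi_{K_{n+1}}\|_2$ bounded once $\|K_{n+1}-\tilde K_{n+1}\|_\F$ is small --- is not a peripheral detail but the crux of the lemma, and it is exactly what the paper's Lemma \ref{lemma:perturb} packages. You correctly avoid the circular shortcut $\|\Phi_{K_{n+1}}\|_2\le J_1(K_{n+1})/\sigma_{\min}(Q)$ (which presupposes the very cost control being proved) and point to a continuity argument for the Lyapunov equation \eqref{eq:f2p}; to make the sketch complete you would need to state and prove (or cite) a quantitative version of that continuity, e.g.\ a bound of the form $\|\Phi_{K_{n+1}}-\Phi_{\tilde K_{n+1}}\|_2 \le C$ valid under a smallness condition analogous to \eqref{eq:perturbK}, and fold that condition into the choice of $T_n,\tilde T_n$ exactly as the paper does in \eqref{eq:iphone3}. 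You should also make explicit that the failure probability $1-T_n^{-4}-\tilde T_n^{-6}$ from Theorem \ref{thm:pe} becomes $1-\varepsilon^{15}$ only through the polynomial-in-$\varepsilon^{-1}$ choices of $T_n,\tilde T_n$. With those two pieces filled in, your argument is a valid alternative proof.
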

\begin{proof}
See \S\ref{proof:lemma:error_bound_tilde_J} for a detailed proof. 
\end{proof}

Combining \eqref{eq:main_diff_J1} and Lemma \ref{lemma:error_bound_tilde_J}, if $J_1(K_n) - J_1(K^*) \geq \varepsilon/2$, it holds with probability at least $1-\varepsilon^{15}$ that
\#\label{eq:main_diff_J3}
J_1(K_{n+1}) - J_1(K_n)&  \leq J_1(\tilde K_{n+1}) - J_1(K_n) + \bigl|J_1(\tilde  K_{n+1}) - J_1( K_{n+1})\bigr|\notag\\
&  \leq  - \gamma\cdot \sigma_{\min}(\Psi_\epsilon)\cdot \sigma_{\min}(R)\cdot \|\Phi_{K^*}\|_2^{-1}\cdot  \varepsilon / 4 < 0. 
\#
Combining \eqref{eq:lxnb2} and \eqref{eq:main_diff_J3}, it holds  with probability at least $1 - \varepsilon^{15}$ that 
\$
J_1(K_{n+1})\leq J_1(K_{n})\leq  \cdots \leq J_1(K_0). 
\$  
Finally, following from a union bound argument and the choice of $N$ in Theorem \ref{thm:ac}, if $J_1(K_n) - J_1(K^*)\geq \varepsilon/2$ holds for any $n \leq N$, we have 
\$
J_1(K_{N})\leq J_1(K_{N-1})\leq  \cdots \leq J_1(K_0),
\$ 
which  holds with probability at least $1 - \varepsilon^{13}$.  Thus, we complete the proof of \eqref{eq:lxnb1}.

Combining \eqref{eq:main_diff_J1} and \eqref{eq:main_diff_J3},  for $J_1(K_n) - J_1(K^*)\geq \varepsilon/2$, we have 
\$
J_1(K_{n+1}) - J_1(K^*) \leq \bigl[1-\gamma\cdot \sigma_{\min}(\Psi_\epsilon)\cdot \sigma_{\min}(R)\cdot \|\Phi_{K^*}\|_2^{-1}\bigr]\cdot  \bigl[ J_1(K_n) - J_1(K^*) \bigr],
\$ 
which holds with probability at least $1-\varepsilon^{13}$. Meanwhile, following from a union bound argument and the choice of $N$ in Theorem \ref{thm:ac},   it holds with probability at least $1 - \varepsilon^{11}$ that 
\#\label{eq:lxnb3}
J_1(K_{N}) - J_1(K^*) \leq \varepsilon / 2.
\#  

The following lemma upper bounds $\|K_{N} - K^*\|_\F$. 

\begin{lemma}\label{lemma:local_sc}
For any  $K$, we have
\$
\|K - K^*\|_\F^2 \leq \sigma_{\min}^{-1}(\Psi_\epsilon)\cdot \sigma_{\min}^{-1}(R) \cdot \bigl[ J_1(K) - J_1(K^*) \bigr]. 
\$
\end{lemma}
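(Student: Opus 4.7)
\textbf{Proof plan for Lemma \ref{lemma:local_sc}.}
The plan is to lower bound the cost gap $J_1(K) - J_1(K^*)$ by a multiple of $\|K - K^*\|_\F^2$ using the decomposition from Lemma \ref{lemma:cost_diff}. The case $J_1(K) = \infty$ is trivial, so assume $J_1(K) < \infty$, which in particular forces $\rho(A - BK) < 1$.

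First I would record the first-order optimality condition at $K^*$. From Proposition \ref{prop:pg}, $\nabla_K J_1(K) = 2(\Upsilon_K^{22} K - \Upsilon_K^{21}) \Phi_K$. Since $\Phi_{K^*} \succeq \Psi_\epsilon \succ 0$ is invertible, the optimality of $K^*$ implies $\Upsilon_{K^*}^{22} K^* - \Upsilon_{K^*}^{21} = 0$.

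Next I apply Lemma \ref{lemma:cost_diff} with $K_1 = K^*$, $K_2 = K$, together with \eqref{eq:J1_form}, to obtain
\$
J_1(K) - J_1(K^*) &= \EE_{y \sim \mathcal N(0,\Psi_\epsilon)}\bigl[y^\top (P_K - P_{K^*}) y\bigr] \\
&= \EE_{y \sim \mathcal N(0,\Psi_\epsilon)} \sum_{t \geq 0} D_{K^*, K}(y_t),
\$
where $\{y_t\}_{t\ge 0}$ is generated by $y_{t+1} = (A - BK) y_t$ with $y_0 = y$. The first-order condition makes the linear piece of $D_{K^*, K}$ vanish, leaving only the quadratic piece: $D_{K^*, K}(y_t) = y_t^\top (K - K^*)^\top \Upsilon_{K^*}^{22} (K - K^*) y_t$. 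Since $\Upsilon_{K^*}^{22} = R + B^\top P_{K^*} B \succeq 0$, every summand is nonnegative, so I can keep only the $t = 0$ term as a lower bound.

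Finally, using $\EE[y_0 y_0^\top] = \Psi_\epsilon$ and the Loewner bounds $\Upsilon_{K^*}^{22} \succeq R \succeq \sigma_{\min}(R) I$ and $\Psi_\epsilon \succeq \sigma_{\min}(\Psi_\epsilon) I$,
\$
J_1(K) - J_1(K^*) &\geq \tr\bigl[(K - K^*)^\top \Upsilon_{K^*}^{22} (K - K^*)\, \Psi_\epsilon\bigr] \\
&\geq \sigma_{\min}(\Psi_\epsilon)\cdot \sigma_{\min}(R)\cdot \|K - K^*\|_\F^2,
\$
and rearranging yields the claim. The argument is essentially a one-step ``gradient domination'' / PL inequality; the only subtle point is the first-order condition at $K^*$, and that is handled cleanly via invertibility of $\Phi_{K^*}$, so I do not anticipate a serious obstacle.
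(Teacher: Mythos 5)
Your proposal is correct and follows essentially the same route as the paper: apply Lemma \ref{lemma:cost_diff} with $K_1=K^*$, $K_2=K$, kill the linear term via the first-order condition $\Upsilon_{K^*}^{22}K^*-\Upsilon_{K^*}^{21}=0$, and lower bound the remaining nonnegative quadratic sum using $\Upsilon_{K^*}^{22}\succeq R$ together with $\Psi_\epsilon$ (the paper keeps the whole sum as $\tr[\Phi_K(K-K^*)^\top\Upsilon_{K^*}^{22}(K-K^*)]$ and uses $\Phi_K\succeq\Psi_\epsilon$, which is the same as your keeping only the $t=0$ term). No gaps.
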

\begin{proof}
See \S\ref{proof:lemma:local_sc} for a detailed proof. 
\end{proof}

Combining \eqref{eq:lxnb3} and Lemma \ref{lemma:local_sc}, we have
\#\label{eq:k-bound}
\| K_N - K^* \|_\F \leq \bigl[\sigma_{\min}^{-1}(\Psi_\epsilon)\cdot \sigma_{\min}^{-1}(R) \cdot \varepsilon / 2\bigr]^{1/2},
\# 
which holds with probability $1 - \varepsilon^{11}$.

\vskip5pt
\noindent\textbf{Part 2.}  We show that $J_2(K_N, b_H) - J_2(K^*, b^*) < \varepsilon / 2$ with high probability. 
Following from Proposition \ref{prop:J2}, it holds that $J_2(K^*, b^*) = J_2(K_N, b^{K_N})$. Therefore, it suffices to show that $J_2(K_N, b_H) - J_2(K_N, b^{K_N}) < \varepsilon / 2$.   

First, we show that if $J_2(K_N, b_h) - J_2(K_N, b^{K_N})\geq \varepsilon/2$ for any $h\leq H$, we obtain that
\#\label{eq:lxnb55}
J_2(K_N, b_{H}) \leq J_2(K_N, b_{H-1}) \leq \cdots\leq J_2(K_N, b_{1})\leq J_2(K_N, b_{0}),
\# 
which holds with probability at least $1 - \varepsilon^{13}$. We prove \eqref{eq:lxnb55} by mathematical induction.   Suppose that 
\#\label{eq:lxnb5}
J_2(K_N, b_h)\leq J_2(K_N, b_{h-1})\leq  \cdots \leq J_2(K_N, b_0),
\#
Recall that by Algorithm \ref{algo:ac_lqr}, the parameter $b$ is updated via
\#\label{eq:ac_update_bn}
b_{h+1} = b_h -  \gamma^b\cdot \hat\nabla_b J_2(K_N, b_h).
\#
Here 
\#\label{eq:def-grad-2-hat}
\hat\nabla_b J_2(K_N, b_h) = \hat \Upsilon_{K_N}^{22}(-K_N\hat \mu_{K_N, b_h} + b_h) + \hat \Upsilon_{K_N}^{21}\hat \mu_{K_N,b_h} +\hat  q_{K_N,b_h},
\# 
where  $\hat \Upsilon_{K_N}$ and  $\hat  q_{K_N,b_h}$ are the outputs of Algorithm \ref{algo:pg_eval}.  We define $\tilde b_{h+1}$ as 
\#\label{eq:tilde_update_bn}
\tilde b_{h+1} = b_h -  \gamma^b\cdot \nabla_b J_2(K_N, b_h).
\#
Here 
\#\label{eq:def-grad-2}
\nabla_b J_2(K_N, b_h) =  \Upsilon_{K_N}^{22}(-K_N \mu_{K_N, b_h} + b_h) +  \Upsilon_{K_N}^{21} \mu_{K_N,b_h} +  q_{K_N,b_h},
\#
where  $\Upsilon_{K_N}$ and  $q_{K_N,b_h}$ are defined in \eqref{eq:def_upsilon}.    We upper bound $J_2(K_N, b_{h+1}) - J_2(K_N, b^{K_N})$ in the sequel. 
Following from \eqref{eq:tilde_update_bn} and Proposition \ref{prop:convex_J2},  we have
\#\label{eq:diff_tildeb_b1}
 J_2(K_N, \tilde b_{h+1}) - J_2(K_N, b_h)& \leq -\gamma^b/2\cdot \bigl\| \nabla_b J_2(K_N, b_h)\bigr\|_2^2\notag\\
 &  \leq -\nu_{K_N}\cdot \gamma^b\cdot \bigl[ J_2(K_N, b_h) - J_2(K_N, b^{K_N}) \bigr]\notag\\
& \leq -\nu_{K_N}\cdot \gamma^b\cdot\varepsilon < 0,
\#
where $\nu_{K_N}$ is specified in Proposition \ref{prop:convex_J2}. 
The following lemma upper bounds  $ |J_2(K_N, b_{h+1}) - J_2(K_N, \tilde b_{h+1})|$. 

\begin{lemma}\label{lemma:bound_tilde_J_2}
Assume that $J_2(K_N, b_h) \leq J_2(K_N, b_0)$.  It holds with probability at least $1-\varepsilon^{15}$ that
\$
\bigl|J_2(K_N, b_{h+1}) - J_2(K_N, \tilde b_{h+1})\bigr|  \leq  \nu_{K_N}\cdot\gamma^b\cdot \varepsilon/2,
\$
where $b_{h+1}$ and $\tilde b_{h+1}$ are defined in \eqref{eq:ac_update_bn} and \eqref{eq:tilde_update_bn}, respectively. 
\end{lemma}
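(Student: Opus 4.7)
The strategy is to control $|J_2(K_N, b_{h+1}) - J_2(K_N, \tilde b_{h+1})|$ through the smoothness of $J_2(K_N,\cdot)$ and then reduce to bounding the gradient-estimation error produced by the critic. By Proposition \ref{prop:convex_J2}, $b \mapsto J_2(K_N, b)$ has an $\iota_{K_N}$-Lipschitz gradient, so a second-order Taylor expansion about $\tilde b_{h+1}$ yields
\$
\bigl|J_2(K_N, b_{h+1}) - J_2(K_N, \tilde b_{h+1})\bigr| \leq \bar G_h \cdot \|b_{h+1} - \tilde b_{h+1}\|_2 + \tfrac{\iota_{K_N}}{2}\cdot \|b_{h+1} - \tilde b_{h+1}\|_2^2,
\$
where $\bar G_h$ is an upper bound on $\|\nabla_b J_2(K_N, \cdot)\|_2$ along the segment $[\tilde b_{h+1}, b_{h+1}]$. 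The hypothesis $J_2(K_N, b_h) \leq J_2(K_N, b_0)$ combined with $\nu_{K_N}$-strong convexity of $J_2(K_N, \cdot)$ controls $\|b_h - b^{K_N}\|_2$, and hence $\bar G_h$, by a polynomial in $\|K_N\|_\F$, $\|b_0\|_2$, $\|\mu\|_2$, $J_2(K_N, b_0)$, and $[1-\rho(A-BK_N)]^{-1}$. Using the update rules \eqref{eq:ac_update_bn} and \eqref{eq:tilde_update_bn}, one has $\|b_{h+1} - \tilde b_{h+1}\|_2 = \gamma^b \cdot \|\hat\nabla_b J_2(K_N, b_h) - \nabla_b J_2(K_N, b_h)\|_2$, so it remains to bound the gradient-estimation error.

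Next, subtracting \eqref{eq:def-grad-2} from \eqref{eq:def-grad-2-hat} and adding and subtracting cross terms gives
\$
\hat\nabla_b J_2 - \nabla_b J_2 = (\hat\Upsilon_{K_N}^{22} - \Upsilon_{K_N}^{22})(-K_N \hat\mu_{K_N,b_h} + b_h) - \Upsilon_{K_N}^{22} K_N(\hat\mu_{K_N,b_h} - \mu_{K_N,b_h}) + (\hat\Upsilon_{K_N}^{21} - \Upsilon_{K_N}^{21})\hat\mu_{K_N,b_h} + \Upsilon_{K_N}^{21}(\hat\mu_{K_N,b_h} - \mu_{K_N,b_h}) + (\hat q_{K_N,b_h} - q_{K_N,b_h}).
\$
Bounds on $\|\Upsilon_{K_N}^{22}\|_2$ and $\|\Upsilon_{K_N}^{21}\|_2$ follow as in \eqref{eq:nmd1} from the monotonicity chain \eqref{eq:lxnb1} established in Part~1, and $\|K_N\|_\F$ is controlled via \eqref{eq:k-bound}. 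The triangle inequality then upper bounds the gradient-estimation error by a polynomial in the above quantities times
\$
\|\hat\Upsilon_{K_N} - \Upsilon_{K_N}\|_\F + \|\hat q_{K_N,b_h} - q_{K_N,b_h}\|_2 + \|\hat\mu_{K_N,b_h} - \mu_{K_N,b_h}\|_2.
\$

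Finally, I would invoke Theorem \ref{thm:pe} applied to the policy $\pi_{K_N, b_h}$ with the iteration counts $T^b_h$ and $\tilde T^b_h$ prescribed in Theorem \ref{thm:ac}. These are chosen so that each of the three critic errors above is at most $\nu_{K_N}\cdot \varepsilon/(C \bar G_h)$ with probability at least $1 - (T^b_h)^{-4} - (\tilde T^b_h)^{-6} \geq 1 - \varepsilon^{15}$, for a large absolute constant $C$. Combining this with the smoothness bound, the identity $\|b_{h+1} - \tilde b_{h+1}\|_2 = \gamma^b \cdot \|\hat\nabla - \nabla\|$, and the choice $\gamma^b \leq 1 - \rho(A-BK_N)$ (so the higher-order quadratic term is negligible) yields $|J_2(K_N, b_{h+1}) - J_2(K_N, \tilde b_{h+1})| \leq \nu_{K_N}\gamma^b \varepsilon / 2$. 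The main obstacle is the bookkeeping needed to show that the polynomial dependence of $\bar G_h$ and of the coefficients in the decomposition above on the stability constants $\|K_N\|_\F$, $[1-\rho(A-BK_N)]^{-1}$, and $\lambda_{K_N}^{-1}$, $\nu_{K_N}^{-1}$ is dominated by the $\poly(\cdot)$ prefactors in the prescribed $T^b_h$ and $\tilde T^b_h$; this verification hinges on using strong convexity to bound $\|b_h\|_2$ uniformly in $h$.
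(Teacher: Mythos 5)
Your proposal is correct and follows essentially the same route as the paper: a smoothness/convexity expansion of $J_2(K_N,\cdot)$ around $\tilde b_{h+1}$, the identity $\|b_{h+1}-\tilde b_{h+1}\|_2=\gamma^b\cdot\|\hat\nabla_b J_2(K_N,b_h)-\nabla_b J_2(K_N,b_h)\|_2$, a triangle-inequality decomposition of the gradient-estimation error into $\|\hat\Upsilon_{K_N}-\Upsilon_{K_N}\|_\F$, $\|\hat q_{K_N,b_h}-q_{K_N,b_h}\|_2$, and $\|\hat\mu_{K_N,b_h}-\mu_{K_N,b_h}\|_2$, and an application of Theorem \ref{thm:pe} with the $T^b_h$, $\tilde T^b_h$ prescribed in Theorem \ref{thm:ac}. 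The only cosmetic difference is that the paper anchors the expansion at the exact gradient $\nabla_b J_2(K_N,\tilde b_{h+1})$ (bounded via Proposition \ref{prop:pg}) rather than a segment bound $\bar G_h$, and it need not bound $\|b_h\|_2$ uniformly since the prescribed iteration counts already depend on $\|b_h\|_2$.
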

\begin{proof}
See \S\ref{proof:lemma:bound_tilde_J_2} for a detailed proof. 
\end{proof}

Combining \eqref{eq:diff_tildeb_b1} and Lemma \ref{lemma:bound_tilde_J_2}, we know that  if $J_2(K_N, b_h) - J_2(K_N, b^{K_N}) \geq \varepsilon$, it holds with probability at least $1-\varepsilon^{15}$ that
\#\label{eq:lxnb6}
J_2(K_N, b_{h+1}) - J_2(K_N, b_h) & \leq J_2(K_N, \tilde b_{h+1}) - J_2(K_N, b_h) + \bigl|J_2(K_N, b_{h+1}) - J_2(K_N, \tilde b_{h+1})\bigr| \notag\\
& \leq  - \nu_{K_N} \cdot \gamma^b\cdot  \varepsilon/2 < 0. 
\#
Combining \eqref{eq:lxnb5} and \eqref{eq:lxnb6}, it holds with probability at least $1-\varepsilon^{15}$ that
\$
J_2(K_N, b_{h+1})\leq J_2(K_N, b_h)\leq  \cdots \leq J_2(K_N, b_0). 
\$ 
Following from a union bound argument and the choice of $H$ in Theorem \ref{thm:ac}, if $J_2(K_N, b_h) - J_2(K_N, b^{K_N})\geq \varepsilon$ holds for any $h\leq H$,  we have
\$
J_2(K_N, b_H)\leq J_2(K_N, b_{H-1})\leq  \cdots \leq J_2(K_N, b_0),
\$ 
which holds with probability at  least  $1-\varepsilon^{13}$.  Thus, we finish the proof of \eqref{eq:lxnb55}. 

Combining \eqref{eq:diff_tildeb_b1} and Lemma \ref{lemma:bound_tilde_J_2}, for $J_2(K_N, b_h) - J_2(K_N, b^{K_N})\geq \varepsilon/2$, we have
\$
J_2(K_N, b_{h+1}) - J_2(K_N, b^{K_N}) \leq (1-\nu_{K_N}\cdot \gamma^b)\cdot  \bigl[ J_2(K_N, b_h) - J_2(K_N, b^{K_N})\bigr],
\$ 
which holds with probability at least $1-\varepsilon^{13}$.   Meanwhile, following from a union bound argument and the choice of $H$ in Theorem \ref{thm:ac},  it holds with probability at least $1 - \varepsilon^{11}$ that
\#\label{eq:bugaile}
J_2(K_N, b_{H}) - J_2(K_N, b^{K_N}) \leq \varepsilon / 2. 
\#

By Proposition \ref{prop:convex_J2} and \eqref{eq:bugaile}, it holds with probability at least $1 - \varepsilon^{11}$ that
\#\label{eq:b-bound1}
\|b_H - b^{K_N}\|_2\leq ({2 \varepsilon/\nu_{K^*} })^{1/2}.
\#
Following from Proposition \ref{prop:J2}, we know that
\#\label{eq:b-bound2}
b^{K_N} - b^{*} & =  (K_N - K^*)Q^{-1} (I-A)^\top   \\
&\qquad \cdot \bigl[ (I-A)Q^{-1}(I-A)^\top + BR^{-1}B^\top \bigr]^{-1}  \cdot  (\overline A \mu + d). \notag
\#
Combining \eqref{eq:k-bound}, \eqref{eq:b-bound1}, and \eqref{eq:b-bound2}, it holds with probability $1 - \varepsilon^{10}$ that
\$
\|b_H - b^{K_N}\|_2\leq M_b \cdot \varepsilon^{1/2},
\$
where 
\$
M_b(\mu) & = 4 \Bigl\| Q^{-1} (I-A)^\top   \cdot \bigl[ (I-A)Q^{-1}(I-A)^\top + BR^{-1}B^\top \bigr]^{-1} \cdot  (\overline A \mu + d) \Bigr\|_2 \notag\\
&\qquad \cdot  \bigl[{\nu_{K^*}^{-1} + \sigma_{\min}^{-1}(\Psi_\epsilon)\cdot \sigma_{\min}^{-1}(R) }\bigr]^{1/2}. 
\$
We finish the proof of the theorem. 
\end{proof}

\subsection{Proof of Theorem \ref{thm:pe}}\label{proof:thm:pe}
\begin{proof}
We follow the proof of Theorem 4.2 in \cite{yyy2019aclqr}, where they only consider LQR without drift terms.  Since our proof requires much more delicate analysis, we present it here. 

\vskip5pt
\noindent\textbf{Part 1.}   We denote by $\hat \zeta$ and $\hat \xi$ the primal and dual variables generated by Algorithm \ref{algo:pg_eval}.  We define the primal-dual gap of \eqref{eq:minmax_pe} as 
\#\label{eq:def_gap}
\gap (\hat \zeta, \hat \xi) = \max_{\xi\in\cV_\xi} F(\hat\zeta, \xi) - \min_{\zeta\in\cV_\zeta} F(\zeta, \hat\xi).
\#
In the sequel, we upper bound $\|\hat \alpha_{K, b} - \alpha_{K, b}\|_2$ using \eqref{eq:def_gap}.

We define $\zeta_{K,b}$ and $\xi(\zeta)$ as
\#\label{eq:def-xi-zeta}
& \zeta_{K,b} = \bigl(J(K,b), \alpha_{K,b}^\top\bigr)^\top, \qquad   \xi(\zeta) = \argmax_\xi F(\zeta, \xi). 
\#
Following from \eqref{eq:grad-forms-pe}, we know that 
\#\label{eq:expl_xi-thm}
& \xi^1(\zeta) = \zeta^1 - J(K, b), \quad \xi^2(\zeta) = \EE_{\pi_{K,b}}\bigl[\psi(x,u)\bigr] \zeta^1+ \Theta_{K,b} \zeta^2  - \EE_{\pi_{K,b}}\bigl[ c(x,u) \psi(x,u) \bigr]. 
\#
The following lemma shows that $\zeta_{K,b}\in\cV_\zeta$ and $\xi(\zeta)\in\cV_\xi$ for any $\zeta\in\cV_\zeta$.

\begin{lemma}\label{lemma:zeta_xi}
Under the assumptions in Theorem \ref{thm:pe}, it holds that $\zeta_{K,b} = (J(K,b), \alpha_{K,b}^\top)^\top\in\cV_\zeta$.  Also, for any $\zeta\in\cV_\zeta$, the vector $\xi(\zeta)$ defined in \eqref{eq:def-xi-zeta} satisfies that $\xi(\zeta)\in\cV_\xi$. 
\end{lemma}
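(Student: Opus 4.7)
The plan is to verify the two containments separately by explicitly bounding each block of $\zeta_{K,b}$ and $\xi(\zeta)$ in terms of the quantities that appear in the definitions of $\cV_\zeta$ and $\cV_\xi$. All bounds will be reduced to the three ``fundamental'' quantities $J(K_0,b_0)$, $\sigma_{\min}(\Psi_\omega)$, and $[1-\rho(A-BK)]^{-1}$, using the relations $\|P_K\|_2 \le J(K,b)\cdot\sigma_{\min}^{-1}(\Psi_\epsilon)$ from Proposition \ref{prop:cost_form} and $\|\Phi_K\|_2 \le [1-\rho(A-BK)]^{-2}\cdot\|\Psi_\epsilon\|_2$ from the Lyapunov equation \eqref{eq:f2p}.

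For the first claim $\zeta_{K,b}\in\cV_\zeta$, the scalar component is immediate since $0\le J(K,b)\le J(K_0,b_0)$ by the hypothesis of Theorem \ref{thm:pe}. For the vector component $\alpha_{K,b}$ given in \eqref{eq:q1}, I would bound the two blocks separately. The block $\svec(\Upsilon_K)$ has norm equal to $\|\Upsilon_K\|_\F$, which by the definition \eqref{eq:def_upsilon} is at most $\|Q\|_\F+\|R\|_\F + (\|A\|_\F^2+\|B\|_\F^2)\sqrt d\cdot \|P_K\|_2$, matching the first summand of $M_{\zeta,1}$ after invoking $\|P_K\|_2\le J(K_0,b_0)\cdot\sigma_{\min}^{-1}(\Psi_\omega)$. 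The second block involves $\mu_{K,b}$, $-K\mu_{K,b}+b$, and the vectors $p_{K,b}$, $q_{K,b}$; I would bound $\|\mu_{K,b}\|_2$ via \eqref{eq:f2q} by $[1-\rho(A-BK)]^{-1}\cdot\|Bb+\overline A\mu+d\|_2$, then use the closed form of $f_{K,b}$ to express $p_{K,b},q_{K,b}$ in terms of $P_K$, $\mu_{K,b}$, and $b$. Each resulting term then matches, up to a constant, one of the summands defining $M_{\zeta,1}$ and $M_{\zeta,2}(1+\|K\|_\F)[1-\rho(A-BK)]^{-1}$.

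For the second claim $\xi(\zeta)\in\cV_\xi$ with $\zeta\in\cV_\zeta$, I use the explicit form \eqref{eq:expl_xi-thm}. The bound $|\xi^1(\zeta)| = |\zeta^1 - J(K,b)|\le J(K_0,b_0)$ is immediate. For $\xi^2(\zeta)$, I would bound each of the three summands:
\[
\bigl\|\EE_{\pi_{K,b}}[\psi(x,u)]\bigr\|_2\cdot|\zeta^1|,\qquad \|\Theta_{K,b}\|_2\cdot\|\zeta^2\|_2,\qquad \bigl\|\EE_{\pi_{K,b}}[c(x,u)\psi(x,u)]\bigr\|_2.
\]
The middle factor is controlled by Proposition \ref{prop:invert_theta} which gives $\|\Theta_{K,b}\|_2\le 4(1+\|K\|_\F^2)^2\|\Phi_K\|_2^2$, and combined with the bound on $\|\zeta^2\|_2$ from $\cV_\zeta$ produces exactly the $(1+\|K\|_\F^2)^3[1-\rho(A-BK)]^{-1}$ scaling in the definition of $\cV_\xi$. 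The moments of $\psi$ and $c(x,u)\psi(x,u)$ under the stationary Gaussian $\mathcal N(\mu_{K,b},\Phi_K)$ are computed directly using the quadratic structure of $\varphi(x,u)$ and the Gaussian moment formulas, yielding polynomial bounds in $\|K\|_\F$, $\|P_K\|_2$, $\|\Phi_K\|_2$, and $\|\mu_{K,b}\|_2$ that are once again absorbed into the definition of $M_\xi$.

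The main obstacle is the bookkeeping in bounding $\EE_{\pi_{K,b}}[c(x,u)\psi(x,u)]$: because $c(x,u)$ is quadratic in $(x,u)$ and $\psi(x,u)$ contains a symmetric-vectorized quadratic piece $\varphi(x,u)$, this term requires controlling fourth-order Gaussian moments of $(x-\mu_{K,b},u-(-K\mu_{K,b}+b))$ along with cross terms coming from the non-centered mean $\mu_{K,b}$ and intercept $b$. These moments scale as $\|\Phi_K\|_2^2$ combined with $(1+\|\mu_{K,b}\|_2^4 + \|b\|_2^4)$, both of which are already controlled by $J(K_0,b_0)$ and $[1-\rho(A-BK)]^{-1}$ as in Part 1, so the final comparison against the constants $M_{\zeta,1},M_{\zeta,2},M_\xi$ in Definition \ref{assum:proj} reduces to choosing the absolute constant $C$ in $M_\xi$ large enough. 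No new convergence or dynamical input is required beyond Propositions \ref{prop:cost_form} and \ref{prop:invert_theta}.
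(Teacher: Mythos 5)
Your overall plan — split into the two containments, bound $\alpha_{K,b}$ blockwise, control $\Theta_{K,b}$ via Proposition \ref{prop:invert_theta}, and compute the Gaussian moments of $\psi$ and $c(x,u)\psi(x,u)$ (the paper packages the latter as Lemma \ref{lemma:form_c_psi} using Lemma \ref{lemma:magnus}) — matches the paper's proof in structure. But there is a genuine gap in how you control $\mu_{K,b}$ and $b$. You propose to bound $\|\mu_{K,b}\|_2$ through the dynamics \eqref{eq:f2q} by $[1-\rho(A-BK)]^{-1}\cdot\|Bb+\overline A\mu+d\|_2$, and similarly to express $p_{K,b},q_{K,b}$ through $f_{K,b}$ ``in terms of $P_K$, $\mu_{K,b}$, and $b$.'' Any bound obtained this way carries explicit factors of $\|b\|_2$, $\|\mu\|_2$, and $\|d\|_2$, and these \emph{cannot} be matched to the radius $M_{\zeta,1}+M_{\zeta,2}(1+\|K\|_\F)[1-\rho(A-BK)]^{-1}$ of $\cV_\zeta$: by Definition \ref{assum:proj} the constants $M_{\zeta,1},M_{\zeta,2}$ are independent of $b$ and contain no $\|\mu\|_2$ or $\|d\|_2$. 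The missing idea — and the crux of the paper's argument — is to use $J(K,b)\le J(K_0,b_0)$ together with the decomposition of Proposition \ref{prop:cost_form} to get the cost-based bounds $\|P_K\|_2\le J(K_0,b_0)/\sigma_{\min}(\Psi_\omega)$, $\|\mu_{K,b}\|_2\le J(K_0,b_0)/\sigma_{\min}(Q)$ and $\|{-K}\mu_{K,b}+b\|_2\le J(K_0,b_0)/\sigma_{\min}(R)$, and to rewrite $f_{K,b}=-P_K\mu_{K,b}+(I-A+BK)^{-\top}(Q\mu_{K,b}-K^\top R\,\mu^u_{K,b})$ so that it too is controlled by these quantities plus $\kappa_Q+\kappa_R\|K\|_\F$ times $[1-\rho(A-BK)]^{-1}$. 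Without this conversion your claim that ``each resulting term matches one of the summands defining $M_{\zeta,1}$ and $M_{\zeta,2}$'' does not hold, and the same defect propagates into Part 2, where your moment bounds are stated as polynomial in $\|\mu_{K,b}\|_2$ and $\|b\|_2$.

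A secondary issue: the opening relation $\|\Phi_K\|_2\le[1-\rho(A-BK)]^{-2}\|\Psi_\epsilon\|_2$ is not valid as written, since $\|(A-BK)^t\|_2$ is not bounded by $\rho(A-BK)^t$ (spectral radius versus operator norm); one needs either a $\rho\in(\rho(A-BK),1)$ with a $K$-dependent prefactor, or — as the paper does in \eqref{eq:bound_phix} — the cost-based bound $\|\Phi_K\|_2\le J(K_0,b_0)/\sigma_{\min}(Q)$, which again keeps the final constants free of $b$ and $\mu$.
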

\begin{proof}
See \S\ref{proof:lemma:zeta_xi} for a detailed proof. 
\end{proof}
By \eqref{eq:grad-forms-pe}, we know that $\nabla_\zeta F(\zeta_{K,b}, 0) = 0$ and $\nabla_\xi F(\zeta_{K,b}, 0) = 0$. Combining Lemma \ref{lemma:zeta_xi}, it holds that $(\zeta_{K,b}, 0)$ is a saddle point of the function $F(\zeta, \xi)$ defined in \eqref{eq:minmax_pe}. 

Following from \eqref{eq:def_gap}, it holds that
\#\label{eq:gap1}
& \Bigl \|  \EE_{\pi_{K,b}}\bigl[\psi(x,u)\bigr] \hat\zeta^1+ \Theta_{K,b} \hat\zeta^2 - \EE_{\pi_{K,b}}\bigl[ c(x,u) \psi(x,u) \bigr] \Bigr\|^2_2   + \bigl |\hat\zeta^1 - J(K,b)\bigr |^2\notag\\
&\qquad = F\bigl(\hat\zeta, \xi(\hat\zeta) \bigr) = \max_{\xi\in\cV_\xi} F(\hat\zeta, \xi) = \gap (\hat\zeta, \hat\xi) + \min_{\zeta\in\cV_\zeta} F(\zeta, \hat\xi),
\#
where the first equality comes from \eqref{eq:expl_xi-thm}, and the second equality comes from the fact that $\xi(\hat \zeta) = \argmax_{\xi\in\cV_\xi} F(\hat \zeta, \xi)$ by \eqref{eq:def-xi-zeta} and Lemma \ref{lemma:zeta_xi}.   We upper bound the RHS of \eqref{eq:gap1} and lower bound the LHS of \eqref{eq:gap1} in the sequel. 

As for the RHS of \eqref{eq:gap1}, it holds for any $\xi\in\cV_\xi$ that
\#\label{eq:gap2}
&\min_{\zeta\in\cV_\zeta} F(\zeta, \xi) \leq \min_{\zeta\in\cV_\zeta} \max_{\xi\in\cV_\xi} F(\zeta, \xi) = \min_{\zeta\in\cV_\zeta} F\bigl( \zeta, \xi(\zeta) \bigr)\notag\\
& \qquad = \frac{1}{2} \min_{\zeta\in\cV_\zeta}  \biggl\{   \Bigl \|  \EE_{\pi_{K,b}}\bigl[\psi(x,u)\bigr] \zeta^1+ \Theta_{K,b} \zeta^2 - \EE_{\pi_{K,b}}\bigl[ c(x,u) \psi(x,u) \bigr] \Bigr\|^2_2   + \bigl |\zeta^1 - J(K,b)\bigr |^2\biggr\}\notag\\
&\qquad = 0, 
\#
where the first equality comes from the fact that $\xi(\zeta) = \argmax_{\xi\in\cV_\xi} F(\zeta, \xi)$ by \eqref{eq:def-xi-zeta} and Lemma \ref{lemma:zeta_xi},  the second equality comes from \eqref{eq:expl_xi-thm}, and the last equality holds by taking $\zeta = \zeta_{K, b}\in\cV_\zeta$.  
Meanwhile, we lower bound the LHS of \eqref{eq:gap1} as
\#\label{eq:gap3}
& \Bigl \|  \EE_{\pi_{K,b}}\bigl[\psi(x,u)\bigr] \hat\zeta^1+ \Theta_{K,b} \hat\zeta^2 - \EE_{\pi_{K,b}}\bigl[ c(x,u) \psi(x,u) \bigr] \Bigr\|^2_2   + \bigl |\hat\zeta^1 - J(K,b)\bigr |^2\notag\\
&\qquad =\bigl \| \tilde \Theta_{K, b} (\hat\zeta - \zeta_{K, b})\bigr \|_2^2 \geq \lambda_K^2 \cdot \| \hat\zeta - \zeta_{K, b} \|_2^2 \geq  \lambda_K^2 \cdot \| \hat\alpha_{K, b} - \alpha_{K, b} \|_2^2,
\#
where the first equality comes from the definition of $\tilde \Theta_{K,b}$ in \eqref{eq:def-tilde-theta}, and  the first inequality comes from Proposition \ref{prop:invert_theta}. Here $\lambda_K$ is defined in Proposition \ref{prop:invert_theta}.  Combining \eqref{eq:gap1}, \eqref{eq:gap2}, and \eqref{eq:gap3}, it holds that
\#\label{eq:gap_error}
\|\hat \alpha_{K, b} - \alpha_{K, b}\|_2^2 \leq \lambda_K^{-2} \cdot \gap(\hat\zeta, \hat\xi),
\#
which finishes the proof of this part.

\vskip5pt
\noindent\textbf{Part 2.} We now upper bound $\gap(\hat\zeta, \hat\xi)$. 
We denote by $\tilde z_t = (\tilde x_t^\top, \tilde u_t^\top)^\top$ for $t\in[\tilde T]$, where $\tilde x_t$ and $\tilde u_t$ are generated in Line \ref{line:tilde-xu} of Algorithm \ref{algo:pg_eval}.  Following from the state transition in Problem \ref{prob:mflqr} and the form of the linear policy, $\{\tilde z_t\}_{t\in[\tilde T]}$ follows the following transition, 
\#\label{eq:tilde-trans}
\tilde z_{t+1} = L \tilde z_t + \nu + \delta_t,
\#
where
\$
\nu = \begin{pmatrix}
\overline A \mu + d\\
-K(\overline A\mu+d) + b
\end{pmatrix}, \qquad
\delta_t = \begin{pmatrix}
\omega_t\\
-K\omega_t + \sigma \eta
\end{pmatrix},\qquad
L = \begin{pmatrix}
A & B\\
-KA & -KB
\end{pmatrix}.
\$ 
Note that we have
\$
L = \begin{pmatrix}
A & B\\
-KA & -KB
\end{pmatrix} = \begin{pmatrix}
I\\
-K
\end{pmatrix}\begin{pmatrix}
A  & B
\end{pmatrix}.
\$
Then by the property of spectral radius, it holds that 
\$
\rho(L) = \rho \Biggl(  \begin{pmatrix}
A & B
\end{pmatrix}
\begin{pmatrix}
I\\
-K
\end{pmatrix}  \Biggr) = 
 \rho(A-BK) < 1. 
\$
Thus, the Markov chain generated by \eqref{eq:tilde-trans} admits a unique stationary distribution $\mathcal N(\mu_z, \Sigma_z)$, where 
\#\label{eq:def-mean-var}
\mu_z = (I-L)^{-1}\nu, \qquad \Sigma_z = L\Sigma_z L^\top  + \begin{pmatrix}
\Psi_\omega & -\Psi_\omega K^\top\\
-K\Psi_\omega & K\Psi_\omega K^\top + \sigma^2 I
\end{pmatrix}. 
\#
The following lemma characterizes the average 
\#\label{eq:def-mean-est}
\hat \mu_z = 1/\tilde T\cdot \sum_{t = 1}^{\tilde T} \tilde z_t.
\#

\begin{lemma}\label{lemma:dist_hat_muz}
It holds that
\$
\hat \mu_z\sim\mathcal N\biggl(\mu_z + \frac{1}{\tilde T}\mu_{\tilde T}, ~\frac{1}{\tilde T} \tilde \Sigma_{\tilde T} \biggr),
\$
where $\|\mu_{\tilde T}\|_2\leq M_\mu\cdot (1-\rho)^{-2}\cdot \|\mu_z\|_2$ and $\|\tilde \Sigma_{\tilde T}\|_\F\leq M_\Sigma\cdot (1-\rho)^{-1}\cdot \|\Sigma_z\|_\F$. Here $M_\mu$ and $M_\Sigma$ are positive absolute constants.   Moreover, it holds with probability at least $1 -  \tilde T^{-6}$ that
\$
\| \hat \mu_z - \mu_z \|_2 \leq   \frac{\log \tilde T}{\tilde T^{1/4}}\cdot (1-\rho)^{-2} \cdot \poly \bigl( \|\Phi_K\|_2, \|K\|_\F, \|b\|_2, \|\mu\|_2  \bigr). 
\$
\end{lemma}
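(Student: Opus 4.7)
The plan is to exploit the fact that $\hat\mu_z$ is an affine function of jointly Gaussian random variables, so that it is itself Gaussian and fully determined by its first two moments. Because $\tilde z_0$ is drawn from the stationary law $\mathcal N(\mu_z, \Sigma_z)$ and the noises $\{\delta_t\}$ in \eqref{eq:tilde-trans} are Gaussian, mutually independent, and independent of $\tilde z_0$, the stacked vector $(\tilde z_1^\top, \ldots, \tilde z_{\tilde T}^\top)^\top$ is Gaussian, hence so is $\hat\mu_z$. It therefore suffices to compute its mean and covariance and then apply a standard Gaussian tail bound.

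For the mean, I would iterate \eqref{eq:tilde-trans} and use the identity $(I-L)\mu_z = \nu$ to get $\tilde z_t - \mu_z = L^t(\tilde z_0 - \mu_z) + \sum_{s=0}^{t-1} L^{t-1-s}\delta_s$. Under the stationary initialization this gives $\EE[\tilde z_t] = \mu_z$ for every $t \geq 0$, hence $\EE[\hat\mu_z] = \mu_z$, so one may take $\mu_{\tilde T} = 0$, which satisfies the stated bound trivially. For the covariance, stationarity provides $\text{Cov}(\tilde z_t, \tilde z_s) = L^{t-s}\Sigma_z$ for $t \geq s$ and its transpose for $t \leq s$. Summing over $1 \leq t, s \leq \tilde T$ then yields
\[
\tilde \Sigma_{\tilde T} \;=\; \Sigma_z \;+\; \sum_{k=1}^{\tilde T - 1}\bigl(1 - k/\tilde T\bigr)\bigl[L^k \Sigma_z + \Sigma_z (L^\top)^k\bigr].
\]
Since $\rho(L) = \rho(A-BK) < \rho < 1$, Gelfand's formula gives $\|L^k\|_2 \leq C\rho^k$ for a constant $C$ depending on $L$ and $\rho$; summing the geometric series produces $\|\tilde \Sigma_{\tilde T}\|_\F \leq (1 + 2C/(1-\rho))\|\Sigma_z\|_\F$, which matches the claimed bound after absorbing $C$ into $M_\Sigma$.

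Finally, noting that $\hat\mu_z - \mu_z \sim \mathcal N(0, \tilde\Sigma_{\tilde T}/\tilde T)$, standard Gaussian concentration gives $\|\hat\mu_z - \mu_z\|_2 \leq \sqrt{\|\tilde\Sigma_{\tilde T}\|_2/\tilde T}\cdot(\sqrt{m+k} + \sqrt{2\log(1/\delta)})$ with probability at least $1-\delta$. Setting $\delta = \tilde T^{-6}$, substituting the covariance bound derived above, and loosening the $\tilde T^{-1/2}$ rate to $\tilde T^{-1/4}\log \tilde T$ produces the stated high-probability inequality.

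The main obstacle is the bookkeeping needed to convert the bounds on $\|\Sigma_z\|_\F$ and $\|\mu_z\|_2$ into an explicit polynomial in $\|\Phi_K\|_2, \|K\|_\F, \|b\|_2, \|\mu\|_2$. For $\|\Sigma_z\|_\F$ one unrolls the Lyapunov equation $\Sigma_z = L\Sigma_z L^\top + D$ with noise block $D = \begin{pmatrix} \Psi_\omega & -\Psi_\omega K^\top \\ -K\Psi_\omega & K\Psi_\omega K^\top + \sigma^2 I \end{pmatrix}$ and uses the geometric decay of $\|L^k\|_2$; for $\|\mu_z\|_2$ one expands $\mu_z = (I-L)^{-1}\nu$ blockwise and invokes the explicit form of $\mu_{K,b}$ from \eqref{eq:f2q}. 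Both steps are routine but require keeping careful track of how each factor depends on $1 - \rho$ so that the final polynomial matches the statement.
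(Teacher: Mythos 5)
Your proposal is correct and follows the same overall route as the paper: observe that $\hat\mu_z$ is Gaussian because it is a linear functional of the linear-Gaussian chain \eqref{eq:tilde-trans}, bound its first two moments using the geometric decay coming from $\rho(L)=\rho(A-BK)<1$, and finish with a Gaussian tail bound at level $\delta=\tilde T^{-6}$. The differences are in the bookkeeping, and they are worth noting. First, you exploit the fact that Algorithm \ref{algo:pg_eval} initializes from the stationary distribution, so $\EE[\tilde z_t]=\mu_z$ for all $t$ and you may take $\mu_{\tilde T}=0$; the paper instead runs the argument for a generic Gaussian initialization $\mathcal N(\mu_\dagger,\Sigma_\dagger)$ and tracks the resulting $O(1/\tilde T)$ bias explicitly, which is why the term $\mu_{\tilde T}/\tilde T$ appears in the statement. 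Your simplification is legitimate for the lemma as stated (the bound on $\mu_{\tilde T}$ is satisfied trivially), but it is less robust if one does not trust exact stationary sampling. Second, for the covariance you sum the exact cross-covariances $\mathrm{Cov}(\tilde z_t,\tilde z_s)=L^{t-s}\Sigma_z$, whereas the paper bounds $\|\tilde\Sigma_{\tilde T}\|_\F$ through the marginal covariances of the $\tilde z_t$ via a pairwise "factor-2" inequality; your computation is tighter and handles the temporal correlations more transparently. One small caveat: your constant $C$ in $\|L^k\|_2\le C\rho^k$ depends on $L$ (hence on $K$), so absorbing it into $M_\Sigma$ is at odds with $M_\Sigma$ being an absolute constant; the paper has the same looseness (it uses $\rho(L)^t$ as though it bounded $\|L^t\|_2$, which fails for non-normal $L$), and in the final high-probability bound such factors are harmless since they can be folded into the $\poly(\|\Phi_K\|_2,\|K\|_\F,\|b\|_2,\|\mu\|_2)$ term, but you should flag it rather than absorb it silently.
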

\begin{proof}
See \S\ref{proof:lemma:dist_hat_muz} for a detailed proof. 
\end{proof}

Lemma \ref{lemma:dist_hat_muz} gives that
\$
\| \hat \mu_{K,b} - \mu_{K,b}\|_2 \leq \frac{\log \tilde T}{\tilde T^{1/4}}\cdot (1-\rho)^{-2} \cdot \poly \bigl( \|\Phi_K\|_2, \|K\|_\F, \|b\|_2, \|\mu\|_2  \bigr),
\$
which holds with probability at least $1 -  \tilde T^{-6}$. 

We now apply a truncation argument to show that $\gap(\hat\zeta, \hat\xi)$ is upper bounded.   We define the event $\cE$ in the sequel.   Following from Lemma \ref{lemma:dist_hat_muz},  it holds for any $z\sim\mathcal N(\mu_z, \Sigma_z)$ that
\$
z-\hat\mu_z + 1/\tilde T\cdot \mu_{\tilde T} \sim \mathcal N(0,\Sigma_z + 1/\tilde T\cdot \tilde \Sigma_{\tilde T}). 
\$
By Lemma \ref{lemma:hwineq}, there exists a positive absolute constant $C_0$ such that 
\#\label{eq:xgnb1}
& \PP\Bigl[   \bigl |  \| z- \hat \mu_z + 1/\tilde T\cdot \mu_{\tilde T}  \|_2^2 - \tr(\tilde \Sigma_z)    \bigr|   >   \tau    \Bigr]   \leq  2  \exp\Bigl[ -C_0\cdot \min\bigl(  \tau^2 \|\tilde \Sigma_z\|_\F^{-2}, ~\tau\|\tilde \Sigma_z\|_2^{-1}  \bigr) \Bigr],
\#
where we write  $\tilde \Sigma_z = \Sigma_z + 1/\tilde T\cdot \tilde \Sigma_{\tilde T}$ for notational convenience.  By taking $\tau = C_1\cdot \log T\cdot \|\tilde \Sigma_z\|_\F$ in \eqref{eq:xgnb1} for a sufficiently large positive absolute constant $C_1$, it holds that
\#\label{eq:prob1}
\PP\Bigl[   \bigl |  \| z-\hat \mu_z + 1/\tilde T\cdot \mu_{\tilde T}  \|_2^2 - \tr(\tilde \Sigma_z)    \bigr|   >   C_1\cdot \log T\cdot \|\tilde \Sigma_z\|_\F    \Bigr]  \leq T^{-6}.
\#
We define the event $\cE_{t, 1}$ for any $t\in[T]$ as
\$
\cE_{t,1} = \Bigl\{   \bigl |  \| z_t - \hat \mu_z + 1/\tilde T\cdot \mu_{\tilde T}  \|_2^2 - \tr(\tilde \Sigma_z)    \bigr|   \leq    C_1\cdot \log T\cdot \|\tilde \Sigma_z\|_\F   \Bigr\}.
\$
Then by \eqref{eq:prob1}, it holds for any $t\in[T]$ that
\#\label{eq:pp-bd1}
\PP(\cE_{t,1}) \geq 1- T^{-6}. 
\# 
Also, we define 
\#\label{eq:def-ce1}
\cE_1 = \bigcap_{t\in[T]}\cE_{t,1}.
\#
Following from a union bound argument and \eqref{eq:pp-bd1}, it holds that 
\#\label{eq:pp-bd2}
\PP(\cE_1)\geq 1 - T^{-5}.
\#
Also, conditioning on $\cE_1$, it holds for sufficiently large $\tilde T$ that
\#\label{eq:bound_z}
&\max_{t\in[T]} \|z_t - \hat \mu_z\|_2^2\notag\\
 &\qquad  \leq     C_1\cdot \log T\cdot \|\tilde \Sigma_z\|_\F + \tr(\tilde \Sigma_z) + \| 1/\tilde T\cdot \mu_{\tilde T}\|_2^2\notag \\
&\qquad\leq   2 \tilde C_1\cdot \bigl[1 + M_\Sigma(1-\rho)^{-1}/\tilde T^2 \bigr] \cdot \log T\cdot \| \Sigma_z\|_2  + M_\mu(1-\rho)^{-2} / \tilde T^2\cdot \|\mu_z\|_2^2\notag \\
&\qquad \leq C_2\cdot \log T\cdot \bigl(1+\|K\|_\F^2\bigr)\cdot \|\Phi_K\|_2\cdot(1-\rho)^{-1} +   C_3\cdot  \bigl(\|b\|_2^2 + \|\mu\|_2^2 \bigr) \cdot (1-\rho)^{-4} \cdot  \tilde T^{-2}\notag\\
& \qquad\leq  2C_2\cdot \log T\cdot \bigl(1+\|K\|_\F^2\bigr)\cdot \|\Phi_K\|_2\cdot (1-\rho)^{-1},
\#
where $\tilde C_1$, $C_2$, and $C_3$ are positive absolute constants. Here, the first inequality comes from the definition of $\cE_1$ in \eqref{eq:def-ce1}, the second inequality comes from Lemma \ref{lemma:dist_hat_muz}, and the third inequality comes from \eqref{eq:def-mean-var}.   Also, we define the following event
\#\label{eq:def-ce2nn}
\cE_2 = \bigl\{    \| \hat\mu_z -\mu_z + 1/\tilde T\cdot \mu_{\tilde T}  \|_2     \leq    C_1\bigr\}. 
\#
Then by Lemma \ref{lemma:dist_hat_muz}, we know that 
\#\label{eq:pp-bd3}
\PP(\cE_2) \geq 1 - \tilde T^{-6}
\# 
for $\tilde T$ sufficiently large.  We define the event $\cE$ as
\$ 
\cE = \cE_1 \bigcap \cE_2.
\$ 
Then following from \eqref{eq:pp-bd2}, \eqref{eq:pp-bd3}, and a union bound argument, we know that 
\$
\PP(\cE)\geq 1- T^{-5} - \tilde T^{-6}.
\$

Now, we define the truncated feature vector $\tilde \psi(x,u)$  as $\tilde \psi(x,u) = \hat \psi(x,u)\ind_{\cE}$, the truncated cost function $\tilde c(x,u)$ as $\tilde c(x,u) = c(x,u)\ind_{\cE}$, and also the truncated objective function $\tilde F(\zeta, \xi)$ as 
\#\label{eq:trun_obj}
\tilde F(\zeta, \xi) =  \Bigl\{  \EE ( \tilde \psi ) \zeta^1+ \EE \bigl[ (\tilde \psi - \tilde \psi') \tilde \psi^\top \bigr] \zeta^2 - \EE ( \tilde c \tilde \psi )\Bigr\}^\top \xi^2  + \bigl[\zeta^1 - \EE (\tilde c)\bigr] \cdot \xi^1 -  \|\xi\|_2^2 / 2,
\#
where we write $\tilde \psi = \tilde \psi(x,u)$ and $\tilde c = \tilde c(x,u)$ for notational convenience. Here the expectation is taken following the policy $\pi_{K, b}$ and the state transition. The following lemma establishes the upper bound of $|F(\zeta, \xi) - \tilde  F(\zeta, \xi)|$, where $F(\zeta, \xi)$ and $\tilde F(\zeta, \xi)$ are defined in \eqref{eq:minmax_pe} and \eqref{eq:trun_obj}, respectively. 

\begin{lemma}\label{lemma:bound_tilde_F}
It holds with probability at least $1-\tilde T^{-6}$ that
\$
\bigl |F(\zeta, \xi) - \tilde F(\zeta, \xi) \bigr|  \leq  \biggl(\frac{1}{2T} + \frac{\log \tilde T}{\tilde T^{1/4}}\biggr)\cdot (1-\rho)^{-2}\cdot \poly\bigl( \|K\|_\F, \|b\|_2, \|\mu\|_2, J(K_0, b_0) \bigr).
\$ 
\end{lemma}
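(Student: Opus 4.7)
The plan is to decompose $F-\tilde F$ into terms that can be controlled by (i) the closeness of $\hat\mu_{K,b}$ to $\mu_{K,b}$ (via Lemma \ref{lemma:dist_hat_muz}) and (ii) the tail probability of $\cE^c$, and then to use the uniform boundedness of $\zeta\in\cV_\zeta$ and $\xi\in\cV_\xi$ from Definition \ref{assum:proj} together with moment bounds for $\psi(x,u)$ and $c(x,u)$ under the stationary distribution $\mathcal N(\mu_z,\Sigma_z)$. Comparing \eqref{eq:minmax_pe} and \eqref{eq:trun_obj}, the difference $F-\tilde F$ is a linear combination (with coefficients $\zeta$ and $\xi$) of four elementary ``defect'' vectors:
\$
\Delta_1 = \EE(\psi) - \EE(\tilde\psi),\quad \Delta_2 = \EE(c) - \EE(\tilde c),\quad \Delta_3 = \EE(c\psi) - \EE(\tilde c\tilde \psi),\quad \Delta_4 = \EE[(\psi-\psi')\psi^\top] - \EE[(\tilde\psi-\tilde\psi')\tilde\psi^\top].
\$
Thus $|F-\tilde F|\leq (\|\zeta\|_2+\|\xi\|_2)^2\cdot(\|\Delta_1\|_2+|\Delta_2|+\|\Delta_3\|_2+\|\Delta_4\|_\F)$, and the prefactor is already $\poly(\|K\|_\F,J(K_0,b_0),(1-\rho)^{-1})$ by Definition \ref{assum:proj}.

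For each $\Delta_i$ I would insert $\hat\psi$ and split into a \emph{mean-estimation defect} and a \emph{truncation defect}. For example, write
\$
\EE(\psi) - \EE(\tilde\psi) = \bigl[\EE(\psi)-\EE(\hat\psi)\bigr] + \EE(\hat\psi\,\mathbf 1_{\cE^c}).
\$
For the first piece, use that $\hat\psi$ depends on $\hat\mu_{K,b}$ only through a Lipschitz map whose Lipschitz constant is polynomial in $\|x\|_2,\|u\|_2,\|K\|_\F,\|\mu\|_2,\|b\|_2$; then take expectations, use Gaussian moment bounds, and apply Lemma \ref{lemma:dist_hat_muz} to obtain a bound of order $(\log\tilde T/\tilde T^{1/4})\cdot(1-\rho)^{-2}\cdot\poly(\cdot)$ with probability at least $1-\tilde T^{-6}$. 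For the truncation piece, use Cauchy--Schwarz together with $\PP(\cE^c)\leq T^{-5}+\tilde T^{-6}$ from \eqref{eq:pp-bd2}, \eqref{eq:pp-bd3}, and the fact that $\EE\|\hat\psi\|_2^4$ is bounded by a polynomial in $\|K\|_\F,\|\mu\|_2,\|b\|_2,\|\Phi_K\|_2$; this yields a bound of order $T^{-5/2}+\tilde T^{-3}$, which is dominated by the target $1/(2T)$. The terms $\Delta_2,\Delta_3,\Delta_4$ are handled identically, noting for $\Delta_3$ and $\Delta_4$ that the cost $c(x,u)$ is quadratic so $\EE c^2$ is polynomial in the same quantities, and for $\Delta_4$ that the successor-pair expectation factors as a contraction of $L$ plus noise, so stationary moments of $(\psi,\psi')$ remain polynomial in $(1-\rho)^{-1}$.

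Summing the four contributions and taking a union bound gives the stated inequality with probability at least $1-\tilde T^{-6}$. The main technical obstacle is controlling $\Delta_4$ cleanly: because $\hat\psi(x,u)$ contains the quadratic block $\svec([(x-\hat\mu_{K,b});(u+K\hat\mu_{K,b}-b)][\cdots]^\top)$, the difference $\hat\psi-\psi$ carries cross terms of the form $(x-\mu_{K,b})(\hat\mu_{K,b}-\mu_{K,b})^\top$, so one needs fourth-order moment estimates of $(x,u)\sim\mathcal N(\mu_z,\Sigma_z)$ and of its successor pair to show that the perturbation of $\EE[(\tilde\psi-\tilde\psi')\tilde\psi^\top]$ is linear in $\|\hat\mu_{K,b}-\mu_{K,b}\|_2$ with a polynomial prefactor; the stationary covariance of $(z,z')$ is controlled through $(1-\rho)^{-1}$ via the Lyapunov equation in \eqref{eq:def-mean-var}, accounting for the $(1-\rho)^{-2}$ factor in the final bound.
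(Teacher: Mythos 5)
Your proposal is correct and takes essentially the same route as the paper: the paper introduces an intermediate function $\hat F$ built from the estimated (non-truncated) features and bounds $|F-\tilde F|\leq |F-\hat F|+|\hat F-\tilde F|$, which is exactly your mean-estimation/truncation split, with the first piece controlled through $\|\hat\mu_z-\mu_z\|_2$ via Lemma \ref{lemma:dist_hat_muz} (giving the $\log\tilde T/\tilde T^{1/4}$ term with probability $1-\tilde T^{-6}$) and the second via $\PP(\cE^c)$, Cauchy--Schwarz, moment bounds, and the compactness of $\cV_\zeta,\cV_\xi$ (giving the $1/(2T)$ term). Your componentwise $\Delta_1,\ldots,\Delta_4$ bookkeeping is just a rearrangement of the paper's argument, and the slight looseness in how powers of $(1-\rho)^{-1}$ are absorbed into the polynomial factor mirrors the paper's own treatment.
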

\begin{proof}
See \S\ref{proof:lemma:bound_tilde_F} for a detailed proof. 
\end{proof}

Following from \eqref{eq:def_gap} and Lemma \ref{lemma:bound_tilde_F}, it holds with probability at least $1-\tilde T^{-6}$ that
\#\label{eq:gap_tilde_bound}
&\bigl| \gap(\hat \zeta, \hat \xi) - \tilde \gap(\hat \zeta, \hat \xi) \bigr|\notag\\
&\qquad \leq \biggl(\frac{1}{2T} + \frac{\log \tilde T}{\tilde T^{1/4}}\biggr)\cdot  (1-\rho)^{-2}\cdot \poly\bigl( \|K\|_\F, \|b\|_2, \|\mu\|_2, J(K_0, b_0) \bigr).
\#
where we define $\tilde \gap(\hat \zeta, \hat \xi)$ as 
\$
\tilde \gap(\hat \zeta, \hat \xi) =  \max_{\xi\in\cV_\xi} \tilde F(\hat\zeta, \xi) - \min_{\zeta\in\cV_\zeta} \tilde F(\zeta, \hat\xi). 
\$
Therefore, to upper bound of $\gap(\zeta, \xi)$, we only need to upper bound $\tilde \gap(\zeta, \xi)$.  

\vskip5pt
\noindent \textbf{Part 3.}   We upper bound $\tilde \gap(\zeta, \xi)$ in the sequel.  We first show that the trajectory generated by the policy $\pi_{K,b}$ and the state transition in Problem \ref{prob:slqr} is $\beta$-mixing. 

\begin{lemma}\label{lemma:mixing}
Consider a linear system $y_{t+1} = D y_t + \vartheta + \upsilon_t$, where $\{y_t\}_{t\geq 0}\subset \RR^m$, the matrix $D\in\RR^{m\times m}$ satisfying $\rho(D) < 1$, the vector $\vartheta\in\RR^m$, and $\upsilon_t\sim\mathcal N(0, \Sigma)$ is the Gaussians. We denote by $\varpi_t$ the marginal distribution of $y_t$ for any $t\geq0$. Meanwhile, assume that the stationary distribution of $\{y_t\}_{t\geq 0}$ is a Gaussian distribution $\mathcal N((I-D)^{-1}\vartheta, \Sigma_\infty)$, where $\Sigma_\infty$ is the covariance matrix. We define the $\beta$-mixing coefficients for any $n\geq 1$ as follows
\$
\beta(n) = \sup_{t\geq 0} \EE_{y\sim \varpi_t} \Bigl[   \bigl\| \PP_{y_n}(\cdot\given y_0 = y) - \PP_{\mathcal N((I-D)^{-1}\vartheta, \Sigma_\infty)} (\cdot )  \bigr\|_\TV   \Bigr].
\$
Then, for any $\rho\in(\rho(D), 1)$, the $\beta$-mixing coefficients satisfy that 
\$
\beta(n) \leq C_{\rho, D, \vartheta}  \cdot \bigl[ \tr(\Sigma_\infty) + m\cdot (1-\rho)^{-2} \bigr]^{1/2}\cdot \rho^n,
\$
where $C_{\rho, D, \vartheta}$ is a constant, which only depends on $\rho$, $D$, and $\vartheta$.   We say that the sequence $\{y_t\}_{t\geq 0}$ is $\beta$-mixing with parameter $\rho$. 
\end{lemma}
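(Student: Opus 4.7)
The plan is to apply Pinsker's inequality to reduce the total variation distance to a Kullback--Leibler divergence, for which the Gaussian structure gives a closed-form expression that I can control via Gelfand's formula on $\|D^n\|_2$.

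First, I would iterate the recursion to see that, conditional on $y_0 = y$, the law of $y_n$ is Gaussian with mean $\mu_\infty + D^n(y - \mu_\infty)$ and covariance $\Sigma_n = \sum_{k=0}^{n-1} D^k \Sigma (D^\top)^k$, where $\mu_\infty = (I-D)^{-1}\vartheta$. The mean identity uses $\sum_{k=0}^{n-1}D^k\vartheta = (I-D^n)\mu_\infty$, and by iterating the Lyapunov equation $\Sigma_\infty = D\Sigma_\infty D^\top + \Sigma$ one obtains $\Sigma_\infty - \Sigma_n = D^n \Sigma_\infty (D^\top)^n$. Thus both the mean gap and the covariance gap to the stationary distribution are governed by a single factor of $D^n$ applied to bounded quantities.

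Second, Pinsker's inequality together with the Gaussian KL formula gives
\[
\bigl\|\PP_{y_n}(\cdot\mid y_0=y) - \mathcal{N}(\mu_\infty,\Sigma_\infty)\bigr\|_{\TV}^2 \leq \tfrac{1}{4}\Bigl[(D^n(y-\mu_\infty))^\top \Sigma_\infty^{-1}(D^n(y-\mu_\infty)) + \tr(\Sigma_\infty^{-1}\Sigma_n) - m - \log\det(\Sigma_\infty^{-1}\Sigma_n)\Bigr].
\]
Writing $\Sigma_\infty^{-1}\Sigma_n = I - E_n$ with $E_n = \Sigma_\infty^{-1}D^n\Sigma_\infty(D^\top)^n$, the Taylor bound $\tr(E_n) - \log\det(I-E_n) = O(m\|E_n\|_2^2)$ (valid once $\|E_n\|_2 \leq 1/2$) reduces the covariance piece to $O(m\|D^n\|_2^4)$. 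By Gelfand's formula, for any $\rho > \rho(D)$ there exists a constant $C_{\rho,D}$ with $\|D^n\|_2 \leq C_{\rho,D}\rho^n$, so both the mean term and the covariance term are bounded by $C_{\rho,D}^2\rho^{2n}$ multiplied by, respectively, $\|\Sigma_\infty^{-1}\|_2\,\|y-\mu_\infty\|_2^2$ and $m$, with the spectral constants absorbable into $C_{\rho,D}$.

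Third, applying Jensen's inequality $\EE[\sqrt{Z}] \leq \sqrt{\EE[Z]}$ to the expectation over $y\sim\varpi_t$ yields $\beta(n) \leq C_{\rho,D,\vartheta}\,\rho^n\,\sqrt{\EE_{y\sim\varpi_t}\|y-\mu_\infty\|_2^2 + m}$. Since $\varpi_t$ is the marginal of the same linear Gaussian dynamics at time $t$, its covariance $\Sigma_t$ is dominated by $\Sigma_\infty$ and its mean satisfies $\|\mu_t - \mu_\infty\|_2 \leq \|D^t\|_2\,\|\mu_0 - \mu_\infty\|_2$; summing the resulting geometric series gives $\EE_{y\sim\varpi_t}\|y-\mu_\infty\|_2^2 \leq \tr(\Sigma_\infty) + C_{\rho,D}^2(1-\rho)^{-2}\cdot m$ uniformly in $t$, which yields the claimed factor $[\tr(\Sigma_\infty) + m(1-\rho)^{-2}]^{1/2}$. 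The main obstacle will be controlling the $\tr - \log\det$ combination in the Gaussian KL without losing the clean $\rho^{2n}$ decay, and verifying that the spectral quantities $\|\Sigma_\infty\|_2$, $\|\Sigma_\infty^{-1}\|_2$, together with the Gelfand constant and the magnitude of $\mu_\infty$, can all be absorbed into a single constant $C_{\rho,D,\vartheta}$ depending only on the stated parameters.
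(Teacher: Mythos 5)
The paper does not actually prove this lemma: its ``proof'' is a pointer to Proposition 3.1 of \cite{tu2017least}, so your self-contained argument is necessarily a different route. Your route is sound in outline: the conditional law of $y_n$ given $y_0=y$ is exactly $\mathcal N\bigl(\mu_\infty + D^n(y-\mu_\infty), \Sigma_n\bigr)$ with $\Sigma_\infty-\Sigma_n = D^n\Sigma_\infty (D^\top)^n$, and Pinsker plus the closed-form Gaussian KL, with Gelfand's formula $\|D^n\|_2\leq C_{\rho,D}\,\rho^n$ and Jensen over $y\sim\varpi_t$, does deliver geometric $\beta$-mixing with a prefactor of the claimed shape $[\tr(\Sigma_\infty)+m(1-\rho)^{-2}]^{1/2}$. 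The cited proof instead estimates the total-variation distance between the two Gaussians directly (separating the mean shift from the covariance perturbation), which is what lets the noise enter essentially only through $\tr(\Sigma_\infty)$ and an explicit noise floor; your KL detour is shorter to write but pays for it in the constants, as noted next.

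Two caveats you should address if you carry this out. First, the KL step needs $\Sigma_\infty\succ 0$, and your constants exceed what the lemma advertises: the mean term carries $\|\Sigma_\infty^{-1}\|_2$, the covariance term carries the condition number of $\Sigma_\infty$ (since $\|E_n\|_2\leq \kappa(\Sigma_\infty)\|D^n\|_2^2$), and the bound $-\tr(E_n)-\log\det(I-E_n)\lesssim m\|E_n\|_2^2$ is only valid after a burn-in $n_0$ at which $\|E_n\|_2\leq 1/2$; for $n<n_0$ you must fall back on $\beta(n)\leq 1$, which costs a factor $\rho^{-n_0}$, and $n_0$ again depends on $\kappa(\Sigma_\infty)$. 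All of these quantities depend on the noise covariance $\Sigma$, so strictly you obtain the lemma with a constant $C_{\rho,D,\vartheta,\Sigma}$ rather than one depending only on $\rho$, $D$, $\vartheta$; this is harmless in the paper's application (there the driving noise $\Psi_\epsilon$, resp.\ $\Psi_\delta$, is nondegenerate, so $\sigma_{\min}(\Sigma_\infty)$ is bounded below), but it is weaker than the statement as written, and it is precisely the place where the direct TV estimate of \cite{tu2017least} is tighter. Second, $\varpi_t$ depends on the unspecified initial distribution: your claims $\Sigma_t\preceq\Sigma_\infty$ and $\|\mu_t-\mu_\infty\|_2\leq\|D^t\|_2\|\mu_0-\mu_\infty\|_2$ hold for a deterministic (or stationary) start, which is how the lemma is used in the paper, but this assumption should be made explicit; note also that the uniformity in $t$ comes from taking a supremum, the $(1-\rho)^{-1}$ factor arising from the Neumann series for $(I-D)^{-1}$ in $\mu_\infty$ rather than from a sum over $t$.
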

\begin{proof}
See Proposition 3.1 in \cite{tu2017least} for details.
\end{proof}

Recall that by \eqref{eq:f1}, the sequence $\{x_t\}_{t\geq 0}$ follows 
\$
x_{t+1} = (A-BK)x_t + (Bb + \overline A\mu + d) + \epsilon_t, \qquad \epsilon_t\sim\mathcal N(0,\Psi_\epsilon),
\$
where the matrix $A-BK$ satisfies that $\rho(A-BK) < 1$. Therefore, by Lemma \ref{lemma:mixing}, the sequence $\{z_t\}_{t\geq 0}$ is $\beta$-mixing with parameter $\rho\in(\rho(A-BK),1)$, where $z_t = (x_t^\top, u_t^\top)^\top$.   The following lemma upper bounds the primal-dual gap for a convex-concave problem.

\begin{lemma}\label{lemma:primal_dual_gap}
Let $\cX$ and $\cY$ be two compact and convex sets such that $\|x-x'\|_2 \leq M$ and $\|y-y'\|_2 \leq M$ for any $x,x'\in\cX$ and $y,y'\in\cY$. We consider solving the following minimax problem
\$
\min_{x\in\cX}\max_{y\in\cY} H(x,y) = \EE_{\epsilon\sim \varpi_\epsilon} \bigl[ G(x,y;\epsilon) \bigr],
\$ 
where the objective function $H(x,y)$ is convex in $x$ and concave in $y$. In addition, we assume that the distribution $\varpi_\epsilon$ is $\beta$-mixing with $\beta(n) \leq C_\epsilon\cdot \rho^n$, where $C_\epsilon$ is a constant. Meanwhile, we assume that it holds almost surely that $G(x,y;\epsilon)$ is $\tilde L_0$-Lipschitz in both $x$ and $y$, the gradient $\nabla_x G(x,y;\epsilon)$ is $\tilde L_1$-Lipschitz in $x$ for any $y\in\cY$, the gradient $\nabla_y G(x,y;\epsilon)$ is $\tilde L_1$-Lipschitz in $y$ for any $x\in\cX$, where $C_\epsilon, \tilde L_0, \tilde L_1 > 1$. Each step of our gradient-based method takes the following forms,
\$
x_{t+1} = \Gamma_{\cX} \bigl[ x_{t} - \gamma_{t+1} \cdot \nabla_x G(x_{t}, y_{t}; \epsilon_t)\bigr], \qquad y_{t+1} = \Gamma_{\cY} \bigl[ y_{t} - \gamma_{t+1} \cdot \nabla_y G(x_{t}, y_{t}; \epsilon_t)\bigr],
\$
where the operators $\Gamma_\cX$ and $\Gamma_\cY$ projects the variables back to $\cX$ and $\cY$, respectively, and the stepsizes take the form $\gamma_t = \gamma_0\cdot{t^{-1/2}}$ for a constant $\gamma_0 > 0$. Moreover, let $\hat x = (\sum_{t = 1}^T \gamma_t)^{-1} (\sum_{t = 1}^T \gamma_t x_t)$ and $\hat y = (\sum_{t = 1}^T \gamma_t)^{-1} (\sum_{t = 1}^T \gamma_t y_t)$ be the final output of the gradient method after $T$ iterations, then there exists a positive absolute constant $C$, such that for any $\delta\in(0,1)$, the primal-dual gap to the minimax problem is upper bounded as
\$
\max_{x\in\cX} H(\hat x, y) - \min_{y\in\cY} H(x, \hat y) \leq \frac{C\cdot (M^2 + \tilde L_0^2 + \tilde L_0\tilde L_1 M)}{\log(1/\rho)} \cdot \frac{\log^2 T + \log(1/\delta)}{\sqrt{T}} + \frac{C\cdot C_\epsilon \tilde L_0 M}{T},
\$
which holds with probability at least $1-\delta$. 
\end{lemma}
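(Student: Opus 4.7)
The plan is to combine the standard convex--concave analysis of projected stochastic gradient descent--ascent with a blocking argument that reduces the $\beta$-mixing data to an essentially i.i.d.\ setting. To start, non-expansiveness of the projections onto $\cX$ and $\cY$, together with $\|\nabla_x G\|_2,\|\nabla_y G\|_2\leq \tilde L_0$, give for every fixed $(x,y)\in\cX\times\cY$ the one-step descent/ascent inequalities
\begin{align*}
\|x_{t+1}-x\|_2^2 &\leq \|x_t-x\|_2^2 - 2\gamma_{t+1}\bigl\langle\nabla_x G(x_t,y_t;\epsilon_t),\,x_t-x\bigr\rangle + \gamma_{t+1}^2\tilde L_0^2, \\
\|y_{t+1}-y\|_2^2 &\leq \|y_t-y\|_2^2 + 2\gamma_{t+1}\bigl\langle\nabla_y G(x_t,y_t;\epsilon_t),\,y_t-y\bigr\rangle + \gamma_{t+1}^2\tilde L_0^2.
\end{align*}
Using convexity of $G(\cdot,y_t;\epsilon_t)$ and concavity of $G(x_t,\cdot;\epsilon_t)$ to replace inner products by function differences, summing over $t\in[T]$, dividing by $\sum_t\gamma_t$, and applying Jensen's inequality to the averaged iterates $(\hat x,\hat y)$ yields
\begin{align*}
H(\hat x, y) - H(x,\hat y) \;\leq\; \frac{M^2+\tilde L_0^2\sum_t\gamma_t^2}{\sum_t\gamma_t} + \frac{1}{\sum_t\gamma_t}\sum_{t=1}^T\gamma_t\,\Xi_t(x,y),
\end{align*}
where $\Xi_t(x,y)$ collects the differences between the sampled and population gradients, paired with $x_t-x$ and $y-y_t$ respectively. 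With the stepsize schedule $\gamma_t=\gamma_0/\sqrt{t}$ one has $\sum_t\gamma_t\asymp\sqrt{T}$ and $\sum_t\gamma_t^2\asymp\log T$, so the deterministic piece is already of order $(M^2+\tilde L_0^2\log T)/\sqrt{T}$.

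The main obstacle, and where $\beta$-mixing must be used, is to uniformly control $\sum_t \gamma_t\,\Xi_t(x,y)$ over $(x,y)\in\cX\times\cY$ when $\{\epsilon_t\}$ is not independent. I would employ the classical Yu-style blocking argument: partition $[T]$ into $2T/n$ alternating blocks of length $n:=\lceil\log T/\log(1/\rho)\rceil$. The defining property of $\beta$-mixing provides a coupling of the odd (resp.\ even) sub-sequences with a genuinely independent sequence at total-variation cost at most $(T/n)\cdot\beta(n)\leq (T/n)\cdot C_\epsilon\rho^n = O(C_\epsilon/T)$. Combined with the almost-sure envelope $|\gamma_t\Xi_t|\leq 2\gamma_t\tilde L_0 M$, this coupling cost accounts for the $C\cdot C_\epsilon\tilde L_0 M/T$ residual term in the claimed bound. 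Conditionally on the past, each $\Xi_t$ is mean-zero and uniformly bounded on each decoupled block, so Azuma--Hoeffding applied to the resulting martingale gives, for any fixed $(x,y)$, a deviation of order $\tilde L_0 M\sqrt{\log(1/\delta')}/\sqrt{T}$ with probability at least $1-\delta'$.

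To upgrade this pointwise bound to one that is uniform in $(x,y)\in\cX\times\cY$, I would lay down an $\eta$-net of the product domain with $\eta\asymp 1/\sqrt{T}$; its cardinality is polynomial in $T$ and $M$, so a union bound over the net introduces only an extra $\log T$ factor (giving $\log^2 T$ when combined with the $\log T$ from $\sum_t\gamma_t^2$), while the Lipschitz continuity of $\nabla G$ in $(x,y)$ with constant $\tilde L_1$ is used to transfer the bound from the net to the whole domain and is what produces the $\tilde L_0\tilde L_1 M$ coefficient. Assembling the three pieces---deterministic $(M^2+\tilde L_0^2\log T)/\sqrt{T}$, uniform martingale tail of order $(M^2+\tilde L_0^2+\tilde L_0\tilde L_1 M)(\log^2 T+\log(1/\delta))/(\sqrt{T}\log(1/\rho))$ with the $1/\log(1/\rho)$ coming from $n\asymp\log T/\log(1/\rho)$, and the mixing-coupling residual $C\cdot C_\epsilon\tilde L_0 M/T$---gives the stated bound. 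The delicate step is the balance in the block length $n$: too large ruins the $1/\sqrt{T}$ rate, while too small wastes the exponential $\beta$-mixing decay, and $n\asymp\log T/\log(1/\rho)$ is dictated by this trade-off.
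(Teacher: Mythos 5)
Note first that the paper does not prove this lemma at all: it is imported verbatim as Theorem 5.4 of \cite{yyy2019aclqr}, so the comparison here is between your argument and the proof given there.

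Your skeleton (projection non-expansiveness, convexity/concavity of $H$ to turn inner products into function gaps, Jensen on the averaged iterates, $\sum_t\gamma_t\asymp\sqrt{T}$, $\sum_t\gamma_t^2\asymp\log T$) is the standard and correct first half. The genuine gap is in how you handle $\sum_t\gamma_t\,\Xi_t(x,y)$ under $\beta$-mixing. Your key claim --- that after Yu/Berbee blocking, ``conditionally on the past, each $\Xi_t$ is mean-zero on each decoupled block'' --- is false. Blocking makes \emph{alternating blocks} of the noise independent of one another, but it does nothing about the dependence \emph{within} a block, and $x_t,y_t$ are adapted to the very same recent samples that $\epsilon_t$ is correlated with; hence $\EE[\Xi_t\,|\,\cF_{t-1}]\neq 0$ and the block sums are not martingale differences either. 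This conditional bias is exactly the difficulty that distinguishes mixing data from i.i.d.\ data (with $\rho$ close to $1$ the bias of a single step is not small), and Yu's blocking, which is designed for empirical processes of a \emph{fixed} function class, does not remove it for adapted SGD iterates. Consequently the Azuma step, and everything downstream of it, is unsupported as written.

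The missing idea, which is how the cited proof (in the spirit of ergodic mirror descent) proceeds, is a delayed-iterate argument: compare $\Xi_t$ evaluated at $(x_t,y_t)$ with the same quantity evaluated at the frozen iterates $(x_{t-\tau},y_{t-\tau})$ with $\tau\asymp\log T/\log(1/\rho)$. The substitution error is controlled by the slow drift of the iterates, $\|x_t-x_{t-\tau}\|_2\leq \tilde L_0\sum_{i=t-\tau}^{t-1}\gamma_i$, combined with the $\tilde L_1$-Lipschitz gradients and the diameter $M$ --- this is in fact where the $\tilde L_0\tilde L_1 M$ coefficient comes from, not from an $\eta$-net transfer. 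For the frozen term, $(x_{t-\tau},y_{t-\tau})$ is $\cF_{t-\tau}$-measurable, so mixing bounds the conditional bias by $\beta(\tau)\cdot\tilde L_0 M\lesssim C_\epsilon\tilde L_0 M/T$ (your $1/T$ term), and what remains is a genuine bounded martingale difference sequence with respect to the $\tau$-skipped filtration, to which Azuma--Hoeffding applies; the supremum over $(x,y)$ is then handled (by a net or an auxiliary-sequence trick), giving the $\log^2 T+\log(1/\delta)$ factor. Without this shift-and-drift step your blocking construction cannot be repaired simply by choosing the block length more carefully, so the concentration half of your proof needs to be replaced rather than tightened.
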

\begin{proof}
See Theorem 5.4 in \cite{yyy2019aclqr} for details. 
\end{proof}

To use Lemma \ref{lemma:primal_dual_gap}, we define the function $G(\zeta, \xi; \tilde \psi, \tilde \psi')$ as
\$
G(\zeta, \xi; \tilde \psi, \tilde \psi') = \bigl[  \tilde \psi  \zeta^1+  (\tilde \psi - \tilde \psi') \tilde \psi^\top  \zeta^2 -  \tilde c \tilde \psi \bigr]^\top \xi^2  + (\zeta^1 -  \tilde c ) \cdot \xi^1 - 1/2\cdot \|\xi\|_2^2,
\$
where $\tilde \psi = \tilde \psi(x,u)$ and $\tilde \psi' = \tilde \psi(x',u')$. Note that the gradients of $G(\zeta, \xi; \tilde \psi, \tilde \psi')$ take the form
\$
\nabla_\zeta G(\zeta, \xi; \tilde \psi, \tilde \psi') = \begin{pmatrix} 
\tilde \psi^\top \xi^2 + \xi^1\\
\tilde \psi (\tilde \psi - \tilde \psi')^\top \xi^2
\end{pmatrix}, \quad
\nabla_\xi G(\zeta, \xi; \tilde \psi, \tilde \psi') = \begin{pmatrix} 
\zeta^1 - \tilde c - \xi^1\\
\tilde \psi  \zeta^1+  (\tilde \psi - \tilde \psi') \tilde \psi^\top  \zeta^2 -  \tilde c \tilde \psi - \xi^2
\end{pmatrix}.
\$
By Definition \ref{assum:proj} and Lemma \ref{lemma:zeta_xi}, we know that
\#\label{eq:grad_bound}
& \bigl\|\nabla_\zeta G(\zeta, \xi; \tilde \psi, \tilde \psi')  \bigr\|_2 \leq  \poly \bigl(  \|K\|_\F, J(K_0, b_0)  \bigr)\cdot \log^2 T\cdot (1-\rho)^{-2}, \notag\\
& \bigl\|\nabla_\xi G(\zeta, \xi; \tilde \psi, \tilde \psi')  \bigr\|_2 \leq  \poly \bigl(  \|K\|_\F, \|\mu\|_2, J(K_0, b_0)  \bigr)\cdot \log^2 T \cdot (1-\rho)^{-2}.
\#
This gives the Lipschitz constant $\tilde L_0$ in Lemma \ref{lemma:primal_dual_gap} for $G(\zeta, \xi; \tilde \psi, \tilde \psi')$. 
Also, the Hessians of $G(\zeta, \xi; \tilde \psi, \tilde \psi')$ take the forms of
\$
\nabla^2_{\zeta\zeta} G(\zeta, \xi; \tilde \psi, \tilde \psi') = 0, \qquad \nabla^2_{\xi\xi} G(\zeta, \xi; \tilde \psi, \tilde \psi') = -I,
\$
which follows that
\#\label{eq:hess_bound}
\bigl\|\nabla^2_{\zeta\zeta} G(\zeta, \xi; \tilde \psi, \tilde \psi')\bigr\|_2 = 0, \qquad \bigl\|\nabla^2_{\xi\xi} G(\zeta, \xi; \tilde \psi, \tilde \psi')\bigr\|_2 = 1. 
\#
This gives the Lipschitz constant $\tilde L_1$ in Lemma \ref{lemma:primal_dual_gap} for $\nabla_\zeta G(\zeta, \xi; \tilde \psi, \tilde \psi')$ and $\nabla_\xi G(\zeta, \xi; \tilde \psi, \tilde \psi')$. 
Moreover, note that \eqref{eq:bound_z} provides an upper bound of $M$, combining \eqref{eq:grad_bound}, \eqref{eq:hess_bound} and Lemma \ref{lemma:primal_dual_gap}, it holds with probability at least $1 - T^{-5}$ that 
\#\label{eq:xgnb22}
\tilde \gap(\hat \zeta, \hat \xi) \leq \frac{\poly \bigl( \|K\|_\F, \|\mu\|_2, J(K_0, b_0)  \bigr)\cdot \log^6 T}{(1-\rho)^{4}\cdot \sqrt{T}}.
\#
Combining \eqref{eq:gap_error},  \eqref{eq:gap_tilde_bound}, and \eqref{eq:xgnb22}, we know that
\$
\|\hat\alpha_{K, b} - \alpha_{K, b}\|_2^2 \leq  \lambda_K^{-2}\cdot \poly_1\bigl( \|K\|_\F, \|b\|_2, \|\mu\|_2, J(K_0, b_0) \bigr)  \cdot  \biggl[  \frac{\log^6 T}{T^{1/2} \cdot (1-\rho)^{4}} +  \frac{\log \tilde T}{\tilde T^{1/4}\cdot (1-\rho)^{2}}\biggr].
\$
Same bounds for $\|\hat\Upsilon_K - \Upsilon_K\|_\F^2$, $\|\hat p_{K,b} - p_{K,b}\|_2^2$, and $\|\hat q_{K,b} - q_{K,b}\|_2^2$ hold.  We finish the proof of the theorem. 
\end{proof}


\section{Proofs of Propositions}

\subsection{Proof of Proposition \ref{prop:uniq_eq}}\label{proof:prop:uniq_eq}
\begin{proof}
We follow a similar proof as in the one of Theorem 1.1 in \cite{sznitman1991topics} and Theorem 3.2 in \cite{bensoussan2016linear}. 
Note that for any policy $\pi_{K,b}\in\Pi$, the parameters $K$ and $b$ uniquely determine the policy.  We define the following metric on $\Pi$. 
\begin{definition}\label{def:metric_pi}
For any $\pi_{K_1, b_1}, \pi_{K_2, b_2}\in\Pi$, we define the following metric,
\$
\|\pi_{K_1, b_1} - \pi_{K_2, b_2}\|_2 = c_1\cdot \|K_1 - K_2\|_2 + c_2\cdot \|b_1 - b_2\|_2,
\$
where $c_1$ and $c_2$ are positive constants. 
\end{definition}
One can verify that Definition \ref{def:metric_pi} satisfies the requirement of being a metric.  We first evaluate the forms of the operators $\Lambda_1(\cdot)$ and $\Lambda_2(\cdot, \cdot)$. 

\vskip5pt
\noindent \textbf{Forms of the operators $\Lambda_1(\cdot)$ and $\Lambda_2(\cdot, \cdot)$. }   By the definition of  $\Lambda_1(\mu)$, which gives the optimal policy under the mean-field state $\mu$,  it holds that
\$
\Lambda_1(\mu) = \pi_\mu^*,
\$
where $\pi_\mu^*$ solves Problem \ref{prob:slqr}.  This gives the form of $\Lambda_1(\cdot)$.  We now turn to $\Lambda_2(\mu, \pi)$, which gives the mean-field state $\mu_{\rm new}$ generated by the policy $\pi$ under the current mean-field state $\mu$.  In Problem \ref{prob:slqr}, the sequence of states $\{x_t\}_{t\geq 0}$ constitutes a Markov chain, which admits a unique stationary distribution. Thus, by the state transition in Problem \ref{prob:slqr} and the form of the linear-Gaussian policy, we have 
\#\label{eq:xgnb5}
\mu_{\text{new}} = (A-BK_{\pi})\mu_{\text{new}} + (Bb_{\pi} + \overline A\mu + d),
\#
where $K_\pi$ and $b_\pi$ are parameters of the policy $\pi$. 
By solving \eqref{eq:xgnb5} for $\mu_{\rm new}$, it holds that 
\$
\Lambda_2(\mu, \pi) = \mu_{\text{new}} = (I-A+BK_{\pi})^{-1}(Bb_{\pi} + \overline A\mu + d). 
\$
This gives the form of $\Lambda_2(\cdot, \cdot)$.  

\vskip5pt
Next, we compute the Lipschitz constants for $\Lambda_1(\cdot)$ and $\Lambda_2(\cdot, \cdot)$.

\vskip5pt
\noindent \textbf{Lipschitz constant for $\Lambda_1(\cdot)$.}   By Proposition \ref{prop:J2}, for any $\mu_1, \mu_2\in\RR^m$, the optimal  $K^*$ is fixed for Problem \ref{prob:slqr}. Therefore, by the form of the optimal $b^K$ given in Proposition \ref{prop:J2}, it holds that
\#\label{eq:lip1}
\bigl\| \Lambda_1(\mu_1) - \Lambda_1(\mu_2)  \bigr\|_2 & \leq  c_2\cdot   \Bigl\| \bigl[ (I-A)Q^{-1}(I-A)^\top + BR^{-1}B^\top \bigr]^{-1} \overline A \Bigr\|_2\notag\\
& \qquad \cdot  \Bigl\| \bigl[ K^*Q^{-1} (I-A)^\top - R^{-1}B^\top \bigr]  \Bigr\|_2 \cdot \|\mu_1 - \mu_2\|_2\notag\\
& = c_2   L_1\cdot \|\mu_1 - \mu_2\|_2,
\#
where $L_1$ is defined in Assumption \ref{assum:contraction}. 

\vskip5pt
\noindent \textbf{Lipschitz constants for $\Lambda_2(\cdot, \cdot)$.}   By Proposition \ref{prop:J2}, for any $\mu_1, \mu_2\in\RR^m$, the optimal $K^*$ is fixed for Problem \ref{prob:slqr}. Thus,  for any $\pi\in\Pi$ such that $\pi$ is an optimal policy under some $\mu\in\RR^m$, it holds that
\#\label{eq:lip2}
\bigl \| \Lambda_2(\mu_1, \pi) - \Lambda_2(\mu_2, \pi) \bigr\|_2& =  \bigl\|(I-A+BK_\pi)^{-1}\cdot \overline A\cdot (\mu_1-\mu_2) \bigr\|_2\notag\\
& \leq \bigl[1-\rho(A-BK^*)\bigr]^{-1}\cdot \|\overline A\|_2\cdot \|\mu_1 - \mu_2\|_2\notag\\
& =  L_2\cdot \|\mu_1 - \mu_2\|_2,
\#
where $L_2$ is defined in Assumption \ref{assum:contraction}, and $K_\pi = K^*$ is the parameter of the policy $\pi$.   Meanwhile, for any mean-field state $\mu\in\RR^m$, and any poicies $\pi_1, \pi_2\in\Pi$ that are optimal under some mean-field states $\mu_1$, $\mu_2$, respectively, we have
\#\label{eq:lip3}
\bigl \| \Lambda_2(\mu, \pi_1) - \Lambda_2(\mu, \pi_2) \bigr\|_2& =  \bigl\|(I-A+BK^*)^{-1} B\cdot (b_{\pi_1}-b_{\pi_2}) \bigr\|_2\notag\\
& \leq \bigl[1-\rho(A-BK^*)\bigr]^{-1}\cdot \|B\|_2\cdot \|b_{\pi_1}-b_{\pi_2}\|_2\notag\\
& = c_2^{-1} L_3\cdot \| \pi_1 - \pi_2 \|_2,
\#
where in the last equality, we use the fact that $K_{\pi_1} = K_{\pi_2} = K^*$ by Proposition \ref{prop:J2}. Here $L_3$ is defined in Assumption \ref{assum:contraction}, and $b_{\pi_1}$ and $b_{\pi_2}$ are the parameters of the policies $\pi_1$ and $\pi_2$. 

\vskip5pt
Now we show that the operator $\Lambda(\cdot)$ is a contraction.  For any $\mu_1, \mu_2\in\RR^m$, it holds that
\$
&\bigr\|\Lambda(\mu_1) - \Lambda(\mu_2)\bigr\|_2  = \Bigr\|\Lambda_2\bigl(\mu_1, \Lambda_1(\mu_1)\bigr) - \Lambda_2\bigl(\mu_2, \Lambda_1(\mu_2)\bigr)\Bigr\|_2\\
&\qquad \leq \Bigr\|\Lambda_2\bigl(\mu_1, \Lambda_1(\mu_1)\bigr) - \Lambda_2\bigl(\mu_1, \Lambda_1(\mu_2)\bigr)\Bigr\|_2 + \Bigr\|\Lambda_2\bigl(\mu_1, \Lambda_1(\mu_2)\bigr) - \Lambda_2\bigl(\mu_2, \Lambda_1(\mu_2)\bigr)\Bigr\|_2\\
& \qquad \leq c_2^{-1} L_3\cdot  \bigr\|\Lambda_1(\mu_1) -  \Lambda_1(\mu_2) \bigr\|_2 + L_2 \cdot \|\mu_1 - \mu_2\|_2 \\
& \qquad \leq c_2^{-1}  L_3\cdot c_2 L_1\cdot \|\mu_1 - \mu_2\|_2 + L_2 \cdot \|\mu_1 - \mu_2\|_2 = (L_1L_3 + L_2)\cdot \|\mu_1 - \mu_2\|_2,
\$
where the first inequality comes from triangle inequality, the second inequality comes from \eqref{eq:lip2} and \eqref{eq:lip3}, and the last inequality comes from \eqref{eq:lip1}. By Assumption \ref{assum:contraction}, we know that $L_0 = L_1L_3 + L_2 < 1$, which shows that the operator $\Lambda(\cdot)$ is a contraction.  Moreover, by Banach fixed-point theorem, we obtain that $\Lambda(\cdot)$ has a unique fixed point, which gives the unique equilibrium pair of Problem \ref{prob:mflqr}.  We finish the proof of the proposition. 
\end{proof}

\subsection{Proof of Proposition \ref{prop:J2}}\label{proof:prop:J2}
\begin{proof}
By the definition of $J_2(K,b)$ in \eqref{eq:a2} and the definition of $\mu_{K,b}$ in \eqref{eq:f2q}, the problem 
\$
\min_b J_2(K,b)
\$ 
is equivalent to the following constrained problem, 
\#\label{eq:a4}
& \min_{\mu, b}~~ \begin{pmatrix}
\mu\\
b
\end{pmatrix}^\top
\begin{pmatrix}
Q+K^\top RK & -K^\top R\\
-RK & R
\end{pmatrix}
\begin{pmatrix}
\mu\\
b
\end{pmatrix}\notag\\
& \text{s.t.} ~~ (I-A+BK)\mu - (Bb+\overline{A}\mu + d) = 0. 
\#
Following from the KKT conditions of \eqref{eq:a4}, it holds that
\#\label{eq:a5}
 2M_K
\begin{pmatrix}
\mu\\
b
\end{pmatrix}
+
N_K \lambda = 0, \qquad
 N_K^\top 
\begin{pmatrix}
\mu\\
b
\end{pmatrix} + \overline A \mu + d = 0,
\#
where 
\$
M_K = \begin{pmatrix}
Q+K^\top RK & -K^\top R\\
-RK & R
\end{pmatrix}, \qquad N_K = \begin{pmatrix}
-(I-A+BK)^\top\\
B^\top
\end{pmatrix}. 
\$
By solving \eqref{eq:a5}, the minimizer of \eqref{eq:a4} takes the form of
\#\label{eq:a7}
\begin{pmatrix}
\mu_{K,b^K}\\
b^K
\end{pmatrix}
 = -M_K^{-1}N_K (N_K^\top M_K^{-1}N_K)^{-1}(\overline A\mu+d). 
\#
By substituting \eqref{eq:a7} into the definition of $J_2(K,b)$ in \eqref{eq:a2}, we have
\#\label{eq:a8}
J_2(K,b^K) = (\overline A \mu + d)^\top (N_K^\top M_K^{-1} N_K )^{-1}(\overline A \mu + d). 
\#
Meanwhile, by calculation, we have
\$
M_K^{-1} = \begin{pmatrix}
Q^{-1} & Q^{-1}K^\top\\
K Q^{-1} & KQ^{-1}K^\top +R^{-1}
\end{pmatrix}. 
\$
Therefore, the term $N_K^\top M_K^{-1}N_K$ in \eqref{eq:a8} takes the form of
\#\label{eq:wlx1}
N_K^\top M_K^{-1}N_K = (I-A)Q^{-1}(I-A^\top) + BR^{-1}B^\top. 
\#
By plugging \eqref{eq:wlx1} into \eqref{eq:a8}, we have
\$
J_2(K, b^K) = (\overline A \mu + d)^\top \bigl[ (I-A)Q^{-1}(I-A^\top) + BR^{-1}B^\top \bigr]^{-1} (\overline A \mu + d).
\$
Also, by plugging \eqref{eq:wlx1} into \eqref{eq:a7}, we have
\$
\begin{pmatrix}
\mu_{K,b^K}\\
b^K
\end{pmatrix}
 = \begin{pmatrix}
 Q^{-1} (I-A)^\top\\
 KQ^{-1}(I-A)^\top - R^{-1}B^\top
 \end{pmatrix} \bigl[ (I-A)Q^{-1}(I-A)^\top + BR^{-1}B^\top \bigr]^{-1}  (\overline A \mu + d). 
\$
We finish the proof of the proposition. 
\end{proof}

\subsection{Proof of Proposition \ref{prop:cost_form}}\label{proof:prop:cost_form}
\begin{proof}
By the definition of the cost function $c(x,u)$ in Problem \ref{prob:slqr} (recall that we drop the subscript $\mu$ when we focus on Problem \ref{prob:slqr}), we have
\#\label{eq:f4}
\EE c_t& = \EE(x_t^\top Q x_t  + u_t^\top R u_t + \mu^\top \overline Q \mu)\notag\\
& = \EE(x_t^\top Q x_t  + x_t^\top K^\top R K x_t - 2b^\top RK x_t + b^\top R b + \sigma^2 \eta_t^\top R\eta_t + \mu^\top \overline Q \mu)\notag\\
& = \EE\bigl[ x_t^\top(Q + K^\top R K) x_t - 2b^\top RK x_t \bigr] + b^\top R b + \sigma^2\cdot \tr(R) + \mu^\top \overline Q \mu,
\#
where we write $c_t = c(x_t, u_t)$ for notational convenience. Here in the second line we use $u_t = \pi_{K,b}(x_t) =  -K x_t + b + \sigma \eta_t$. Therefore, combining \eqref{eq:f4} and the definition of $J(K,b)$ in Problem \ref{prob:slqr}, we have
\#\label{eq:ffff1}
J(K,b)& = \lim_{T\to\infty} \frac{1}{T}\sum_{t = 0}^T \Bigl\{ \EE\bigl[ x_t^\top(Q + K^\top R K) x_t  - 2b^\top RK x_t\bigr] + b^\top R b + \sigma^2\cdot \tr(R) +  \mu^\top \overline Q \mu \Bigr\}\notag\\
& = \EE_{x\sim\mathcal N(\mu_{K,b}, \Phi_K)}\bigl[ x^\top(Q + K^\top R K) x  - 2b^\top R K x \bigr] + b^\top R b + \sigma^2\cdot \tr(R) + \mu^\top \overline Q \mu \notag \\
& = \tr\bigl[(Q + K^\top R K)\Phi_K\bigr] + \mu_{K,b}^\top (Q+K^\top RK)\mu_{K,b} - 2b^\top RK\mu_{K,b}  \\
& \qquad + b^\top Rb + \sigma^2\cdot \tr(R) + \mu^\top \overline Q\mu. \notag
\#
Now, by iteratively applying \eqref{eq:f2p} and \eqref{eq:bellman}, we have
\#\label{eq:ffff2}
\tr\bigl[(Q + K^\top R K)\Phi_K\bigr] = \tr(P_K\Psi_\epsilon),
\#
where $P_K$ is given in \eqref{eq:bellman}. Combining \eqref{eq:ffff1} and \eqref{eq:ffff2}, we know that
\$
J(K,b) = J_1(K) + J_2(K,b) + \sigma^2 \cdot \tr(R) + \mu^\top \overline Q \mu,
\$
where
\$
& J_1(K) =\tr\bigl[(Q + K^\top R K)\Phi_K\bigr]  = \tr(P_K \Psi_\epsilon),\notag\\ 
& J_2(K,b) = \begin{pmatrix}
\mu_{K,b}\\
b
\end{pmatrix}^\top
\begin{pmatrix}
Q + K^\top RK & -K^\top R\\
-R K & R
\end{pmatrix}
 \begin{pmatrix}
\mu_{K,b}\\
b
\end{pmatrix}.
\$
We finish the proof of the proposition.  
\end{proof}

\subsection{Proof of Proposition \ref{prop:convex_J2}}\label{proof:prop:convex_J2}
\begin{proof}
By calculating the Hessian matrix of $J_2(K, b)$, we have
\$
 \nabla^2_{bb} J_2(K, b) = & B^\top (I-A+BK)^{-\top} (Q + K^\top RK)(I-A+BK)^{-1} B \notag\\
&\qquad - \bigl[RK(I-A+BK)^{-1}B + B^\top (I-A+BK)^{-\top}  K^\top R\bigr] + R\\
 = & \bigl[{R}^{1/2}K(I-A+BK)^{-1}B - {R}^{1/2}\bigr]^\top\bigl[{R}^{1/2}K(I-A+BK)^{-1}B - {R}^{1/2}\bigr] \notag\\
&\qquad + B^\top (I-A+BK)^{-\top} Q (I-A+BK)^{-1} B,
\$
which is a positive definite matrix independent of $b$. We denote by its minimum singular value as $\nu_K$. 
Also, note that  $\|\nabla^2_{bb} J_2(K, b)\|_2$ is upper bounded as
\$
\bigl\|\nabla^2_{bb} J_2(K, b)\bigr\|_2 \leq \bigl[1-\rho(A-BK)\bigr]^{-2} \cdot \bigl(  \|B\|_2^2\cdot \|K\|_2^2\cdot \|R\|_2 + \|B\|_2^2\cdot \|Q\|_2 \bigr). 
\$
Therefore, it holds that 
\$
\iota_K \leq \bigl[1-\rho(A-BK)\bigr]^{-2} \cdot \bigl(  \|B\|_2^2\cdot \|K\|_2^2\cdot \|R\|_2 + \|B\|_2^2\cdot \|Q\|_2 \bigr),
\$
where $\iota_K$ is the maximum singular value of $\nabla^2_{bb} J_2(K, b)$. We finish the proof of the proposition. 
\end{proof}

\subsection{Proof of Proposition \ref{prop:pg}}\label{proof:prop:pg}
\begin{proof}
Following from Proposition \ref{prop:cost_form}, it holds that 
\#\label{eq:j1-form}
J_1(K) = \tr(P_K \Psi_\epsilon) = \EE_{y\sim \mathcal N(0, \Psi_\epsilon)}  ( y^\top P_K y ) = \EE_{y\sim \mathcal N(0, \Psi_\epsilon)} \bigl[ f_K ( y ) \bigr], 
\#
where $f_K(y) = y^\top P_K y$.  By the definition of $P_K$ in \eqref{eq:bellman}, we obtain that
\#\label{eq:af1}
\nabla_K f_K(y) & = \nabla_K \Bigl\{  y^\top (Q + K^\top RK) y + \bigl[ (A-BK) y \bigr]^\top P_K \bigl[ (A-BK) y \bigr]^\top  \Bigr\}\notag\\
& = 2 RK y y^\top + \nabla_K \Bigl[ f_K\bigl( (A-BK)y \bigr) \Bigr]. 
\#
Also, we have
\#\label{eq:af2}
\nabla_K \Bigl[ f_K\bigl( (A-BK)y \bigr) \Bigr] & = \nabla_K  f_K\bigl( (A-BK)y \bigr) - 2B^\top P_K (A-BK) y y^\top. 
\#
By plugging \eqref{eq:af2} into \eqref{eq:af1}, we have
\#\label{eq:af3}
\nabla_K f_K(y) = 2\bigl[ (R+B^\top P_K B)K - B^\top P_K A \bigr] y y^\top + \nabla_K  f_K\bigl( (A-BK)y \bigr). 
\#
By iteratively applying \eqref{eq:af3}, it holds that 
\#\label{eq:af4}
\nabla_K f_K(y) = 2\bigl[ (R+B^\top P_K B)K - B^\top P_K A \bigr] \cdot \sum_{t = 0}^\infty y_t y_t^\top,
\#
where $y_{t+1} = (A-BK)y_t$ with $y_0 = y$.  Now, combining \eqref{eq:j1-form} and \eqref{eq:af4}, it holds that
\$
\nabla_K J_1(K) = 2\bigl[ (R+B^\top P_K B)K - B^\top P_K A \bigr]\Phi_K = 2(\Upsilon_K^{22} K  - \Upsilon_K^{21})\cdot \Phi_K,
\$
where $\Upsilon_K$ is defined in \eqref{eq:def_upsilon}.  Meanwhile, combining the form of $\mu_{K,b}$ in \eqref{eq:f2q}, it holds by calculation that
\$
\nabla_b J_2(K,b) = 2\bigl[\Upsilon_K^{22}(-K\mu_{K, b} + b) + \Upsilon_{K}^{21}\mu_{K,b} + q_{K,b}\bigr],
\$
where $q_{K,b}$ is defined in \eqref{eq:def_upsilon}.  We finish the proof of the proposition. 
\end{proof}

\subsection{Proof of Proposition \ref{prop:val_func_form}}\label{proof:prop:val_func_form}
\begin{proof}
From the definition of $V_{K,b}(x)$ in \eqref{eq:val_funcv} and the definition of the cost function $c(x,u)$ in Problem \ref{prob:slqr}, it holds  that 
\$
V_{K,b}(x) = \sum_{t = 0}^\infty \Bigl\{ \EE\bigl[ & x_t^\top (Q+K^\top RK)x_t - 2b^\top RKx_t \\
& + b^\top Rb + \sigma^2 \eta_t^\top R\eta_t + \mu^\top \overline Q \mu \given x_0 = x\bigr] -J(K,b) \Bigr\}.
\$
Combining  \eqref{eq:f1}, we know that $V_{K,b}(x)$ is a quadratic function taking the form of $V_{K,b}(x) = x^\top G x + r^\top x + h$, where $G$, $r$, and $h$ are functions of $K$ and $b$. Note that $V_{K,b}(x)$ satisfies that 
\#\label{eq:quad-form1}
V_{K,b}(x) = c(x,-Kx+b) - J(K,b) + \EE\bigl[ V_{K,b}(x')\given x \bigr],
\#
by substituting the form of $c(x,-Kx+b)$ in Problem \ref{prob:slqr} and $J(K,b)$ in \eqref{eq:a1} into \eqref{eq:quad-form1}, we obtain that
\#\label{eq:f6}
&x^\top G x + r^\top x + h\notag\\
&\qquad = x^\top (Q + K^\top RK)x - 2b^\top RKx + b^\top Rb + \mu^\top \overline Q \mu\\
&\qquad\qquad  - \bigl[ \tr(P_K \Psi_\epsilon) + \mu_{K,b}^\top (Q+K^\top RK)\mu_{K,b} - 2b^\top RK\mu_{K,b} + \mu^\top \overline Q \mu + b^\top Rb \bigr ]\notag\\
& \qquad\qquad+ \bigl[ (A-BK)x + (Bb+\overline A\mu + d)  \bigr]^\top G \bigl[(A-BK)x + (Bb+\overline A\mu + d)  \bigr]\notag\\
& \qquad\qquad  + \tr(G\Psi_\epsilon) + r^\top \bigl[ (A-BK)x + (Bb+\overline A\mu + d) \bigr] + h -  \sigma^2 \cdot \tr(R). \notag
\#
By comparing the quadratic terms and linear terms on both the LHS and RHS in \eqref{eq:f6}, we obtain that
\$
G = P_K, \qquad r = 2f_{K,b}, 
\$
where $f_{K,b} = (I-A+BK)^{-\top} [  (A-BK)^\top P_K (Bb+\overline A\mu + d) - K^\top Rb ]$.  Also, by the definition of $V_{K,b}(x)$ in \eqref{eq:val_funcv}, we know that $\EE[V_{K,b}(x)] = 0$, where the expectation is taken following the stationary distribution generated by the policy $\pi_{K,b}$ and the state transition.  Therefore, we have
\$
h = -2f_{K,b}\mu_{K,b} - \mu_{K,b}^\top P_K\mu_{K,b} - \tr(P_K\Phi_K),
\$
which shows that
\#\label{eq:woca}
V_{K,b}(x) = x^\top P_K x - \tr(P_K \Phi_K)  +  2 f_{K,b}^\top (x -\mu_{K,b}) - \mu_{K,b}^\top P_K\mu_{K,b}. 
\# 

For the action-value function $Q_{K,b}(x,u)$, by plugging \eqref{eq:woca} into \eqref{eq:val_funcq}, we obtain that
\$
Q_{K,b}(x,u) &= \begin{pmatrix}
x\\
u
\end{pmatrix}^\top
\Upsilon_K
\begin{pmatrix}
x\\
u
\end{pmatrix} + 2\begin{pmatrix}
p_{K,b}\\
q_{K,b}
\end{pmatrix}^\top \begin{pmatrix}
x\\
u
\end{pmatrix}   - \tr(P_K\Phi_K)  - \sigma^2\cdot \tr(R + P_K B B^\top) \notag\\
& \qquad  - b^\top Rb +2b^\top RK\mu_{K,b}  - \mu_{K,b}^\top (Q+K^\top RK + P_K)\mu_{K,b}    \notag\\
& \qquad + 2f_{K,b}^\top\bigl[ (\overline A\mu+d)-\mu_{K,b}\bigr]  + (\overline A\mu+d)^\top P_K (\overline A\mu+d). 
\$
We finish the proof of the proposition. 
\end{proof}

\subsection{Proof of Proposition \ref{prop:bellman_compact}}\label{proof:prop:bellman_compact}
\begin{proof}
By Proposition \ref{prop:val_func_form}, it holds that  $Q_{K,b}$ takes the following linear form
\#\label{eq:q3}
Q_{K,b}(x,u) =  \psi(x,u)^\top \alpha_{K,b} + \beta_{K,b},
\#
where $\beta_{K,b}$ is a scalar independent of $x$ and $u$. 
Note that $Q_{K,b}(x,u)$ satisfies that
\#\label{eq:q4}
Q_{K,b}(x,u) = c(x,u) - J(K,b) + \EE_{\pi_{K,b}}\bigl[ Q_{K,b}(x',u')\given x,u \bigr],
\#
where $(x',u')$ is the state-action pair after $(x,u)$ following the policy $\pi_{K,b}$ and the state transition. Combining \eqref{eq:q3} and \eqref{eq:q4}, we obtain that
\#\label{eq:q5}
 \psi(x,u)^\top \alpha_{K,b}  =  c(x,u) - J(K,b) +  \EE_{\pi_{K,b}}\bigl[\psi(x',u')\given x,u\bigr]^\top \alpha_{K,b}.
\#
By left multiplying $\psi(x,u)$ to both sides of \eqref{eq:q5}, and taking the expectation, we have
\$
\EE_{\pi_{K,b}}\Bigl\{ \psi(x,u) \bigl[\psi(x,u) -\psi(x',u')\bigr]^\top \Bigr \} \cdot \alpha_{K,b} + \EE_{\pi_{K,b}}\bigl[ \psi(x,u)\bigr] \cdot J(K,b)  =  \EE_{\pi_{K,b}}\bigl[c(x,u) \psi(x,u)\bigr]. 
\$
Combining the definition of the matrix $\Theta_{K,b}$ in \eqref{eq:q2}, we have
\$ 
\begin{pmatrix}
1 & 0\\
\EE_{\pi_{K,b}}\bigl[ \psi(x,u) \bigr] & \Theta_{K,b}
\end{pmatrix}
\begin{pmatrix}
J(K,b)\\
\alpha_{K,b}
\end{pmatrix} = \begin{pmatrix}
J(K,b)\\
\EE_{\pi_{K,b}}\bigl[ c(x,u) \psi(x,u) \bigr]
\end{pmatrix},
\$
which concludes the proof of the proposition. 
\end{proof}

\subsection{Proof of Proposition \ref{prop:invert_theta}}\label{proof:prop:invert_theta}
\begin{proof}
\noindent\textbf{Invertibility and Upper Bound.}  We denote by $z_t = (x_t^\top, u_t^\top)^\top$ for any $t\geq 0$.  Then following from the state transition and the policy $\pi_{K,b}$, the transition of $\{z_t\}_{t\geq0}$ takes the form of
\#\label{eq:system_of_z}
z_{t+1} = L z_t + \nu + \delta_t,
\#
where $L$, $\nu$ and $\delta$ are defined as
\$
L = \begin{pmatrix}
A & B\\
-KA & -KB
\end{pmatrix}, \qquad
\nu = \begin{pmatrix}
\overline A \mu + d\\
-K(\overline A\mu+d) + b
\end{pmatrix}, \qquad
\delta_t = \begin{pmatrix}
\omega_t\\
-K\omega_t + \sigma \eta_t
\end{pmatrix}.  
\$
Note that $L$ also takes the form of
\$
L = \begin{pmatrix}
I\\
-K
\end{pmatrix}
\begin{pmatrix}
A & B
\end{pmatrix}. 
\$
Combining the fact that $\rho(UV) = \rho(VU)$ for any matrices $U$ and $V$, we know that $\rho(L) = \rho(A-BK) < 1$, which verifies the stability of  \eqref{eq:system_of_z}. 
Following from the stability of \eqref{eq:system_of_z}, we know that the Markov chain generated by \eqref{eq:system_of_z} admits a unique stationary distribution $\mathcal N(\mu_z, \Sigma_z)$, where $\mu_z$ and $\Sigma_z$ satisfy that
\$
\mu_z = L\mu_z + \nu, \qquad \Sigma_z = L\Sigma_z L^\top  + \Psi_\delta.
\$
where
\$
\Psi_\delta = \begin{pmatrix}
\Psi_\omega & -\Psi_\omega K^\top\\
-K \Psi_\omega & K\Psi_\omega K^\top +\sigma^2 I 
\end{pmatrix}. 
\$
Also, we know that  $\Sigma_z$ takes the form of
\#\label{eq:sigma_z_factor}
\Sigma_z = {\rm Cov} \Biggl[ \begin{pmatrix}
x\\
u
\end{pmatrix}\Biggr]
 = \begin{pmatrix}
\Phi_K & -\Phi_K K^\top\\
-K\Phi_K & K\Phi_K K^\top + \sigma^2  I
\end{pmatrix} = \begin{pmatrix}
0 & 0\\
0 & \sigma^2 I
\end{pmatrix} + \begin{pmatrix}
I\\
-K
\end{pmatrix}\Phi_K\begin{pmatrix}
I\\
-K
\end{pmatrix}^\top,
\#
where $\Phi_K$ is defined in \eqref{eq:f2p}.

The following lemma establishes the form of $\Theta_{K,b}$. 

\begin{lemma}\label{lemma:theta_form}
The matrix $\Theta_{K,b}$ in \eqref{eq:q2} takes the form of
\$
\Theta_{K,b} = \begin{pmatrix}
2 (\Sigma_z\otimes_s \Sigma_z) (I - L \otimes_s L)^\top & 0 \\
0 & \Sigma_z (I-L)^\top
\end{pmatrix}. 
\$
\end{lemma}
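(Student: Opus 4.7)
The plan is to reduce the computation of $\Theta_{K,b}$ to moments of the centered stationary process. Write $z_t=(x_t^\top,u_t^\top)^\top$ and $\tilde z_t=z_t-\mu_z$. Because $\mu_z$ has the form $(\mu_{K,b}^\top,\,-K\mu_{K,b}+b^\top)^\top$, the feature vector in \eqref{eq:def_feature} becomes $\psi_t=(\svec(\tilde z_t\tilde z_t^\top)^\top,\,\tilde z_t^\top)^\top$. Subtracting $\mu_z=L\mu_z+\nu$ from \eqref{eq:system_of_z} yields the centered recursion $\tilde z_{t+1}=L\tilde z_t+\delta_t$. Under the stationary distribution, $\tilde z_t\sim\mathcal N(0,\Sigma_z)$ and $\delta_t$ is independent of $\tilde z_t$ with mean zero and covariance $\Psi_\delta$. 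I will then compute the four blocks of
\$
\Theta_{K,b}=\EE\begin{pmatrix}\varphi_t\\ \tilde z_t\end{pmatrix}\begin{pmatrix}\varphi_t-\varphi_{t+1}\\ \tilde z_t-\tilde z_{t+1}\end{pmatrix}^\top,\qquad \varphi_t:=\svec(\tilde z_t\tilde z_t^\top),
\$
and show the off-diagonal blocks vanish while the diagonal blocks match the claim.

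For the two off-diagonal blocks, I will use odd-moment vanishing. Concretely, $\EE[\tilde z_t\varphi_t^\top]=0$ and $\EE[\varphi_t\tilde z_t^\top]=0$ since $\tilde z_t$ is centered Gaussian. For the cross-time terms, substitute $\tilde z_{t+1}=L\tilde z_t+\delta_t$ and $\tilde z_{t+1}\tilde z_{t+1}^\top=L\tilde z_t\tilde z_t^\top L^\top+L\tilde z_t\delta_t^\top+\delta_t\tilde z_t^\top L^\top+\delta_t\delta_t^\top$; taking expectation and using independence of $\delta_t$ from $\tilde z_t$ together with $\EE\delta_t=0$ produces only third- and first-order moments of $\tilde z_t$, all of which vanish.

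For the bottom-right block, a direct computation gives $\EE[\tilde z_t(\tilde z_t-\tilde z_{t+1})^\top]=\Sigma_z-\Sigma_z L^\top=\Sigma_z(I-L)^\top$, as desired. The main work is the top-left block. Here I will invoke the defining identity $\svec(LML^\top)=(L\otimes_s L)\svec(M)$ for symmetric $M$, which gives
\$
\EE\bigl[\svec(\tilde z_{t+1}\tilde z_{t+1}^\top)\bigm|\tilde z_t\bigr]=(L\otimes_s L)\svec(\tilde z_t\tilde z_t^\top)+\svec(\Psi_\delta).
\$
Combining this with the Gaussian fourth-moment identity
\$
\EE\bigl[\svec(\tilde z_t\tilde z_t^\top)\svec(\tilde z_t\tilde z_t^\top)^\top\bigr]=2(\Sigma_z\otimes_s\Sigma_z)+\svec(\Sigma_z)\svec(\Sigma_z)^\top
\$
yields $\EE[\varphi_t\varphi_{t+1}^\top]=2(\Sigma_z\otimes_s\Sigma_z)(L\otimes_s L)^\top+\svec(\Sigma_z)\svec(\Sigma_z)^\top$ after using the Lyapunov equation $\svec(\Psi_\delta)=(I-L\otimes_s L)\svec(\Sigma_z)$ to cancel the constant contributions. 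Subtracting this from $\EE[\varphi_t\varphi_t^\top]=2(\Sigma_z\otimes_s\Sigma_z)+\svec(\Sigma_z)\svec(\Sigma_z)^\top$ gives the top-left block $2(\Sigma_z\otimes_s\Sigma_z)(I-L\otimes_s L)^\top$, matching the lemma.

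The main obstacle is bookkeeping in the top-left block: one must juggle the symmetric Kronecker identity, the Gaussian fourth moment, and the Lyapunov equation for $\Sigma_z$ so that the rank-one terms $\svec(\Sigma_z)\svec(\Sigma_z)^\top$ cancel cleanly. The vanishing of the off-diagonal blocks, by contrast, is routine once the centering $\tilde z_t$ is set up.
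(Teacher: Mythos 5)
Your proposal is correct and takes essentially the same route as the paper: center the stationary chain, kill the off-diagonal blocks by odd Gaussian moments and independence of the noise, get the bottom-right block from $\Sigma_z-\Sigma_zL^\top$, and obtain the top-left block from the Gaussian fourth-moment identity combined with the stationarity (Lyapunov) equation $\Sigma_z=L\Sigma_zL^\top+\Psi_\delta$, which cancels the rank-one $\svec(\Sigma_z)\svec(\Sigma_z)^\top$ terms. The only difference is presentational: you compute the blocks directly in $\svec$ coordinates via the symmetric-Kronecker conditional-expectation identity, while the paper evaluates the bilinear form of $\Theta_{K,b}$ against arbitrary test pairs $(\svec(M),m)$, $(\svec(N),n)$ and uses the trace form of the same fourth-moment lemma.
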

\begin{proof}
See \S\ref{proof:lemma:theta_form} for a detailed proof. 
\end{proof}

Note that since $\rho(L) < 1$, both $I-L \otimes_s L$ and $I-L$ are positive definite. Therefore, by  Lemma \ref{lemma:theta_form}, the matrix $\Theta_{K,b}$ is invertible. This finishes the proof of the invertibility of $\Theta_{K,b}$.  Moreover, by \eqref{eq:sigma_z_factor} and Lemma \ref{lemma:theta_form}, we upper bound the spectral norm of $\Theta_{K,b}$ as 
\$
\|\Theta_{K,b}\|_2 & \leq 2 \max\Bigl\{ \|\Sigma_z\|_2^2\cdot \bigl(1 + \|L\|_2^2\bigr),~ \|\Sigma_z\|_2\cdot \bigl(1 + \|L\|_2\bigr) \Bigr \} \leq 4 \bigl( 1 + \|K\|_\F^2 \bigr)^2\cdot \|\Phi_K\|_2^2,
\$
which proves the upper bound of $\|\Theta_{K,b}\|_2$. 

\vskip5pt
\noindent\textbf{Minimum singular value.}  To lower bound $\sigma_{\min}(\tilde \Theta_{K,b})$, we only need to upper bound $\sigma_{\max}(\tilde \Theta_{K,b}^{-1}) = \|\tilde \Theta_{K,b}^{-1}\|_2$.  We first calculate $\tilde \Theta_{K,b}^{-1}$. 
Recall that the matrix $\tilde \Theta_{K,b}$ in \eqref{eq:linear_sys} takes the form of 
\$
\tilde \Theta_{K,b} = \begin{pmatrix}
1 & 0\\
\EE_{\pi_{K,b}}\bigl[ \psi(x,u)  \bigr] & \Theta_{K,b}
\end{pmatrix}. 
\$
By the definition of the feature vector $\psi(x,u)$ in \eqref{eq:def_feature}, the vector $\tilde \sigma_z = \EE_{\pi_{K,b}}[ \psi(x,u) ]$ takes the form of
\$
\tilde \sigma_z = \EE_{\pi_{K,b}}\bigl[ \psi(x,u) \bigr] = \begin{pmatrix}
\svec(\Sigma_z)\\
\mathbf 0_{k+m}
\end{pmatrix}, 
\$
where $\mathbf 0_{k+m}$ denotes the all-zero column vector with dimension $k+m$. Also, since  $\Theta_{K,b}$ is invertible, the matrix $\tilde \Theta_{K,b}$ is also invertible, whose inverse takes the form of
\$
\tilde \Theta_{K,b}^{-1} = \begin{pmatrix}
1 & 0\\
-\Theta_{K,b}^{-1}\cdot  \tilde \sigma_z & \Theta_{K,b}^{-1}
\end{pmatrix}. 
\$

The following lemma upper bounds the spectral norm of $\tilde \Theta_{K,b}^{-1}$. 

\begin{lemma}\label{lemma:upper_bound_tilde_theta}
The spectral norm of the matrix $\tilde \Theta_{K,b}^{-1}$ is upper bounded by a positive constant $\tilde \lambda_K$, where $\tilde \lambda_K$ only depends on $\|K\|_2$ and $\rho(A-BK)$. 
\end{lemma}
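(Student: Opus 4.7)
The plan is to bound $\|\tilde\Theta_{K,b}^{-1}\|_2$ by exploiting the block-triangular structure of $\tilde\Theta_{K,b}^{-1}$ together with the block-diagonal decomposition of $\Theta_{K,b}$ from Lemma \ref{lemma:theta_form}. Applying a standard block-triangular norm bound, I would first reduce the task to controlling $\|\Theta_{K,b}^{-1}\|_2$ and $\|\Theta_{K,b}^{-1}\tilde\sigma_z\|_2$, noting that $\|\tilde\sigma_z\|_2 = \|\Sigma_z\|_\F$ is already controlled by $\|\Phi_K\|_2$ and $\|K\|_2$ through the factorization in \eqref{eq:sigma_z_factor}. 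Since $\Theta_{K,b}$ is block-diagonal, bounding $\|\Theta_{K,b}^{-1}\|_2$ splits into bounding $\|(I-L\otimes_s L)^{-\top}(\Sigma_z\otimes_s\Sigma_z)^{-1}\|_2$ and $\|(I-L)^{-\top}\Sigma_z^{-1}\|_2$.

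For the factors $(I-L)^{-1}$ and $(I-L\otimes_s L)^{-1}$, the key fact is $\rho(L)=\rho(A-BK)<1$, which also gives $\rho(L\otimes_s L)=\rho(L)^2<1$, so both inverses exist. To turn spectral-radius control into a genuine norm bound, I would invoke the discrete Lyapunov equation: since $\rho(L)<1$, the equation $X - L^\top X L = I$ has a unique positive definite solution whose extremal eigenvalues are controlled in terms of $\rho(A-BK)$ and $\|L\|_2 \le \|A\|_2 + \|B\|_2\|K\|_2$. Standard manipulations then convert this into an explicit upper bound on $\|(I-L)^{-1}\|_2$ depending only on $\|K\|_2$ and $\rho(A-BK)$ (together with the fixed data $A,B$). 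The same argument applied to the lifted operator $L\otimes_s L$ handles the symmetric-Kronecker factor.

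For $\Sigma_z^{-1}$, the Lyapunov identity $\Sigma_z = L\Sigma_z L^\top + \Psi_\delta$ proved in the discussion before Lemma \ref{lemma:theta_form} yields $\Sigma_z \succeq \Psi_\delta$, hence $\|\Sigma_z^{-1}\|_2 \le \sigma_{\min}^{-1}(\Psi_\delta)$; the latter is bounded via the block form of $\Psi_\delta$ in terms of $\sigma_{\min}(\Psi_\omega)$, $\sigma$, and $\|K\|_2$. Analogously, $\|(\Sigma_z\otimes_s\Sigma_z)^{-1}\|_2 \le \sigma_{\min}^{-2}(\Sigma_z)$ is controlled by the same quantities. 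Multiplying these bounds with the Lyapunov-based bound on the $(I-L)^{-1}$ factors gives the desired estimate on $\|\Theta_{K,b}^{-1}\|_2$, which plugged back into the block-triangular bound produces $\tilde\lambda_K$ depending only on $\|K\|_2$ and $\rho(A-BK)$ (modulo the fixed problem data).

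The main obstacle is converting $\rho(L)<1$ into an explicit bound on $\|(I-L)^{-1}\|_2$, since Gelfand's formula only yields $\|L^n\|_2 \le C_\varepsilon(\rho(L)+\varepsilon)^n$ with a non-explicit $C_\varepsilon$. The Lyapunov-equation route is the cleanest workaround because the positive definite solution $X$ of $X - L^\top X L = I$ satisfies $\lambda_{\min}(X)\ge 1$ while $\lambda_{\max}(X)$ admits a quantitative upper bound driven by $[1-\rho(A-BK)]^{-1}$ and $\|L\|_2$, feeding directly into the required $\tilde\lambda_K$; without this device one cannot cleanly isolate the dependence on $\|K\|_2$ and $\rho(A-BK)$ alone.
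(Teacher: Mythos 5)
Your proposal is correct in outline and shares the paper's skeleton — block-triangular reduction of $\tilde\Theta_{K,b}^{-1}$ to $\|\Theta_{K,b}^{-1}\|_2$ and $\|\Theta_{K,b}^{-1}\tilde\sigma_z\|_2$, the block-diagonal form of $\Theta_{K,b}$ from Lemma \ref{lemma:theta_form}, and a lower bound on $\sigma_{\min}(\Sigma_z)$ driven by $\Psi_\omega$, $\sigma$, and $\|K\|_2$ (your route $\Sigma_z\succeq\Psi_\delta$ is essentially the same quadratic-form estimate the paper performs directly) — but it diverges in two ways. First, the paper does not bound $\|\Theta_{K,b}^{-1}\tilde\sigma_z\|_2$ by submultiplicativity; it exploits the cancellation $(\Sigma_z\otimes_s\Sigma_z)^{-1}\svec(\Sigma_z)=\svec(\Sigma_z^{-1})$, so only a \emph{lower} bound on $\Sigma_z$ is ever needed. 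Your route instead multiplies by $\|\tilde\sigma_z\|_2=\|\Sigma_z\|_\F$, which drags in an \emph{upper} bound on $\|\Phi_K\|_2$; since the lemma's constant may depend only on $\|K\|_2$ and $\rho(A-BK)$ (plus fixed problem data), you must additionally bound $\|\Phi_K\|_2\le\|\Psi_\epsilon\|_2\sum_{t\ge0}\|(A-BK)^t\|_2^2$ by the same transient-growth machinery you invoke for $(I-L)^{-1}$ — doable, but an extra step you should make explicit rather than treat $\|\Phi_K\|_2$ as a free parameter. Second, where the paper simply converts $\rho(L)<1$ and $\rho(L\otimes_s L)=\rho(L)^2<1$ into resolvent bounds of the form $[1-\rho(L)]^{-1}$ and $[1-\rho^2(L)]^{-1}$ (citing the symmetric-Kronecker eigenvalue lemma, which controls eigenvalues rather than singular values of the non-normal $I-L\otimes_s L$), you explicitly flag the non-normality issue and route through the discrete Lyapunov equation $X-L^\top XL=I$, using $\lambda_{\min}(X)\ge1$ and an $X$-weighted contraction to get genuine norm bounds; this is more careful than the paper's step, at the price of needing the (dimension-dependent, but dimension is fixed) quantitative bound on $\lambda_{\max}(X)$ in terms of $\|L\|_2\le\|A\|_2+\|B\|_2\|K\|_2$ and $\rho(A-BK)$. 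In short: adopt the paper's cancellation trick to avoid the $\|\Phi_K\|_2$ detour, and your Lyapunov device then cleanly supplies the resolvent bounds; with those two adjustments your argument closes.
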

\begin{proof}
See \S\ref{proof:lemma:upper_bound_tilde_theta} for a detailed proof. 
\end{proof}

By Lemma \ref{lemma:upper_bound_tilde_theta}, we know that $\sigma_{\min}(\tilde\Theta_{K,b})$ is lower bounded by a positive constant $\lambda_K = 1/\tilde \lambda_K$, which only depends on $\|K\|_2$ and $\rho(A-BK)$. This concludes the proof of the proposition. 
\end{proof}


\section{Proofs of Lemmas}

\subsection{Proof of Lemma \ref{lemma:cost_diff}}\label{proof:lemma:cost_diff}
\begin{proof}
Following from \eqref{eq:bellman}, it holds that
\#\label{eq:381}
 y^\top P_{K_2} y = \sum_{t\geq 0} y^\top \bigl[ (A-BK_2)^t  \bigr]^\top (Q + K_2^\top RK_2) (A-BK_2)^t y. 
\#
Meanwhile, by the state transition $y_{t + 1} = (A-BK_2) y_t$, we know that 
\#\label{eq:382}
y_t = (A-BK_2)^t y_0 = (A-BK_2)^t y.
\#
By plugging \eqref{eq:382} into \eqref{eq:381}, it holds that
\#\label{eq:ooo1}
 y^\top P_{K_2} y = \sum_{t\geq 0} y_t^\top (Q + K_2^\top RK_2) y_t =  \sum_{t\geq 0} (y_t^\top Q y_t + y_t^\top K_2^\top RK_2 y_t).
\#
Also, it holds that
\#\label{eq:ooo2}
y^\top P_{K_1} y = \sum_{t\geq 0} (y_{t+1}^\top P_{K_1} y_{t + 1}-  y_{t}^\top P_{K_1} y_{t })
\#
Combining \eqref{eq:ooo1} and \eqref{eq:ooo2}, we have 
\#\label{eq:telesum}
 y^\top P_{K_2} y -  y^\top P_{K_1} y =  \sum_{t\geq 0} (y_t^\top Q y_t + y_t^\top K_2^\top RK_2 y_t  +  y_{t+1}^\top P_{K_1} y_{t + 1}-  y_{t}^\top P_{K_1} y_{t } ). 
\#
Also, by the state transition $y_{t + 1} = (A-BK_2)y_t$,  it holds for any $t\geq 0$ that
\#\label{eq:telesum2}
& y_t^\top Q y_t + y_t^\top K_2^\top RK_2 y_t  +  y_{t+1}^\top P_{K_1} y_{t + 1}-  y_{t}^\top P_{K_1} y_{t }\notag\\
& \qquad = y_t^\top \bigl[ Q + (K_2 - K_1 + K_1)^\top R (K_2 - K_1 + K_1) \bigr]y_t\notag\\
&\qquad\qquad +  y_t^\top \bigl[ A-BK_1 - B(K_2 - K_1) \bigr]^\top P_{K_1} \bigl[ A-BK_1 - B(K_2 - K_1) \bigr] y_t - y_t^\top P_{K_1}y_t\notag\\
& \qquad = 2y_t^\top  (K_2 - K_1)^\top \bigl[ (R + B^\top P_{K_1}B)K_1 - B^\top P_{K_1}A \bigr] y_t\notag\\
&\qquad \qquad + y_t^\top (K_2 - K_1)^\top (R + B^\top P_{K_1}B)(K_2 - K_1) y_t\notag\\
& \qquad = 2y_t^\top  (K_2 - K_1)^\top ( \Upsilon_{K_1}^{22} K_1 - \Upsilon_{K_1}^{21} ) y_t + y_t^\top (K_2 - K_1)^\top \Upsilon_{K_1}^{22} (K_2 - K_1) y_t,
\#
where the matrix $\Upsilon_{K_1}$ is defined in \eqref{eq:def_upsilon}.  By plugging \eqref{eq:telesum2} into \eqref{eq:telesum}, we have
\$
& y^\top P_{K_2} y -  y^\top P_{K_1} y \notag\\
&\qquad = \sum_{t\geq 0}  2y_t^\top  (K_2 - K_1)^\top ( \Upsilon_{K_1}^{22} K_1 - \Upsilon_{K_1}^{21} ) y_t + y_t^\top (K_2 - K_1)^\top \Upsilon_{K_1}^{22} (K_2 - K_1) y_t\notag\\
&\qquad = \sum_{t\geq 0} D_{K_1, K_2}(y_t),
\$
where $D_{K_1, K_2}(y) = 2y^\top (K_2 - K_1)(\Upsilon_{K_1}^{22} K_1  - \Upsilon_{K_1}^{21}) y + y^\top (K_2 - K_1)^\top\Upsilon_{K_1}^{22} (K_2 - K_1)y$.  We finish the proof of the lemma. 
\end{proof}

\subsection{Proof of Lemma \ref{lemma:grad_dom}}\label{proof:lemma:grad_dom}
\begin{proof}
We prove  \eqref{eq:upper-bdJ} and  \eqref{eq:lower-bdJ} separately in the sequel. 

\vskip5pt
\noindent\textbf{Proof of \eqref{eq:upper-bdJ}.}    From the definition of $J_1(K)$ in \eqref{eq:a2}, we have
\#\label{eq:JJ_diff1}
J_1(K) - J_1(K^*)&  = \tr( P_K\Psi_\epsilon - P_{K^*}\Psi_\epsilon ) = \EE_{y\sim \mathcal N(0, \Psi_\epsilon)} (y^\top P_K y - y^\top P_{K^*} y )\notag\\
& =  -\EE\biggl[ \sum_{t\geq 0} D_{K, K^*} (y_t) \biggr],
\#
where in the last equality, we apply Lemma \ref{lemma:cost_diff} and the expectation is taken following the transition $y_{t+ 1} = (A- BK^*)y_t$ with initial state $y_0 \sim\mathcal N(0,\Psi_\epsilon)$.  Here we denote by $D_{K,K^*}(y)$ as
\$
D_{K, K^*}(y) = 2y^\top (K^* - K)(\Upsilon_{K}^{22} K  - \Upsilon_{K}^{21}) y + y^\top (K^* - K)^\top\Upsilon_{K}^{22} (K^* - K)y.
\$
Also,  we write $D_{K, K^*}(y)$ as
\#\label{eq:bound_D1}
D_{K, K^*}(y)& = 2y^\top (K^* - K)(\Upsilon_K^{22} K  - \Upsilon_K^{21}) y + y^\top (K^* - K)^\top\Upsilon_K^{22} (K^* - K) y \\
& = y^\top \bigl[ K^* - K + (\Upsilon_K^{22})^{-1}  (\Upsilon_K^{22} K  - \Upsilon_K^{21}) \bigr]^\top \Upsilon_K^{22} \bigl[ K^* - K + (\Upsilon_K^{22})^{-1}  (\Upsilon_K^{22} K  - \Upsilon_K^{21}) \bigr] y  \notag\\
&\qquad\qquad -y^\top (\Upsilon_K^{22} K  - \Upsilon_K^{21})^\top (\Upsilon_{K}^{22})^{-1}(\Upsilon_K^{22} K  - \Upsilon_K^{21})y. \notag
\#
Note that the first term on the RHS of \eqref{eq:bound_D1} is positive, due to the fact that it is a quadratic form of a positive definite matrix, we lower bound $D_{K, K^*}(y)$ as
\#\label{eq:bound_D}
D_{K, K^*}(y) \geq -y^\top (\Upsilon_K^{22} K  - \Upsilon_K^{21})^\top (\Upsilon_{K}^{22})^{-1}(\Upsilon_K^{22} K  - \Upsilon_K^{21})y. 
\#
Combining \eqref{eq:JJ_diff1} and \eqref{eq:bound_D}, it holds that
\$
J_1(K) - J_1(K^*) & \leq   \biggl\|  \EE  \biggl( \sum_{t\geq 0}  y_t y_t^\top\biggr) \biggr \|_2   \cdot  \tr\bigl[(\Upsilon_K^{22} K  - \Upsilon_K^{21})^\top (\Upsilon_{K}^{22})^{-1}(\Upsilon_K^{22} K  - \Upsilon_K^{21})\bigr]\notag\\
& = \| \Phi_{K^*} \|_2\cdot \tr\bigl[(\Upsilon_K^{22} K  - \Upsilon_K^{21})^\top (\Upsilon_{K}^{22})^{-1}(\Upsilon_K^{22} K  - \Upsilon_K^{21})\bigr]\notag\\
& \leq  \bigl\|(\Upsilon_{K}^{22})^{-1}\bigr\|_2 \cdot   \| \Phi_{K^*} \|_2\cdot \tr\bigl[(\Upsilon_K^{22} K  - \Upsilon_K^{21})^\top (\Upsilon_K^{22} K  - \Upsilon_K^{21})\bigr]\notag\\
& \leq \sigma_{\min}^{-1}(R) \cdot   \| \Phi_{K^*} \|_2\cdot \tr\bigl[(\Upsilon_K^{22} K  - \Upsilon_K^{21})^\top (\Upsilon_K^{22} K  - \Upsilon_K^{21})\bigr],
\$
where the last line comes from the fact that $\Upsilon_{K}^{22} = R + B^\top P_K B\succeq R$.  This complete the proof of \eqref{eq:upper-bdJ}. 

\vskip5pt
\noindent\textbf{Proof of \eqref{eq:lower-bdJ}.}  Note that for any $\tilde K$, it holds by the optimality of $K^*$ that
\#\label{eq:JJ_diff3}
J_1(K) - J_1(K^*)\geq J_1(K) - J_1(\tilde K ) = -\EE\biggl[ \sum_{t\geq 0} D_{K, \tilde K} (y_t) \biggr],
\#
where the expectation is taken following the transition $y_{t+ 1} = (A- B\tilde K)y_t$ with initial state $y_0 \sim \mathcal N(0, \Psi_\epsilon)$.  By taking $\tilde K = K - (\Upsilon_K^{22})^{-1}  (\Upsilon_K^{22} K  - \Upsilon_K^{21})$ and following from a similar calculation as in \eqref{eq:bound_D1}, the function $D_{K, \tilde K}(y)$ takes the form of
\#\label{eq:form_D_tilde}
D_{K, \tilde K}(y)& = -y^\top (\Upsilon_K^{22} K  - \Upsilon_K^{21})^\top (\Upsilon_{K}^{22})^{-1}(\Upsilon_K^{22} K  - \Upsilon_K^{21})y. 
\#
Combining \eqref{eq:JJ_diff3} and \eqref{eq:form_D_tilde}, it holds that 
\$
J(K) - J(K^*)& \geq \tr\bigl[ \Phi_{\tilde K}  (\Upsilon_K^{22} K  - \Upsilon_K^{21})^\top (\Upsilon_{K}^{22})^{-1}(\Upsilon_K^{22} K  - \Upsilon_K^{21})  \bigr]\\
& \geq \sigma_{\min}(\Psi_\epsilon) \cdot \| \Upsilon_{K}^{22} \|_2^{-1}\cdot \tr\bigl[ (\Upsilon_K^{22} K  - \Upsilon_K^{21})^\top (\Upsilon_K^{22} K  - \Upsilon_K^{21}) \bigr],
\$
where we use the fact that $\Phi_{\tilde K} = (A-B\tilde K)\Phi_{\tilde K} (A-B\tilde K)^\top + \Psi_\epsilon \succeq \Psi_\epsilon$ in the last line.  This finishes the proof of \eqref{eq:lower-bdJ}.  
\end{proof}

\subsection{Proof of Lemma \ref{lemma:error_bound_tilde_J}}\label{proof:lemma:error_bound_tilde_J}
\begin{proof}
By Proposition \ref{prop:cost_form}, we have
\#\label{eq:312diff1}
\bigl|J_1(\tilde  K_{n+1}) - J_1( K_{n+1})\bigr| =  \tr\bigl[ (P_{\tilde K_{n+1}} - P_{ K_{n+1}} ) \Psi_\epsilon \bigr]\leq \|P_{\tilde K_{n+1}} - P_{ K_{n+1}}\|_2\cdot \|\Psi_\epsilon\|_\F. 
\#
The following lemma upper bounds the term $\|P_{\tilde K_{n+1}} - P_{ K_{n+1}}\|_2$.  

\begin{lemma}\label{lemma:perturb}
Suppose that the parameters $K$ and $\tilde K$ satisfy that
\#\label{eq:perturbK}
\| \tilde K - K \|_2\cdot \bigl( \|A-BK\|_2 +1 \bigr) \cdot \| \Phi_K \|_2  \leq \sigma_{\min}(\Psi_\omega)/4\cdot \|B\|_2^{-1}, 
\#
then it holds that
\#\label{eq:perturbPK}
\| P_{\tilde K} - P_K \|_2 & \leq 6\cdot \sigma_{\min}^{-1} (\Psi_\omega)\cdot \|\Phi_K\|_2 \cdot \|K\|_2\cdot \|R\|_2\cdot \|\tilde   K- K\|_2   \\
&\qquad \cdot \bigl( \|B\|_2\cdot \|K\|_2)\cdot \|A-BK\|_2 + \|B\|_2\cdot\|K\|_2 + 1 \bigr). \notag
\#
\end{lemma}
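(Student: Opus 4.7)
The plan is to carry out a standard perturbation analysis for the discrete algebraic Lyapunov equation defining $P_K$. Writing $\Delta=\tilde K-K$ and $E=P_{\tilde K}-P_K$, I would first subtract the identity $P_K=Q+K^\top RK+(A-BK)^\top P_K(A-BK)$ and the analogous one for $\tilde K$, and then add and subtract $(A-BK)^\top P_{\tilde K}(A-BK)$, to obtain the Lyapunov equation $E=(A-BK)^\top E(A-BK)+M$ with
$$
M=K^\top R\Delta+\Delta^\top R K+\Delta^\top R\Delta-(A-BK)^\top P_{\tilde K}B\Delta-\Delta^\top B^\top P_{\tilde K}(A-BK)+\Delta^\top B^\top P_{\tilde K}B\Delta.
$$
The condition \eqref{eq:perturbK} is sized to keep the extra drift $B\Delta$ in the closed-loop matrix small relative to the stability margin of $(A-BK,\Psi_\omega)$. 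A separate preliminary step will therefore verify that $A-B\tilde K$ is stable, that $\|\Phi_{\tilde K}\|_2\le 2\|\Phi_K\|_2$, and that $\|P_{\tilde K}\|_2$ is of order $\|R\|_2\|K\|_2^2\|\Phi_K\|_2/\sigma_{\min}(\Psi_\omega)$, obtained from the series formula $P_{\tilde K}=\sum_t[(A-B\tilde K)^t]^\top(Q+\tilde K^\top R\tilde K)(A-B\tilde K)^t$ together with the duality identity $\tr(P_{\tilde K}\Psi_\epsilon)=\tr((Q+\tilde K^\top R\tilde K)\Phi_{\tilde K})$.

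Stability of $A-BK$ then gives the explicit solution $E=\sum_{t=0}^\infty[(A-BK)^t]^\top M(A-BK)^t$. I will bound its spectral norm via
$$
\|E\|_2\le\|M\|_2\cdot\Bigl\|\sum_{t=0}^\infty[(A-BK)^t]^\top(A-BK)^t\Bigr\|_2,
$$
and control the second factor using the observation that $\Phi_K=\sum_t(A-BK)^t\Psi_\epsilon[(A-BK)^t]^\top\succeq\sigma_{\min}(\Psi_\omega)\sum_t(A-BK)^t[(A-BK)^t]^\top$, which after taking traces (invariant under transposition of each summand) yields the factor $\|\Phi_K\|_2/\sigma_{\min}(\Psi_\omega)$ on the right-hand side of \eqref{eq:perturbPK}. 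I will then insert the triangle-inequality bound
$$
\|M\|_2\le 2\|K\|_2\|R\|_2\|\Delta\|_2+\|R\|_2\|\Delta\|_2^2+2\|A-BK\|_2\|B\|_2\|P_{\tilde K}\|_2\|\Delta\|_2+\|B\|_2^2\|P_{\tilde K}\|_2\|\Delta\|_2^2,
$$
absorb the quadratic-in-$\Delta$ terms using $\|\Delta\|_2\lesssim\|K\|_2$ (which follows from \eqref{eq:perturbK} once $\|B\|_2$, $\|\Phi_K\|_2$, $\sigma_{\min}(\Psi_\omega)$ have been fixed), and substitute the preliminary bound on $\|P_{\tilde K}\|_2$.

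The main obstacle will be the bookkeeping in that last substitution: because $\|P_{\tilde K}\|_2$ itself contains a $\Phi$-type factor, getting a final bound that depends only on $\|\Phi_K\|_2$, $\|K\|_2$, $\|R\|_2$, $\|B\|_2$, and $\|A-BK\|_2$ requires a small bootstrap--first use \eqref{eq:perturbK} to get a coarse bound on $\|\Phi_{\tilde K}\|_2$ and $\|P_{\tilde K}\|_2$ that is independent of $E$, and only then plug these into the estimate for $\|M\|_2$. Once this is in hand, the six summands of $M$ line up, term by term, with the three summands of the factor $(\|B\|_2\|K\|_2\|A-BK\|_2+\|B\|_2\|K\|_2+1)$ in \eqref{eq:perturbPK}, which accounts for the absolute constant $6$.
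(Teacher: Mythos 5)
You are trying to supply a proof that the paper itself does not contain: its ``proof'' of this lemma is a one-line citation to Lemma 5.7 of \cite{yyy2019aclqr}, so there is no internal argument to match, and a direct Lyapunov-perturbation argument like yours is the natural thing to attempt. Your algebra is right up to a point: writing $\Delta=\tilde K-K$, the identity $E=(A-BK)^\top E(A-BK)+M$ with your $M$ is correct, as is $\|E\|_2\le\|M\|_2\cdot\bigl\|\sum_{t\ge0}[(A-BK)^t]^\top(A-BK)^t\bigr\|_2$. The first genuine gap is the orientation of the Gramian in that last factor: \eqref{eq:f2p} gives $\Phi_K\succeq\sigma_{\min}(\Psi_\omega)\sum_{t\ge0}(A-BK)^t[(A-BK)^t]^\top$, which controls the oppositely transposed sum. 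The two sums have the same trace but in general different spectral norms, so your trace device only yields $\tr(\Phi_K)/\sigma_{\min}(\Psi_\omega)$, i.e.\ an extra dimension factor $m$ on top of the $\|\Phi_K\|_2$ in \eqref{eq:perturbPK}; this step, as written, cannot produce the stated factor $\|\Phi_K\|_2$ with an absolute constant. Moreover, the preliminary facts you assume (stability of $A-B\tilde K$ and $\|\Phi_{\tilde K}\|_2\le 2\|\Phi_K\|_2$ under \eqref{eq:perturbK}) are the real content of the hypothesis and require the standard operator-perturbation argument; they are asserted rather than proved, and the auxiliary claim $\|\Delta\|_2\lesssim\|K\|_2$ is not a consequence of \eqref{eq:perturbK} (take $K=0$).

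The more serious problem is your bound on $\|P_{\tilde K}\|_2$: since $P_{\tilde K}\succeq Q$, any valid estimate must carry $\|Q\|_2$, which your claimed order $\|R\|_2\|K\|_2^2\|\Phi_K\|_2/\sigma_{\min}(\Psi_\omega)$ drops. Once $Q$ is reinstated, the cross terms $(A-BK)^\top P_{\tilde K}B\Delta$ in $M$ contribute $\|Q\|_2$-dependent quantities, and no bookkeeping makes them line up with the right-hand side of \eqref{eq:perturbPK}, which contains neither $\|Q\|_2$ nor $\|P_K\|_2$. In fact the inequality as printed cannot be derived at all: in the scalar example $A=0$, $B=1$, $\Psi_\omega=1$, $R=1$, $K=1/2$, $\tilde K=2/5$ (with small exploration noise), hypothesis \eqref{eq:perturbK} holds, yet $|P_{\tilde K}-P_K|$ grows linearly in $Q$ while the right-hand side of \eqref{eq:perturbPK} is independent of $Q$; likewise the overall factor $\|K\|_2$ forces the right-hand side to vanish at $K=0$ even though $P_{\tilde K}\ne P_K$ there. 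So the statement appears to be a mis-transcription of the cited lemma (note also its unbalanced parentheses), and what your route delivers, once the preliminary steps are carried out, is a correct perturbation bound of the same shape but with additional $\|Q\|_2$ and dimension factors---which is all the downstream application in Lemma \ref{lemma:error_bound_tilde_J} actually needs, since those factors are absorbed into $\poly(\cdot)$ there. To match the lemma literally, the only option is the one the paper takes, namely citing \cite{yyy2019aclqr}; otherwise you should restate the conclusion with the constant your argument genuinely produces.
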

\begin{proof}
See Lemma 5.7 in \cite{yyy2019aclqr} for a detailed proof. 
\end{proof}

To use  Lemma \ref{lemma:perturb}, it suffices to verify that  $\tilde K_{n+1}$ and $K_{n+1}$ satisfy \eqref{eq:perturbK}. Note that from the definitions of $K_{n+1}$ and $\tilde K_{n+1}$  in  \eqref{eq:ac_update_Kn} and  \eqref{eq:tilde_update_Kn}, respectively, we have
\#\label{eq:veridiff1}
& \| \tilde K_{n+1} - K_{n+1} \|_2 \cdot \bigl( \|A-B\tilde K_{n+1}\|_2 +1 \bigr) \cdot \| \Phi_{\tilde K_{n+1}} \|_2  \notag\\
&\qquad\leq \gamma\cdot \|\hat \Upsilon_{K_n} - \Upsilon_{K_n}\|_\F \cdot \bigl( 1 + \| K_n \|_2 \bigr)\cdot \bigl( \|A-B\tilde K_{n+1}\|_2 +1 \bigr) \cdot \| \Phi_{\tilde K_{n+1}} \|_2. 
\#
Now, we upper bound  the RHS of \eqref{eq:veridiff1}.  For the term $\|A-B\tilde K_{n+1}\|_2$, it holds by the definition of $\tilde K_{n+1}$ in \eqref{eq:tilde_update_Kn} that
\#\label{eq:veribd1}
\|A-B\tilde K_{n+1}\|_2& \leq \|A-BK_n\|_2 + \gamma\cdot \| B \|_2\cdot \|\Upsilon_{K_n}^{22}K_n - \Upsilon_{K_n}^{21}\|_2\notag\\
& \leq \|A-BK_n\|_2 + \gamma\cdot \| B \|_2\cdot \|\Upsilon_{K_n}\|_2\cdot \bigl(1 + \|K_n\|_2 \bigr).
\#
By the definition of $\Upsilon_{K_n}$ in \eqref{eq:def_upsilon}, we upper bound $\|\Upsilon_{K_n}\|_2$ as 
\#\label{eq:veribd2}
\|\Upsilon_{K_n}\|_2 & \leq \|Q\|_2 + \|R\|_2 + \bigl( \|A\|_\F + \|B\|_\F \bigr)^2\cdot \|P_{K_n}\|_2\notag\\
&\leq \|Q\|_2 + \|R\|_2 + \bigl( \|A\|_\F + \|B\|_\F \bigr)^2\cdot J_1(K_0) \cdot \sigma_{\min}^{-1}(\Psi_\epsilon), 
\#
where the last line comes from the fact that 
\$
J_1(K_0) \geq J_1(K_n) = \tr\bigl[(Q + K_n^\top R K_n)\Phi_{K_n}\bigr]  = \tr(P_{K_n} \Psi_\epsilon) \geq \|P_{K_n}\|_2 \cdot \sigma_{\min}(\Psi_\epsilon). 
\$
As for the term $ \| \Phi_{\tilde K_{n+1}} \|_2$ in \eqref{eq:veridiff1}, from the fact that
\$
J_1(K_0) \geq J_1(\tilde K_{n+1}) = \tr\bigl[(Q + \tilde K_{n+1}^\top R \tilde K_{n+1})\Phi_{\tilde K_{n+1}}\bigr] \geq \|\Phi_{\tilde K_{n+1}}\|_2 \cdot \sigma_{\min}(Q),
\$
it holds that
\#\label{eq:veribd3}
 \| \Phi_{\tilde K_{n+1}} \|_2  \leq J_1(K_0) \cdot \sigma_{\min}^{-1}(Q). 
\#
Therefore, combining \eqref{eq:veridiff1}, \eqref{eq:veribd1}, \eqref{eq:veribd2}, and \eqref{eq:veribd3}, we know that 
\#\label{eq:tcup1}
& \| \tilde K_{n+1} - K_{n+1} \|_2 \cdot \bigl( \|A-B\tilde K_{n+1}\|_2 +1 \bigr) \cdot \| \Phi_{\tilde K_{n+1}} \|_2\notag\\
&\qquad  \leq \poly_1\bigl( \|K_n\|_2 \bigr) \cdot  \|\hat \Upsilon_{K_n} - \Upsilon_{K_n}\|_\F. 
\# 
From Theorem \ref{thm:pe}, it holds with probability at least $1-T_n^{-4} - \tilde T_n^{-6}$ that
\#\label{eq:iphone2}
\|\hat\Upsilon_{K_n} - \Upsilon_{K_n}\|_\F &  \leq  \frac{\poly_3 \bigl(  \|K_n\|_\F, \|\mu\|_2  \bigr) }{ \lambda_{K_n}\cdot (1-\rho)^{2} }\cdot\frac{\log^3 T_n}{T_n^{1/4}} \\
&\qquad + \frac{\poly_4 \bigl( \|K_n\|_\F, \|b_{0}\|_2, \|\mu\|_2 \bigr)}{\lambda_{K_n}}\cdot \frac{\log^{1/2} \tilde T_n}{\tilde T_n^{1/8}\cdot (1-\rho)}, \notag
\#
which holds for any $\rho\in( \rho(A-BK_n), 1 )$.  Note that from the choice of $T_n$ and $\tilde T_n$ in the statement of Theorem \ref{thm:ac} that
\$
& T_n\geq \poly_5 \bigl( \|K_n\|_\F, \|b_{0}\|_2, \|\mu\|_2 \bigr) \cdot \lambda_{K_n}^{-4} \cdot \bigl[1-\rho(A-BK_n)\bigr]^{-9}\cdot \varepsilon^{-5},\\
& \tilde T_n \geq \poly_6 \bigl( \|K_n\|_\F, \|b_{0}\|_2, \|\mu\|_2 \bigr) \cdot \lambda_{K_n}^{-2}\cdot \bigl[1-\rho(A-BK_n)\bigr]^{-12}\cdot \varepsilon^{-12}, 
\$
it holds that
\#\label{eq:iphone3}
& \frac{\poly_3 \bigl(  \|K_n\|_\F, \|\mu\|_2  \bigr) }{ \lambda_{K_n}\cdot (1-\rho)^{2} }\cdot\frac{\log^3 T_n}{T_n^{1/4}}  + \frac{\poly_4 \bigl( \|K_n\|_\F, \|b_{0}\|_2, \|\mu\|_2 \bigr)}{\lambda_{K_n}}\cdot \frac{\log^{1/2} \tilde T_n}{\tilde T_n^{1/8}\cdot (1-\rho)}\notag\\
&\qquad \leq \min\biggl\{  \Bigl[\poly_1\bigl( \|K_n\|_2\bigr)\Bigr]^{-1}\cdot \sigma_{\min}(\Psi_\omega)/4\cdot\|B\|_2^{-1},  \\
&\qquad\qquad\qquad  \Bigl[\poly_2\bigl(\|K_n\|_2\bigr)\Bigr]^{-1}\cdot \varepsilon/8 \cdot \gamma\cdot \sigma_{\min}(\Psi_\epsilon)\cdot \sigma_{\min}(R)\cdot \|\Phi_{K^*}\|_2^{-1}\cdot \|\Psi_\epsilon\|_\F^{-1}   \biggr\}.\notag
\#
Combining \eqref{eq:tcup1}, \eqref{eq:iphone2}, and \eqref{eq:iphone3}, we know that  \eqref{eq:perturbK} holds with probability at least $1-\varepsilon^{15}$ for sufficiently small $\varepsilon > 0$.   Meanwhile, by \eqref{eq:veribd1}, \eqref{eq:veribd2}, and \eqref{eq:veribd3}, the RHS of \eqref{eq:perturbPK} is upper bounded as
\#\label{eq:iphone1}
& 6\cdot \sigma_{\min}^{-1} (\Psi_\omega)\cdot \|\Phi_{\tilde K_{n+1}}\|_2 \cdot \|\tilde K_{n+1}\|_2\cdot \|R\|_2\cdot \|\tilde K_{n+1}-  K_{n+1}\|_2  \notag\\
&\qquad\qquad\cdot \bigl( \|B\|_2\cdot \|\tilde K_{n+1}\|_2)\cdot \|A-B\tilde K_{n+1}\|_2 + \|B\|_2\cdot\|\tilde K_{n+1}\|_2 + 1 \bigr)\notag\\
&\qquad\leq \poly_2\bigl(\|K_n\|_2\bigr)\cdot  \|\hat \Upsilon_{K_n} - \Upsilon_{K_n}\|_\F. 
\#
 Now, by Lemma \ref{lemma:perturb}, it holds with probability at least $1 - \varepsilon^{15}$ that
 \#\label{eq:iphonese}
 \| P_{\tilde K_{n+1}} - P_{K_{n+1}} \|_2 & \leq 6\cdot \sigma_{\min}^{-1} (\Psi_\omega)\cdot \|\Phi_{\tilde K_{n+1}}\|_2 \cdot \|\tilde K_{n+1}\|_2\cdot \|R\|_2\cdot \|\tilde K_{n+1}-  K_{n+1}\|_2  \notag\\
&\qquad\cdot \bigl( \|B\|_2\cdot \|\tilde K_{n+1}\|_2)\cdot \|A-B\tilde K_{n+1}\|_2 + \|B\|_2\cdot\|\tilde K_{n+1}\|_2 + 1 \bigr)\notag\\
&\leq \poly_2\bigl(\|K_n\|_2\bigr)\cdot  \|\hat \Upsilon_{K_n} - \Upsilon_{K_n}\|_\F \notag\\
& \leq \varepsilon/8 \cdot \gamma\cdot \sigma_{\min}(\Psi_\epsilon)\cdot \sigma_{\min}(R)\cdot \|\Phi_{K^*}\|_2^{-1}\cdot \|\Psi_\epsilon\|_\F^{-1},
 \#
 where the second inequality comes from \eqref{eq:iphone1}, and the last inequality comes from \eqref{eq:iphone2} and \eqref{eq:iphone3}.  Combining \eqref{eq:312diff1} and \eqref{eq:iphonese}, it holds with probability at least $1-\varepsilon^{15}$ that
\$
\bigl|J_1(\tilde  K_{n+1}) - J_1( K_{n+1})\bigr| \leq \gamma\cdot \sigma_{\min}(\Psi_\epsilon)\cdot \sigma_{\min}(R)\cdot \|\Phi_{K^*}\|_2^{-1}\cdot  \varepsilon / 4,
\$
which concludes the proof of the lemma. 
\end{proof}

\subsection{Proof of Lemma \ref{lemma:local_sc}}\label{proof:lemma:local_sc}
\begin{proof}
Note that $\Upsilon_{K^*}^{22}K^* - \Upsilon_{K^*}^{21}$ is the natural gradient of $J_1$ at the minimizer $K^*$, which implies that 
\#\label{eq:grad=0}
\Upsilon_{K^*}^{22}K^* - \Upsilon_{K^*}^{21} = 0. 
\#
By Lemma \ref{lemma:cost_diff}, it holds that
\#\label{eq:grad-bd1}
J_1(K) - J_1(K^*)& = \tr( P_K\Psi_\epsilon - P_{K^*}\Psi_\epsilon ) = \EE_{y\sim \mathcal N(0, \Psi_\epsilon)} (y^\top P_K y - y^\top P_{K^*} y )\notag\\
& = \EE\biggl\{  \sum_{t\geq 0}  \Bigl[  2y_t^\top (K-K^*)(\Upsilon_{K^*}^{22}K^* - \Upsilon_{K^*}^{21})y_t + y_t^\top (K-K^*)^\top \Upsilon_{K^*}^{22}(K-K^*)y_t  \Bigr]  \biggr\}\notag\\
& = \EE\biggl\{  \sum_{t\geq 0}   y_t^\top (K-K^*)^\top \Upsilon_{K^*}^{21}(K-K^*)y_t    \biggr\},
\#
where we use \eqref{eq:grad=0} in the last line. Here the expectations are taken following the transition $y_{t+1} = (A-BK)y_t$ with initial state $y_0\sim\mathcal N(0, \Psi_\epsilon)$. Also, we have
\#\label{eq:grad-bd2}
& \EE\biggl\{  \sum_{t\geq 0}   y_t^\top (K-K^*)^\top \Upsilon_{K^*}^{22}(K-K^*)y_t    \biggr\} \notag\\
&\qquad  = \tr\bigl[ \Phi_{K} (K-K^*)^\top\Upsilon_{K^*}^{22}(K-K^*) \bigr]\notag\\
&\qquad \geq \|\Phi_K\|_2\cdot \| \Upsilon_{K^*}^{22} \|_2 \cdot \tr\bigl[ (K-K^*)^\top(K-K^*) \bigr]\notag\\
&\qquad \geq \sigma_{\min}(\Psi_\epsilon)\cdot \sigma_{\min}(R) \cdot \|K-K^*\|_\F^2,
\#
where we use the fact that $\Phi_K = (A-BK)\Phi_K (A-BK) + \Psi_\epsilon \succeq \Psi_\epsilon$ and $\Upsilon_{K^*}^{22} = R +  B^\top P_{K^*} B\succeq R$ in the last line.  Combining \eqref{eq:grad-bd1} and \eqref{eq:grad-bd2}, we have
\$
J_1(K) - J_1(K^*) \geq \sigma_{\min}(\Psi_\epsilon)\cdot \sigma_{\min}(R) \cdot \|K-K^*\|_\F^2.  
\$
We conclude the proof of the lemma.  
\end{proof}

\subsection{Proof of Lemma \ref{lemma:bound_tilde_J_2}}\label{proof:lemma:bound_tilde_J_2}
\begin{proof}
Following from Proposition \ref{prop:convex_J2}, we have
\#\label{eq:brute_force_bound0}
& J_2(K_N, b_{h+1}) - J_2(K_N, \tilde b_{h+1})\notag\\
 &\qquad \leq \gamma^b\cdot  \nabla_b J_2(K_N, \tilde b_{h+1})^\top \bigl[  \nabla_b J_2(K_N, b_h) - \hat\nabla_b J_2(K_N, b_h)  \bigr ]\notag\\
&\qquad\qquad + (\gamma^b)^2\cdot {\nu_{K_N}}/{2}\cdot \bigl \|\nabla_b J_2(K_N, b_h) - \hat\nabla_b J_2(K_N, b_h)\bigr\|_2^2,\notag\\
&  J_2(K_N, \tilde b_{h+1}) - J_2( K_N, b_{h+1}) \notag\\
&\qquad \leq - \gamma^b\cdot  \nabla_b J_2(K_N, \tilde b_{h+1})^\top \bigl[  \nabla_b J_2(K_N, b_h) - \hat\nabla_b J_2(K_N, b_h)  \bigr ]\notag\\
 &\qquad\qquad - (\gamma^b)^2\cdot{\iota_{K_N}}/{2}\cdot \bigl\|\nabla_b J_2(K_N, b_h) - \hat\nabla_b J_2(K_N, b_h)\bigr\|_2^2,
\#
where $\nu_{K_N}$ and $\iota_{K_N}$ are defined in Proposition \ref{prop:convex_J2}. Also, following from Proposition \ref{prop:pg}, it holds that 
\#\label{eq:brute_force_bound}
\bigl\| \nabla_b J_2(K_N, \tilde b_{h+1})\bigr\|_2 \leq \poly_1\bigl( \|K_N\|_\F, \|b_h\|_2,  \|\mu\|_2, J(K_N, b_0) \bigr)\cdot \bigl[1 -  \rho(A-B K_N) \bigr]^{-1}. 
\#
Combining \eqref{eq:brute_force_bound0}, \eqref{eq:brute_force_bound}, and the fact that $\nu_{K_N}\leq \iota_{K_N}\leq [1-\rho(A-B K_N)]^{-2}\cdot \poly_2(\|K_N\|_2)$, we know that
\#\label{eq:bynb1}
&\bigl|J_2(K_N, b_{h+1}) - J_2(K_N, \tilde b_{h+1})\bigr|\\
 &\qquad  \leq  (\gamma^b)^2 \cdot  \poly_2\bigl(\|K_N\|_2\bigr)  \cdot \bigl \|\nabla_b J_2(K_N, b_h) - \hat\nabla_b J_2(K_N, b_h)\bigr\|_2^2\cdot \bigl[1 -  \rho(A-B K_N) \bigr]^{-2} \notag\\
&\qquad\qquad  + \gamma^b  \cdot \poly_1\bigl( \|K_N\|_\F, \|b_h\|_2,  \|\mu\|_2, J(K_N, b_0) \bigr) \cdot \bigl\|  \nabla_b J_2(K_N, b_h) - \hat\nabla_b J_2(K_N, b_h)  \bigr \|_2 \notag\\
& \qquad\qquad \cdot   \bigl[1 -  \rho(A-B K_N) \bigr]^{-1}\notag. 
\#
Note that from the definition of $\hat \nabla_b J_2(K_N, b_h)$ and $\nabla_b J_2(K_N, b_h)$ in  \eqref{eq:def-grad-2-hat} and \eqref{eq:def-grad-2}, respectively, it holds by triangle inequality that
\$
& \bigl \|  \nabla_b J_2(K_N, b_h)  - \hat \nabla_b J_2(K_N, b_h)  \bigr\|_2\notag\\
  &\qquad \leq \| \hat \Upsilon_{K_N}^{22} - \Upsilon_{K_N}^{22} \|_2\cdot \|K_N\|_2\cdot \|\hat\mu_{K_N, b_h}\|_2 + \|\Upsilon_{K_N}^{22} \|_2\cdot \|K_N\|_2\cdot \|\hat\mu_{K_N, b_h} - \mu_{K_N, b_h}\|_2\notag\\
  &\qquad\qquad + \|\hat\Upsilon_{K_N}^{22} - \Upsilon_{K_N}^{22}\|_2\cdot \|b_h\|_2  + \|\hat\Upsilon_{K_N}^{21} - \Upsilon_{K_N}^{21}\|_2\cdot \|\hat\mu_{{K_N}, b_h} \|_2 + \|\Upsilon_{K_N}^{21}\|_2\cdot \|\hat\mu_{{K_N}, b_h} - \mu_{{K_N}, b_h}  \|_2 \notag\\
  & \qquad\qquad + \|\hat q_{{K_N}, b_h} - q_{{K_N}, b_h}\|_2.
\$
By Theorem \ref{thm:pe}, combining the fact that $J_2({K_N}, b_h) \leq J_2({K_N}, b_0)$ and the fact that $\|\mu_{{K_N},b}\|_2\leq J({K_N}, b_0)/\sigma_{\min}(Q)$, we know that with probability at least $1 - (T^b_n)^{-4} - (\tilde T^b_n)^{-6}$, it holds for any $\rho \in ( \rho(A-BK_N), 1  ) $   that
\#\label{eq:bynb2}
& \bigl \|  \nabla_b J_2({K_N}, b_h)  - \hat \nabla_b J_2({K_N}, b_h)  \bigr\|_2\\
&\qquad \leq  \lambda_{K_N}^{-1}  \cdot  \poly_3\bigl( \|{K_N}\|_\F, \|b_h\|_2, \|\mu\|_2, J_2({K_N}, b_0) \bigr)\cdot \biggl[ \frac{\log^3 T_n^b}{(T_n^b)^{1/4}(1-\rho)^{2}} +  \frac{\log^{1/2} \tilde T_n^b}{(\tilde T_n^b)^{1/8}\cdot (1-\rho)}  \biggr]. \notag
\#
Following from the choices of $\gamma^b$, $T_n^b$, and $\tilde T_n^b$ in the statement of Theorem  \ref{thm:ac}, it holds that
\$
& \gamma^b\cdot \poly_1\bigl( \|{K_N}\|_\F, \|b_h\|_2,  \|\mu\|_2, J({K_N}, b_0) \bigr)\cdot \lambda_{K_N}^{-1}\cdot \poly_3\bigl( \|{K_N}\|_\F, \|b_h\|_2, \|\mu\|_2, J_2({K_N}, b_0) \bigr)\\
&\qquad   \cdot  \biggl[ \frac{\log^3 T_n^b}{(T_n^b)^{1/4}(1-\rho)^{2}} +  \frac{\log^{1/2} \tilde T_n^b}{(\tilde T_n^b)^{1/8}\cdot (1-\rho)}  \biggr]\cdot \bigl[1 -  \rho(A-B{K_N}) \bigr]^{-1} + \bigl[1 -  \rho(A-B {K_N}) \bigr]^{-2} \\
&\qquad\cdot \poly_3\bigl( \|{K_N}\|_\F, \|b_h\|_2, \|\mu\|_2, J_2({K_N}, b_0) \bigr) \cdot  \biggl[ \frac{\log^6 T_n^b}{(T_n^b)^{1/2}(1-\rho)^{4}} +  \frac{\log \tilde T_n^b}{(\tilde T_n^b)^{1/4}\cdot (1-\rho)^{2}}  \biggr]\notag\\
&\qquad\cdot (\gamma^b)^2\cdot \poly_2\bigl( \|{K_N}\|_2 \bigr)  \cdot \lambda_{K_N}^{-1}\\
& \quad \leq \nu_{K_N}\cdot \gamma^b\cdot \varepsilon/2.
\$
Further combining \eqref{eq:bynb1} and \eqref{eq:bynb2}, it holds with probability at least $1-\varepsilon^{15}$ that 
\$
\bigl|J_2(K_N, b_{h+1}) - J_2(K_N, \tilde b_{h+1})\bigr|  \leq  \nu_{K_N}\cdot\gamma^b\cdot \varepsilon/2. 
\$
We then finish the proof of the lemma. 
\end{proof}

\subsection{Proof of Lemma \ref{lemma:zeta_xi}}\label{proof:lemma:zeta_xi}
\begin{proof}
We show that $\zeta_{K,b}\in\cV_\zeta$ and $\xi(\zeta)\in\cV_\xi$ for any $\zeta\in \cV_\zeta$ separately. 

\vskip5pt
\noindent\textbf{Part 1.} First we show that $\zeta_{K,b}\in\cV_\zeta$. 
Note that from Definition \ref{assum:proj}, we know that $\zeta_{K,b}^1 = J(K,b)$ satisfies that $0\leq \zeta_{K,b}^1 \leq J(K_0, b_0)$.  It remains to show that $\zeta_{K,b}^2 = \alpha_{K,b}$ satisfies that $\|\zeta_{K,b}^2\|_2\leq M_\zeta$. By the definition of $\alpha_{K,b}$ in \eqref{eq:q1}, we know that
\#\label{eq:alpha_bound}
\|\alpha_{K,b}\|_2^2 &\leq \|\Upsilon_K\|_\F^2 + \|\Upsilon_K\|_2^2 \cdot \bigl( \|\mu_{K,b}\|_2^2 + \|\mu_{K,b}^u\|_2^2 \bigr) \notag\\
&\qquad + \bigl(\|A\|_2 + \|B\|_2\bigr)^2\cdot \bigl(\|P_K\|_2 \cdot \|\overline A \mu + d\|_2 + \|f_{K, b}\|_2\bigr)^2
\#
where  $f_{K,b} = (I-A+BK)^{-\top} [  (A-BK)^\top P_K (Bb+\overline A\mu + d) - K^\top Rb ]$ and  for notational simplicity, we denote by $\mu_{K,b}^u = -K\mu_{K,b} + b$. We only need to bound $\Upsilon_K$, $\mu_{K,b}$, $\mu_{K,b}^u$, $P_K$, and $f_{K,b}$.  Note that by Proposition \ref{prop:cost_form}, the expected total cost $J(K, b)$ takes the form of
\$
J(K, b) = \tr(P_K\Psi_\epsilon) + \mu_{K,b}^\top Q\mu_{K,b} + (\mu_{K,b}^u)^\top R\mu_{K,b}^u + \sigma^2\cdot \tr(R) + \mu^\top \overline Q\mu. 
\$
Thus, we have
\$
& J(K_0, b_0)\geq J(K, b)\geq \sigma_{\min}(\Psi_\omega)\cdot \tr(P_K) \geq \sigma_{\min}(\Psi_\omega)\cdot \|P_K\|_2, \\
& J(K_0, b_0)\geq J(K, b)\geq \mu_{K,b}^\top Q\mu_{K,b}\geq \sigma_{\min}(Q)\cdot \|\mu_{K,b}\|_2, \\
& J(K_0, b_0)\geq J(K, b)\geq (\mu_{K,b}^u)^\top R\mu_{K,b}^u\geq \sigma_{\min}(R)\cdot \|\mu_{K,b}^u\|_2, 
\$
which imply that 
\#\label{eq:bound_mu_P}
& \|P_K\|_2\leq J(K_0, b_0)/\sigma_{\min}(\Psi_\omega), \notag\\
& \|\mu_{K,b}\|_2\leq J(K_0, b_0)/\sigma_{\min}(Q),\notag\\
& \|\mu_{K,b}^u\|_2\leq J(K_0, b_0)/\sigma_{\min}(R). 
\#
For $\Upsilon_K$, it holds that 
\$
\Upsilon_K = \begin{pmatrix}
Q & \mathbf{0} \\
\mathbf{0} & R
\end{pmatrix}  +  \begin{pmatrix}
A^\top\\
B^\top
\end{pmatrix}P_K    \begin{pmatrix}
A & B
\end{pmatrix},
\$
which gives
\$
& \|\Upsilon_K\|_\F \leq (\|Q\|_\F + \|R\|_\F) + \bigl( \|A\|_\F^2 + \|B\|_\F^2 \bigr)\cdot \| P_K \|_\F, \notag\\
& \|\Upsilon_K\|_2 \leq (\|Q\|_2 + \|R\|_2) + \bigl( \|A\|_2 + \|B\|_2 \bigr)^2 \cdot \| P_K \|_2. 
\$
Combining \eqref{eq:bound_mu_P} and the fact that $\|P_K\|_\F\leq \sqrt{m}\cdot \|P_K\|_2$, we know that 
\#\label{eq:bound_upsilon}
& \|\Upsilon_K\|_\F \leq \bigl(\|Q\|_\F + \|R\|_\F \bigr) + \bigl( \|A\|_\F^2 + \|B\|_\F^2 \bigr)\cdot \sqrt{m}\cdot J(K_0, b_0)/\sigma_{\min}(\Psi_\omega), \notag\\
& \|\Upsilon_K\|_2 \leq \bigl(\|Q\|_2 + \|R\|_2 \bigr) + \bigl( \|A\|_2 + \|B\|_2 \bigr)^2 \cdot J(K_0, b_0)/\sigma_{\min}(\Psi_\omega). 
\#
Now, we upper bound the vector $f_{K,b}$. 
Note that by algebra, the vector $f_{K,b}$ takes the form of 
\$
f_{K,b} = -P_K\mu_{K,b} + (I-A+BK)^{-T} ( Q\mu_{K,b} - K^\top R\mu_{K,b}^u ).
\$ 
Therefore, we upper bound $f_{K,b}$ as
\#\label{eq:bound_fKb}
\|f_{K,b}\|_2\leq  J(K_0, b_0)^2\cdot \sigma_{\min}^{-1}(\Psi_\omega)\cdot \sigma_{\min}^{-1}(Q) + \bigl[ 1-\rho(A-BK) \bigr]^{-1}\cdot(\kappa_Q + \kappa_R\cdot \|K\|_\F)
\#
Combining \eqref{eq:alpha_bound}, \eqref{eq:bound_mu_P}, \eqref{eq:bound_upsilon}, and \eqref{eq:bound_fKb}, it holds that 
\$
\|\zeta_{K,b}^2\|_2 = \|\alpha_{K,b}\|_2 \leq M_{\zeta,1} + M_{\zeta,2}\cdot(1+ \|K\|_\F)\cdot [ 1-\rho(A-BK) ]^{-1}.
\$
Therefore, it holds that $\zeta_{K,b}\in\cV_\zeta$. 

\vskip5pt
\noindent\textbf{Part 2.} Now we show that for any $\zeta\in\cV_\zeta$, we have $\xi(\zeta)\in\cV_\xi$. Recall that from \eqref{eq:expl_xi-thm}, it holds that
\#\label{eq:expl_xi}
& \xi^1(\zeta) = \zeta^1 - J(K, b), \quad \xi^2(\zeta) = \EE_{\pi_{K,b}}\bigl[\psi(x,u)\bigr] \zeta^1+ \Theta_{K,b} \zeta^2  - \EE_{\pi_{K,b}}\bigl[ c(x,u) \psi(x,u) \bigr]. 
\#
Then we have 
\#\label{eq:xi1}
\bigl|\xi^1(\zeta)\bigr| = \bigl|\zeta^1 - J(K, b)\bigr|\leq J(K_0, b_0),
\#
where we use the fact that since $\zeta\in \cV_\zeta$, we have $0 \leq \zeta^1 \leq J(K_0, b_0)$ by Definition \ref{assum:proj}. Also, by \eqref{eq:expl_xi}, we have 
\#\label{eq:xi2_1}
\bigl\|\xi^2(\zeta)  \bigr\|_2\leq \underbrace{\Bigl\|  \EE_{\pi_{K,b}}\bigl[\psi(x,u)\bigr] \zeta^1 \Bigr\|_2}_{B_1} + \underbrace{\|\Theta_{K,b}\|_2\cdot \|\zeta^2\|_2}_{B_2} + \underbrace{\Bigl\|\EE_{\pi_{K,b}}\bigl[ c(x,u) \psi(x,u) \bigr]\Bigr\|_2}_{B_3}. 
\#
Note that we upper bound $B_1$ as
\#\label{eq:bound_B11}
B_1 \leq J(K_0, b_0)\cdot  \Bigl\|  \EE_{\pi_{K,b}}\bigl[\psi(x,u)\bigr]  \Bigr\|_2.  
\#
Following from the definition of $\psi(x,u)$ in \eqref{eq:def_feature}, we know that 
\#\label{eq:laji1}
\Bigl\|  \EE_{\pi_{K,b}}\bigl[\psi(x,u)\bigr]  \Bigr\|_2 \leq \| \Sigma_z \|_\F,
\#
where $\Sigma_z$ is defined as 
\$
\Sigma_z = {\rm Cov} \Biggl[ \begin{pmatrix}
x\\
u
\end{pmatrix}\Biggr]
 = \begin{pmatrix}
\Phi_K & -\Phi_K K^\top\\
-K\Phi_K & K\Phi_K K^\top + \sigma^2  I
\end{pmatrix} = \begin{pmatrix}
0 & 0\\
0 & \sigma^2 I
\end{pmatrix} + \begin{pmatrix}
I\\
-K
\end{pmatrix}\Phi_K\begin{pmatrix}
I\\
-K
\end{pmatrix}^\top. 
\$
Combining \eqref{eq:bound_B11} and \eqref{eq:laji1}, we have
\#\label{eq:bound_B1}
B_1  \leq J(K_0, b_0)\cdot \|\Sigma_z\|_\F. 
\#
By Proposition \ref{prop:invert_theta}, we upper bound $B_2$ as
\#\label{eq:bound_B2}
B_2 \leq 4 ( 1 + \|K\|_\F^2 )^3\cdot \|\Phi_K\|_2^2 \cdot (M_{\zeta,1} + M_{\zeta,2})\cdot \bigl[ 1-\rho(A-BK) \bigr]^{-1},
\#
where we use the fact that $\zeta\in\cV_\zeta$ and Definition \ref{assum:proj}.  As for the term $B_3$ in \eqref{eq:xi2_1},  we utilize the following lemma to provide an upper bound. 

\begin{lemma}\label{lemma:form_c_psi}
The vector $\EE_{\pi_{K, b}}[c(x,u)\psi(x,u)]$ takes the following form,
\$
\EE_{\pi_{K, b}} \bigl[ c(x,u)\psi(x,u)\bigr ]& = \begin{pmatrix}
2\svec\bigl[ \Sigma_z \diag(Q,R)\Sigma_z + \la \Sigma_z, \diag(Q,R)\ra \Sigma_z \bigr]\\
\Sigma_z\begin{pmatrix}
2Q\mu_{K,b}\\
2R\mu_{K,b}^u
\end{pmatrix}
\end{pmatrix}\notag\\
&\qquad + \bigl[\mu_{K,b}^\top Q\mu_{K,b} + (\mu_{K,b}^u)^\top R\mu_{K,b}^u + \mu^\top \overline Q\mu\bigr]\begin{pmatrix}
\svec(\Sigma_z)\\
\mathbf{0}_{m}\\
\mathbf{0}_{k}
\end{pmatrix}.
\$
Here the matrix $\Sigma_z$ takes the form of
\$
\Sigma_z = \begin{pmatrix}
\Phi_K & -\Phi_K K^\top\\
-K\Phi_K & K\Phi_K K^\top + \sigma^2 \cdot I
\end{pmatrix}. 
\$
\end{lemma}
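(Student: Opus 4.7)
The plan is to pass to stationarity and reduce every component of $\EE_{\pi_{K,b}}[c(x,u)\psi(x,u)]$ to an explicit Gaussian moment computation via Isserlis' (Wick's) theorem.

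First, under the linear-Gaussian policy $\pi_{K,b}$ and the drifted state transition \eqref{eq:f1}, the stacked vector $z = (x^\top, u^\top)^\top$ has the stationary distribution $\mathcal N(\mu_z, \Sigma_z)$, where
\$
\mu_z = \begin{pmatrix} \mu_{K,b} \\ \mu_{K,b}^u \end{pmatrix}, \quad \mu_{K,b}^u = -K\mu_{K,b} + b, \quad \Sigma_z = \begin{pmatrix} \Phi_K & -\Phi_K K^\top\\ -K\Phi_K & K\Phi_K K^\top + \sigma^2 I \end{pmatrix},
\$
by the same derivation used in the proof of Proposition \ref{prop:invert_theta}. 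Writing $\tilde z = z - \mu_z$, which is centered Gaussian with covariance $\Sigma_z$, the feature vector becomes
\$
\psi(x,u) = \bigl(\svec(\tilde z\tilde z^\top)^\top,~ \tilde z^\top\bigr)^\top,
\$
and with $M = \diag(Q,R)$ the cost decomposes as
\$
c(x,u) = z^\top M z + \mu^\top \overline Q \mu = c_0 + 2\mu_z^\top M \tilde z + \tilde z^\top M \tilde z, \quad c_0 = \mu_z^\top M \mu_z + \mu^\top \overline Q \mu.
\$

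Next, I would compute the two blocks of $\EE[c \cdot \psi]$ separately using the fact that the odd moments of the centered Gaussian $\tilde z$ vanish. For the linear block $\EE[c \cdot \tilde z]$, the $c_0$ and $\tilde z^\top M \tilde z$ contributions vanish, and only the cross term $\EE[2\mu_z^\top M \tilde z\cdot \tilde z] = 2\Sigma_z M \mu_z$ survives; unpacking $M\mu_z = (Q\mu_{K,b},\, R\mu_{K,b}^u)$ yields exactly the second block of the claimed expression. For the quadratic block $\EE[c \cdot \svec(\tilde z \tilde z^\top)]$, the $2\mu_z^\top M \tilde z$ term contributes $0$; the $c_0$ term contributes $c_0\,\svec(\Sigma_z)$; and the remaining term requires evaluating
\$
\EE\bigl[\tilde z^\top M \tilde z \cdot \tilde z \tilde z^\top\bigr]
\$
via Isserlis, i.e., $\EE[\tilde z_i\tilde z_j\tilde z_k\tilde z_l] = \Sigma_{ij}\Sigma_{kl}+\Sigma_{ik}\Sigma_{jl}+\Sigma_{il}\Sigma_{jk}$, which after contracting with $M_{ij}$ gives $\langle M,\Sigma_z\rangle\Sigma_z + 2\Sigma_z M \Sigma_z$. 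Combining the three contributions to the first block and collecting the resulting $\svec(\Sigma_z)$ terms into the scalar prefactor $\mu_{K,b}^\top Q \mu_{K,b} + (\mu_{K,b}^u)^\top R \mu_{K,b}^u + \mu^\top \overline Q \mu$ yields the claimed form.

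The main obstacle, or rather the only delicate step, is the fourth-moment computation via Isserlis and its consistent presentation in $\svec$ notation: one has to be careful that $\svec$ is linear, that the symmetric Kronecker identities used elsewhere in this appendix are compatible with the $\sqrt{2}$ scaling on off-diagonal entries, and that the two pairings $\Sigma_{ik}\Sigma_{jl}$ and $\Sigma_{il}\Sigma_{jk}$ both reduce to $\Sigma_z M \Sigma_z$ because $M$ is symmetric. Once this computation is in place, the rest of the proof is an unpacking and rearrangement of the blocks, which proceeds by direct substitution.
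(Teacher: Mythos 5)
Your proposal is correct and follows essentially the same route as the paper: both decompose the cost about the stationary mean into a constant, a linear, and a quadratic part in $\tilde z = z - \mu_z$, discard the odd Gaussian moments, and evaluate the remaining fourth moment — you via Isserlis' theorem directly, the paper via the equivalent Magnus identity (Lemma \ref{lemma:magnus}) after pairing with arbitrary test vectors $(\svec(V), v_x, v_u)$. Note that your fourth-moment outcome $2\Sigma_z M \Sigma_z + \langle M, \Sigma_z\rangle \Sigma_z$ agrees with the paper's own intermediate step \eqref{eq:ipp1}; the placement of the factor $2$ outside the bracket in \eqref{eq:D1} and in the lemma display (which would double the $\langle \Sigma_z, \diag(Q,R)\rangle\,\Sigma_z$ term) is an internal inconsistency of the paper rather than a gap in your argument.
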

\begin{proof}
See \S\ref{proof:lemma:form_c_psi} for a detailed proof. 
\end{proof}

From Lemma \ref{lemma:form_c_psi} and \eqref{eq:bound_mu_P}, it holds that
\#\label{eq:bound_B3}
B_3& \leq 3 \bigl[ \|Q\|_\F + \|R\|_\F  + J(K_0, b_0)\cdot \|Q\|_2 / \sigma_{\min}(Q) \\
& \qquad + J(K_0, b_0)\cdot \|R\|_2 / \sigma_{\min}(R)   \bigr] \cdot \|\Sigma_z\|_2^2. \notag
\#
Moreover, by the definition of $\Sigma_z$ in \eqref{eq:sigma_z_factor}, combining the triangle inequality, we have the following bounds for $\|\Sigma_z\|_\F$ and $\|\Sigma_z\|_2$,
\#\label{eq:sigmaz_bound}
\|\Sigma_z\|_\F \leq 2(d + \|K\|_\F^2)\cdot \|\Phi_K\|_2, \qquad \|\Sigma_z\|_2 \leq 2(1 + \|K\|_\F^2)\cdot \|\Phi_K\|_2. 
\#
Also, we have
\$
J(K_0, b_0) \geq J(K,b)\geq \tr\bigl[ (Q + K^\top RK) \Phi_K \bigr] \geq \|\Phi_K\|_2 \cdot \sigma_{\min}(Q),
\$
which gives the upper bound for $\Phi_K$ as follows,
\#\label{eq:bound_phix}
\|\Phi_K\|_2 \leq J(K_0, b_0) / \sigma_{\min}(Q). 
\#
Therefore, combining \eqref{eq:xi2_1}, \eqref{eq:bound_B1}, \eqref{eq:bound_B2}, \eqref{eq:bound_B3}, \eqref{eq:sigmaz_bound}, and \eqref{eq:bound_phix}, we know that 
\#\label{eq:xi2}
\bigl\|\xi^2(\zeta)  \bigr\|_2 & \leq C\cdot (M_{\zeta,1} + M_{\zeta,2}) \cdot J(K_0, b_0)^2/\sigma^2_{\min}(Q) \\
&\qquad  \cdot \bigl( 1 + \|K\|_\F^2 \bigr)^3\cdot \bigl[ 1-\rho(A-BK) \bigr]^{-1}. \notag
\#
By \eqref{eq:xi1} and \eqref{eq:xi2}, we know that $\xi(\zeta)\in\cV_\xi$ for any $\zeta\in\cV_\zeta$. We conclude the proof of the lemma.   
\end{proof}

\subsection{Proof of Lemma \ref{lemma:dist_hat_muz}}\label{proof:lemma:dist_hat_muz}
\begin{proof}
Assume that $\tilde z_0\sim\mathcal N(\mu_\dagger, \Sigma_\dagger)$. Following from the fact that
\$
\tilde z_{t+1} = L \tilde z_t + \nu + \delta_t,
\$
it holds that 
\#\label{eq:dist_ztmu}
\tilde z_t \sim \mathcal   N\Biggl(   L^t \mu_\dagger +  \sum_{i = 0}^{t-1} L^i\cdot \nu,   ~ (L^\top)^t \Sigma_\dagger L^t + \sum_{i = 0}^{t-1} (L^\top)^{i} \Psi_\delta L^i         \Biggr),
\#
where
\$
\Psi_\delta = \begin{pmatrix}
\Psi_\omega & K \Psi_\omega\\
K\Psi_\omega & K\Psi_\omega K^\top + \sigma^2 I
\end{pmatrix}.   
\$
From \eqref{eq:def-mean-var}, we know that $\mu_z$ takes the form of 
\#\label{eq:muz-sum}
\mu_z = (I-L)^{-1}\nu = \sum_{j = 0}^\infty L^j\nu. 
\#
Therefore, combining \eqref{eq:dist_ztmu} and \eqref{eq:muz-sum}, we have
\#\label{eq:mean_muz1}
\EE(\hat\mu_z) = \mu_z + \frac{1}{\tilde T}\sum_{t = 1}^{\tilde T}L^t \mu_\dagger - \frac{1}{\tilde T}\sum_{t = 1}^{\tilde T}\sum_{i = t}^\infty L^i\nu. 
\#
We denote by
\$
\mu_{\tilde T} = \sum_{t = 1}^{\tilde T}L^t \mu_\dagger - \sum_{t = 1}^{\tilde T}\sum_{i = t}^\infty L^i\nu. 
\$
Meanwhile, it holds that 
\#\label{eq:mean_muz2}
&\biggl\| \sum_{t = 1}^{\tilde T}L^t \mu_\dagger -  \sum_{t = 1}^{\tilde T}\sum_{i = t}^\infty L^i\nu\biggr\|_2\notag\\
&\qquad \leq  \sum_{t = 1}^{\tilde T}\rho(L)^t \cdot \|\mu_\dagger\|_2 +  \sum_{t = 1}^{\tilde T}\sum_{i = t}^\infty \rho(L)^i\cdot \|\nu\|_2\notag\\
& \qquad \leq {\bigl[ 1 - \rho(L) \bigr]^{-1}}\cdot \|\mu_\dagger\|_2 + {\bigl[ 1 - \rho(L) \bigr]^{-2}} \cdot \|\nu\|_2 \notag\\
&\qquad \leq M_\mu\cdot (1-\rho)^{-2}\cdot \|\mu_z\|_2,
\#
where $M_\mu$ is a positive absolute constant.  

For the covariance, note that for any random variables $X\sim\mathcal N(\mu_1, \Sigma_1)$ and $Y\sim\mathcal N(\mu_2, \Sigma_2)$, we know that $Z = X+Y\sim\mathcal N(\mu_1+\mu_2, \Sigma)$, where $\|\Sigma\|_\F\leq 2\|\Sigma_1\|_\F + 2\|\Sigma_2\|_\F$.  Combining \eqref{eq:dist_ztmu}, we know that $\hat\mu_z\sim\mathcal N(\EE\hat\mu_z , \tilde \Sigma_{\tilde T}/\tilde T)$, where $\tilde \Sigma_{\tilde T}$ satisfies that
\$
\tilde T/2\cdot \|\tilde \Sigma_{\tilde T}\|_\F & \leq  \sum_{t = 1}^{\tilde T} \rho(L)^{2t}\cdot \|\Sigma_\dagger\|_\F + \sum_{t = 1}^{\tilde T}\sum_{i = 0}^{t-1}\rho(L)^{2i}\cdot \|\Psi_\delta\|_\F\notag\\
& \leq \bigl[1 - \rho(L)^2\bigr]^{-1}\cdot \|\Sigma_\dagger\|_\F + \tilde T\cdot \bigl[1 - \rho(L)^2\bigr]^{-1}\cdot \|\Psi_\delta\|_\F,
\$
which implies that 
\#\label{eq:var_muz2}
\|\tilde \Sigma_{\tilde T}\|_\F\leq M_\Sigma\cdot (1-\rho)^{-1}\cdot \|\Sigma_z\|_\F,
\#
where $M_\Sigma$ is a positive absolute constant. Combining \eqref{eq:mean_muz1}, \eqref{eq:mean_muz2}, and \eqref{eq:var_muz2}, we obtain that 
\$
\hat \mu_z\sim\mathcal N\biggl(\mu_z + \frac{1}{\tilde T}\mu_{\tilde T}, ~\frac{1}{\tilde T} \tilde \Sigma_{\tilde T} \biggr),
\$
where $\|\mu_{\tilde T}\|_2\leq M_\mu\cdot (1-\rho)^{-2}\cdot \|\mu_z\|_2$ and $\|\tilde \Sigma_{\tilde T}\|_\F\leq M_\Sigma\cdot (1-\rho)^{-1}\cdot \|\Sigma_z\|_\F$.    Moreover, by the Gaussian tail inequality, it holds with probability at least $1 -  \tilde T^{-6}$ that
\$
\| \hat \mu_z - \mu_z \|_2 \leq   \frac{\log \tilde T}{\tilde T^{1/4}}\cdot (1-\rho)^{-2} \cdot \poly \bigl( \|\Phi_K\|_2, \|K\|_\F, \|b\|_2, \|\mu\|_2  \bigr). 
\$
Then we finish the proof of the lemma.  
\end{proof}

\subsection{Proof of Lemma \ref{lemma:bound_tilde_F}}\label{proof:lemma:bound_tilde_F}
\begin{proof}
We continue using the notations given in \S\ref{proof:thm:pe}.   We define
\$
\hat  F(\zeta, \xi) =  \Bigl\{  \EE ( \hat  \psi ) \zeta^1+ \EE \bigl[ (\hat  \psi - \hat  \psi') \hat  \psi^\top \bigr] \zeta^2 - \EE (   c \hat  \psi )\Bigr\}^\top \xi^2  + \bigl[\zeta^1 - \EE (  c)\bigr] \cdot \xi^1 - 1/2\cdot \|\xi\|_2^2,
\$
where $\hat \psi = \hat \psi(x,u)$ is the estimated feature vector. Here the expectation is only taken over the trajectory generated by the state transition and the policy $\pi_{K,b}$, conditioning on the randomness induced when calculating the estimated feature vectors.   Thus, the function $\hat F(\zeta, \xi)$ is still random, where the randomness comes from the estimated feature vectors. 
Note that $|F(\zeta, \xi) - \tilde F(\zeta, \xi)|  \leq |F(\zeta, \xi) - \hat F(\zeta, \xi)|  + |\hat F(\zeta, \xi) - \tilde F(\zeta, \xi)| $.  Thus, we only need to upper bound $|F(\zeta, \xi) - \hat F(\zeta, \xi)|$ and $|\hat F(\zeta, \xi) - \tilde F(\zeta, \xi)|$.
 
\vskip5pt
\noindent\textbf{Part 1.} First we upper bound $|F(\zeta, \xi) - \hat F(\zeta, \xi)|$.  Note that by algebra, we have
\#\label{eq:diff_hat_F}
&  \bigl|F(\zeta, \xi) - \hat F(\zeta, \xi)\bigr| \notag\\
&\qquad = \biggl|  \Bigl\{ \EE(\psi - \hat\psi)\zeta^1 + \EE\bigl[ (\psi - \psi')\psi^\top - (\hat \psi - \hat\psi')\hat\psi^\top \bigr] \zeta^2 - \EE\bigl[ c(\psi - \hat\psi) \bigr]  \Bigr\}^\top \xi^2   \biggr|\notag\\
&\qquad \leq \EE\bigl(\|\psi - \hat\psi\|_2\bigr) \cdot \Big[  |\zeta^1|  +   \EE\bigl(   \| \psi - \psi' \|_2 + 2\|\hat\psi\|_2    \bigr)  \cdot  \| \zeta^2 \|_2   + \EE( c)     \Bigr]\cdot \|\xi^2\|_2,
\#
where the expectation is only taken over the trajectory generated by the state transition and the policy $\pi_{K, b}$.   From Lemma \ref{lemma:dist_hat_muz}, it holds that 
\#\label{eq:pen1}
\PP\bigl(\| \hat\mu_z -\mu_z + 1/\tilde T\cdot \mu_{\tilde T}  \|_2  \leq  C_1\bigr) \geq 1 - \tilde T^{-6}. 
\# 
Therefore,  combining \eqref{eq:pen1}, it holds  with probability at least $1-\tilde T^{-6}$ that 
\#\label{eq:pen2}
\EE \bigl(   \| \psi - \psi' \|_2 + 2\|\hat\psi\|_2 \bigr) \leq \poly\bigl( \|\Phi_K\|_2, \|K\|_\F, \|b\|_2, \|\mu\|_2, J(K_0, b_0) \bigr),
\#
where the expectation is conditioned on the randomness induced when calculating the estimated feature vectors.   Also, we know that
\#\label{eq:pen3}
\EE(c) \leq \poly\bigl( \|\Phi_K\|_2, \|K\|_\F, \|b\|_2, \|\mu\|_2, J(K_0, b_0) \bigr). 
\# 
Therefore, combining \eqref{eq:diff_hat_F},  \eqref{eq:pen2}, \eqref{eq:pen3}, and Definition \ref{assum:proj}, it holds with probability at least $1-\tilde T^{-6}$ that
\#\label{eq:diff_hat_F2}
\bigl|F(\zeta, \xi) - \hat F(\zeta, \xi)\bigr| \leq \EE\bigl(\|\psi - \hat\psi\|_2\bigr)\cdot \poly\bigl( \|\Phi_K\|_2, \|K\|_\F, \|b\|_2, \|\mu\|_2, J(K_0, b_0) \bigr). 
\#
Following from the definitions of $\psi(x,u)$ in \eqref{eq:def_feature} and $\hat\psi(x,u)$ in \eqref{eq:est_def_feature}, we upper bound $\|\psi(x,u) - \hat\psi(x,u)\|_2$ for any $x$ and $u$ as
\#\label{eq:error_psi}
\|\psi(x,u) - \hat\psi(x,u)\|_2^2&  = \|\hat \mu_z - \mu_z\|_2^2 + \bigl\|   z(\hat\mu_z - \mu_z)^\top + (\hat\mu_z - \mu_z)z^\top  \bigr\|_\F^2 + \| \mu_z\mu_z^\top - \hat\mu_z \hat\mu_z^\top \|_\F^2\notag\\
& \leq \poly\bigl( \|\Phi_K\|_2, \|K\|_\F, \|b\|_2, \|\mu\|_2, J(K_0, b_0) \bigr) \cdot \| \hat\mu_z - \mu_z \|_2^2,
\#
where $\mu_z$ is defined in \eqref{eq:def-mean-var}, $\hat\mu_z$ is defined in \eqref{eq:def-mean-est}, and $z = (x^\top, u^\top)^\top$.   Also, by Lemma \ref{lemma:dist_hat_muz}, we know that 
\#\label{eq:error_muz}
\| \hat\mu_z - \mu_z \|_2 \leq \frac{\log \tilde T}{\tilde T^{1/4}}\cdot (1-\rho)^{-2}\cdot \poly\bigl( \|\Phi_K\|_2, \|K\|_\F, \|b\|_2, \|\mu\|_2, J(K_0, b_0) \bigr),
\#
which holds with probability at least $1 - \tilde T^{-6}$.  Combining \eqref{eq:diff_hat_F2}, \eqref{eq:error_psi}, and \eqref{eq:error_muz}, it holds with probability at least $1 - \tilde T^{-6}$ that
\#\label{eq:bound_F_hatF}
\bigl|F(\zeta, \xi) - \hat F(\zeta, \xi)\bigr| \leq   \frac{\log \tilde T}{\tilde T^{1/4}}\cdot (1-\rho)^{-2}\cdot \poly\bigl(  \|K\|_\F, \|b\|_2, \|\mu\|_2, J(K_0, b_0) \bigr).
\# 

\vskip5pt
\noindent\textbf{Part 2.} We now upper bound $|\hat F(\zeta, \xi) - \tilde F(\zeta, \xi)|$ in the sequel.  By definitions, we have
\#\label{eq:diff_tilde_hat_F}
& \bigl|\tilde F(\zeta, \xi) - \hat F(\zeta, \xi)\bigr|\notag\\
&\qquad  =   \biggl|  \Bigl\{ \EE(\tilde \psi - \hat\psi)\zeta^1 + \EE\bigl[ (\tilde \psi - \tilde \psi')\tilde \psi^\top - (\hat \psi -  \hat\psi')\hat\psi^\top \bigr] \zeta^2 - \EE( \tilde c\tilde \psi - \hat c \hat \psi )  \Bigr\}^\top \xi^2  + \EE(\hat c - \tilde c)\xi^1  \biggr|\notag\\
& \qquad \leq  \biggl|  \Bigl\{ \EE(\hat\psi) \zeta^1 + \EE ( \hat \psi \hat\psi^\top )  \zeta^2 - \EE( \hat c \hat \psi )    \Bigr\}^\top \xi^2  + \EE(\hat c) \xi^1  \biggr|\cdot \ind_{\cE^c} \\
&\qquad\qquad+ \Bigl|  \bigl[\EE(\hat \psi'\hat \psi^\top)\zeta^2\bigr]^\top \xi^2  \Bigr|\cdot \ind_{(\cE'\cap\cE)^c},\notag
\#
where we define the event $\cE'$ as 
\$
\cE' = \biggl(\bigcap_{t \in[T]}\Bigl\{   \bigl |  \| z_t' -\mu_z + 1/\tilde T\cdot \mu_{\tilde T}  \|_2^2 - \tr(\tilde \Sigma_z)    \bigr|   \leq    C_1\cdot \log T\cdot \|\tilde \Sigma_z\|_2   \Bigr\} \biggr) \bigcap  \cE_2,
\$
where $\cE_2$ is defined in \eqref{eq:def-ce2nn}. 
Combining the fact that $\PP(\cE_2) \geq 1 - \tilde T^{-6}$ and Lemma \ref{lemma:hwineq}, it holds that $\PP(\cE')\geq 1 - T^{-5} - \tilde T^{-6}$.  Following a similar argument as in \textbf{Part 1}, it holds from \eqref{eq:diff_tilde_hat_F} that
\#\label{eq:bound_tilde_hat_F}
\bigl|\tilde F(\zeta, \xi) - \hat F(\zeta, \xi)\bigr| \leq \biggl(\frac{1}{T} + \frac{1}{\tilde T^{1/4}}\biggr) \cdot \poly\bigl(  \|K\|_\F, \|b\|_2, \|\mu\|_2, J(K_0, b_0) \bigr)
\#
for sufficiently large $T$ and $\tilde T$. 

\vskip5pt
Now, combining \eqref{eq:bound_F_hatF} and \eqref{eq:bound_tilde_hat_F}, by triangle inequality, it holds with probability at least $1-\tilde T^{-6}$ that
\$
\bigl |F(\zeta, \xi) - \tilde F(\zeta, \xi) \bigr|  \leq  \biggl(\frac{1}{2T} + \frac{\log \tilde T}{\tilde T^{1/4}}\biggr)\cdot (1-\rho)^{-2}\cdot \poly\bigl( \|K\|_\F, \|b\|_2, \|\mu\|_2, J(K_0, b_0) \bigr).
\$
We finish the proof of the lemma. 
\end{proof}

\subsection{Proof of Lemma \ref{lemma:theta_form}}\label{proof:lemma:theta_form}
\begin{proof}
Recall that the feature vector $\psi(x,u)$ takes the following form
\$
\psi(x,u) = \begin{pmatrix}
\svec\bigl[ (z-\mu_z)(z-\mu_z)^\top \bigr]\\
z-\mu_z
\end{pmatrix}.
\$
We then have
\#\label{eq:mzm1}
\psi(x,u) - \psi(x',u') = \begin{pmatrix}
\svec\bigl[ yy^\top - (Ly +\delta)(Ly+\delta)^\top \bigr]\\
y - (Ly+\delta)
\end{pmatrix},
\#
where we denote by $y = z-\mu_z$, and $(x', u')$ is the state-action pair after $(x, u)$ following the state transition and the policy $\pi_{K,b}$. Therefore, for any symmetric matrices $M$, $N$ and any vectors $m$, $n$,  it holds from \eqref{eq:q2} and \eqref{eq:mzm1} that
\#\label{eq:quad_deri}
& \begin{pmatrix}
\svec(M)\\
m
\end{pmatrix}^\top 
\Theta_{K,b}
\begin{pmatrix}
\svec(N)\\
n
\end{pmatrix}\notag \\
& \qquad = \EE_{y, \delta}\Biggl\{  \begin{pmatrix}
\svec(M)\\
m
\end{pmatrix}^\top   
\begin{pmatrix}
\svec(yy^\top)\\
y
\end{pmatrix}
\begin{pmatrix}
\svec\bigl[ yy^\top - (Ly +\delta)(Ly +\delta)^\top \bigr]\\
y - (Ly+\delta)
\end{pmatrix}^\top
\begin{pmatrix}
\svec(N)\\
n
\end{pmatrix} \Biggr \} \notag \\
&\qquad = \EE_{y, \delta}\Bigl\{ \bigl( \la M, yy^\top\ra + m^\top y\bigr)\cdot \bigl[  \la N,  yy^\top - (Ly+\delta)(Ly+\delta)^\top\ra  + n^\top (y-Ly-\delta)     \bigr]  \Bigr\}\notag \\
&\qquad = \underbrace{ \EE_{y}\bigl(  \la yy^\top, M\ra\cdot \la yy^\top -Lyy^\top L^\top - \Psi_\delta, N  \ra  \bigr) }_{A_1}  + \underbrace{ \EE_{y}\bigl( \la yy^\top, M\ra\cdot n^\top (y-Ly)  \bigr) }_{A_2}\\
&\qquad\qquad +  \underbrace{  \EE_{y}  \bigl(   m^\top y\cdot \la yy^\top -Lyy^\top L^\top - \Psi_\delta, N\ra   \bigr)  }_{A_3} + \underbrace{  \EE_{y}  \bigl[  m^\top y\cdot n^\top (y-Ly)  \bigr] }_{A_4},\notag
\#
where the expectations are taken over  $y\sim\mathcal N(0,\Sigma_z)$ and $\delta\sim\mathcal N(0,\Psi_\delta)$.  We evaluate the terms $A_1$, $A_2$, $A_3$, and $A_4$ in the sequel. 

For the terms $A_2$ and $A_3$ in \eqref{eq:quad_deri}, by the fact that $y = z-\mu_z\sim\mathcal N (0,\Sigma_z)$, we know that these two terms vanish.   For $A_4$, it holds that
\#\label{eq:cal_a4}
A_4 = \EE_y\bigl[ m^\top y\cdot (y-Ly)^\top n \bigr] = \EE_y\bigl[ m^\top y y^\top (I-L)^\top n \bigr] = m^\top \Sigma_z (I-L)^\top n. 
\#
For $A_1$, by algebra, we have
\#\label{eq:cal_a1_1}
A_1&  =   \EE_{y}\bigl(  \la yy^\top, M\ra\cdot \la yy^\top -Lyy^\top L^\top - \Psi_\delta, N  \ra  \bigr)\notag\\
& = \EE_{y}\bigl(  \la yy^\top, M\ra\cdot \la yy^\top -Lyy^\top L^\top, N  \ra  \bigr) - \EE_{y}\bigl(  \la yy^\top, M\ra\cdot  \la \Psi_\delta, N  \ra  \bigr)\notag\\
& = \EE_{y}\bigl[  y^\top M y \cdot y^\top (N-L^\top N L) y  \bigr] -  \la \Sigma_z, M\ra\cdot  \la \Psi_\delta, N  \ra \notag\\
& = \EE_{u\sim\mathcal N(0,I)}\bigl[  u^\top \Sigma_z^{1/2} M \Sigma_z^{1/2} u \cdot u^\top \Sigma_z^{1/2} (N-L^\top N L) \Sigma_z^{1/2} u  \bigr] -  \la \Sigma_z, M\ra\cdot  \la \Psi_\delta, N  \ra. 
\#  
Now, by applying Lemma \ref{lemma:magnus} to the first term on the RHS of \eqref{eq:cal_a1_1}, we know that
\$
A_1 & = 2\tr\bigl[ \Sigma_z^{1/2} M \Sigma_z^{1/2}\cdot \Sigma_z^{1/2} (N-L^\top N L) \Sigma_z^{1/2} \bigr]\notag\\
&\qquad + \tr(\Sigma_z^{1/2} M \Sigma_z^{1/2})\cdot \tr\bigl[ \Sigma_z^{1/2} (N-L^\top N L) \Sigma_z^{1/2} \bigr] - \la \Sigma_z, M\ra\cdot  \la \Psi_\delta, N  \ra\notag\\
& = 2\bigl\la M, \Sigma_z (N-L^\top N L )\Sigma_z\ra + \la \Sigma_z, M \bigr\ra\cdot \la \Sigma_z - L\Sigma_z L^\top - \Psi_\delta, N\ra  \notag\\
&= 2\bigl\la M, \Sigma_z (N-L^\top N L )\Sigma_z\bigr\ra,
\$
where we use the fact that $\Sigma_z = L\Sigma_z L^\top  + \Psi_\delta$ in the last equality.   By using the property of the operator $\svec(\cdot)$ and the definition of the symmetric Kronecker product, we obtain that
\#\label{eq:cal_a1}
A_1& = 2\svec(M)^\top \svec\bigl[ \Sigma_z (N-L^\top N L )\Sigma_z \bigr]\notag\\
& = 2\svec(M)^\top \bigl[\Sigma_z\otimes_s \Sigma_z - (\Sigma_z L^\top) \otimes_s (\Sigma_z L^\top) \bigr]\svec(N)\notag\\
& = 2\svec(M)^\top \bigl[(\Sigma_z\otimes_s \Sigma_z)(I-L \otimes_s L)^\top \bigr]\svec(N). 
\#
Combining \eqref{eq:quad_deri}, \eqref{eq:cal_a4}, and \eqref{eq:cal_a1}, we obtain that
\$
& \begin{pmatrix}
\svec(M)\\
m
\end{pmatrix}^\top 
\Theta_{K,b}
\begin{pmatrix}
\svec(N)\\
n
\end{pmatrix} \\
&\qquad =  \svec(M)^\top \bigl[2 (\Sigma_z\otimes_s \Sigma_z)(I-L \otimes_s L)^\top \bigr]\svec(N) +   m^\top \Sigma_z (I-L)^\top n\notag\\
& \qquad = \begin{pmatrix}
\svec(M)\\
m
\end{pmatrix}^\top 
\begin{pmatrix}
2 (\Sigma_z\otimes_s \Sigma_z) (I - L \otimes_s L)^\top & 0 \\
0 & \Sigma_z (I-L)^\top
\end{pmatrix}
\begin{pmatrix}
\svec(N)\\
n
\end{pmatrix}. 
\$
Thus, the matrix $\Theta_{K,b}$ takes the following form, 
\$
\Theta_{K,b} = \begin{pmatrix}
2 (\Sigma_z\otimes_s \Sigma_z) (I - L \otimes_s L)^\top & 0 \\
0 & \Sigma_z (I-L)^\top
\end{pmatrix},
\$
which concludes the proof of the lemma. 
\end{proof}

\subsection{Proof of Lemma \ref{lemma:upper_bound_tilde_theta}}\label{proof:lemma:upper_bound_tilde_theta}
\begin{proof}
From the definition of $\tilde \Theta_{K,b}$ in \eqref{eq:def-tilde-theta}, it holds that
\#\label{eq:norm_tilde}
\|\tilde \Theta_{K,b}^{-1} \|_2^2 \leq 1 + \|\Theta_{K,b}^{-1}\|_2^2 + \|\Theta_{K,b}^{-1}\tilde \sigma_z\|_2^2,
\#
where $\tilde \sigma_z$ is defined as
\$
\tilde \sigma_z = \EE_{\pi_{K,b}}\bigl[ \psi(x,u) \bigr] = \begin{pmatrix}
\svec(\Sigma_z)\\
\mathbf 0_{k+m}
\end{pmatrix}.
\$
We bound the RHS of \eqref{eq:norm_tilde} in the sequel. 
For the term $\Theta_{K,b}^{-1}\tilde \sigma_z$, combining Lemma \ref{lemma:theta_form}, we have
\#\label{eq:what1}
\Theta_{K,b}^{-1}\tilde \sigma_z & = \begin{pmatrix}
1/2\cdot  (I-L \otimes_s L) ^{-\top}  (\Sigma_z\otimes_s \Sigma_z)^{-1}\cdot \svec(\Sigma_z)\notag\\
\mathbf 0_{k+m}
\end{pmatrix}\\
& = \begin{pmatrix}
1/2\cdot  (I-L \otimes_s L) ^{-\top}  (\Sigma_z^{-1}\otimes_s \Sigma_z^{-1} ) \cdot  \svec(\Sigma_z)\notag\\
\mathbf 0_{k+m}
\end{pmatrix}\\
& = \begin{pmatrix}
1/2\cdot  (I-L \otimes_s L) ^{-\top} \cdot  \svec(\Sigma_z^{-1})\\
\mathbf 0_{k+m}
\end{pmatrix},
\#
where we use the property of the symmetric Kronecker product in the second and last line.  By taking the spectral norm on both sides of \eqref{eq:what1}, it holds that
\#\label{eq:norm_theta_sigma}
\|\Theta_{K,b}^{-1}\tilde \sigma_z\|_2&  = 1/2\cdot \bigl \| (I-L \otimes_s L) ^{-\top} \cdot  \svec(\Sigma_z^{-1})\bigr \|_2\notag\\
& \leq 1/2\cdot \bigl \| (I-L \otimes_s L) ^{-\top} \bigr \|_2 \cdot  \bigl \| \svec(\Sigma_z^{-1})\bigr \|_2\notag\\
& \leq 1/2\cdot \bigl[1-\rho^2(L)\bigr]^{-1}\cdot \|\Sigma_z^{-1}\|_\F\notag\\
& \leq 1/2\cdot\sqrt{k+m}\cdot \bigl[1-\rho^2(L)\bigr]^{-1}\cdot \|\Sigma_z^{-1}\|_2\notag\\
& = 1/2\cdot\sqrt{k+m}\cdot \bigl[1-\rho^2(L)\bigr]^{-1} \cdot \sigma_{\min}^{-1} (\Sigma_z),
\# 
where in the third line we use Lemma \ref{lemma:alizadeh1998primal} to the matrix $L\otimes_s L$.  Similarly, we upper bound  $\|\Theta_{K,b}^{-1}\|_2$ in the sequel
\#\label{eq:norm_theta}
\|\Theta_{K,b}^{-1}\|_2 \leq \min\Bigl\{  1/2\cdot \bigl[ 1-\rho^2(L) \bigr]^{-1} \sigma_{\min}^{-2} (\Sigma_z), \bigl[ 1-\rho(L) \bigr]^{-1} \sigma_{\min}^{-1} (\Sigma_z) \Bigr\}. 
\#
Thus, combining \eqref{eq:norm_tilde}, \eqref{eq:norm_theta_sigma}, and \eqref{eq:norm_theta}, we obtain that
\#\label{eq:norm_tilde2}
\|\tilde \Theta_{K,b}^{-1} \|_2^2  &\leq 1 + 1/2\cdot\sqrt{k+m}\cdot \bigl[1-\rho^2(L)\bigr]^{-1} \cdot  \sigma_{\min}^{-1} (\Sigma_z)\notag\\
&  \qquad +  \min\Bigl\{  1/2\cdot \bigl[ 1-\rho^2(L) \bigr]^{-1} \sigma_{\min}^{-2} (\Sigma_z), \bigl[ 1-\rho(L) \bigr]^{-1} \sigma_{\min}^{-1} (\Sigma_z)\Bigr\}. 
\#
Now it remains to characterize $\sigma_{\min}(\Sigma_z)$.  For any vectors $s\in\RR^m$ and $r\in\RR^k$, we have
\#\label{eq:quad_sigmaz}
\begin{pmatrix}
s\\
r
\end{pmatrix}^\top  \Sigma_z \begin{pmatrix}
s\\
r
\end{pmatrix} & = \EE_{x\sim\mathcal N(\mu_{K,b}, \Phi_K), u\sim \pi_{K,b}(\cdot\given x)  }\Bigl\{ \bigl[ s^\top (x-\mu_{K,b}) + r^\top (u+K\mu_{K,b} - b) \bigr]^2 \Bigr\} \notag\\
& = \EE_{x\sim\mathcal N(\mu_{K,b}, \Phi_K), \eta\sim\mathcal N(0,I)}\Bigl\{ \bigl[ (s-K^\top r)^\top (x-\mu_{K,b}) + \sigma r^\top \eta \bigr]^2 \Bigr\}\notag\\
& = \EE_{x\sim\mathcal N(\mu_{K,b}, \Phi_K)}\Bigl\{ \bigl[ (s-K^\top r)^\top (x-\mu_{K,b})  \bigr]^2 \Bigr\} + \EE_{\eta\sim\mathcal N(0,I)} \bigl[ ( \sigma r^\top \eta)^2 \bigr]. 
\#
The first term on the RHS of \eqref{eq:quad_sigmaz} is lower bounded  as 
\#\label{eq:quad_sigmaz1}
& \EE_{x\sim\mathcal N(\mu_{K,b}, \Phi_K)}\Bigl\{ \bigl[ (s-K^\top r)^\top (x-\mu_{K,b})  \bigr]^2 \Bigr\} = (s-K^\top r)^\top \Phi_K (s-K^\top r)\notag\\
&\qquad \geq \|s-K^\top r\|_2^2\cdot \sigma_{\min}(\Phi_K) \geq \|s-K^\top r\|_2^2\cdot \sigma_{\min}(\Psi_\omega),
\#
where the last inequality comes from the fact that $\sigma_{\min}(\Phi_K)\geq \sigma_{\min}(\Psi_\omega)$ by \eqref{eq:f2p}. The second term on the RHS of \eqref{eq:quad_sigmaz} takes the form of 
\#\label{eq:what2}
\EE_{\eta\sim\mathcal N(0,I)} \bigl[ ( \sigma r^\top \eta)^2 \bigr] = \sigma^2 \|r\|_2^2.
\# 
Therefore, combining \eqref{eq:quad_sigmaz}, \eqref{eq:quad_sigmaz1}, and \eqref{eq:what2}, we have
\$
\begin{pmatrix}
s\\
r
\end{pmatrix}^\top  \Sigma_z \begin{pmatrix}
s\\
r
\end{pmatrix} & \geq \|s-K^\top r\|_2^2\cdot \sigma_{\min}(\Psi_\omega) + \sigma^2 \|r\|_2^2 \notag\\
&\geq \sigma_{\min}(\Psi_\omega)\cdot \|s\|_2^2 + \bigl[\sigma^2 -  \|K\|_2^2\cdot \sigma_{\min}(\Psi_\omega) \bigr]\cdot \|r\|_2^2. 
\$
From this, we know that 
\#\label{eq:what3}
\sigma_{\min}(\Sigma_z) \geq \min\bigl\{ \sigma_{\min}(\Psi_\omega), \sigma^2 -  \|K\|_2^2\cdot \sigma_{\min}(\Psi_\omega)  \bigr\}.
\#
Thus, combining \eqref{eq:norm_tilde2} and \eqref{eq:what3}, we know that $\|\tilde \Theta_{K,b}^{-1}\|_2$ is upper bounded by a constant $\tilde \lambda_K$, where $\tilde \lambda_K$ only depends on $\|K\|_2$ and $\rho(L) = \rho(A-BK)$.   This finishes the proof of the lemma. 
\end{proof}

\subsection{Proof of Lemma \ref{lemma:form_c_psi}}\label{proof:lemma:form_c_psi}
\begin{proof}
First, note that the cost function $c(x,u)$ takes the following form,
\$
c(x,u) = \psi(x,u)^\top \begin{pmatrix}
\svec\bigl[ \diag(Q,R) \bigr]\\
2Q\mu_{K,b}\\
2R\mu_{K,b}^u
\end{pmatrix} + \bigl[\mu_{K,b}^\top Q\mu_{K,b} + (\mu_{K,b}^u)^\top R\mu_{K,b}^u + \mu^\top \overline Q\mu\bigr]. 
\$
For any matrix $V$ and vectors $v_x$, $v_u$, it holds that
\#\label{eq:d1pd2}
& \EE_{\pi_{K,b}} \bigl[ c(x,u)\psi(x,u) \bigr]^\top \begin{pmatrix}
\svec(V)\\
v_x\\
v_u
\end{pmatrix}\notag \\
&\qquad = \underbrace{\EE_{\pi_{K, b}}\vast\{  \psi(x,u)^\top\begin{pmatrix}
\svec\bigl[\diag(Q,R) \bigr]\\
2Q\mu_{K,b}\\
2R\mu_{K,b}^u
\end{pmatrix} \psi(x,u)^\top \begin{pmatrix}
\svec(V)\\
v_x\\
v_u
\end{pmatrix}  \vast\}}_{D_1} \\
&\qquad\qquad+ \underbrace{\EE_{\pi_{K, b}}\vast\{\psi(x,u)^\top (\mu_{K,b}^\top Q\mu_{K,b} + (\mu_{K,b}^u)^\top R\mu_{K,b}^u + \mu^\top \overline Q\mu)  \begin{pmatrix}
\svec(V)\\
v_x\\
v_u
\end{pmatrix}\vast\}}_{D_2}. \notag
\#
In the sequel, we calculate $D_1$ and $D_2$ respectively. 

\vskip5pt
\noindent \textbf{Calculation of $D_1$.} Note that by the definition of $\psi(x,u)$ in \eqref{eq:def_feature}, it holds that 
\#\label{eq:D1_1}
D_1&  = \EE_{\pi_{K, b}} \vast\{\Biggl [ (z-\mu_z)^\top \diag(Q,R) (z-\mu_z)  +(z-\mu_z)^\top \begin{pmatrix}
2Q\mu_{K,b}\\
2R\mu_{K,b}^u
\end{pmatrix} \Biggr ]\notag\\
&\qquad\qquad \cdot  \Biggl [ (z-\mu_z)^\top V (z-\mu_z)  +(z-\mu_z)^\top \begin{pmatrix}
v_x\\
v_u
\end{pmatrix} \Biggr ]   \vast\}\notag\\
& = \EE_{\pi_{K, b}} \bigl [ (z-\mu_z)^\top \diag(Q,R) (z-\mu_z) \cdot (z-\mu_z)^\top V (z-\mu_z) \bigr]  \\
& \qquad + \EE_{\pi_{K, b}} \Biggl[ \begin{pmatrix}
2Q\mu_{K,b}\\
2R\mu_{K,b}^u
\end{pmatrix}^\top (z-\mu_z)  (z-\mu_z)^\top \begin{pmatrix}
v_x\\
v_u
\end{pmatrix}  \Biggr]. \notag
\#
Here $z = (x^\top, u^\top)^\top$ and $\mu_z = \EE_{\pi_{K,b}}(z)$.  For the first term on the RHS of \eqref{eq:D1_1}, note that $z-\mu_z\sim \mathcal N(0, \Sigma_z)$. Therefore, by Lemma \ref{lemma:magnus}, we obtain that
\#\label{eq:ipp1}
& \EE_{\pi_{K, b}} \bigl [ (z-\mu_z)^\top \diag(Q,R) (z-\mu_z) \cdot (z-\mu_z)^\top V (z-\mu_z) \bigr]\notag\\
& \qquad = 2\bigl\la \Sigma_z \diag(Q,R) \Sigma_z, V\bigr\ra + \bigl\la \Sigma_z, \diag(Q, R)\bigr \ra \cdot \la \Sigma_z, V\ra\notag\\
& \qquad = \svec\Bigl[ 2 \Sigma_z \diag(Q,R)\Sigma_z +  \bigl\la \Sigma_z, \diag(Q, R)\bigr \ra \cdot  \Sigma_z  \Bigr]^\top \svec(V). 
\#
Meanwhile, the second term on the RHS of \eqref{eq:D1_1} takes the form of
\#\label{eq:ipp2}
 \EE_{\pi_{K, b}} \Biggl[ \begin{pmatrix}
2Q\mu_{K,b}\\
2R\mu_{K,b}^u
\end{pmatrix}^\top (z-\mu_z)  (z-\mu_z)^\top \begin{pmatrix}
v_x\\
v_u
\end{pmatrix}  \Biggr] = \Biggl[\Sigma_z \begin{pmatrix}
2Q\mu_{K,b}\\
2R\mu_{K,b}^u
\end{pmatrix}\Biggr]^\top \begin{pmatrix}
v_x\\
v_u
\end{pmatrix}. 
\#
Combining \eqref{eq:D1_1}, \eqref{eq:ipp1}, and \eqref{eq:ipp2}, we obtain that 
\#\label{eq:D1}
D_1 = \begin{pmatrix}
2\svec\bigl[ \Sigma_z \diag(Q,R)\Sigma_z + \la \Sigma_z, \diag(Q,R)\ra \Sigma_z \bigr]\\
\Sigma_z\begin{pmatrix}
2Q\mu_{K,b}\\
2R\mu_{K,b}^u
\end{pmatrix}
\end{pmatrix}^\top \begin{pmatrix}
\svec(V)\\
v_x\\
v_u
\end{pmatrix}. 
\#

\vskip5pt
\noindent\textbf{Calculation of $D_2$.} By the definition of the feature vector $\psi(x,u)$ in \eqref{eq:def_feature}, we know that 
\#\label{eq:D2}
D_2 =  (\mu_{K,b}^\top Q\mu_{K,b} + (\mu_{K,b}^u)^\top R\mu_{K,b}^u + \mu^\top \overline Q\mu) \begin{pmatrix}
\svec(\Sigma_z)\\
\mathbf{0}_{m}\\
\mathbf{0}_{k}
\end{pmatrix}^\top \begin{pmatrix}
\svec(V)\\
v_x\\
v_u
\end{pmatrix}. 
\#
Now, combining \eqref{eq:d1pd2}, \eqref{eq:D1}, and \eqref{eq:D2}, it holds that
\$
\EE_{\pi_{K, b}} \bigl[ c(x,u)\psi(x,u)\bigr ]& = \begin{pmatrix}
2\svec\bigl[ \Sigma_z \diag(Q,R)\Sigma_z + \la \Sigma_z, \diag(Q,R)\ra \Sigma_z \bigr]\\
\Sigma_z\begin{pmatrix}
2Q\mu_{K,b}\\
2R\mu_{K,b}^u
\end{pmatrix}
\end{pmatrix}\notag\\
&\qquad + \bigl[\mu_{K,b}^\top Q\mu_{K,b} + (\mu_{K,b}^u)^\top R\mu_{K,b}^u + \mu^\top \overline Q\mu\bigr]\begin{pmatrix}
\svec(\Sigma_z)\\
\mathbf{0}_{m}\\
\mathbf{0}_{k}
\end{pmatrix},
\$ 
which concludes the proof of the lemma. 
\end{proof}

\section{Auxiliary Results}

\begin{lemma}\label{lemma:magnus}
Assume that the random variable $w\sim\mathcal N(0,I)$, and let $U$ and $V$ be two symmetric matrices, then it holds that
\$
\EE(w^\top U w \cdot w^\top V w) = 2\tr(UV) + \tr(U)\cdot \tr(V). 
\$
\end{lemma}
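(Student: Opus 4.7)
The plan is to expand the product in coordinates and apply Isserlis' theorem (Wick's formula) for fourth moments of a standard Gaussian, then resum the terms using symmetry of $U$ and $V$.

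First I would write
\$
w^\top U w \cdot w^\top V w = \sum_{i,j,k,\ell} U_{ij} V_{k\ell}\, w_i w_j w_k w_\ell,
\$
so that by linearity,
\$
\EE(w^\top U w \cdot w^\top V w) = \sum_{i,j,k,\ell} U_{ij} V_{k\ell}\, \EE(w_i w_j w_k w_\ell).
\$
For $w\sim\mathcal N(0,I)$, Isserlis' theorem gives the fourth-moment identity
\$
\EE(w_i w_j w_k w_\ell) = \delta_{ij}\delta_{k\ell} + \delta_{ik}\delta_{j\ell} + \delta_{i\ell}\delta_{jk},
\$
where $\delta$ denotes the Kronecker delta. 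This is the one fact I would invoke without proof; it can be derived either from the moment generating function $\EE[\exp(t^\top w)] = \exp(\|t\|^2/2)$ or by direct integration using independence of the coordinates.

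Next I would substitute this back and split the sum into three pieces. The $\delta_{ij}\delta_{k\ell}$ term gives $\sum_{i}U_{ii}\sum_{k} V_{kk} = \tr(U)\tr(V)$. The $\delta_{ik}\delta_{j\ell}$ term gives $\sum_{i,j} U_{ij} V_{ij} = \tr(U V^\top)$, and the $\delta_{i\ell}\delta_{jk}$ term gives $\sum_{i,j} U_{ij} V_{ji} = \tr(UV)$. Using the symmetry of $V$ (so $V^\top = V$ and hence $\tr(UV^\top)=\tr(UV)$), the last two terms combine to $2\tr(UV)$, yielding
\$
\EE(w^\top U w \cdot w^\top V w) = 2\tr(UV) + \tr(U)\tr(V),
\$
as claimed.

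There is no real obstacle here; the statement is standard Gaussian algebra. The only subtle point is that the identity relies on $U$ and $V$ being symmetric — for general matrices one obtains $\tr(UV) + \tr(UV^\top)$ in place of $2\tr(UV)$, so I would make sure to invoke symmetry explicitly at the final step rather than earlier. An alternative route would be to diagonalize $U$ and use the rotational invariance of $\mathcal N(0,I)$, but the index calculation above is the cleanest and most direct, so that is the one I would write up.
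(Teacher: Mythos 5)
Your proof is correct. Note that the paper does not actually prove this lemma at all: it simply cites \cite{magnus1978moments} and \cite{magnus1979expectation}, where the moments of Gaussian quadratic forms are computed in general. Your argument is therefore a genuinely self-contained alternative: expanding $w^\top U w\cdot w^\top V w$ in coordinates and invoking Isserlis' (Wick's) fourth-moment identity $\EE(w_iw_jw_kw_\ell)=\delta_{ij}\delta_{k\ell}+\delta_{ik}\delta_{j\ell}+\delta_{i\ell}\delta_{jk}$ reduces everything to three index sums, which you correctly identify as $\tr(U)\tr(V)$, $\tr(UV^\top)$, and $\tr(UV)$, and the symmetry of $U$ and $V$ merges the last two into $2\tr(UV)$. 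What your route buys is an elementary, verifiable derivation that relies only on a standard Gaussian moment formula (itself obtainable from the moment generating function), rather than deferring to external references; what the citation route buys is brevity and a pointer to the general (possibly non-symmetric, non-identity-covariance) case. Your remark that for general matrices the cross terms give $\tr(UV)+\tr(UV^\top)$ is also accurate and is exactly the right place to use symmetry.
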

\begin{proof}
See \cite{magnus1978moments} and \cite{magnus1979expectation} for a detailed proof. 
\end{proof}

\begin{lemma}\label{lemma:alizadeh1998primal}
Let $M$, $N$ be commuting symmetric matrices, and let $\alpha_1, \ldots, \alpha_n$, $\beta_1, \ldots, \beta_n$ denote their eigenvalues with $v_1, \ldots, v_n$ a common basis of orthogonal eigenvectors. Then the $n(n+1)/2$ eigenvalues of $M\otimes_s N$ are given by $(\alpha_i\beta_j + \alpha_j\beta_i)/2$, where $1\leq i\leq j\leq n$. 
\end{lemma}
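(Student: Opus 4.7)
The plan is to construct an explicit eigenbasis of $M \otimes_s N$ from the common eigenbasis $\{v_1,\dots,v_n\}$ of $M$ and $N$, using the definition of $\otimes_s$ as a linear operator on the space of symmetric matrices. Writing $M v_i = \alpha_i v_i$ and $N v_i = \beta_i v_i$, I would unfold the definition given in the notation section: for any symmetric $X$,
\[
(M \otimes_s N)\,\svec(X) \;=\; \tfrac{1}{2}\,\svec\bigl(N X M^\top + M X N^\top\bigr) \;=\; \tfrac{1}{2}\,\svec\bigl(N X M + M X N\bigr),
\]
where the last equality uses that $M$ and $N$ are symmetric. So, up to the isometry $\svec$, eigenvalues of $M \otimes_s N$ correspond to scalars $\lambda$ admitting a nonzero symmetric $X$ with $\tfrac{1}{2}(MXN + NXM) = \lambda X$.

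For each pair $1 \le i \le j \le n$, I would propose the candidate symmetric matrix
\[
X_{ij} \;=\; \tfrac{1}{\sqrt{2-\delta_{ij}}}\bigl(v_i v_j^\top + v_j v_i^\top\bigr),
\]
(the normalization ensures $\|X_{ij}\|_\F = 1$ and orthogonality of $\{\svec(X_{ij})\}$ in the Euclidean sense, matching the $\sqrt{2}$ scaling built into $\svec$). A direct computation using $Mv_k = \alpha_k v_k$ and $Nv_k = \beta_k v_k$ gives
\[
\tfrac{1}{2}\bigl(M X_{ij} N + N X_{ij} M\bigr) \;=\; \tfrac{\alpha_i\beta_j + \alpha_j\beta_i}{2}\, X_{ij},
\]
since the cross terms $v_i v_j^\top$ and $v_j v_i^\top$ each pick up a factor $\tfrac{1}{2}(\alpha_i\beta_j + \alpha_j\beta_i)$ after symmetrization. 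So $X_{ij}$ is an eigenvector of $M \otimes_s N$ (read through $\svec$) with the claimed eigenvalue.

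To finish, I would argue that $\{X_{ij}\}_{1 \le i \le j \le n}$ is a basis of the space $\mathcal{S}^n$ of $n \times n$ symmetric matrices: they are pairwise Frobenius-orthogonal because $\{v_i\}$ is an orthonormal basis of $\mathbb{R}^n$, and there are exactly $n(n+1)/2 = \dim \mathcal{S}^n$ of them. Since $\svec$ is an isometry between $\mathcal{S}^n$ and $\mathbb{R}^{n(n+1)/2}$, the vectors $\{\svec(X_{ij})\}$ form an orthonormal eigenbasis for $M \otimes_s N$, and hence the list $\{(\alpha_i\beta_j + \alpha_j\beta_i)/2 : 1 \le i \le j \le n\}$ exhausts the spectrum. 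No step is really the obstacle here: the main subtlety is just tracking the $\sqrt{2}$ scaling conventions in $\svec$ and in the definition of $\otimes_s$ so that the eigenvalue identity comes out with the factor $1/2$ rather than $1$.
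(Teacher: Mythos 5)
Your proof is correct, but it is worth noting that the paper itself does not prove this lemma at all: it simply cites Lemma 2 of \cite{alizadeh1998primal}. What you have written is essentially the standard self-contained argument behind that cited result: unfold the definition $(M\otimes_s N)\svec(X)=\tfrac12\svec(NXM+MXN)$, check that each symmetrized rank-one matrix $v_iv_j^\top+v_jv_i^\top$ is an eigenvector with eigenvalue $(\alpha_i\beta_j+\alpha_j\beta_i)/2$, and conclude by counting: the $n(n+1)/2$ candidates are pairwise Frobenius-orthogonal (hence linearly independent) and $\svec$ is an isometry onto $\RR^{n(n+1)/2}$, so they exhaust the spectrum. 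The eigenvalue computation and the orthogonality argument are both sound. The only slip is cosmetic: with your normalization $\tfrac{1}{\sqrt{2-\delta_{ij}}}$ the diagonal case $i=j$ gives $X_{ii}=2v_iv_i^\top$, which has Frobenius norm $2$ rather than $1$ (to get unit norm you would take $v_iv_i^\top$ for $i=j$, i.e.\ a factor $\tfrac{1}{\sqrt{2+2\delta_{ij}}}$). This does not affect the proof, since being an eigenvector and pairwise orthogonality are scale-invariant and the dimension count is what closes the argument; it only matters if you insist on calling the resulting basis orthonormal. Compared with the paper's citation, your route buys a short, verifiable proof from first principles using only the paper's own definition of $\otimes_s$, at the cost of half a page.
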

\begin{proof}
See Lemma 2 in \cite{alizadeh1998primal} for a detailed proof. 
\end{proof}

\begin{lemma}\label{lemma:hwineq}
For any integer $m>0$, let $A\in\RR^{m\times m}$ and $\eta\sim \mathcal N(0, I_m)$. Then, there exists some absolute constant $C>0$ such that for any $t\geq 0$, we have
\$
\PP\Bigl[ \bigl| \eta^\top A\eta - \EE(\eta^\top A\eta)   \bigr| > t \Bigr]  \leq 2\cdot \exp\Bigl[ -C\cdot \min\bigl(  t^2 \|A\|_\F^{-2}, ~t\|A\|_2^{-1}  \bigr) \Bigr]. 
\$
\end{lemma}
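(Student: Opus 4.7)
\textbf{Proof Proposal for Lemma \ref{lemma:hwineq}.}
My plan is to prove this Hanson--Wright-type inequality for Gaussian quadratic forms via the classical diagonalization plus Chernoff argument. First, I would reduce to the symmetric case: since $\eta^\top A \eta = \eta^\top A_{\rm sym} \eta$ with $A_{\rm sym} = (A + A^\top)/2$, and since $\|A_{\rm sym}\|_\F \leq \|A\|_\F$ and $\|A_{\rm sym}\|_2 \leq \|A\|_2$, it suffices to prove the bound for symmetric $A$. Then I would diagonalize $A_{\rm sym} = U \Lambda U^\top$ with $\Lambda = \diag(\lambda_1, \ldots, \lambda_m)$, and invoke rotational invariance of the standard Gaussian to write $U^\top \eta \stackrel{d}{=} \eta$, so that
\$
\eta^\top A \eta - \EE(\eta^\top A \eta) = \sum_{i = 1}^m \lambda_i (\xi_i^2 - 1),
\$
where $\xi_1, \ldots, \xi_m \sim \mathcal{N}(0,1)$ are i.i.d. Note that $\sum_i \lambda_i^2 = \|A_{\rm sym}\|_\F^2 \leq \|A\|_\F^2$ and $\max_i |\lambda_i| = \|A_{\rm sym}\|_2 \leq \|A\|_2$.

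Next I would control the moment generating function. For $|s| < 1/(2\|A\|_2)$, the independence of the $\xi_i$ gives
\$
\log \EE\Bigl[ e^{s \sum_i \lambda_i (\xi_i^2 - 1)} \Bigr] = \sum_{i = 1}^m \bigl[ -s \lambda_i - \tfrac{1}{2} \log(1 - 2 s \lambda_i) \bigr].
\$
Using the elementary inequality $-u - \tfrac{1}{2}\log(1 - 2u) \leq u^2 / (1 - 2|u|)$ valid for $|u| < 1/2$, the RHS is bounded by $s^2 \sum_i \lambda_i^2 / (1 - 2|s|\|A\|_2) \leq s^2 \|A\|_\F^2 / (1 - 2|s|\|A\|_2)$. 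In particular, restricting to $|s| \leq 1/(4\|A\|_2)$ yields the clean bound
\$
\log \EE\bigl[ e^{s (\eta^\top A \eta - \EE \eta^\top A \eta)} \bigr] \leq 2 s^2 \|A\|_\F^2.
\$

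Finally I would apply the Chernoff bound and optimize. For the one-sided tail, $\PP(\eta^\top A \eta - \EE \eta^\top A \eta > t) \leq \exp(-st + 2 s^2 \|A\|_\F^2)$ for any $s \in (0, 1/(4\|A\|_2)]$. If $t \leq 8 \|A\|_\F^2 / \|A\|_2$, take $s = t / (4\|A\|_\F^2)$ to obtain $\exp(-t^2 / (8 \|A\|_\F^2))$; otherwise take $s = 1/(4\|A\|_2)$ to obtain $\exp(-t/(8\|A\|_2))$. Combining these into a single minimum gives the one-sided bound with absolute constant $C = 1/8$. The lower-tail bound follows by replacing $A$ with $-A$ (which preserves both norms), and a union bound doubles the probability. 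The main obstacle is simply keeping the constants and the domain of validity of the MGF explicit so that the two regimes (subgaussian for small $t$, subexponential for large $t$) combine cleanly into the advertised $\min(t^2 / \|A\|_\F^2, t/\|A\|_2)$ form; no substantive difficulty beyond this routine bookkeeping is expected.
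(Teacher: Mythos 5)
Your proposal is correct in substance, but it takes a different route from the paper: the paper does not prove this lemma at all, it simply invokes the Hanson--Wright inequality of \cite{rudelson2013hanson}, which is stated for general sub-Gaussian vectors and whose proof requires a decoupling argument. You instead exploit the fact that $\eta$ is exactly Gaussian: after symmetrizing $A$ and using rotational invariance to diagonalize, the quadratic form becomes an independent sum $\sum_i \lambda_i(\xi_i^2-1)$ whose moment generating function is explicit, and the two-regime Chernoff bound is then routine. This buys a short, self-contained, fully explicit-constant proof of exactly the statement the paper needs, at the cost of not covering the non-Gaussian generality of the cited result (which the paper never uses). One small bookkeeping slip: your regime split should occur at $t \leq \|A\|_\F^2/\|A\|_2$ rather than $t \leq 8\|A\|_\F^2/\|A\|_2$, since the choice $s = t/(4\|A\|_\F^2)$ is only admissible (i.e.\ satisfies $s \leq 1/(4\|A\|_2)$, where your MGF bound $2s^2\|A\|_\F^2$ is valid) when $t \leq \|A\|_\F^2/\|A\|_2$; with the corrected threshold both regimes still yield the constant $C = 1/8$, and the two cases combine into the advertised $\min(t^2\|A\|_\F^{-2},\, t\|A\|_2^{-1})$ form because the minimum switches branches at exactly that threshold.
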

\begin{proof}
See \cite{rudelson2013hanson} for a detailed proof. 
\end{proof}

\end{document}